
\documentclass{amsart}
\usepackage{amscd,amsmath,xypic,amssymb,combelow,tikz-cd,etoolbox}

\makeatletter
\patchcmd{\@settitle}{\uppercasenonmath\@title}{}{}{}
\makeatother

\xyoption{all}
\CompileMatrices

\emergencystretch=2cm
\newcommand{\nc}{\newcommand}

\makeatletter
\@addtoreset{equation}{section}
\makeatother

\newtheorem{theorem}[subsection]{Theorem}
\newtheorem{proposition}[subsection]{Proposition}

\newtheorem{definition}[subsection]{Definition}

\newtheorem{claim}[subsection]{Claim}

\newtheorem{remark}[subsection]{Remark}

\nc{\fa}{{\mathfrak{a}}}
\nc{\fb}{{\mathfrak{b}}}
\nc{\fg}{{\mathfrak{g}}}
\nc{\fh}{{\mathfrak{h}}}
\nc{\fj}{{\mathfrak{j}}}
\nc{\fn}{{\mathfrak{n}}}
\nc{\fm}{{\mathfrak{m}}}
\nc{\fu}{{\mathfrak{u}}}
\nc{\fp}{{\mathfrak{p}}}
\nc{\fr}{{\mathfrak{r}}}
\nc{\ft}{{\mathfrak{t}}}
\nc{\fsl}{{\mathfrak{sl}}}
\nc{\fgl}{{\mathfrak{gl}}}
\nc{\hsl}{{\widehat{\mathfrak{sl}}}}
\nc{\hgl}{{\widehat{\mathfrak{gl}}}}
\nc{\hg}{{\widehat{\mathfrak{g}}}}
\nc{\chg}{{\widehat{\mathfrak{g}}}{}^\vee}
\nc{\hn}{{\widehat{\mathfrak{n}}}}
\nc{\chn}{{\widehat{\mathfrak{n}}}{}^\vee}

\nc{\Mod}{{\textrm{Mod}}}

\nc{\wGL}{{\widehat{GL}^+}}

\nc{\BA}{{\mathbb{A}}}
\nc{\BC}{{\mathbb{C}}}
\nc{\BM}{{\mathbb{M}}}
\nc{\BN}{{\mathbb{N}}}
\nc{\BF}{{\mathbb{F}}}
\nc{\BH}{{\mathbb{H}}}
\nc{\BP}{{\mathbb{P}}}
\nc{\BQ}{{\mathbb{Q}}}
\nc{\BR}{{\mathbb{R}}}
\nc{\BZ}{{\mathbb{Z}}}

\nc{\ff}{{\mathbb{F}}}
\nc{\kk}{{\mathbb{K}}}
\nc{\kko}{{\mathbb{K}}}

\nc{\coh}{{\text{Coh}}}

\nc{\CA}{{\mathcal{A}}}
\nc{\CC}{{\mathcal{C}}}
\nc{\CB}{{\mathcal{B}}}
\nc{\DD}{{\mathcal{D}}}
\nc{\CE}{{\mathcal{E}}}
\nc{\CF}{{\mathcal{F}}}
\nc{\tCF}{{\widetilde{\CF}}}
\nc{\tCT}{{\widetilde{\CT}}}
\nc{\oCF}{{\overline{\CF}}}
\nc{\CG}{{\mathcal{G}}}
\nc{\CL}{{\mathcal{L}}}
\nc{\CK}{{\mathcal{K}}}
\nc{\CI}{{\mathcal{I}}}
\nc{\CM}{{\mathcal{M}}}
\nc{\CH}{{\mathcal{H}}}
\nc{\CN}{{\mathcal{N}}}
\nc{\CO}{{\mathcal{O}}}
\nc{\CP}{{\mathcal{P}}}
\nc{\CR}{{\mathcal{R}}}
\nc{\CQ}{{\mathcal{Q}}}
\nc{\CS}{{\mathcal{S}}}
\nc{\CT}{{\mathcal{T}}}
\nc{\CU}{{\mathcal{U}}}
\nc{\CV}{{\mathcal{V}}}
\nc{\CW}{{\mathcal{W}}}

\nc{\tpsi}{{\widetilde{\Psi}}}
\nc{\wpi}{{\widetilde{\pi}}}
\nc{\Ker}{{\text{Ker }}}

\nc{\CX}{{\mathcal{X}}}
\nc{\tCX}{{\widetilde{\mathcal{X}}}}
\nc{\CY}{{\mathcal{Y}}}
\nc{\tCY}{{\widetilde{\mathcal{Y}}}}

\nc{\tN}{{\widetilde{\CN}}}
\nc{\pN}{{\BP\widetilde{\CN}}}

\nc{\tT}{{T}}

\nc{\fC}{{\mathfrak{C}}}
\nc{\fZ}{{\mathfrak{Z}}}
\nc{\fU}{{\mathfrak{U}}}
\nc{\fV}{{\mathfrak{V}}}
\nc{\fS}{{\mathfrak{S}}}

\nc{\od}{{\overline{d}}}
\nc{\rg}{{\textrm{R}\Gamma}}
\nc{\erg}{{\emph{R}\Gamma}}
\nc{\id}{{\textrm{Id}}}
\nc{\rhom}{{\textrm{RHom}}}

\def\ph{\varphi}

\def\Ext{\textrm{Ext}}
\def\Hom{\textrm{Hom}}

\def\e{\varepsilon}

\def\barA{\bar{A}}
\def\barE{\bar{E}}
\def\barCS{\bar{\CS}}
\def\barT{\bar{T}}
\def\barPhi{\bar{\Phi}}

\def\barm{\bar{m}}
\def\baru{\bar{u}}
\def\barb{\bar{b}}

\def\barp{\bar{p}}
\def\barbu{\bar{\bu}}

\def\bary{\bar{y}}
\def\barz{\bar{z}}
\def\barR{\bar{R}}
\def\barW{\bar{W}}
\def\barLambda{\bar{\Lambda}}
\def\barchi{\bar{\chi}}
\def\bartau{\bar{\tau}}
\def\barzeta{\bar{\zeta}}

\def\barbla{\bar{\bla}}
\def\barbmu{\bar{\bmu}}
\def\barbnu{\bar{\bnu}}

\def\tp{\tilde{p}}

\def\pt{\textrm{pt}}
\def\and{\textrm{ }\&\textrm{ }}

\def\sym{\textrm{Sym}}
\def\esym{\emph{\sym}}

\def\Tr{\textrm{Tr}}

\def\tCF{\widetilde{\CF}}

\def\tpi{\widetilde{\pi}}

\def\res{\textrm{Res }}

\def\tp{\tilde{p}}

\def\loccit{\emph{loc. cit. }}
\def\loccitt{\emph{loc. cit.}}
\def\op{{\textrm{op}}}

\def\tab{\text{} \\}

\def\bu{{\textbf{u}}}
\def\ebu{{\emph{\bu}}}

\def\la{{\lambda}}
\def\lamu{{\lambda \backslash \mu}}
\def\lanu{{\lambda \backslash \nu}}
\def\munu{{\mu \backslash \nu}}
\def\bla{{\boldsymbol{\la}}}
\def\bmu{{\boldsymbol{\mu}}}
\def\bnu{{\boldsymbol{\nu}}}
\def\blamu{{\boldsymbol{\lamu}}}
\def\blanu{{\boldsymbol{\lanu}}}
\def\bmunu{{\boldsymbol{\munu}}}

\def\sq{\square}
\def\bsq{\blacksquare}

\def\Tan{\text{Tan}}

\def\Id{\text{Id}}
\def\wW{\widetilde{W}}
\def\wCA{\widehat{\CA}}

\def\wCH{\widehat{\CH}}

\def\tCB{\widetilde{\CB}}
\def\tCH{\widetilde{\CH}}

\def\wwedge{\widetilde{\wedge}}
\def\wGamma{\widetilde{\Gamma}}
\def\wLambda{\widetilde{\Lambda}}

\def\tp{\widetilde{p}}

\def\ext{\text{ext}}
\def\diag{\text{diag}}

\def\resy{\underset{y=\infty}\res}

\begin{document}

\title[The $q$--AGT--W relations via shuffle algebras]{\large{\textbf{THE $q$--AGT--W RELATIONS VIA SHUFFLE ALGEBRAS}}}
\author[Andrei Negu\cb t]{Andrei Negu\cb t}
\address{MIT, Department of Mathematics, Cambridge, MA, USA}
\address{Simion Stoilow Institute of Mathematics, Bucharest, Romania}
\email{andrei.negut@gmail.com}

\maketitle

\renewcommand{\thefootnote}{\fnsymbol{footnote}} 
\footnotetext{\emph{2010 Mathematics Subject Classification: }Primary 14J60, Secondary 14D21}     
\renewcommand{\thefootnote}{\arabic{footnote}} 

\renewcommand{\thefootnote}{\fnsymbol{footnote}} 
\footnotetext{\emph{Key words: }Nekrasov partition function, moduli space of instantons, deformed $W$ algebra, shuffle algebras, AGT-W relations}     
\renewcommand{\thefootnote}{\arabic{footnote}} 

\begin{abstract} \noindent We construct the action of the $q$--deformed $W$--algebra on its level $r$ representation geometrically, using the moduli space of $U(r)$ instantons on the plane and the double shuffle algebra. We give an explicit LDU decomposition for the action of $W$--algebra currents in the fixed point basis of the level $r$ representation, and prove a relation between the Carlsson-Okounkov Ext operator and intertwiners for the deformed $W$--algebra. We interpret this result as a $q$--deformed version of the AGT--W relations.  

\end{abstract}

\section{Introduction}

\noindent Fix $r \in \BN$. The moduli space $\CM$ of rank $r$ framed sheaves on $\BP^2$ is an algebro-geometric incarnation of the moduli space of $U(r)$ instantons, where the Nekrasov partition function naturally appears. More precisely, it has been known from the work of \cite{CNO}, \cite{CO}, \cite{ON}, \cite{Nek} that the partition function of $5d$ $U(r)^k$--gauge theory with bi-fundamental hypermultiplets $m_1,...,m_k$, in the presence of full $\Omega$--background, is:
\begin{equation}
\label{eqn:nek part}
Z_{m_1,...,m_k}(x_1,...,x_k) = \Tr \left( A_{m_1}(x_1) \circ ... \circ A_{m_k}(x_k) \Big|_{\bu^{k+1} = \bu^1} \right) 
\end{equation}
where the Ext operator (see \eqref{eqn:a correspondence} for the precise geometric definition) is:
\begin{equation}
\label{eqn:ext 0}
A_{m_i}(x_i) : K_{\bu^{i+1}} \longrightarrow K_{\bu^i} 
\end{equation}
and $K_{\bu^i}$ denotes the equivariant $K$--theory of the moduli space of rank $r$ sheaves, with equivariant weights encoded in the vector of parameters $\bu^i = (u_1^i,...,u_r^i)$. The partition function of linear quiver gauge theory can be recovered from the operators $A_m(x)$ and their matrix coefficients, as explained in \cite{Bo}. This partition function has been studied extensively and from many different points of view, see e.g. \cite{NY}, \cite{NS}.

\tab 
The main purpose of the present paper is to mathematically state and prove a connection between the rank $r$ Nekrasov partition function and conformal blocks for the $_{q}W$--algebra (more commonly called ``defomed $W$--algebra") of type $\fgl_r$. The proof of our main Theorem \ref{thm:main} uses two main mathematical tools: expressing $_qW$--algebras via shuffle algebras, and performing intersection--theoretic computations with the Ext operator \eqref{eqn:ext 0}. We interpret our result as a $q$--deformed version of the well-known AGT--W relations between gauge theory and conformal field theory (these were introduced by Alday, Gaiotto and Tachikawa and extended by Wyllard in the undeformed case, and formulated in the $q$--deformed case by Awata and Yamada, see \cite{AY}, \cite{Bo}, \cite{Ta}, \cite{T} among other references. The physical literature on the subject is vast, see for example \cite{Aga}, \cite{KP}, \cite{NPS} for other points of view).  

\noindent The algebra we study is the tensor product of the $_qW$--algebra of type $\fsl_r$ (\cite{AKOS}, \cite{FF}) and a $_q$Heisenberg algebra. By close analogy with \loccitt, we show in Section \ref{sec:miura} that the defining currents of our $_qW$--algebra are ``elementary symmetric functions":
\begin{equation}
\label{eqn:miura 0}
W_k(z) = \sum_{1\leq i_1 < ... < i_k \leq r} : \exp \left[ b^{i_1}(x) \right] \exp \left[ b^{i_2}\left(\frac xq\right) \right] ... \exp \left[ b^{i_k} \left(\frac x{q^{k-1}} \right) \right] : 
\end{equation}
in a family of bosonic fields $b^1(z),...,b^r(z)$ which satisfy the commutation relations \eqref{eqn:bosons}. The nice thing about the $\fgl_r$ case is that one can send $r \rightarrow \infty$, and the resulting limit can be interpreted as the upper half of the double shuffle algebra (as in Section \ref{sec:algebra}). For fixed $r$, the definition \eqref{eqn:miura 0} implies the following relations:
$$
W_0(x) = 1, \qquad W_k(x) = 0\quad \text{for all }k>r
$$
and:
\begin{equation}
\label{eqn:relations}
W_k(x) W_{k'}(y) \cdot f_{kk'}\left(\frac yx \right) -  W_{k'}(y) W_k(x) \cdot f_{k'k}\left(\frac xy \right) =
\end{equation}
$$
= \sum_{i=\max(0,k'-k)+1}^{k'} \delta\left(\frac {y}{xq^i} \right) \left[ W_{k'-i}(x) W_{k+i}(y) f_{k'-i,k+i}\left(\frac yx \right) \Big|_{x = \frac y{q^i}} \right] \theta(\min(i,k-k'+i))
$$
$$
- \sum^{k}_{i = \max(0,k-k')+1} \delta\left(\frac {x}{yq^i} \right) \left[ W_{k-i}(y) W_{k'+i}(x) f_{k-i,k'+i}\left(\frac xy \right) \Big|_{y = \frac x{q^i}} \right] \theta(\min(i,k'-k+i))
$$
The quantity $\theta(s)$ is defined for all $s\in \BN$ in \eqref{eqn:def theta}, while the power series $f_{kk'}(z)$ is defined in \eqref{eqn:def f} (note that we always expand it in $|z| \ll 1$). In Proposition \ref{prop:iso}, we will explain how formulas \eqref{eqn:relations} differ from those of \cite{AKOS} and \cite{FF}.

\tab 
Our strategy is quite well-known to mathematicians and physicists: to recast the AGT--W relations as a connection between the operator $A_m(x)$ and intertwiners for the $_qW$--algebra. This starts with Theorem \ref{thm:geom} below, which states that for arbitrary $r \in \BN$ and generic equivariant parameters $\bu = (u_1,...,u_r)$, the $K$--theory group $K_\bu$ of the moduli space of rank $r$ sheaves is isomorphic to the Verma module of the $_qW$--algebra, with highest weight prescribed by the equivariant parameters $\bu$. Note that our construction and proof are purely geometric, and do not use the isomorphism between the level $r$ representation and a tensor product of $r$ Fock spaces (which was used e.g. in \cite{AFHKSY}). This geometric definition is fruitful because it can be extended to moduli of sheaves on other surfaces, see \cite{W gen} and \cite{AGT}: \\

\begin{theorem}
\label{thm:main}

For arbitrary $r \geq 1$ and collections $\ebu = (u_1,...,u_r)$, $\ebu' = (u_1',...,u_r')$ of equivariant parameters, let $u = u_1...u_r$ and $u'=u_1'...u_r'$. The Ext operator:
$$
A_m(x) : K_{\ebu'} \longrightarrow K_{\ebu} 
$$
is connected by the simple relation \eqref{eqn:def phi} with the vertex operator:
$$
\Phi_m(x) : K_{\ebu'} \longrightarrow K_{\ebu} 
$$
which satisfies the following commutation relations with the $_qW$--algebra currents:
\begin{equation}
\label{eqn:phi wk}
\left[ \Phi_m(x), W_k(y) \right]_{m^k} \cdot \prod_{i=1}^k \left(1 - \frac {m^r u x}{q^{r-i} u' y} \right) = 0
\end{equation}
where $[A,B]_s = AB - s BA$ denotes the $s$--commutator. 
\end{theorem}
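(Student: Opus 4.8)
The plan is to prove the commutation relation \eqref{eqn:phi wk} by passing to matrix coefficients in the fixed-point bases of $K_{\ebu}$ and $K_{\ebu'}$, which under Theorem \ref{thm:geom} are indexed by $r$-tuples of partitions $\bla$. Since $\Phi_m(x)$ maps $K_{\ebu'} \to K_{\ebu}$, the two orderings use the current $W_k(y)$ acting on \emph{different} modules --- on the source $K_{\ebu'}$ in $\Phi_m(x) W_k(y)$, and on the target $K_{\ebu}$ in $W_k(y) \Phi_m(x)$ --- which is the geometric origin of the distinct parameters $u, u'$ in \eqref{eqn:phi wk}. It then suffices to verify, for each pair $\bla, \bmu$, that
$$\langle \bla |\, [\Phi_m(x), W_k(y)]_{m^k} \,|\bmu\rangle \cdot \prod_{i=1}^k \Big(1 - \frac{m^r u x}{q^{r-i} u' y}\Big) = 0.$$
To set up the computation I would first record the matrix coefficients of $\Phi_m(x)$, obtained by combining the Carlsson--Okounkov formula for the Ext operator $A_m(x)$ with the renormalization \eqref{eqn:def phi} defining $\Phi_m$; these are explicit products over the boxes of $\bla$ and $\bmu$, rational in $x$. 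I would then use the LDU decomposition of $W_k(y)$ in the fixed-point basis --- the output of the shuffle-algebra realization of the $_qW$-algebra established earlier --- which gives explicit rational expressions for $\langle \bla | W_k(y) | \bnu \rangle$ reflecting the Miura form \eqref{eqn:miura 0}, in which the $k$ nested arguments $y, y/q, \dots, y/q^{k-1}$ appear.

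Inserting a complete set of fixed points between the two operators, each ordering $\langle \bla | \Phi_m(x) W_k(y) | \bmu \rangle$ and $\langle \bla | W_k(y) \Phi_m(x) | \bmu \rangle$ becomes a finite sum of products of an Ext factor and a $W$-factor, hence a rational function of $y$. The heart of the argument is a pole analysis in $y$: I would show that the $m^k$-commutator has poles only at the $k$ values $y = m^r u x / (q^{r-i} u')$, $i = 1, \dots, k$, so that multiplication by $\prod_{i=1}^k (1 - m^r u x/(q^{r-i} u' y))$ removes all of them and the scalar above vanishes identically. Geometrically, these distinguished loci are exactly the coincidences of equivariant weights at which the spectral parameter $x$ of $\Phi_m$, twisted by the highest-weight data $u, u', m^r, q^{r-i}$, resonates with one of the $k$ nested arguments of the Miura current $W_k(y)$.

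The main obstacle is precisely this residue cancellation. Each ordering carries its own poles in $y$, arising both from the Ext factor and from the LDU expression for $W_k$, and one must check that every spurious pole --- every pole not among the $k$ distinguished points --- cancels between $\Phi_m(x) W_k(y)$ and $m^k W_k(y) \Phi_m(x)$. I expect this to reduce, after dividing out the common Ext prefactors, to a family of elementary rational-function identities in the box contents of $\bla$, $\bmu$ and the intermediate partition $\bnu$, which can be checked directly. Matching the leading behavior as $y \to \infty$ then forces the weight $m^k$, and the survival of exactly $k$ resonance loci matches the product of $k$ factors appearing in \eqref{eqn:phi wk}, which is what is needed to conclude the theorem.
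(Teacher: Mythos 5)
Your strategy (direct matrix--coefficient computation in the fixed--point basis plus a pole analysis in $y$) is genuinely different from the paper's, but as written it has a gap exactly where the content of the theorem lies, together with one outright error. The error: after inserting a complete set of fixed points, each ordering is \emph{not} a finite sum. Both $\Phi_m(x)$ and $W_k(y)$ have nonzero matrix elements between fixed points of arbitrarily different sizes, so $\langle \bla|\Phi_m(x)W_k(y)|\bmu\rangle$ is an infinite sum over intermediate $r$--partitions $\bnu$, i.e.\ a one--sided formal series in $y$ whose rationality must be \emph{proved}, not observed (compare Proposition \ref{prop:explicit shuffle 2}, which is needed even to make sense of a product of two $W$--currents). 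The gap: the claim that every spurious pole --- the poles at box weights $\chi_\sq$ coming from the factors $1-q\chi_{i+1}/\chi_i$, $\zeta(\chi_i/\chi_\bnu)$ and $1-y/\chi_{d}$ in \eqref{eqn:lower}--\eqref{eqn:diagonal}, as well as from the Ext factor \eqref{eqn:matrix coefficients} --- cancels between $\Phi_m(x)W_k(y)$ and $m^kW_k(y)\Phi_m(x)$, leaving only the $k$ resonance loci $y=m^rux/(q^{r-i}u')$, is precisely the theorem. Deferring it to ``elementary rational--function identities checked directly'' is not a proof: these identities are indexed by all pairs of $r$--partitions and all intermediate $\bnu$, so some structural input about the Ext operator is indispensable; none is supplied.

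The paper's route is more economical. It first establishes the $k=1$ case (Proposition \ref{prop:vertex}) from an honest geometric computation: Theorem \ref{thm:commute} computes $[A_m,p_{\pm k}]$ and the commutator with $P_{k,1}$ by realizing both orderings as push--forwards along the fine correspondences and comparing two contour integrals with the same integrand, whose difference is a sum of residues at $0$ and $\infty$. It then runs an induction on $k$: the $_qW$--relation \eqref{eqn:relations} with $k'=1$, multiplied by $1-\tfrac{y'}{yq^k}$, isolates $W_{k+1}(y)$ as the constant term in $y'$ of an expression built from $W_k(y')$ and $W_1(y)$, and the Leibniz rule $[\Phi,AB]_{m^{k+1}}=[\Phi,A]_{m^k}B+m^kA[\Phi,B]_m$ propagates \eqref{eqn:phi wk} from steps $k$ and $1$ to step $k+1$, producing the extra linear factor. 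If you wish to salvage your approach, you would still need the contour--integral computation of Theorem \ref{thm:commute} (or an equivalent geometric statement) as input, at which point the induction is the shorter path.
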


\noindent (see Remark \ref{rem:future} for a stronger form of relation \eqref{eqn:phi wk}, to be proved in \cite{AGT}). Let us say a few words about our methods. Theorem \ref{thm:geom} below was proved in the undeformed case in \cite{MO} and \cite{SV2}, by different means. Our proof of the deformed case is new, and uses the embedding of the $_qW$--algebra into the double shuffle algebra $\CA$ of \cite{BS}, \cite{FO}, \cite{FT}, \cite{SV}. More specifically, we recast relations \eqref{eqn:relations} in terms of the shuffle algebra, recall the action of the shuffle algebra on $K_\bu$ from \cite{Mod}, and show that relations \eqref{eqn:relations} hold in $K_\bu$ by a shuffle algebra computation. In particular, we prove the following formula for the LDU decomposition of $_qW$--algebra currents in the basis of fixed points indexed by $r$--tuples of partitions $\bla = (\lambda^1,...,\lambda^r)$ (note that this basis is quite different, and in a sense ``orthogonal", to the basis arising from the isomorphism $K_\bu \cong \text{Fock}^{\otimes r}$, which we will review in Subsection \ref{sub:stable basis}): \\

\begin{theorem}
\label{thm:ldu}
Let $D_x$ denote the $q$--difference operator $f(x) \leadsto f (x q)$. The action of the $_qW$--algebra on its level $r$ representation (interpreted as the $K$--theory of the moduli space of framed sheaves, as in Subsections \ref{sub:sheaf} and \ref{sub:tor act}) is given by:
\begin{equation}
\label{eqn:w currents}
\sum_{k=0}^\infty \frac {W_k(x)}{(-yD_x)^k} = T \left(x^{-1}, yD_x \right)^\leftarrow E\left( y D_x \right) \ T\left(xq, y D_x \right)^\rightarrow
\end{equation}
where the three factors are lower triangular, diagonal and upper triangular, respectively (see Remark \ref{rem:paragraph} for how to place the non-commuting symbols $x$ and $D_x$ in \eqref{eqn:w currents}). These operators are interpreted geometrically in Section \ref{sec:geom}, when we obtain: 
$$
\langle \bmu | W_k(x) |\bla \rangle = \sum^{d_\rightarrow - d_\leftarrow = d}_{k_\leftarrow + k_0 + k_\rightarrow = k}  \sum_{\bnu \subset \bla \cap \bmu} \langle \bmu |T_{d_\leftarrow,k_\leftarrow}^\leftarrow | \bnu \rangle \langle \bnu | E_{0,k_0} | \bnu \rangle \langle \bnu | T_{d_\rightarrow, k_\rightarrow}^\rightarrow | \bla \rangle  q^{(k-1)d_\rightarrow}
$$
with the three matrix coefficients in the right-hand side given by \eqref{eqn:lower}, \eqref{eqn:upper} and \eqref{eqn:diagonal}, respectively. The basis $|\bla \rangle \in K$ consists of torus fixed points in $K$--theory.

\end{theorem}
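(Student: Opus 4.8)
The plan is to deduce both assertions of the theorem from the Miura-type presentation \eqref{eqn:miura 0} together with the triangular decomposition of the double shuffle algebra $\CA$ and its geometric action on $K_\bu$ recalled from \cite{Mod}. The starting point is that each defining current $W_k(x)$ is a sum of normally ordered products of $k$ vertex operators $\exp[b^{i_j}(xq^{1-j})]$. Every such vertex operator splits, upon separating the negative, zero and positive bosonic modes, into a product of a creation exponential, a Cartan factor and an annihilation exponential. Resumming the generating series $\sum_{k\ge 0} W_k(x)(-yD_x)^{-k}$ and collecting the three types of contributions should then reorganize the whole expression into the asserted product of a lower triangular, a diagonal and an upper triangular operator, the shift $x \leadsto xq^{1-j}$ in the $j$-th factor being precisely what is encoded by the symbol $D_x$. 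Concretely, I would define $T^\leftarrow$, $E$, $T^\rightarrow$ by the explicit formulas that will appear in \eqref{eqn:lower}, \eqref{eqn:diagonal}, \eqref{eqn:upper}, and verify the operator identity \eqref{eqn:w currents} by a direct computation, using Remark \ref{rem:paragraph} to fix the ordering of the non-commuting symbols $x$ and $D_x$.

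The second, geometric half of the statement follows by inserting a complete set of torus fixed points between the three factors of \eqref{eqn:w currents}. Because $T^\rightarrow$ lies in the box-removing (annihilation) half of the shuffle algebra, $\langle \bnu | T^\rightarrow | \bla\rangle$ vanishes unless $\bnu \subseteq \bla$; dually $\langle \bmu | T^\leftarrow | \bnu\rangle$ forces $\bnu \subseteq \bmu$, and $E$ is diagonal. Hence the intermediate index is constrained to $\bnu \subset \bla \cap \bmu$, which is exactly the summation range in the matrix-coefficient formula. The bookkeeping indices $k_\leftarrow + k_0 + k_\rightarrow = k$ and $d_\rightarrow - d_\leftarrow = d$ arise from expanding each factor in its mode ($k$) and degree ($d$) gradings, and the prefactor $q^{(k-1)d_\rightarrow}$ is produced when the difference operators $D_x$ are commuted past the $x$'s to the right. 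The individual coefficients $\langle \bmu | T^\leftarrow_{d_\leftarrow,k_\leftarrow}|\bnu\rangle$, $\langle\bnu|E_{0,k_0}|\bnu\rangle$ and $\langle\bnu|T^\rightarrow_{d_\rightarrow,k_\rightarrow}|\bla\rangle$ are then computed by equivariant localization on $\CM$, using the box-adding and box-removing correspondences of \cite{Mod}: the off-diagonal coefficients are products of tangent weights along the chain of boxes added or removed, while the diagonal coefficient is read off from the Euler class of the tangent space at the fixed point $\bnu$.

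The main obstacle, I expect, is to verify the algebraic factorization \eqref{eqn:w currents} with all constants in place: one must check that normal-ordering the product \eqref{eqn:miura 0} and resumming over $k$ really produces the three triangular factors in closed form, and that the non-commutativity of $x$ and $D_x$ is handled consistently so as to yield the clean separation into $T^\leftarrow$, $E$, $T^\rightarrow$ rather than a tangled mixture. A secondary difficulty is to match the purely algebraic $T^\leftarrow$, $E$, $T^\rightarrow$ with the geometric correspondences, i.e. to show that the shuffle elements representing the creation, Cartan and annihilation parts act on fixed points precisely by the stated box-adding and box-removing correspondences; this identification is where the action of $\CA$ on $K_\bu$ does the essential work, and where the exact powers of $q$ and the tautological weights must be tracked with care.
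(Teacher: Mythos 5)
Your proposal starts from the Miura presentation \eqref{eqn:miura 0} and tries to obtain the three factors of \eqref{eqn:w currents} by normal--ordering the bosonic vertex operators. This is not the paper's route, and as written it has a genuine gap: the decomposition you get by separating creation, Cartan and annihilation modes of the $b^i(x)$ is triangular with respect to the free--field (Fock/stable) basis, whereas the factors $T(x^{-1},yD_x)^\leftarrow$, $E(yD_x)$, $T(xq,yD_x)^\rightarrow$ in the theorem are specific elements of the halves $\CA^\leftarrow$, $\CA^{\diag}$, $\CA^\rightarrow$ of the double shuffle algebra, whose matrix coefficients in the \emph{fixed point} basis are the explicit tableau sums \eqref{eqn:lower}--\eqref{eqn:diagonal}. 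An individual boson $b^i_{-n}$ is not an element of $\CA$ (only the combinations $p_{-n}=\sum_i b^i_{-n}q^{n(i-1)}$ are), so the normal--ordered exponentials are not shuffle correspondences, and there is no reason for them to coincide factor--by--factor with $T^\leftarrow$, $E$, $T^\rightarrow$ even though both products equal $W_k(x)$; block--triangularity with respect to the grading by $|\bla|$ does not pin the factors down. Worse, transporting the Miura formula to the geometric representation $K$ in the first place requires knowing that the free--field action matches the shuffle--algebra action under the stable basis map --- this is exactly Claim \ref{claim:stab}, which the paper explicitly states without proof, so your argument would rest on an unestablished identification.

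The paper instead takes \eqref{eqn:w k} (a symmetrized delta function in $\wCA^\uparrow$) as the definition of $W_k(x)$ and proves the LDU decomposition entirely inside the shuffle algebra: Proposition \ref{prop:explicit shuffle}, proved in the Appendix by pairing $W_k(x)$ against the orthogonal basis $P_v$ via the evaluation functional $\ph^k_x$ of \eqref{eqn:evaluation 0}, using its multiplicativity (Claim \ref{claim:1}) and its values on the generators (Claim \ref{claim:2}) to obtain the expansion \eqref{eqn:power}, regrouping into $E_v$'s, and finally identifying the lower and upper pieces with the single shuffle elements $T_{d,k}$ through the combinatorial lattice--path identity \eqref{eqn:iden} (Claim \ref{claim:3}, broken paths, hinges, and an Euler--characteristic argument). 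None of this machinery is replaced by anything in your sketch. The second half of your proposal --- inserting fixed points, the support condition $\bnu\subset\bla\cap\bmu$, and the origin of $q^{(k-1)d_\rightarrow}$ from commuting $D_x$ past $x$ --- is essentially what the paper does in Subsection \ref{sub:w action} and is fine, but it presupposes the factorization into shuffle correspondences, which is precisely the part your first half does not deliver.
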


\tab 
We will prove formula \eqref{eqn:phi wk} by a geometric computation, which takes up most of Section \ref{sec:ext} and closely follows that in the undeformed case in \cite{Ext}. Note that in \cite{Bo0} and \cite{Bo}, the authors computed the commutation relations of the Ext operator with the degree one generators of the double shuffle algebra (some of these relations had appeared in \cite{Mod}, but without any of the very interesting physics studied in \cite{Bo}). In the present paper, we take the orthogonal viewpoint of computing the commutation relations of the Ext operator with the $_qW$--currents directly. Thus, while the present work uses the double shuffle algebra as a technical step, the final result is presented only in terms of the $_qW$--algebra action on $K$--theory. Finally, in Section \ref{sec:classical}, we show how to degenerate the $_qW$--algebra to the usual one, in which case the limit of $\Phi_m(x)$ is precisely the vertex operator studied in \cite{FL}.  \\

\noindent Let us note that the three factors that make up the LDU decomposition of the currents \eqref{eqn:w currents} are purely geometric, and one may ask what the analogous construction means for moduli spaces of sheaves on a more general (projective or toric) surface. Understanding the corresponding $_qW$--algebra will be the subject of \cite{W gen}.

\tab
I would like to thank the wonderful audience of my mini--course at Institut Henri Poincar\'e for suggesting this problem (special thanks to Amir-Kian Kashani-Poor for the invitation), and especially Francesco Sala for everything he taught me about AGT--W. Many thanks are due to Alexander Tsymbaliuk for his patience and knowledge. I gratefully acknowledge the support of NSF grant DMS--1600375. \\

\section{The shuffle algebra and deformed $W$--algebras}
\label{sec:algebra}

\subsection{}
\label{sub:def shuf}

%We will often use plethystic notation, by which we mean expressions of the form:
%\begin{equation}
%\label{eqn:pleth zeta}
%\zeta\left( \frac xy \cdot \frac {\prod_i (1-a_i)}{\prod_j (1-b_j)} \right)  = \exp \left[\sum_{n=1}^\infty \frac {x^n}{y^n n}  (1-q_1^n)(1-q_2^n) \cdot \frac {\prod_i (1-a_i^n)}{\prod_j (1-b_j^n)} \right] 
%\end{equation}
%and that in plethystic notation this takes the form:
%\begin{equation}
%\label{eqn:inv pleth zeta}
%\zeta\left( \frac xy \cdot \frac {\prod_i (1-a_i)}{\prod_j (1-b_j)} \right) = \zeta\left( \frac y{xq} \cdot \frac {\prod_i (1-a^{-1}_i)}{\prod_j (1-b^{-1}_j)} \right)
%\end{equation}
%Note that relation \eqref{eqn:inv pleth zeta} is a purely formal equality, since according to the definition in \eqref{eqn:pleth zeta}, the left and right-hand sides are defined by expanding in different domains: $|x| \ll |y|$ and $|x| \gg |y|$, respectively. So we may think of \eqref{eqn:inv pleth zeta} as an equality of analytic functions, which are naturally defined on different domains: the left-hand side near 0, and the right-hand side near $\infty$. We will also use the notation:
%\begin{equation}
%\label{eqn:def gamma}
%\gamma_k = \frac {(1-q_1^k)(1-q_2^k)}{1-q^{k}}
%\end{equation}

%For the purpose of expanding power series, we will always assume $|q|<1$, so we may consider the parameters $q_1,q_2$ to be complex numbers. 

Let $q_1,q_2$ be indeterminates, and set $q=q_1q_2$ and $\ff = \BQ(q_1,q_2)$. One of the basic objects of the present paper is the rational function:
\begin{equation}
\label{eqn:def zeta}
\zeta(x) = \frac {(1-q_1x)(1-q_2x)}{(1-x)(1-qx)} % = \exp \left[\sum_{n=1}^\infty \frac {x^n}n (1-q_1^n)(1-q_2^n)  \right]
\end{equation}
which we observe satisfies the relation:
\begin{equation}
\label{eqn:inv zeta}
\zeta(x) = \zeta \left(\frac 1{xq} \right)
\end{equation}
Note that we can write $\zeta$ in the form:
\begin{equation}
\label{eqn:zeta exponential}
\zeta(x) = \exp \left[ \sum_{n=1}^\infty \frac {(1-q_1^n)(1-q_2^n)x^n}n \right]
\end{equation}
Consider an infinite set of variables $z_1,z_2,...$, and take the $\ff-$vector space:
\begin{equation}
\label{eqn:big}
V = \bigoplus_{k \geq 0} \ff(z_{1},...,z_{k})^{\sym}
\end{equation}
of rational functions which are symmetric in the variables $z_1,...,z_k$, for any $k$. We endow $V$ with an $\ff-$algebra structure by the \textbf{shuffle product}:
$$
R(z_{1},...,z_{k}) * R'(z_{1},...,z_{k'}) =
$$
\begin{equation}
\label{eqn:mult}
= \frac 1{k! k'!} \cdot \textrm{Sym} \left[R(z_{1},...,z_{k})R'(z_{k+1},...,z_{k+k'}) \prod_{i=1}^k \prod_{j = k+1}^{k+k'} \zeta \left( \frac {z_i}{z_j} \right) \right]
\end{equation}
where \textrm{Sym} denotes the symmetrization operator:
$$
\textrm{Sym}\left( R(z_1,...,z_k) \right) = \sum_{\sigma \in S(k)} R(z_{\sigma(1)},...,z_{\sigma(k)})
$$
The \textbf{shuffle algebra} $\CS \subset V$ is defined as the set of rational functions of the form:
\begin{equation}
\label{eqn:shuf}
R(z_{1},...,z_{k}) = \frac {r(z_{1},...,z_{k})}{\prod_{1 \leq i \neq j \leq k} (z_{i} - z_{j}q)} 
\end{equation}
where $r$ is a symmetric Laurent polynomial that satisfies the \textbf{wheel conditions}:
\begin{equation}
\label{eqn:wheel}
r(z_1,...,z_k) \Big|_{\left\{ \frac {z_1}{z_2}, \frac {z_2}{z_3}, \frac {z_3}{z_1} \right\} = \left\{q_1,q_2, \frac 1q \right\}} = 0
\end{equation}
This condition is vacuous for $k\leq 2$. It is straightforward to show that $\CS$ is an algebra, and that the shuffle product preserves the grading by the number of variables $k$ in \eqref{eqn:big}, and also the grading by the homogeneous degree $d$ of rational functions. We will use the following notation for the graded and bigraded pieces of $\CS$:
$$
\CS = \bigoplus_{k=0}^\infty \CS_k, \qquad \qquad \CS_k = \bigoplus_{d\in \BZ} \CS_{k,d}
$$

\begin{theorem}
\label{thm:shuf} \emph{(\cite{Shuf})}
The shuffle algebra $\CS$ is generated by the degree one elements $R(z) = z^d \in \CS_1$, as $d$ goes over $\BZ$, under the shuffle product \eqref{eqn:mult} (when dealing with rational functions in a single variable, we will denote the variable by $z = z_1$). \\
\end{theorem}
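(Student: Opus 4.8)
The plan is to prove that the subalgebra $\CA \subseteq \CS$ generated by the one--variable elements $\CS_1 = \{z^d : d \in \BZ\}$ is all of $\CS$, by a leading--term argument. The inclusion $\CA \subseteq \CS$ is free: since $\CS$ is closed under the shuffle product \eqref{eqn:mult} and $\CS_1 \subseteq \CS$, every product of one--variable elements automatically satisfies the wheel conditions \eqref{eqn:wheel} and hence lies in $\CS$. The work is therefore to show $\CS \subseteq \CA$, for which I would induct on the number of variables $k$ and, inside a fixed piece $\CS_{k,d}$, on a term order attached to the ``leading monomial'' of a shuffle element.

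First I would make precise a leading--term map. Given $R \in \CS_{k,d}$, expand each factor $(z_i - q z_j)^{-1}$ of the denominator in \eqref{eqn:shuf} as a geometric series in the region $|z_1| \gg \cdots \gg |z_k|$; this yields a formal Laurent series all of whose monomials $z_1^{a_1}\cdots z_k^{a_k}$ have total degree $d$ and bounded $z_1$--degree, so there is a well--defined largest monomial in the lexicographic order on the exponent vector $(a_1,\dots,a_k)$. Because $R$ is symmetric, this leading exponent vector satisfies $a_1 \geq \cdots \geq a_k$. Next I would compute this leading term for a product of one--variable elements. Iterating \eqref{eqn:mult}, the product $z^{a_1} * \cdots * z^{a_k}$ is a nonzero scalar multiple of $\sym\big[ z_1^{a_1}\cdots z_k^{a_k} \prod_{1 \leq i < j \leq k} \zeta(z_i/z_j) \big]$. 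Using $q = q_1 q_2$ in \eqref{eqn:def zeta} one checks that $\zeta(x) \to 1$ both as $x \to \infty$ and as $x \to 0$, so every $\zeta$--factor contributes $1$ at leading order in the region while its corrections only \emph{lower} the exponents in lexicographic order. After sorting the $a_i$ weakly decreasingly, it follows that the leading monomial of $z^{a_1} * \cdots * z^{a_k}$ is exactly $z_1^{a_1}\cdots z_k^{a_k}$; since $\sym$ carries no signs, the contributions of the permutations realizing this monomial cannot cancel, so the leading coefficient is a positive multiple of the normalizing constant, in particular nonzero.

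With these two facts the theorem follows by triangularity. Given $R \in \CS_{k,d}$ with leading monomial $z_1^{a_1}\cdots z_k^{a_k}$ (automatically weakly decreasing), subtract the scalar multiple of $z^{a_1} * \cdots * z^{a_k}$ with matching leading coefficient; the difference lies in $\CS_{k,d}$ and has strictly smaller leading monomial. Because all exponent vectors of total degree $d$ with weakly decreasing entries have each coordinate bounded below once the previous coordinates are fixed, the lexicographic order is well--founded on them, so this reduction terminates and writes $R$ as a finite combination of products of one--variable elements. Hence $\CS \subseteq \CA$, completing the proof.

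The step I expect to be the main obstacle is the precise leading--term computation of the iterated product in the second paragraph: one must control the interaction between the symmetrization $\sym$ and the poles of the factors $\zeta(z_i/z_j)$ under the region expansion, and confirm both that the leading monomial is the sorted exponent vector and that no cancellation occurs among the $k!$ permutations. Equivalently, the heart of the matter is showing that passing to leading terms is compatible with the shuffle product, sending $z^{a_1} * \cdots * z^{a_k}$ to $z_1^{a_1}\cdots z_k^{a_k}$ up to a nonzero scalar; once this multiplicativity and nonvanishing are in hand, the well--foundedness of the term order and the closure of $\CS$ make the induction routine.
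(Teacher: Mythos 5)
There is a genuine gap, and it sits at the step you treated as routine, not at the one you flagged as the main obstacle. Your reduction hinges on the claim that the lex--leading exponent vector of the expansion of $R \in \CS_{k,d}$ in the region $|z_1| \gg \cdots \gg |z_k|$ is weakly decreasing ``because $R$ is symmetric.'' This is false: symmetry of the rational function does not transfer to its Laurent expansion in a fixed asymmetric region, since the set of monomials occurring in that expansion is not permutation--invariant. Concretely, take $k=2$ and $R = z_1 z_2 \big/ \left( (z_1 - z_2 q)(z_2 - z_1 q) \right) \in \CS_{2,0}$ (the wheel conditions \eqref{eqn:wheel} are vacuous here, and the numerator is a symmetric polynomial, so this is a legitimate element of the form \eqref{eqn:shuf}). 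Expanding in $|z_1| \gg |z_2|$ gives $-\tfrac 1q z_1^{-1} z_2$ plus lex--lower terms, so the leading exponent vector is $(-1,1)$; the monomial $z_1 z_2^{-1}$ does not occur in this expansion at all. On the other hand, every product $z^a * z^{-a}$ or $z^{-a} * z^a$, expanded in the same region, contains only monomials $z_1^c z_2^{-c}$ with $c \leq |a|$ and has leading exponent vector $(|a|, -|a|)$, which is weakly decreasing with leading coefficient $1$ (or $2$ if $a=0$). So no single product of one--variable elements matches the leading monomial of this $R$, and your induction cannot take its first step. (The theorem is still true for this $R$: it is a linear combination of the $z^{\pm a} * z^{\mp a}$ in which all of the weakly decreasing leading monomials cancel --- exactly the phenomenon a greedy leading--term reduction is blind to.)

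There is also a structural reason the argument cannot be repaired cosmetically: it never invokes the wheel conditions \eqref{eqn:wheel}. Since the subalgebra generated by $\CS_1$ is contained in $\CS$, any symmetric rational function of the form \eqref{eqn:shuf} that violates \eqref{eqn:wheel} (these exist for every $k \geq 3$) lies outside that subalgebra, yet your reduction would apply to it verbatim and ``prove'' it lies inside. Any correct proof must therefore use \eqref{eqn:wheel} in an essential way. The argument in \cite{Shuf} does so by bounding $\dim_\ff \CS_{k,d}$ from above using the wheel conditions together with specialization maps of the kind appearing in \eqref{eqn:evaluation 0}, and matching that bound against the graded dimension of the subalgebra generated in degree one, computed via the elliptic Hall algebra of \cite{BS}. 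If you want to keep a triangularity flavor, the ``coordinates'' to triangularize against have to be those specializations (which detect the wheel conditions), not lex--leading Laurent exponents.
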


\subsection{}
\label{sub:explicit shuffle}

The following elements of the shuffle algebra $\CS$ were constructed in \cite{Shuf} (note that the multiplication in \loccit is defined with respect to the function $\zeta(x^{-1})^{-1}$ instead of $\zeta(x)$, but the fact that the two structures are isomorphic is easily observed). Below, we consider any $k>0$ and $d\in \BZ$, and write $n = \gcd(k,d)$, $a = \frac kn$:
\begin{equation}
\label{eqn:pkd}
P_{k,d}  = \sym \left[ \frac {\prod_{i=1}^k z_i^{\left \lfloor \frac {id}k \right \rfloor - \left \lfloor \frac {(i-1)d}k \right \rfloor}}{\prod_{i=1}^{k-1} \left(1 - \frac {qz_{i+1}}{z_i} \right)} \sum_{s=0}^{n-1} q^{s} \frac {z_{a(n-1)+1}...z_{a(n-s)+1}}{{z_{a(n-1)} ...z_{a(n-s)}}} \prod_{i < j} \zeta \left( \frac {z_i}{z_j} \right) \right]
\end{equation}
\begin{equation}
\label{eqn:hkd}
H_{k,d} =  \sym \left[ \frac {\prod_{i=1}^k z_i^{\left \lfloor \frac {id}k \right \rfloor - \left \lfloor \frac {(i-1)d}k \right \rfloor}}{\prod_{i=1}^{k-1} \left(1 - \frac {qz_{i+1}}{z_i} \right)} \prod_{i < j} \zeta \left( \frac {z_i}{z_j} \right) \right]
\end{equation}
\begin{equation}
\label{eqn:ekd}
E_{k,d} = (-q)^{n-1} \cdot \sym \left[ \frac {\prod_{i=1}^k z_i^{\left \lceil \frac {id}k \right \rceil - \left \lceil \frac {(i-1)d}k \right \rceil + \delta_i^k - \delta_i^1}}{\prod_{i=1}^{k-1} \left(1 - \frac {qz_{i+1}}{z_i} \right)} \prod_{i < j} \zeta \left( \frac {z_i}{z_j} \right) \right]
\end{equation}
\begin{equation}
\label{eqn:qkd}
Q_{k,d}  = \left(1 - \frac 1q \right) \cdot
\end{equation}
$$
\cdot \ \sym \left[ \frac {\prod_{i=1}^k z_i^{\left \lfloor \frac {id}k \right \rfloor - \left \lfloor \frac {(i-1)d}k \right \rfloor}}{\prod_{i=1}^{k-1} \left(1 - \frac {qz_{i+1}}{z_i} \right)} \sum_{s=0}^{n-1} \frac {z_{a(n-1)+1}...z_{a(n-s)+1}}{{z_{a(n-1)} ...z_{a(n-s)}}} \prod_{i < j} \zeta \left( \frac {z_i}{z_j} \right) \right] \qquad
$$
It was shown in \cite{Shuf} that $P_{k,d}$ has ``minimal degree", in a sense made explicit in \loccitt, among all symmetric rational functions in $k$ variables which have homogeneous degree $d$ and satisfy the wheel conditions \eqref{eqn:wheel}. As for $H_{k,d}$, $E_{k,d}$ and $Q_{k,d}$, they are in relation to $P_{k,d}$ as complete, elementary and plethystically modified complete symmetric functions, respectively, are in relation to power sum functions. Specifically, this means that for all coprime integers $a$ and $b$, we have:
\begin{equation}
\label{eqn:def h}
\sum_{n=0}^\infty \frac {H_{an,bn}}{x^n} = \exp \left[ \sum_{n=1}^\infty \frac {P_{an,bn}}{n x^n} \right]
\end{equation}
\begin{equation}
\label{eqn:def e}
\sum_{n=0}^\infty \frac {E_{an,bn} (-1)^n}{x^n} = \exp \left[ - \sum_{n=1}^\infty \frac {P_{an,bn}}{n x^n} \right] \qquad
\end{equation}
\begin{equation}
\label{eqn:def q}
\sum_{n=0}^\infty \frac {Q_{an,bn}}{x^n} \ = \exp \left[ \sum_{n=1}^\infty \frac {P_{an,bn}}{n x^n} \cdot (1 - q^{-n}) \right]
\end{equation}
We henceforth take \eqref{eqn:hkd}, \eqref{eqn:ekd}, \eqref{eqn:qkd} as the definition of the shuffle elements $H_{k,d}$, $E_{k,d}$, $Q_{k,d}$, and we will prove formulas \eqref{eqn:def h}, \eqref{eqn:def e}, \eqref{eqn:def q} in the Appendix. \\

\subsection{}
\label{sub:double}

It was shown in \cite{Shuf} that $\CS$ has a coproduct and a bialgebra pairing (strictly speaking, this is true only after enlarging the shuffle algebra by adding commuting elements $a_1,a_2,...$ as below, and we refer the reader to \loccit for the details), which allow us to construct its Drinfeld double. The double is defined as the algebra:
\begin{equation}
\label{eqn:def double}
\CA = \CA^\leftarrow \otimes \CA^{\diag} \otimes \CA^\rightarrow
\end{equation}
where for a central element $c$ and commuting elements $\{a_n\}_{n \in \BZ\backslash 0}$ we set:
$$
\CA^\leftarrow = \CS \qquad \qquad \CA^{\diag} = \BF[...,a_{-2},a_{-1},a_1,a_2,..., c^{\pm 1}] \qquad \qquad \CA^\rightarrow = \CS^\op
$$
Therefore, associated to any shuffle element $R$ as in \eqref{eqn:shuf}, we have two elements $R^\leftarrow \in \CA^\leftarrow$ and $R^\rightarrow \in \CA^\rightarrow$, which satisfy the following rules for all $R_1, R_2 \in \CS$:
$$
(R_1 * R_2)^\leftarrow = R_1^\leftarrow * R_2^\leftarrow
$$
$$
(R_1*R_2)^\rightarrow = R_2^\rightarrow * R_1^\rightarrow
$$
We impose the following relations between the three subalgebras of \eqref{eqn:def double}:
\begin{equation}
\label{eqn:comm1}
[R^\leftarrow(z_{1},...,z_{k}), a_n] \ = \ R^\leftarrow(z_{1},...,z_{k}) (z_1^n+...+z_k^n)
\end{equation}
\begin{equation}
\label{eqn:comm2}
[R^\rightarrow(z_{1},...,z_{k}), a_n] = - R^\rightarrow(z_{1},...,z_{k}) (z_1^n+...+z_k^n)
\end{equation}
for all $R \in \CS, n \in \BZ \backslash 0$, as well as:
\begin{equation}
\label{eqn:comm3}
\left[ (z^d)^\rightarrow, (z^{d'})^\leftarrow \right] = \frac {(1-q_1)(1-q_2)}{q^{-1} - 1} \left( A_{d+d'}  \delta_{d+d' \geq 0} - \frac 1c \cdot A_{d+d'}  \delta_{d+d'\leq 0}  \right)
\end{equation}
where the generators $A_n$ are defined in terms of the $a_n$ by:
\begin{equation}
\label{eqn:defp}
\sum_{n=0}^\infty \frac {A_{\pm n}}{x^{\pm n}} = \exp \left[ \pm \sum_{n=1}^\infty \frac {a_{\pm n}}{n x^{\pm n}} (1 - q_1^n)(1 - q_2^n)(1 - q^{-n}) \right]
\end{equation}
According to Theorem \ref{thm:shuf}, the $\BF$--algebras $\CA^\rightarrow$, $\CA^\leftarrow$ are generated by the elements $(z^d)^\rightarrow$, $(z^{d'})^\leftarrow$, respectively. Therefore, \eqref{eqn:comm3} gives an inductive recipe for expressing any product $R^\rightarrow * (R')^\leftarrow$ in terms of products of the form $(\bar{R}')^\leftarrow * a * \bar{R}^\rightarrow$ for various $\bar{R'}$, $\bar{R} \in \CS$ and $a \in \CA^{\diag}$. This underlies the decomposition \eqref{eqn:def double}. \\

\begin{remark}
\label{rem:second}

In most of the existing literature, the double shuffle algebra is defined as $\langle \CA,{c'}^{\pm 1} \rangle$, where $c'$ is a second central element which controls the commutation of the elements $a_n$ and $a_{-n}$. Since the central element $c'$ will act by 1 in all the representations considered in this paper, we will ignore it. \\

\end{remark}

\subsection{} 
\label{sub:shuffle generators} 

Since the double shuffle algebra $\CA$ contains $\CA^\leftarrow = \CS$ and $\CA^\rightarrow = \CS^{\op}$, we have two copies of each of the shuffle elements \eqref{eqn:pkd}--\eqref{eqn:qkd} inside it. Specifically, we denote them by:
\begin{align*}
&P_{-k, d} = P_{k, d}^\leftarrow, & &H_{-k, d} = H_{k, d}^\leftarrow, & &E_{-k, d} = E_{k, d}^\leftarrow, & &Q_{-k, d} = Q_{k, d}^\leftarrow  &\in \CA^\leftarrow \\
&P_{k, d} = P_{k, d}^\rightarrow, & &H_{k, d} = H_{k, d}^\rightarrow, & &E_{k, d} = E_{k, d}^\rightarrow, & &Q_{k, d} = Q_{k, d}^\rightarrow &\in \CA^\rightarrow 
\end{align*}
for all $k>0$ and $d\in \BZ$, and:
$$
P_{0,\pm d} = \pm (1-q_1^d)(1-q_2^d) a_{\pm d} \in \CA^{\diag}
$$
for all $d>0$. The elements $P_{k,d}$ for $(k,d) \in \BZ^2 \backslash (0,0)$ mimic those introduced by Burban and Schiffmann in the elliptic Hall algebra of \cite{BS}. More specifically, we claim that the elements $P_{k,d}$ introduced in the present paper satisfy the relations between the generators studied in \loccitt, namely \eqref{eqn:relation 1} and \eqref{eqn:relation 2} below:
\begin{equation}
\label{eqn:relation 1}
[P_{k,d}, P_{k',d'}] = \delta_{k+k'}^0 n(1-q_1^n)(1-q_2^n)\cdot \frac {1-c^{k}}{1-q^{-n}}
\end{equation}
if $kd'=k'd$ and $k<0$, where $n = \gcd(k,d)$. The second relation states that whenever $kd'>k'd$ and the triangle $T$ with vertices $(0,0), (k,d), (k+k',d+d')$ contains no lattice points inside nor on one of the edges, then we have the relation:
\begin{equation}
\label{eqn:relation 2}
[P_{k,d}, P_{k',d'}] = \frac {(1 - q_1^n) (1 - q_2^n)}{q^{-1} - 1} Q_{k+k',d+d'} \ \cdot \
\end{equation}
$$
\cdot \ \begin{cases}
c^{k} & \text{if } (k,d) \in \BZ^-, (k',d') \in \BZ^+, (k+k',d+d') \in \BZ^+ \\
%c^{k} & \text{if }  (k,d) \in \BZ^+, (k',d') \in \BZ^-, (k+k',d+d') \in \BZ^- \\ 
c^{-k'} & \text{if } (k,d) \in \BZ^-, (k',d') \in \BZ^+, (k+k',d+d') \in \BZ^- \\
%c^{-k'} & \text{if } (k,d) \in \BZ^+, (k',d') \in \BZ^-, (k+k',d+d') \in \BZ^+ \\
1 & \text{otherwise}
\end{cases}
$$
where $n = \gcd(k,d)\gcd(k',d')$ (by the assumption on the triangle $T$, we note that at most one of the pairs $(k,d), (k',d'), (k+k',d+d')$ can fail to be coprime), and we divide the lattice plane into its left and right halves:
$$
\BZ^+ = \{(k,d) \in \BZ^2 \text{ s.t. } k>0 \text{ or } k=0,d>0\}
$$
$$
\BZ^- = \{(k,d) \in \BZ^2 \text{ s.t. } k<0 \text{ or } k=0,d<0\}
$$
Note that the passage from our generators $P_{k,d}$ to the generators $u_{k,d}$ of \cite{BS} is: 
$$
P_{k,d} = (1-q_1^n)(1-q_2^n)c^{-\frac k2 \delta_{k>0}} u_{k,d}
$$
for all $(k,d) \in \mathbb{Z}^2 \backslash (0,0)$ with $n = \gcd(k,d)$. Moreover, their $\kappa_{k,d}$ equals our $c^{\frac k2}$. The contents of the present Subsection may thus be summarized by the following: \\

\begin{proposition}

\emph{(Follows by combining \cite{BS}, \cite{Shuf}, \cite{SV})}: Relations \eqref{eqn:relation 1} and \eqref{eqn:relation 2}  hold between the generators $\{P_{k,d}\}_{(k,d) \in \BZ^2 \backslash (0,0)}$ in the algebra $\CA$. Moreover, they generate the full ideal of relations, so one may alternatively define $\CA$ as the $\BF$--algebra generated by $\{P_{k,d}\}_{(k,d) \in \BZ^2 \backslash (0,0)}$ modulo relations \eqref{eqn:relation 1} and \eqref{eqn:relation 2}. \\

\end{proposition}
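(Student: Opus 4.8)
The plan is to transport the known presentation of the elliptic Hall algebra of \cite{BS} through its identification with the double shuffle algebra $\CA$, splitting the relations according to whether the two vectors lie on the same side of the lattice or on opposite sides. For the same-side relations --- the instances of \eqref{eqn:relation 2} falling under the ``otherwise'' case, where $(k,d)$ and $(k',d')$ both lie in $\BZ^+$ (or both in $\BZ^-$) --- I would invoke \cite{Shuf} directly: there the shuffle algebra $\CS$ is identified with the positive half of the elliptic Hall algebra, and the elements \eqref{eqn:pkd}--\eqref{eqn:qkd} are shown to obey exactly the Burban--Schiffmann relations, once one matches the normalization $P_{k,d} = (1-q_1^n)(1-q_2^n) c^{-\frac k2 \delta_{k>0}} u_{k,d}$ together with $\kappa_{k,d} = c^{k/2}$. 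The same computation applied to $\CA^\leftarrow = \CS$ and $\CA^\rightarrow = \CS^{\op}$ then yields the same-side relations in both halves.

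The substance lies in the opposite-side relations: relation \eqref{eqn:relation 1}, where $(k,d) \in \BZ^-$ and $(k',d') = (-k,-d) \in \BZ^+$ are antipodal, and the first two cases of \eqref{eqn:relation 2}, where $(k,d) \in \BZ^-$ and $(k',d') \in \BZ^+$. These I would derive from the Drinfeld double structure \eqref{eqn:def double}, whose defining rule \eqref{eqn:comm3} computes the bracket $\left[ (z^d)^\rightarrow, (z^{d'})^\leftarrow \right]$ of two degree-one generators --- precisely the base case of the sought relations. For higher degree I would induct on $k+k'$: by Theorem \ref{thm:shuf} the elements $z^d$ generate $\CS$, so $P_{\pm 1, d}$ generate $\CA^\leftarrow$ and $\CA^\rightarrow$, and I would write $P_{k,d}$ and $P_{k',d'}$ as iterated shuffle brackets of degree-one elements, commute the two sides past one another using \eqref{eqn:comm3} for the elementary crossings, and collect terms using the same-side relations already in hand. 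The hypothesis that the triangle $T$ with vertices $(0,0)$, $(k,d)$, $(k+k',d+d')$ contains no lattice points is exactly what forces a single surviving term $Q_{k+k',d+d'}$, while the position of $(k+k',d+d')$ in $\BZ^+$ or $\BZ^-$ selects the power of the central element $c$.

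The role of \cite{SV} is to ensure that this reconstruction omits nothing: it supplies a faithful realization of $\CA$ (for instance on the equivariant $K$--theory of Hilbert schemes) in which the $P_{k,d}$ act, confirming that the Drinfeld double \eqref{eqn:def double} is genuinely the elliptic Hall algebra and that no relation beyond \eqref{eqn:relation 1}--\eqref{eqn:relation 2} holds. Granting this, completeness is formal: the $P_{k,d}$ generate $\CA$ by the triangular decomposition \eqref{eqn:def double} together with Theorem \ref{thm:shuf}, and since \cite{BS} presents the elliptic Hall algebra by the $u_{k,d}$ modulo exactly these two families, transporting that presentation through the normalization above shows that \eqref{eqn:relation 1} and \eqref{eqn:relation 2} generate the full ideal of relations in $\CA$.

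I expect the principal difficulty to be the bookkeeping of the central element $c$ in the opposite-side case: tracking how the powers $c^{k}$, $c^{-k'}$, and $1$ arise from iterating \eqref{eqn:comm3} demands a precise account of the factor of $c^{\pm 1}$ introduced at each elementary crossing, reconciled with the normalization exponent $-\frac k2 \delta_{k>0}$ relating $P_{k,d}$ to $u_{k,d}$. Compounding this, the conventions differ between the sources --- the present paper uses the kernel $\zeta(x)$ whereas \loccit uses $\zeta(x^{-1})^{-1}$, and $\CA^\rightarrow = \CS^{\op}$ reverses the product --- so all three normalizations must be aligned before the comparison with \cite{BS} can be carried out cleanly.
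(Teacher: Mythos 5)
Your proposal is correct and follows essentially the same route as the paper, which gives no independent proof here: it simply records the dictionary $P_{k,d} = (1-q_1^n)(1-q_2^n)c^{-\frac k2 \delta_{k>0}} u_{k,d}$ and $\kappa_{k,d} = c^{k/2}$ and defers to the combination of \cite{BS}, \cite{Shuf}, \cite{SV}. Your sketch --- same-side relations from the shuffle realization of the two halves, cross relations from the Drinfeld-double rule \eqref{eqn:comm3} by induction on degree, and completeness by transporting the Burban--Schiffmann presentation through a faithful realization --- is a faithful unpacking of that citation, with the $c$-bookkeeping you flag being precisely the content of the normalization above.
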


\subsection{}
\label{sub:other shuffle algebras}

Let us make several observations about the algebra $\CA$. First of all, for each rational slope $\frac ba$, relation \eqref{eqn:relation 1} implies that there exists a $_q$Heisenberg subalgebra:
$$
\CA^{\frac ba} = \BF \left \langle P_{an,bn}, c^{\pm 1} \right \rangle_{n\in \BZ \backslash 0} \subset \CA
$$ 
with central charge $c^{a}$. Moreover, the line of slope $\frac ba$ splits the lattice plane into two open half-planes, and so we will write:
\begin{equation}
\label{eqn:less}
\CA^{< \frac ba} = \BF \left \langle P_{k,d} \right \rangle_{kb < da}
\end{equation}
modulo relations \eqref{eqn:relation 2} between the generators $P_{k,d}$ for $kb<da$, and:
\begin{equation}
\label{eqn:more}
\CA^{> \frac ba} = \BF \left \langle P_{k,d} \right \rangle_{kb > da}
\end{equation}
modulo relations \eqref{eqn:relation 2} between the generators $P_{k,d}$ for $kb>da$. It was shown in \cite{BS} that the algebra $\CA$ exhibits a triangular decomposition:
\begin{equation}
\label{eqn:triangular}
\CA = \CA^{< \frac ba} \otimes \CA^{\frac ba} \otimes \CA^{> \frac ba}
\end{equation}
and the decomposition \eqref{eqn:def double} is precisely the case when $\frac ba = \frac 10$. \\

%\begin{remark}
 
%By definition, there is a homomorphism:
%\begin{equation}
%\label{eqn:reak}
%\CA^{\uparrow'} \twoheadrightarrow \CA^\uparrow \subset \CA
%\end{equation}
%As proved in \cite{BS}, both $\CA^{\uparrow'}$ and its image $\CA^\uparrow$ have $\BQ(q_1,q_2)$ bases indexed by convex paths in the upper half plane. Therefore, the surjection \eqref{eqn:reak} is an isomorphism. The same argument applies to $\CA^\leftarrow$, and establishes the fact that $\CA^\leftarrow$ is generated by $P_{-k,d}$ with $k>0$, modulo those relations \eqref{eqn:relation 2} between such generators. \\

%\end{remark}

\begin{proposition}
\label{prop:half is shuffle}

There exist isomorphisms $\CA^{ < \frac ba} \cong \CS$ and $\CA^{ > \frac ba} \cong \CS^{\emph{op}}$ for all coprime $a,b$ with $a \geq 0$, and so the decomposition \eqref{eqn:triangular} reads:
\begin{equation}
\label{eqn:triangular 2}
\CA = \CS \otimes \left(q-\text{Heisenberg algebra}\right) \otimes \CS^{\emph{op}}
\end{equation}
Moreover, if we choose integers $a',b'$ such that $a'b - ab' = 1$, then the relations between the generators $P_{k,d}, P_{k',d'}, P_{k'',d''}$ with $kb < da$, $k'b = d'a$, $k''b > d''a$ are completely determined by the particular cases of relations \eqref{eqn:relation 1} and \eqref{eqn:relation 2} when the indices of the $P$'s that appear in the left-hand side of these relations are:
\begin{equation}
\label{eqn:apply}
(k,d) \in -(a',b') + \BZ(a,b), \ (k',d') \in \BZ(a,b), \ (k'',d'') \in (a',b') + \BZ(a,b) 
\end{equation}
$$$$
\end{proposition}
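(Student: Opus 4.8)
The plan is to deduce everything from the $\slz$--action on $\CA$ constructed in \cite{BS} (see also \cite{SV}), under which a matrix $\gamma \in \slz$ acts by algebra automorphisms sending each generator $P_{k,d}$ to a central multiple of $P_{\gamma(k,d)}$, with $\gamma$ acting linearly on the lattice index $(k,d) \in \BZ^2$. The crucial feature we will use is that the defining relations \eqref{eqn:relation 1} and \eqref{eqn:relation 2} are covariant under this action: the conditions entering them (collinearity $kd' = k'd$, the slope inequality $kd' > k'd$, emptiness of the triangle $T$, and the various $\gcd$'s) are all $\slz$--invariant, so $\gamma$ permutes the set of relations among themselves. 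Given coprime $(a,b)$ with $a \geq 0$, choose $a', b'$ with $a'b - ab' = 1$ as in the statement and set $\gamma = \left( \begin{smallmatrix} a' & a \\ b' & b \end{smallmatrix} \right) \in \slz$. The whole point of the sign conventions is the computation $\gamma(0,d) = d(a,b)$, $\gamma(1,d) = (a',b') + d(a,b)$, $\gamma(-1,d) = -(a',b') + d(a,b)$, together with the fact that $\gamma(1,0) = (a',b')$ lies in the region $\{kb > da\}$ because $a'b - ab' = 1 > 0$; the hypothesis $a \geq 0$ is what lets us pick $\gamma$ with this orientation.

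First I would establish the isomorphisms and \eqref{eqn:triangular 2}. Applying $\gamma$ to the standard decomposition \eqref{eqn:def double}, the computation above shows that $\gamma$ maps the half--plane $\{k < 0\}$ onto $\{kb < da\}$, the line $\{k=0\}$ onto $\{kb = da\}$, and $\{k>0\}$ onto $\{kb > da\}$. Since $\gamma$ is an algebra automorphism, it therefore restricts to algebra isomorphisms $\CA^\leftarrow = \CS \xrightarrow{\sim} \CA^{<\frac ba}$, $\CA^{\diag} \xrightarrow{\sim} \CA^{\frac ba}$ and $\CA^\rightarrow = \CS^{\op} \xrightarrow{\sim} \CA^{>\frac ba}$, which yields both the claimed isomorphisms and the reading \eqref{eqn:triangular 2} of the triangular decomposition \eqref{eqn:triangular}.

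For the ``Moreover'' statement I would transport the generation property of the standard cross--relations through $\gamma$. In the standard picture $\CA^\leftarrow = \CS$ and $\CA^\rightarrow = \CS^{\op}$ are generated by their degree--one elements $P_{-1,d}$ and $P_{1,d}$ (Theorem \ref{thm:shuf}), while $\CA^{\diag}$ is generated by the $P_{0,d}$; and by the decomposition \eqref{eqn:def double} the only relations beyond those internal to the three factors are the cross--relations \eqref{eqn:comm1}, \eqref{eqn:comm2}, \eqref{eqn:comm3} among precisely these degree--one and Heisenberg generators. Under $\gamma$ these three families are sent to the generators indexed by $-(a',b') + \BZ(a,b)$, $\BZ(a,b)$ and $(a',b') + \BZ(a,b)$ respectively, i.e. exactly the indices in \eqref{eqn:apply}; and by $\slz$--covariance the cross--relations \eqref{eqn:comm1}--\eqref{eqn:comm3} are carried to the instances of \eqref{eqn:relation 1} and \eqref{eqn:relation 2} whose left--hand sides involve those indices. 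Hence these special instances, together with the internal relations of $\CS$, $\CS^{\op}$ and the Heisenberg algebra, suffice to straighten an arbitrary product into the normal form dictated by \eqref{eqn:triangular 2}, which is the assertion.

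The main obstacle is the bookkeeping of the central element under $\gamma$: the passage $P_{k,d} = (1-q_1^n)(1-q_2^n) c^{-\frac k2 \delta_{k>0}} u_{k,d}$ to the Burban--Schiffmann generators involves half--integer powers $c^{k/2}$, and the powers $c^k, c^{-k'}$ appearing in \eqref{eqn:relation 2} are tied to the choice of ``vertical'' splitting. One must check that after applying $\gamma$ these are precisely the $c$--powers appropriate to the slope--$\frac ba$ splitting, and that the normalization $a \geq 0$ keeps the relevant exponents integral and consistently oriented. Granting the $\slz$--covariance proved in \cite{BS}, this reduces to matching central charges on the two sides, after which the structural argument above goes through verbatim.
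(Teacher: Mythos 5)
Your proposal is correct and is essentially the paper's own argument: the explicit assignment $P_{\pm k(a',b')+d(a,b)} \mapsto P_{\pm k,d}\cdot(\text{power of }c)$ used in the paper is exactly the map induced by your $\gamma \in \slz$ after re-twisting by central elements, and both proofs reduce the second statement to the fact that the degree-one generators generate $\CS$ and $\CS^{\op}$, so that the cross-relations among them determine all others. The one point you correctly flag as the remaining obstacle is genuinely the crux: since the paper kills the second central element $c'$, the $\slz$-action of \cite{BS} does not literally descend to $\CA$ and must be corrected by the explicit powers $c^{(ka'+da)\delta_{ka'+da<0}}$ appearing in the paper's assignment --- this is the same ``straightforward accounting of powers of $c$'' that the paper likewise leaves to the reader.
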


\begin{proof} Note that positive/negative multiples of $(a',b')$ plus integer multiples of $(a,b)$ span half of the lattice plane on either side of the line of slope $\frac ba$. The assignments:
\begin{align*}
&P_{-k(a',b')+d(a,b)} \mapsto P_{-k,d} \\
&P_{k(a',b')+d(a,b)} \mapsto P_{k,d} \cdot c^{(ka'+da)\delta_{ka'+da<0}}
\end{align*}
where $k$ ranges over $\BN$ and $d$ ranges over $\BZ$, induce isomorphisms:
$$
\CA^{<\frac ba} \cong \CA^\leftarrow = \CS \qquad \text{and} \qquad \CA^{>\frac ba} \cong \CA^\rightarrow = \CS^{\op}
$$
because relations \eqref{eqn:relation 2} between the generators $P_{\pm ka'+da, \pm kb'+db}$ are mapped to the exact same relations between the generators $P_{\pm k,d}$ (proving this is a straight-forward accounting of powers of $c$, and we leave it to the interested reader; note that the assumption $a \geq 0$ is important). This proves the first statement of the Proposition. Since the shuffle algebra is generated by its degree 1 part (Theorem \ref{thm:shuf}), then for any $k,k' > 0$ and $d,d' \in \BZ$, we may write:
\begin{align*}
&P_{-k(a',b')+d(a,b)} = \sum_{e_1,...,e_k \in \BZ} \text{constant} \cdot P_{-(a',b')+e_1(a,b)}... P_{-(a',b')+e_s(a,b)} \\
&P_{k'(a',b')+d'(a,b)} = \sum_{e''_1,...,e''_k \in \BZ} \text{constant} \cdot P_{(a',b')+e''_1(a,b)}... P_{(a',b')+e_s''(a,b)}
\end{align*}
Therefore, when expressed as an element of $\CA^{<\frac ba} \cdot \CA^{\frac ba} \cdot \CA^{>\frac ba}$, the commutator: 
$$
[P_{-k(a',b')+d(a,b)}, P_{k'(a',b')+d'(a,b)}]
$$
is an expression in $P_{-(a',b')+e(a,b)}$, $P_{(a',b')+e''(a,b)}$ and their commutators. This expression must equal the triangular decomposition \eqref{eqn:triangular} of the right-hand side of relations \eqref{eqn:relation 1}--\eqref{eqn:relation 2}, whenever the lattice points $(-k,d)$ and $(k',d')$ satisfy the hypotheses of these relations. Therefore, we conclude that the commutation relations \eqref{eqn:relation 1}--\eqref{eqn:relation 2} in general can be deduced from their particular cases \eqref{eqn:apply}.

\end{proof}

\subsection{}
\label{sub:vertical shuffle algebra}

In this paper, an important role will be played by the case $\frac ba = \frac 01$ of the decomposition \eqref{eqn:triangular}. The corresponding algebras \eqref{eqn:less}--\eqref{eqn:more} will be denoted:
$$
\CA^\uparrow := \CA^{<0} = \BF \left \langle P_{d,k} \right \rangle^{k>0}_{d \in \BZ} \subset \CA
$$
$$
\CA^\downarrow := \CA^{>0} = \BF \left \langle P_{d,k} \right \rangle^{k<0}_{d \in \BZ} \subset \CA
$$
and $\CA^0 = \BF \left \langle p_n,c^{\pm 1} \right \rangle_{n\in \BZ \backslash 0} \subset \CA$, where we set:
\begin{equation}
\label{eqn:def bosons}
p_{-n} = P_{-n,0}, \qquad p_n = \frac {c^n}{q^n} \cdot P_{n,0} 
\end{equation}
for all $n>0$. We interpret the symbols $p_{\pm n}$ as power sum functions, and think of: 
\begin{equation}
\label{eqn:abbreviations}
h_{-n} = H_{- n,0}, \qquad h_{n} =\frac {c^n}{q^n} \cdot H_{n,0}
\end{equation}
as the corresponding complete symmetric functions. The interaction between the positive and negative $_q$Heisenberg algebra generators is governed by:
\begin{equation}
\label{eqn:heisenberg}
[p_{-n},p_{n}] = n(1-q_1^n)(1-q_2^n) \cdot \frac {1-c^{n}}{1-q^{n}} \qquad \forall \ n>0
\end{equation}
As an application of Proposition \ref{prop:half is shuffle}, we have an isomorphism $\CA^\uparrow \cong \CS$. In other words, to any shuffle element $R \in \CS$, there corresponds an element $R^\uparrow \in \CA^\uparrow$, determined inductively by Theorem \ref{thm:shuf} and the initial assignment:
\begin{equation}
\label{eqn:identify}
(z^d)^\uparrow = P_{d,1} \qquad \forall d\in \BZ
\end{equation}
Finally, we will encounter the subalgebra  generated by both $\CA^\uparrow$ and $\CA^0$: 
$$
\CA^{\uparrow \ext} =  \BF \left \langle P_{d,k}, c^{\pm 1} \right \rangle^{k \geq 0}_{d \in \BZ} \subset \CA
$$
The superscript ext stands for ``extended", and will be used repeatedly throughout the paper, when we will ``extend" various algebras by adding the $_q$Heisenberg algebra generators \eqref{eqn:def bosons}, modulo certain relations defined on a case-by-case basis. \\

\subsection{}
\label{sub:rep theory}

We call a representation $\CA \curvearrowright F$ \textbf{good} if the following conditions are met: \\

\begin{enumerate}
 
\item $F = \bigoplus_{n = 0}^\infty F_n$ is non-negatively graded \\

\item $P_{k,d} \in \CA$ acts on $F$ with degree $-k$ for all $(k,d) \in \BZ^2 \backslash (0,0)$ \\

\end{enumerate}

\begin{definition}
\label{def:level}

The level of an irreducible $\CA$--module $F$ is the constant $\log_q c$. 

\end{definition}

\tab 
All the good representations we will encounter in this paper will have level $\in \BN$, and will moreover be quasifinite in the terminology of \cite{FJMM} (we refer the reader to \loccit for a thorough treatment of the representation theory of $\CA$). In \cite{BS} and \cite{Shuf}, it is shown that a linear basis of $\CA^\uparrow \cong \CS$ is given by the products:
\begin{equation}
\label{eqn:compositions}
P_v = P_{d_1,k_1} ... P_{d_t,k_t} 
\end{equation}
where: 
\begin{equation}
\label{eqn:sequence}
v = \Big\{ (d_1,k_1),...,(d_t,k_t), \ k_i \in \BN, d_i \in \BZ \Big\}
\end{equation}
denotes an arbitrary sequence of lattice points, ordered such that:
\begin{equation}
\label{eqn:sequence inequality}
\frac {d_1}{k_1} \leq ... \leq \frac {d_t}{k_t}
\end{equation}
By convention, if we have $d_i/k_i = d_{i+1}/k_{i+1}$ for some $i$, then we place $(d_i,k_i)$ before $(d_{i+1},k_{i+1})$ in the ordering if and only if $k_i<k_{i+1}$ (this convention is not crucial at all, because of \eqref{eqn:relation 1}). Property (2) of a good representation $F$ implies that among the infinitely many compositions \eqref{eqn:compositions} with $k_1,...,k_t > 0$ and fixed $k_1+...+k_t$, {\bf only finitely many of them} have $\langle f'|P_v|f \rangle \neq 0$ for any pair of elements $f,f' \in F$. This implies that the action $\CA \curvearrowright F$ extends to an action of the completion:
\begin{equation}
\label{eqn:completion}
\wCA^\uparrow \curvearrowright F
\end{equation}
where we define:
\begin{equation}
\label{eqn:def completion}
\wCA^\uparrow = \Big \{ \text{infinite sums of }P_v\text{'s as in \eqref{eqn:compositions}, for bounded }k_1+...+k_t \in \BN \Big \}
\end{equation}
Analogous remarks apply to the completion $\wCA^{\uparrow \ext}$ of $\CA^{\uparrow \ext}$, which we define to be spanned by infinite sums of the form:
\begin{equation}
\label{eqn:nanana}
P_{d_1,k_1} ... P_{d_t,k_t} 
\end{equation}
going over all $d_i \in \BZ$, $k_i \geq 0$ such that $k_1+...+k_t$ is bounded above, as well as:
$$
- \infty \leq \frac {d_1}{k_1} \leq ... \leq \frac {d_t}{k_t} \leq \infty
$$
(if $k_i = k_{i+1} = 0$, then we place $P_{d_i,k_i}$ before $P_{d_{i+1},k_{i+1}}$ in \eqref{eqn:nanana} iff $d_i < d_{i+1}$). \\

\subsection{}
\label{sub:fock}

The basic representation of the algebra $\CA$ is the Fock space:
$$
F_u = \BF [p_{-1},p_{-2},...]
$$
with action given by $c = q$, as well as:
\begin{equation}
\label{eqn:vertex 0}
p_{-n} ( m ) = p_{-n} \cdot m, \qquad p_n ( m ) = - n(1-q_1^n)(1-q_2^n) \cdot \partial_{p_{-n}} (m)
\end{equation}
while the actions of $\CA^\uparrow$ and $\CA^\downarrow$ are completely determined by the formulas:
\begin{equation}
\label{eqn:vertex 1}
\sum_{d \in \BZ} \frac {P_{d,1}}{x^d} = u \cdot \exp \left[ \sum_{n=1}^\infty \frac {p_{-n}}{nx^{-n}} \right] \exp \left[ \sum_{n=1}^\infty \frac {p_n}{nx^n} \right]
\end{equation}
\begin{equation}
\label{eqn:vertex 2}
\sum_{d \in \BZ} \frac {P_{d,-1}}{ q^{d\delta_{d<0}}x^d} = \frac qu \cdot \exp \left[ - \sum_{n=1}^\infty \frac {p_{-n}}{nx^{-n}} \right] \exp \left [- \sum_{n=1}^\infty \frac {p_n}{nx^nq^n} \right] 
\end{equation}
Indeed, the identifications $\CA^\uparrow \cong \CS$ and $\CA^\downarrow \cong \CS^{\op}$ of Proposition \ref{prop:half is shuffle} when $\frac ba = \frac 01$, together with Theorem \ref{thm:shuf}, establish the fact that the algebras $\CA^\uparrow$ and $\CA^\downarrow$ are generated by $P_{d,1}$ and $P_{d,-1}$, respectively. Therefore, formulas \eqref{eqn:vertex 0}--\eqref{eqn:vertex 2} determine the entire action $\CA \curvearrowright F_u$. We must now show this action is well-defined. \\

\begin{proposition}
\label{prop:fock act}

Formulas \eqref{eqn:vertex 0}, \eqref{eqn:vertex 1}, \eqref{eqn:vertex 2} define an action $\CA \curvearrowright F_u$, in such a way that $F_u$ is a good level 1 representation. \\

\end{proposition}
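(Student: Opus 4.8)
The plan is to exploit the triangular decomposition $\CA = \CA^\uparrow \otimes \CA^0 \otimes \CA^\downarrow$ at slope $\frac 01$, and to organize the verification of the defining relations of $\CA$ according to this decomposition, checking each family by a free--boson (normal--ordering) computation on $F_u$. Throughout, I will repackage the modes into the currents $e(x) = \sum_d P_{d,1} x^{-d}$ and $f(x) = \sum_d P_{d,-1} (q^{d\delta_{d<0}} x^d)^{-1}$ of \eqref{eqn:vertex 1} and \eqref{eqn:vertex 2}.

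First I would settle well--definedness and the axioms of a good representation. On $F_u = \BF[p_{-1}, p_{-2}, \dots]$, graded by $\deg p_{-n} = n$, the operators $p_{-n}$ and $p_n$ of \eqref{eqn:vertex 0} act with degrees $+n$ and $-n$. Expanding the exponentials in \eqref{eqn:vertex 1} and \eqref{eqn:vertex 2}, each Fourier mode $P_{d, \pm 1}$ is an infinite sum of normally ordered monomials in the $p_{\pm n}$, but on every graded piece of $F_u$ all but finitely many of these annihilate it; hence each mode is a well--defined operator, and one reads off that $P_{d,1}$ and $P_{d,-1}$ act with degree $-d$. Since $\CA^\uparrow$, $\CA^\downarrow$ are generated by these modes (Theorem \ref{thm:shuf} and Proposition \ref{prop:half is shuffle}) and $\CA^0$ by the $p_{\pm n}$, conditions (1)--(2) of a good representation follow, while $c = q$ gives level $\log_q q = 1$ in the sense of Definition \ref{def:level}.

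Next I would dispatch the relations internal to $\CA^\uparrow$ and $\CA^\downarrow$, together with the $\CA^0$--relations. The key point is that \eqref{eqn:vertex 1} and \eqref{eqn:vertex 2} are precisely the vertex--operator realizations of the shuffle algebra: under the identification $\CA^\uparrow \cong \CS$ of Proposition \ref{prop:half is shuffle}, a shuffle element $R(z_1, \dots, z_k)$ corresponds to a suitable mode of $R(z_1,\dots,z_k)\, {:}e(z_1)\cdots e(z_k){:}$, and the normal--ordering contraction of two currents is governed by exactly the kernel $\zeta$. Indeed, the commutator \eqref{eqn:heisenberg} at $c=q$ furnishes the two--point function, and by \eqref{eqn:zeta exponential} one computes $e(x)e(y) = \zeta(y/x)^{-1}\,{:}e(x)e(y){:}$, so that $\zeta(y/x)\, e(x)e(y) = \zeta(x/y)\, e(y)e(x)$; this is the very $\zeta$--factor appearing in the shuffle product \eqref{eqn:mult}. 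Thus $R \mapsto (\text{its vertex operator})$ is an algebra homomorphism $\CS \to \End(F_u)$, and likewise $\CS^{\op} \to \End(F_u)$ for $\CA^\downarrow$, which accounts for all relations internal to each open half--plane. Moreover, commuting a boson $p_n$ through \eqref{eqn:vertex 1} and \eqref{eqn:vertex 2} produces the expected scalar multiples of $x^{\pm n} e(x)$ and $x^{\pm n} f(x)$, which are the relations tying $\CA^0$ to $\CA^\uparrow$ and $\CA^\downarrow$, while \eqref{eqn:heisenberg} itself is immediate from \eqref{eqn:vertex 0}.

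The remaining and most delicate family is the cross relation between $\CA^\uparrow$ and $\CA^\downarrow$, i.e. the commutator $[e(x), f(y)]$. By Proposition \ref{prop:half is shuffle} it suffices to treat the special cases \eqref{eqn:apply}, which at slope $\frac 01$ pair a mode $P_{m,1}$ with a mode $P_{m',-1}$ and land on the horizontal axis, hence in $\CA^0$. I would compute the normal--ordered forms of $e(x)f(y)$ and of $f(y)e(x)$: both equal the same normally ordered product times one scalar rational function, expanded respectively in the regions $|x| \gg |y|$ and $|x| \ll |y|$. Their difference is therefore a sum of $\delta$--functions supported at the poles of that prefactor (at $y = xq^{\pm 1}$ and the like). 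The main obstacle is the residue bookkeeping: one must verify that the residues at these poles reassemble into the Cartan currents $A_{\pm n}$ of \eqref{eqn:defp}, with the central contribution matching $c=q$ in \eqref{eqn:relation 1}--\eqref{eqn:relation 2}. This is exactly where the normalizations $u$ and $q/u$ and the powers of $q$ built into \eqref{eqn:vertex 1}--\eqref{eqn:vertex 2} must conspire to produce the correct coefficients; once this is checked, all defining relations of $\CA$ hold on $F_u$ and the proof is complete.
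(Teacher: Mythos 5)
Your plan is correct and follows essentially the same route as the paper's proof: realize $\CA^\uparrow$ and $\CA^\downarrow$ through the vertex-operator/contour-integral formula compatible with the shuffle product, use the last statement of Proposition \ref{prop:half is shuffle} to reduce the cross relations to the generators $P_{d,\pm 1}$ and $P_{d,0}$, and verify those by normal ordering and the residue computation at the poles $x=y$ and $x=yq$ of $\zeta$. The residue bookkeeping you flag as the delicate step is exactly the computation the paper carries out to match the right-hand side of \eqref{eqn:rel3}.
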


\begin{proof} It is clear that the operators \eqref{eqn:vertex 0} give rise to an action $\CA^0 \curvearrowright F_u$, because relation \eqref{eqn:heisenberg} holds for $c=q$. Moreover, we may define an action:
$$
\CA^\uparrow \cong \CS \curvearrowright F_u
$$
which to any shuffle element $R(z_1,...,z_k) \in \CS$ associates the operator:
\begin{equation}
\label{eqn:vertex 3}
R^\uparrow = u^k \oint ... \oint \frac {R(z_1,...,z_k)}{\prod_{1\leq i \neq j \leq k} \zeta \left( \frac {z_i}{z_j} \right)} \exp \left[ \sum_{i=1}^k \sum_{n=1}^\infty \frac {p_{-n}}{nz_i^{-n}} \right] \exp \left[ \sum_{i=1}^k \sum_{n=1}^\infty \frac {p_n}{nz_i^n} \right] 
\end{equation}
The contours of integration are defined to be $|z_i| = 1$ for all $1 \leq i \leq k$, and we make the assumption $|q_1|,|q_2|<1$ in order to avoid poles on the contours of integration from the denominators of $R$ and $\zeta$. We leave the following observations as exercises to the interested reader (see \cite{Ops} for details): \\

\begin{itemize}

\item the composition of the operators \eqref{eqn:vertex 3} respects the shuffle product \eqref{eqn:mult}: 
$$
R_1^\uparrow \circ R_2^\uparrow = (R_1 * R_2)^\uparrow
$$

\item when $R = z_1^d$, formula \eqref{eqn:vertex 3} specializes to \eqref{eqn:vertex 1}. \\

\end{itemize}

\noindent Therefore, \eqref{eqn:vertex 1} induces an action $\CA^\uparrow \curvearrowright F_u$. Analogously, \eqref{eqn:vertex 2} induces $\CA^\downarrow \curvearrowright F_u$. To prove that these actions give rise to an action of:
$$
\CA = \CA^\uparrow \otimes \CA^0 \otimes \CA^\downarrow
$$
on $F_u$, we must show that the relations between the generators of the three tensor factors are respected. According to the last statement of Proposition \ref{prop:half is shuffle}, it is enough to check relations \eqref{eqn:relation 1} and \eqref{eqn:relation 2} between the generators $P_{d,1}$, $P_{d,0}$, $P_{d,-1}$ for all $d \in \BZ$. Specifically, these relations state:
\begin{equation}
\label{eqn:rel1}
[P_{d,1}, P_{\pm e,0}] = \pm (1-q_1^e)(1-q_2^e) P_{d\pm e,1}
\end{equation}
\begin{equation}
\label{eqn:rel2}
\left[ \frac {P_{d,-1}}{q^{d\delta_{d<0}}}, \frac {P_{\pm e,0}}{q^{-e\delta_{\pm e <0}}} \right] = \mp (1-q_1^e)(1-q_2^e) \frac {P_{d \pm e,-1}}{q^{(d\pm e)\delta_{d\pm e<0}}}
\end{equation}
for all $d,d' \in \BZ$ and $e \in \BN$, as well as:
\begin{equation}
\label{eqn:rel3}
[P_{d,1}, P_{d',-1}] = \frac {(1-q_1)(1-q_2)}{1-q^{-1}} \left( \delta_{d+d' \geq 0} \frac {Q_{d+d',0}}{q^{-d' \delta_{d'<0}}} - \delta_{d+d' \leq 0} \frac {Q_{d+d',0}}{q^{d' \delta_{d'>0}}} \right)
\end{equation}
where the operators $Q_{\pm n,0}$ are exponentials of the operators $p_{\pm n}$, given by:
$$
\sum_{n=0}^\infty \frac {Q_{ \pm n,0}}{x^{\pm n}} = \exp \left[ \sum_{n=1}^\infty \frac {p_{\pm n}}{nx^{\pm n}}(1-q^{-n}) \right]
$$
Apply \eqref{eqn:vertex 1} and \eqref{eqn:heisenberg} to obtain:
$$
\left[\sum_{d\in \BZ} \frac {P_{d,1}}{x^d}, P_{-e,0} \right] = \left[ u \cdot \exp \left( \sum_{n=1}^\infty \frac {p_{-n}}{nx^{-n}} \right) \exp \left( \sum_{n=1}^\infty \frac {p_n}{nx^n} \right), p_{-e} \right]  = 
$$
$$
= \frac {[p_e,p_{-e}]}{e x^e} \cdot u \exp \left( \sum_{n=1}^\infty \frac {p_{-n}}{nx^{-n}} \right) \exp \left( \sum_{n=1}^\infty \frac {p_n}{nx^n} \right)  = -(1-q_1^e)(1-q_2^e) x^{-e} \sum_{d\in \BZ} \frac {P_{d,1}}{x^d} \cdot 
$$
Picking out the coefficient of $x^{-d}$ gives precisely \eqref{eqn:rel1} when the sign is $\pm = -$. The case when the sign is $\pm = +$, as well as relation \eqref{eqn:rel2}, are analogous and we leave them as an exercise to the interested reader. As for \eqref{eqn:rel3}, we have:
\begin{equation}
\label{eqn:equality}
\sum_{d\in \BZ} \frac {P_{d,1}}{x^d} \cdot \sum_{d' \in \BZ} \frac {P_{d',-1}}{q^{d' \delta_{d'<0}}y^{d'}}  = 
\end{equation}
$$
= q \exp \left[ \sum_{n=1}^\infty \frac {p_{-n}}{nx^{-n}} \right] \exp \left[ \sum_{n=1}^\infty \frac {p_n}{nx^n} \right] \exp \left[ - \sum_{n=1}^\infty \frac {p_{-n}}{ny^{-n}} \right] \exp \left [- \sum_{n=1}^\infty \frac {p_n}{n(yq)^n} \right] =
$$ 
$$
= q \exp \left[ \sum_{n=1}^\infty \frac {p_{-n}}n (x^n - y^n) \right] \exp \left [\sum_{n=1}^\infty \frac {p_n}n \left( \frac 1{x^n} - \frac 1{(yq)^n} \right) \right] \frac {(x - yq_1)(x - yq_2)}{(x - y)(x - yq)}
$$
where in the final equality we used the identity: 
$$
\exp(a)\exp(b) = \exp(b)\exp(a)\exp([a,b]),
$$
that holds whenever the commutator $[a,b]$ commutes with both $a$ and $b$. The product of exponentials in the third line of \eqref{eqn:equality} is normally-ordered, which means that its matrix coefficients in $F_u$ are Laurent polynomials in $x$ and $y$. Meanwhile, the product of exponentials on the second line only gives a well-defined operator on $F_u$ only if we expand the power series in the region $|y| \ll |x|$ (see Remark \ref{rem:currents}). We conclude that the equality \eqref{eqn:equality} makes sense if we expand the rational function:
$$
\zeta \left(\frac yx \right) = \frac {(x - yq_1)(x - yq_2)}{(x - y)(x - yq)}
$$
in non-negative powers of $y/x$. Therefore, \eqref{eqn:equality} implies:
\begin{equation}
\label{eqn:integral}
P_{d,1}P_{d',-1} =  q^{1+d' \delta_{d'<0}} \int_{|y| \ll |x|} x^d y^{d'} \zeta\left( \frac yx \right) \exp \left[ \dots \right] \exp \left [ \dots \right] \frac {dx}{2\pi i x} \frac {dy}{2\pi i y}
\end{equation}
where $\exp \left[ \dots \right] \exp \left [ \dots \right]$ denotes the normal-ordered product of exponentials on the third line of \eqref{eqn:equality}. By definition, the contours over which the variables $x$ and $y$ run in \eqref{eqn:integral} are circles centered at the origin, with the former of much larger radius than the latter. Similarly, the product $P_{d',-1} P_{d,1}$ is given by the same integrand as \eqref{eqn:integral}, but the contours respect the inequality $|x| \ll |y|$. We conclude that:
\begin{equation}
\label{eqn:chet}
[P_{d,1}, P_{d',-1}] =  q^{1+d' \delta_{d'<0}} \int_{|y| \ll |x|} - \int_{|x| \ll |y|} 
\end{equation}
$$
x^d y^{d'} \zeta\left( \frac yx \right) \exp \left[ \sum_{n=1}^\infty \frac {p_{-n}}n (x^n - y^n) \right] \exp \left [\sum_{n=1}^\infty \frac {p_n}n \left( \frac 1{x^n} - \frac 1{(yq)^n} \right) \right] \frac {dx}{2\pi i x} \frac {dy}{2\pi i y}
$$
By the residue theorem, the value of \eqref{eqn:chet} is equal to the sum of the residues of the integrand at poles of the form $x = y \alpha$, for various constants $\alpha \notin \{0,\infty\}$. Since the product of exponentials is normally-ordered, its matrix coefficients in $F_u$ are Laurent polynomials in $x$ and $y$, and therefore do not produce poles at $x = y\alpha$. And in fact, the only poles we encounter in the integral \eqref{eqn:chet} come from $\zeta$, and they are $x = y$ and $x = yq$. The corresponding sum of residues is:
$$
[P_{d,1}, P_{d',-1}] = q^{1+d' \delta_{d'<0}} \frac {(1-q_1)(1-q_2)}{1-q} \cdot
$$
$$
\left( \oint y^{d+d'} q^{d} \exp \left[\sum_{n=1}^\infty \frac {p_{-n}}{ny^{-n}} (q^n-1) \right] \frac {dy}{2\pi i y} -  \oint y^{d+d'} \exp \left[\sum_{n=1}^\infty \frac {p_n}{ny^n} (1-q^{-n}) \right] \frac {dy}{2\pi i y} \right)
$$
Note that the expression above precisely matches the right-hand side of \eqref{eqn:rel3}. The fact that $F_u$ is good is clear, since the two conditions of Subsection \ref{sub:rep theory} are easily seen to be respected. That the level is 1 is precisely restating the fact that $c = q$.

\end{proof}

\subsection{}
\label{sub:q-W currents}

%The completion $\wCS$ is defined in relation to $\CS$ as $\wCA^\uparrow$ is defined in relation to $\CA^\uparrow$: we allow infinite sums of elements $P_{k_1,d_1}...P_{k_t,d_t}$ with $\frac {d_1}{k_1} \leq ... \leq \frac {d_t}{k_t}$ and fixed $k_1+...+k_t$. 

%Note that the roles of $k$ and $d$ have been switched between this definition and the one in Subsection \ref{sub:fock}, because the isomorphism $\CS \cong \CA^\uparrow$ is given by \eqref{eqn:algebra iso}. With this isomorphism in mind, set:

%\begin{equation}
%\label{eqn:w 0}
%W_0(x) = \sum_{n \in \BZ \backslash 0} \frac {p_n}{x^n}  \quad \in \wCA^\uparrow[[x^{\pm 1}]]
%\end{equation}

Since $z^{d\uparrow} = P_{d,1}$ are the images of the degree 1 shuffle elements under $\CS \cong \CA^\uparrow$, the left-hand side of \eqref{eqn:vertex 1} plays an important role in the double shuffle algebra $\CA$:
\begin{equation}
\label{eqn:w1}
W_1(x) := \delta \left( \frac {z}x \right)^\uparrow = \sum_{d \in \BZ} \frac {{z^d}^\uparrow}{x^d} \in \CA^\uparrow[[x^{\pm 1}]]
\end{equation}
will be called the ``first $_qW$--current". The reason for the terminology in quotes is that we will now define ``higher $_qW$--currents" in terms of the double shuffle algebra $\CA$, and in Proposition \ref{prop:w} will prove that they satisfy the relations in \cite{AKOS}, \cite{FF}:\footnote{Strictly speaking, our currents differ from those of \loccit by certain exponentials in the bosons $p_{\pm n}$, which we explicitly give in \eqref{eqn:jon}. The difference, as we will see in Section \ref{sec:miura}, stems from the fact that our $_qW$--algebra corresponds to $\fgl_r$, while that of \loccit corresponds to $\fsl_r$}
\begin{equation}
\label{eqn:w k}
W_k(x) = \eta_k \cdot \sym \left[ \delta \left(\frac {z_1}x \right) ... \delta \left( \frac {z_k}{xq^{1-k}} \right) \right]^\uparrow \in \wCA^\uparrow[[x^{\pm 1}]]
\end{equation}
where:
$$
\eta_k = \prod_{1\leq i < j \leq k} \zeta(q^{j-i})
$$
We set $W_0(x) = 1$ by convention, and often use the following notation:
\begin{equation}
\label{eqn:modes}
W_k(x) = \sum_{d\in \BZ} \frac {W_{d,k}}{x^d}
\end{equation}
for the coefficients of $W_k(x)$ as elements of $\wCA^\uparrow$. The fact that the $W_{d,k}$ lie in the completion of $\CA^\uparrow$ is not immediately obvious, but is implied by the following result (together with \eqref{eqn:def for l}, \eqref{eqn:def for u}, \eqref{eqn:lower upper} and the definition of the completion in \eqref{eqn:def completion}): \\

%Then we define the $_qW$--currents as:
%\begin{equation}
%\label{eqn:w k}
%W_k(x) = \psi_k^-(x) \cdot S_k(x) \cdot \psi_k^+(x)  \ \in \wCA^\uparrow[[x^{\pm 1}]]
%\end{equation}
%where $\psi_k^\pm$ are certain exponentials in the annihilation/creation operators $p_n$:
%\begin{equation}
%\label{eqn:psi-}
%\psi_k^-(x) = \exp \left(- \sum_{n=1}^\infty \frac {p_{-n}}{nx^{-n}} \cdot \frac {1-q^{-k n}}{1-c^{-n}} \right) \in \CA[[x]]
%\end{equation}
%\begin{equation}
%\label{eqn:psi+}
%\psi_k^+(x) = \exp \left(- \sum_{n=1}^\infty \frac {p_n}{nx^{n}} \cdot \frac {1 - q^{-k n}}{1-c^{-n}}   q^{n(k-1)} \right) \in \CA[[x^{-1}]]
%\end{equation}

%The middle term in \eqref{eqn:w k} is an infinite sum in each $x$--degree, but they give well-defined elements of $\wCA$ (therefore, only finitely many elements in the infinite sum contribute to each matrix coefficient of $W_k(y) \curvearrowright F_u$) because of the following result: \\

\begin{proposition}
\label{prop:explicit shuffle}

In the completion $\wCA^\uparrow[[x^{\pm 1}]]$, we have the relation:
\begin{equation}
\label{eqn:explicit shuffle}
W_k(x) =  \mathop{\sum^{k_\leftarrow, k_0, k_\rightarrow \geq 0}_{k_\leftarrow + k_0 + k_\rightarrow = k}}^{d_\leftarrow,d_\rightarrow \geq 0} \frac {T_{d_\leftarrow,k_\leftarrow}^\leftarrow E_{0,k_0} T_{d_\rightarrow, k_\rightarrow}^\rightarrow}{x^{d_\rightarrow - d_\leftarrow}} \cdot  q^{(k-1)d_\rightarrow}
\end{equation}
where we set $T_{0,k}^\leftarrow = T_{0, k}^\rightarrow = T_{k,0}^\leftarrow = T_{k,0}^\rightarrow = \delta_k^0$, while for $d,k>0$ we define:
\begin{equation}
\label{eqn:awesome shuffle}
T_{d,k}(z_1,...,z_d) = \esym \left[ \frac {(-1)^{k-1} z_d^k}{\prod_{i=1}^{d-1} \left(1 - \frac {qz_{i+1}}{z_i} \right)} \prod_{1 \leq i < j \leq d} \zeta \left( \frac {z_i}{z_j} \right) \right] \in \CS
\end{equation}
$$$$

\end{proposition}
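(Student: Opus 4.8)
The plan is to compute the factorization of the current $W_k(x) \in \wCA^\uparrow$ with respect to the horizontal triangular decomposition $\CA = \CA^\leftarrow \otimes \CA^{\diag} \otimes \CA^\rightarrow$ of \eqref{eqn:def double}, and then to recognize the three resulting factors as the shuffle elements $T_{d,k}$ of \eqref{eqn:awesome shuffle}. First I would unwind the definition \eqref{eqn:w k}: under the isomorphism $\CA^\uparrow \cong \CS$ of Proposition \ref{prop:half is shuffle}, the current $W_k(x)$ is the $\uparrow$--image of the $k$--variable shuffle expression $\eta_k \cdot \sym[\prod_{i=1}^k \delta(z_i/xq^{1-i})]$, i.e. a symmetrized product of delta--currents supported on the $q$--string $z_i = xq^{1-i}$. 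The role of the prefactor $\eta_k = \prod_{i<j}\zeta(q^{j-i})$ is to cancel the self--interaction factors $\zeta(z_i/z_j)$ that the shuffle product \eqref{eqn:mult} attaches at these arguments, and it is this cancellation that makes $W_k(x)$ a genuine element of the completion \eqref{eqn:def completion}; in particular it accounts for the assertion, made just before the Proposition, that the modes $W_{d,k}$ lie in $\wCA^\uparrow$.

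The core of the argument is then a reordering computation inside $\CA$. By Theorem \ref{thm:shuf} the current $W_k(x)$ is a (completed) combination of monomials $P_{d_1,1}\cdots P_{d_k,1}$ in the degree one generators $P_{d,1} = (z^d)^\uparrow$; each such generator lies in $\CA^\leftarrow$ when $d<0$, in $\CA^{\diag}$ when $d=0$, and in $\CA^\rightarrow$ when $d>0$. Transporting, in every monomial, each $\CA^\rightarrow$--factor to the right of each $\CA^\leftarrow$--factor by repeated use of the defining commutation relation \eqref{eqn:comm3} of the Drinfeld double --- each transposition generating the required diagonal terms --- brings the whole expression into the canonical order $\CA^\leftarrow \cdot \CA^{\diag} \cdot \CA^\rightarrow$ of \eqref{eqn:def double}. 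Collecting by the three partial vertical degrees $k_\leftarrow + k_0 + k_\rightarrow = k$ then yields a sum of triple products of precisely the shape of \eqref{eqn:explicit shuffle}: the diagonal factor is the elementary symmetric combination $E_{0,k_0}$ of the Cartan modes dictated by \eqref{eqn:def e}, and the $q$--shifts accumulated when the right--moving variables are carried past the others assemble into the weight $q^{(k-1)d_\rightarrow}$.

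I expect the main obstacle to be the final identification: showing that the lower (resp. upper) factor produced by the reordering is \emph{exactly} the shuffle element $T_{d,k}$ of \eqref{eqn:awesome shuffle}, with its characteristic features that the whole homogeneous degree $k$ is carried by the last variable $z_d$, that the overall sign is $(-1)^{k-1}$, and that the denominator is the ladder $\prod_{i=1}^{d-1}(1-qz_{i+1}/z_i)$. This is a symmetric--function identity in the same spirit as \eqref{eqn:def h}--\eqref{eqn:def q}. A clean base case is $k=1$, where a direct comparison of \eqref{eqn:awesome shuffle} with \eqref{eqn:pkd} gives $T_{d,1}=P_{d,1}$; the general case should then follow by induction on $k$, carried out either through the operator product of $W_1(x)$ with $W_{k-1}$ or by an explicit residue evaluation of the factors emerging from the localized string. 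A secondary and more routine point is to verify throughout that the infinite sums over $d_\leftarrow, d_\rightarrow \geq 0$ converge in the sense of \eqref{eqn:def completion}, so that every manipulation legitimately takes place inside $\wCA^\uparrow$ rather than in $\CA^\uparrow$ itself.
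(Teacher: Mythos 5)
Your proposal correctly identifies what the Proposition is asserting (an LDU factorization of $W_k(x)$ with respect to the decomposition \eqref{eqn:def double}), and the check $T_{d,1}=P_{d,1}$ is right, but the mechanism you propose for producing the factorization does not work as stated. The reordering you describe would be governed by the commutators of the generators $P_{d,1}$ (lattice degree one in the \emph{second} coordinate), whereas relation \eqref{eqn:comm3} controls the commutator of $(z^d)^\rightarrow = P_{1,d}$ with $(z^{d'})^\leftarrow = P_{-1,d'}$, i.e.\ degree one in the \emph{first} coordinate. The commutator $[P_{d,1},P_{d',1}]$ for $d>0>d'$ is given by \eqref{eqn:relation 2} only when the triangle with vertices $(0,0),(d',1),(d+d',2)$ is empty; in general one must first re-express each $P_{d,1}$ through the horizontal generators before \eqref{eqn:comm3} applies, so the ``repeated transpositions'' are not a routine bookkeeping step, and the claims that the diagonal contributions assemble into $E_{0,k_0}$ and that the accumulated shifts give exactly $q^{(k-1)d_\rightarrow}$ are asserted rather than derived.

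More importantly, the step you defer as ``the main obstacle'' — identifying the lower and upper factors with the specific elements $T_{d,k}$ of \eqref{eqn:awesome shuffle} — is precisely where essentially all of the work lies, and an induction on $k$ via the product $W_1 \cdot W_{k-1}$ is not supplied and is not obviously viable. The paper proceeds quite differently: it computes $\langle P_v, W_k(x)\rangle$ for every element of the orthogonal basis \eqref{eqn:compositions} using the evaluation functional $\ph^k_x$ of \eqref{eqn:evaluation 0} and its multiplicativity (Claims \ref{claim:1} and \ref{claim:2}), obtaining the explicit expansion \eqref{eqn:power}; it then regroups terms of equal slope into the $E_v$ basis via \eqref{eqn:dt} to get \eqref{eqn:elementary}, and the resulting triangular factors $L_{d,k}$, $U_{d,k}$ of \eqref{eqn:def for l}--\eqref{eqn:def for u} are matched with $T_{d,k}^\leftarrow$ and $q^{d(k-1)}T_{d,k}^\rightarrow$ only after establishing the combinatorial identity \eqref{eqn:iden}, whose proof (convex broken paths, hinges, and the vanishing of a reduced Euler characteristic of an abstract simplicial complex) occupies most of the Appendix. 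Neither this identity nor any substitute for it appears in your argument, so the proposal has a genuine gap at its central step.
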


\noindent Formula \eqref{eqn:explicit shuffle} is more effectively packaged by computing the generating series of \textbf{all} the $_qW$--currents at the same time. Specifically, define the power series:
\begin{equation}
\label{eqn:big current}
W(x,y) = \sum_{k=0}^\infty \frac {W_k(x)}{(-y)^k} = 1 + \sum_{d\in \BZ} \sum_{k=1}^\infty \frac {W_{d,k}}{x^d (-y)^k} 
\end{equation}
If we let $D_x$ denote the difference operator $f(x) \leadsto f(xq)$, then \eqref{eqn:explicit shuffle} implies:
\begin{equation}
\label{eqn:ldu}
W(x,yD_x) = T \left(x^{-1}, y D_x \right)^\leftarrow \cdot E\left( y D_x \right) \cdot T\left(xq, y D_x \right)^\rightarrow
\end{equation}
which is precisely formula \eqref{eqn:w currents}, where we set:
\begin{equation}
\label{eqn:awesome shuffle 1}
T(x,y) = 1 + \sum_{d=1}^\infty \sym \left[ \frac {x^{-d}}{\left(1 - \frac y{z_d} \right)\prod_{i=1}^{d-1} \left(1 - \frac {qz_{i+1}}{z_i} \right)} \prod_{1 \leq i < j \leq d} \zeta \left( \frac {z_i}{z_j} \right) \right]
\end{equation}
\begin{equation}
\label{eqn:awesome shuffle 2}
E(y) = \sum_{k=0}^\infty \frac {E_{0,k}}{(-y)^k} \ = \ \exp \left[- \sum_{n=1}^\infty \frac {a_n (1-q_1^n)(1-q_2^n)}{ny^n} \right] 
\end{equation}
\text{} \\

\begin{remark}
\label{rem:paragraph}

Note that \eqref{eqn:ldu} is imprecise because $x$ and $D_x$ do not commute. In order for this formula to match \eqref{eqn:explicit shuffle}, one needs to interpret it as follows: in the expansion of $W(x,yD_x)$ and $T(x^{-1}, y D_x)^\leftarrow$ (respectively $T(xq,y D_x)^\rightarrow$), place the powers of $D_x$ after (respectively before) the powers of $x$. We call \eqref{eqn:ldu} the LDU decomposition of $W$--currents, because the three factors in the right-hand side respectively increase, preserve and decrease degree in any good representation. \\

\end{remark}

\noindent In Section \ref{sec:geom}, we will give explicit formulas for the matrix coefficients of the decomposition \eqref{eqn:explicit shuffle} in the level $r$ representation $K$ that is the Hilbert space of the gauge theory side of the AGT--W correspondence. The fact that $W_k(x) = 0$ holds in $K$ for all $k>r$ is not apparent from \eqref{eqn:explicit shuffle}, and will be proved later. \\

\subsection{}
\label{sub:q-W algebra}

Proposition \ref{prop:explicit shuffle} will be proved in the Appendix, together with the following: \\

\begin{proposition}
\label{prop:explicit shuffle 2}

Consider the formal series:
\begin{equation}
\label{eqn:def explicit 2}
W_k(x) * W_{k'}(y) = \eta_k \eta_{k'} \prod^{1\leq i \leq k}_{1 \leq j \leq k'} \zeta \left(\frac {xq^j}{yq^i} \right) \cdot
\end{equation}
$$
\emph{Sym} \left[ \delta \left(\frac {z_1}x \right) ... \delta \left( \frac {z_k}{xq^{1-k}} \right) \delta \left(\frac {z_{k+1}}y \right) ... \delta \left( \frac {z_{k+k'}}{yq^{1-k'}} \right) \right]^\uparrow  
$$
Then $W_k(x) * W_{k'}(y)$ lies in $\wCA^\uparrow[[x^{\pm 1}, y^{\pm 1}]] \otimes (\text{a certain explicit rational function})$:
\begin{equation}
\label{eqn:explicit shuffle 2}
W_k(x) * W_{k'}(y)  =  \frac {\prod^{k-1}_{i = \max(0,k-k')} \zeta \left( \frac {y q^i}x \right)^{-1} \sum_v P_v \cdot a_v(x,y)}{\prod^{k}_{i=\max(0,k-k')+1} (x-yq^{i}) \prod_{i=\max(0,k'-k)+1}^{k'} (y-xq^i)}
\end{equation}
where $a_v(x,y)$ are certain Laurent polynomials that will not matter to us. In the numerator of \eqref{eqn:explicit shuffle 2}, the sum goes over all ordered sequences $v$ as in \eqref{eqn:sequence}. 

\end{proposition}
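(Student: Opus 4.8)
The plan is to reduce the product of two currents to a product of $k+k'$ copies of the first current $W_1$ evaluated along two $q$--strings, and then to locate the poles of such a product by a residue analysis inside $\CS \cong \CA^\uparrow$.

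First I would record the identity that unwinds \eqref{eqn:def explicit 2}. For pairwise distinct points $w_1,\dots,w_n$, the iterated shuffle product of the one--variable elements $\delta(z/w_\alpha)$ is, by \eqref{eqn:mult} together with the observation that on the support of $\prod_\alpha \delta(z_{\sigma(\alpha)}/w_\alpha)$ every permutation $\sigma$ contributes the same weight $\prod_{\alpha<\beta}\zeta(w_\alpha/w_\beta)$, equal to $\prod_{\alpha<\beta}\zeta(w_\alpha/w_\beta)\cdot\sym[\prod_\alpha \delta(z_\alpha/w_\alpha)]$. Applying $(\cdot)^\uparrow$ and using \eqref{eqn:w1}, \eqref{eqn:identify}, this yields
$$\sym\Big[\prod_\alpha \delta(z_\alpha/w_\alpha)\Big]^\uparrow \;=\; \prod_{1\le\alpha<\beta\le n}\zeta\big(w_\alpha/w_\beta\big)^{-1}\cdot W_1(w_1)\cdots W_1(w_n).$$
Specializing the $w_\alpha$ to the two strings $x,x/q,\dots,xq^{1-k}$ and $y,y/q,\dots,yq^{1-k'}$, the within--string factors reproduce $\eta_k^{-1}\eta_{k'}^{-1}$ (in particular $W_k(x)=W_1(x)\cdots W_1(xq^{1-k})$), while the cross factors are exactly the inverse of the prefactor in \eqref{eqn:def explicit 2}. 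Hence the prefactor cancels and $W_k(x)\,W_{k'}(y)=\prod_{a=1}^k W_1(xq^{1-a})\cdot\prod_{b=1}^{k'} W_1(yq^{1-b})$, a product of $k+k'$ first currents. This is the form on which all remaining work is done.

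Next I would analyze the poles of this product as an element of $\wCA^\uparrow[[x^{\pm1},y^{\pm1}]]$. Each coefficient of a fixed total level $k_1+\dots+k_t$ in the basis $\{P_v\}$ of \eqref{eqn:compositions} is obtained by reordering the monomials $P_{d_1,1}\cdots P_{d_n,1}$ into slope--increasing order via \eqref{eqn:relation 1}--\eqref{eqn:relation 2} (Theorem \ref{thm:shuf} guarantees this suffices): each commutation collapses two level--one generators into a single level--two generator, and summing the resulting corrections against the weights $w_\alpha^{-d_\alpha}$ produces geometric series whose resummation creates simple poles in the ratios $w_\alpha/w_\beta$. Because each $w_\alpha$ lies on one of the two strings, the within--string contractions are regular (they only ever evaluate $\zeta$ at $q^m$, $m\ge1$, which is finite, consistently with $W_k$ being pole--free), so the only poles in $x/y$ come from cross contractions. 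Equivalently, from the shuffle viewpoint, the $P_v$--coefficients are extracted by iterated residues of $\prod_{\alpha<\beta}\zeta(w_\alpha/w_\beta)\,\sym[\prod\delta(z_\alpha/w_\alpha)]$ at the poles $z_i=z_jq$; on the delta support these force $w_\alpha=w_\beta q$, i.e. the cross--string relations $x=yq^{a-b+1}$ and $y=xq^{b-a+1}$.

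Finally I would collect the surviving poles and match them to \eqref{eqn:explicit shuffle 2}. Running $a$ over $1,\dots,k$ and $b$ over $1,\dots,k'$, the cross contraction of $W_1(xq^{1-a})$ with $W_1(yq^{1-b})$ can be singular only at $x=yq^{a-b+1}$ and $y=xq^{b-a+1}$; after applying the symmetry \eqref{eqn:inv zeta} of $\zeta$ and telescoping the $\zeta$--factors along the strings, the poles outside the ranges $i\in[\max(0,k-k')+1,k]$ and $i\in[\max(0,k'-k)+1,k']$ cancel, leaving exactly the denominator $\prod_{i=\max(0,k-k')+1}^{k}(x-yq^i)\prod_{i=\max(0,k'-k)+1}^{k'}(y-xq^i)$ together with the residual factor $\prod_{i=\max(0,k-k')}^{k-1}\zeta(yq^i/x)^{-1}$. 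Since, after clearing these finitely many factors, the $P_v$--coefficients of a shuffle element are honest Laurent polynomials, the numerators $a_v(x,y)$ are Laurent polynomials and the sum runs over the ordered sequences $v$ of \eqref{eqn:sequence}, as claimed. I expect the genuine difficulty to lie in this last step: pinning down the exact pole ranges and verifying the cancellation of the spurious cross--string poles (those with $i$ below the stated bounds), which is precisely the bookkeeping that the statement sidesteps by declaring the $a_v$ irrelevant.
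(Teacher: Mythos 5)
Your opening reduction is fine as far as it goes: it is just a restatement of \eqref{eqn:def explicit 2}, since $(\cdot)^\uparrow$ respects the shuffle product and the within-- and cross--string $\zeta$--factors reproduce $\eta_k\eta_{k'}$ and the prefactor. The real content of the Proposition, however, is the exact list of poles in \eqref{eqn:explicit shuffle 2}, and this is where your argument has a genuine gap. Your mechanism for locating poles is pairwise contraction of $W_1$--currents (reordering $P_{d_1,1}P_{d_2,1}$ via \eqref{eqn:relation 1}--\eqref{eqn:relation 2} and resumming geometric series). Even granting the heuristic, the pairwise picture produces too many cross--string poles, and the cancellation of the spurious ones is \emph{not} a pairwise or ``telescoping'' effect: it comes from the wheel conditions \eqref{eqn:wheel}, which are a three--variable constraint. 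Concretely, for $k=1$, $k'=2$ the pair $W_1(x)$, $W_1(y/q)$ by itself would contribute poles at $y=xq_1$ and $y=xq_2$; these disappear from $W_1(x)W_1(y)W_1(y/q)$ only because the wheel condition applied to the triple of evaluation points $\{1/x,\,1/y,\,q/y\}$ forces the numerator of every $P_v$ to vanish there. No amount of bookkeeping with two--point contractions will see this, so the step you yourself flag as ``the genuine difficulty'' is not merely bookkeeping you have deferred --- it is the theorem, and your framework cannot deliver it. (A secondary issue: relation \eqref{eqn:relation 2} only computes $[P_{d_1,1},P_{d_2,1}]$ when the associated triangle is empty, i.e.\ $|d_1-d_2|=1$; for larger gaps the commutator is not a single $P_{d_1+d_2,2}$--term, so even the claim that each reordering ``collapses two level--one generators into a single level--two generator'' is an oversimplification.)

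The paper's proof takes a different and sharper route: it pairs $W_k(x)*W_{k'}(y)$ against the orthogonal basis $\{P_v\}$ using the bialgebra pairing \eqref{eqn:shuffle pairing}, so that the coefficient of $P_v$ is the explicit evaluation functional $\ph^{k,k'}_{x,y}(P_v)$ of \eqref{eqn:phi kl} --- the value of $P_v$ at the two $q$--strings divided by the product of $\zeta$'s. The denominator $\prod_{i\neq j}(z_i-z_jq)$ of \eqref{eqn:shuf} gives the naive pole candidates, and the decisive input is the divisibility statement (Proposition 2.9 of \cite{Shuf}, after \cite{FHHSY}) that the wheel conditions force the numerator $r$ to vanish on the prescribed family of hyperplanes $z_a=q_{1,2}\,z_b$ built from the two strings; combining this with the explicit cross--string $\zeta$--product yields exactly the denominator and the residual $\prod\zeta^{-1}$ factor in \eqref{eqn:explicit shuffle 2}. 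If you want to salvage your approach, you would have to import precisely this wheel--condition divisibility lemma, at which point you are essentially running the paper's argument.
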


\tab
Since all representations considered in this paper are good in the sense of Subsection \ref{sub:rep theory}, only finitely many of the shuffle elements $P_v$ will contribute to any given matrix coefficient. Therefore, Proposition \ref{prop:explicit shuffle 2} implies that in a good representation, the operator--valued expression: 
$$
W_k(x) W_{k'}(y) \prod^{k-1}_{i = \max(0,k-k')} \zeta \left( \frac {y q^i}x \right)
$$
is a rational function whose only poles are $x^{\pm 1}$, $y^{\pm 1}$, and $x-yq^*$, $y-xq^{**}$ for:
\begin{equation}
\label{eqn:poles 1}
*\in \{\max(0,k-k')+1,...,k\}
\end{equation}
\begin{equation}
\label{eqn:poles 2}
**\in \{\max(0,k'-k)+1,...,k'\}
\end{equation}
Since the second line of \eqref{eqn:def explicit 2} is symmetric in $(k,x) \leftrightarrow (k',y)$, we will use the information of the poles \eqref{eqn:poles 1}--\eqref{eqn:poles 2} to prove the commutation relations \eqref{eqn:w rel}. \\

\begin{proposition}
\label{prop:w}

The currents $\{ W_k(x)\}_{k \geq 1}  \in \wCA^\uparrow[[x^{\pm 1}]]$ satisfy the relations:
\begin{equation}
\label{eqn:w rel 0 minus}
\left[W_k(x), p_{- n} \right] = - \frac {(1-q_1^n)(1-q_2^n)(1-q^{kn})}{1-q^{n}} \cdot x^{-n} W_k(x)
\end{equation}
\begin{equation}
\label{eqn:w rel 0 plus}
\left[W_k(x), p_{n} \right] = \frac {(1-q_1^n)(1-q_2^n)(q^{-kn} - 1)c^n}{1-q^{n}} \cdot x^{n} W_k(x)
\end{equation}
where $p_n \in \CA^0 \subset \CA$ are the bosons of \eqref{eqn:heisenberg}, and:
\begin{equation}
\label{eqn:w rel}
W_k(x) W_{k'}(y) \cdot f_{kk'}\left(\frac yx \right) - W_{k'}(y) W_k(x) \cdot f_{k'k}\left(\frac xy \right) \ = \
\end{equation}
$$
= \sum_{i=\max(0,k'-k)+1}^{k'} \delta\left(\frac {y}{xq^i} \right) \left[ W_{k'-i}(x) W_{k+i}(y) f_{k'-i,k+i}\left(\frac yx \right) \Big|_{x = \frac y{q^i}} \right] \theta(\min(i,k-k'+i))
$$
$$
- \sum^{k}_{i = \max(0,k-k')+1} \delta\left(\frac {x}{yq^i} \right) \left[ W_{k-i}(y) W_{k'+i}(x) f_{k-i,k'+i}\left(\frac xy \right) \Big|_{y = \frac x{q^i}} \right] \theta(\min(i,k'-k+i))
$$
where we set:
\begin{equation}
\label{eqn:def theta}
\theta(s) = \frac {(1-q_1)(1-q_2)}{1-q}\cdot \zeta(q)...\zeta(q^{s-1})
\end{equation}
and for all $k,k' \geq 0$ define the rational function:
\begin{equation}
\label{eqn:def f}
f_{kk'} (z) =  \exp \left[\sum_{n=1}^\infty \frac {z^n}n \cdot \frac {(1-q_1^n)(1-q_2^n)(q^{\max(0,k-k')n}-q^{kn})}{1-q^n}  \right]
\end{equation}
$$$$

%Note that $f_{k'k}\left(\frac 1z \right) = f_{kk'} \left( \frac {z}{q^{\max(k,k')}} \right)$ as analytic continuations. \\

\end{proposition}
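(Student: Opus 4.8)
The plan is to treat the three relations separately, deriving the two boson relations \eqref{eqn:w rel 0 minus}--\eqref{eqn:w rel 0 plus} directly from the degree-one commutation rule \eqref{eqn:rel1}, and deducing the quadratic relation \eqref{eqn:w rel} from the pole structure supplied by Proposition \ref{prop:explicit shuffle 2} together with a residue computation.

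For the boson relations, recall from \eqref{eqn:w k} that $W_k(x)$ is the image under $\uparrow$ of the symmetrized product of delta functions supported at $z_i = xq^{1-i}$ for $1\le i\le k$; equivalently, $W_k(x)$ is assembled from the generators $P_{d,1}$. Commuting $p_{-n}=P_{-n,0}$ past such a product and using \eqref{eqn:rel1} shifts the exponent of each variable by $-n$, that is, multiplies the $i$-th factor by $z_i^{-n}$; summing over $i$ produces the scalar $-(1-q_1^n)(1-q_2^n)\sum_{i=1}^k z_i^{-n}$ times $W_k(x)$. Evaluating the geometric sum on the delta support, $\sum_{i=1}^k(xq^{1-i})^{-n}=x^{-n}\tfrac{1-q^{kn}}{1-q^n}$, gives exactly \eqref{eqn:w rel 0 minus}. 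Relation \eqref{eqn:w rel 0 plus} follows the same way from $p_n=\tfrac{c^n}{q^n}P_{n,0}$, the opposite sign in \eqref{eqn:rel1}, and the companion sum $\sum_{i=1}^k(xq^{1-i})^{n}=x^{n}\tfrac{1-q^{-kn}}{1-q^{-n}}$, the prefactor $c^n/q^n$ absorbing the stated powers of $q$ and $c$.

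For the quadratic relation I would first rewrite $f_{kk'}$ in terms of $\zeta$: comparing \eqref{eqn:def f} with the exponential form \eqref{eqn:zeta exponential} and using the identity $q^{\max(0,k-k')n}-q^{kn}=(1-q^{\min(k,k')n})\,q^{\max(0,k-k')n}$ yields $f_{kk'}(y/x)=\prod_{i=\max(0,k-k')}^{k-1}\zeta(yq^i/x)$. This is precisely the product in the discussion following Proposition \ref{prop:explicit shuffle 2}, so that $W_k(x)W_{k'}(y)f_{kk'}(y/x)$ is, in any good representation, a rational function whose only poles are $x^{\pm1}$, $y^{\pm1}$ and the simple poles $x=yq^*$, $y=xq^{**}$ of \eqref{eqn:poles 1}--\eqref{eqn:poles 2}. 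The structural input is that the symmetrized product of delta functions on the second line of \eqref{eqn:def explicit 2} is invariant under $(k,x)\leftrightarrow(k',y)$; combined with the symmetry $\zeta(w)=\zeta\left(\tfrac{1}{wq}\right)$ of \eqref{eqn:inv zeta}, a direct check shows that the $\zeta$--prefactors of $W_k(x)*W_{k'}(y)$ and $W_{k'}(y)*W_k(x)$ differ exactly by $f_{k'k}(x/y)/f_{kk'}(y/x)$ — this is how \eqref{eqn:def f} is engineered. Consequently $W_k(x)W_{k'}(y)f_{kk'}(y/x)$ and $W_{k'}(y)W_k(x)f_{k'k}(x/y)$ are the expansions, in the regions $|y|\ll|x|$ and $|x|\ll|y|$ respectively, of one and the same rational function $R(x,y)$.

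The left-hand side of \eqref{eqn:w rel} is therefore $R(x,y)\big|_{|y|\ll|x|}-R(x,y)\big|_{|x|\ll|y|}$, which by the residue theorem collapses to a sum of delta functions supported at the poles $x=yq^*$ and $y=xq^{**}$, weighted by the residues of $R$. The remaining, and hardest, step is to identify these residues with the two sums on the right-hand side of \eqref{eqn:w rel}; I expect this to be the main obstacle. One must show that the residue of $R$ at a simple pole $y=xq^i$ factors as the lower product $W_{k'-i}(x)W_{k+i}(y)f_{k'-i,k+i}(y/x)$ specialized to $x=y/q^i$, times the scalar $\theta(\min(i,k-k'+i))$ of \eqref{eqn:def theta}, and symmetrically at $x=yq^i$. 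Concretely, at such a pole a block of spectral points of the two delta supports $\{xq^{1-a}\}$ and $\{yq^{1-b}\}$ becomes aligned: the factor $(1-q_1)(1-q_2)/(1-q)$ in $\theta$ arises from the single $\zeta$--pole generating the residue, the product $\zeta(q)\cdots\zeta(q^{s-1})$ from the remaining prefactor evaluated on the aligned block, and the surviving non-aligned supports reorganize, through the symmetrization, precisely into the smaller currents $W_{k'-i}$ and $W_{k+i}$. Carrying out this fusion — the exact redistribution of delta supports together with the bookkeeping of powers of $q$ that produces the argument $\min(i,k-k'+i)$ of $\theta$ — is where the real work lies; by contrast the boson relations and the reduction to a residue computation are comparatively formal.
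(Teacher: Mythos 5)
Your strategy coincides with the paper's own, and the parts you execute are correct: the boson relations \eqref{eqn:w rel 0 minus}--\eqref{eqn:w rel 0 plus} do follow from \eqref{eqn:rel1} (equivalently \eqref{eqn:nice comm}) together with the geometric sums over the delta supports, your identification $f_{kk'}(y/x)=\prod_{i=\max(0,k-k')}^{k-1}\zeta(yq^i/x)$ is right, and so is the reduction of \eqref{eqn:w rel} to the statement that the two terms on the left-hand side are the $|y|\ll|x|$ and $|x|\ll|y|$ expansions of one rational function whose only relevant poles are those listed in \eqref{eqn:poles 1}--\eqref{eqn:poles 2}, so that their difference is a sum of delta functions weighted by residues.

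The gap is that the step you defer as ``where the real work lies'' is precisely the substance of the paper's proof, and you do not carry it out. Write $W_k(x)W_{k'}(y)=S_{k,k'}(x,y)\,g_{kk'}(y/x)$, where $S_{k,k'}(x,y)$ is the symmetrized delta-function part of \eqref{eqn:def explicit 2} (including $\eta_k\eta_{k'}$) and $g_{kk'}(z)=\prod_{1\leq i\leq k,\,1\leq j\leq k'}\zeta(zq^{i-j-1})$, and set $h_{kk'}=f_{kk'}g_{kk'}$. Beyond the symmetry $h_{kk'}(y/x)=h_{k'k}(x/y)$ (which you assert but do not check), two identities remain to be proved: (a) the fusion of supports $S_{k,k'}(x,y)\big|_{x=y/q^i}=\frac{\eta_k\eta_{k'}}{\eta_{k+i}\eta_{k'-i}}\,S_{k'-i,k+i}(x,y)\big|_{x=y/q^i}$, which is what converts the residue into the product $W_{k'-i}(x)W_{k+i}(y)$ of currents; and (b) the scalar identity $\frac{h_{kk'}(q^i)\,\eta_k\eta_{k'}}{h_{k'-i,k+i}(q^i)\,\eta_{k+i}\eta_{k'-i}}=\prod_{s=0}^{\min(i,k-k'+i)-1}\zeta(q^s)$, in which the factor $\zeta(q^0)=\zeta(1)$ carries the vanishing denominator responsible for the simple pole and the remainder is exactly $\theta(\min(i,k-k'+i))$. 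Both follow from the exponential form \eqref{eqn:zeta exponential} and repeated use of $\zeta(w)=\zeta(1/(wq))$, so the missing piece is computational rather than conceptual; but until (a) and (b) are verified --- in particular the emergence of $\min(i,k-k'+i)$ as the argument of $\theta$, which is the one place where the asymmetric definition \eqref{eqn:def f} genuinely enters --- the right-hand side of \eqref{eqn:w rel} has not been established.
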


%\begin{equation}
%\label{eqn:def f}
%f_{kk'} (z) = \begin{cases} \exp \left[ \sum_{n=1}^\infty \frac {z^n(1-q_1^n)(1-q_2^n)}n \cdot \frac {(1-q^{kn}) (1- c^n q^{-k'n})}{(1-q^n)(1-c^n)} \right] & \text{if }k \leq k' \\
%\exp \left[ \sum_{n=1}^\infty \frac {z^n(1-q_1^n)(1-q_2^n)}n \cdot \frac {(1-q^{-k'n}) (1 - c^{-n} q^{kn})}{q^n(1-q^{-n})(1-c^{-n})} \right] & \text{if }k \geq k' \end{cases}
%\end{equation}

\begin{remark}
\label{rem:currents}

Let us give a hands--on explanation of the convention of normal--ordering products of currents. All currents in this paper will be of the form:
$$
W(x) = \sum_{d\in \BZ} \frac {W_d}{x^d}
$$
where $W_d$ is an operator that acts in good representations with degree $-d$. Therefore, the operators $W_d$ for $d \gg 0$ will annihilate any given vector of a good representation, and so the only reasonable way to make sense of compositions such as:
$$
W(x) W'(y) f \left (\frac yx \right) = \sum_{d,d' \in \BZ} \frac {W_d W_{d'}'}{x^d y^{d'}} \cdot \sum_{n \geq 0} f_n \frac {y^n}{x^n}
$$
is to expand the analytic function $f(z)$ in non--negative powers of $z$. This way, the coefficient of any $x^ay^b$ in the right-hand side acts by a finite sum locally in all good representations. This is why the first summand of \eqref{eqn:w rel} must be expanded in the region $|y| \ll |x|$, while the second summand must be expanded in $|x| \ll |y|$. \\

\end{remark}

\begin{remark}
\label{rem:contours}

We may explain relation \eqref{eqn:w rel} in terms of contour integrals (see \cite{O} for more details). Namely, suppose one wanted to compute the coefficient of $x^a y^b$ in the two sides of the equation. Naively, the way to do so is consider:
\begin{equation}
\label{eqn:equality  of integrals}
\oint \oint \frac {\text{LHS of \eqref{eqn:w rel}}}{x^a y^b} \cdot \frac {dx dy}{xy} = \oint \oint \frac {\text{RHS of \eqref{eqn:w rel}}}{x^a y^b} \cdot \frac {dx dy}{xy}
\end{equation}
However, the devil is in the contours: in the LHS, the term $W_k(x) W_{k'}(y) f_{kk'}(\frac yx)$ must be integrated over $|x| \gg |y|$, while the term $W_{k'}(y) W_k(x)  f_{k'k}(\frac xy)$ must be integrated over $|x| \ll |y|$. Meanwhile, every summand in the right-hand side of \eqref{eqn:w rel} reduces to a single contour integral, because for any constant $a$ we have:
$$
\oint \oint \delta\left(\frac y{xa} \right) Z(x,y) \frac {dx dy}{xy} = \oint Z(x,xa) \frac {dx}x
$$
Therefore, formula \eqref{eqn:equality of integrals} states that moving the $y$ contour from $|x| \gg |y|$ to $|x| \ll |y|$ picks up precisely the residues that make up the right-hand side of \eqref{eqn:w rel}. \\

\end{remark}

\subsection{}
\label{sub:verma}

Note that relation \eqref{eqn:w rel} matches the $_qW$--algebra relations of \cite{AKOS}, \cite{FF}, modulo the fact that our currents and theirs are not quite equal. Instead, the precise relation between the two conventions is given in \eqref{eqn:jon}, and this distinction accounts for the fact that our function $f_{kk'}(z)$ is different from that of \cite{AKOS}. \\

\begin{definition} 
\label{def:w}  

Define the algebra $\CA_r$ as the quotient of $ \wCA^{\uparrow}$ by the relations:
\begin{equation}
\label{eqn:impose rels}
W_k(x) = 0
\end{equation}
for all $k>r$. Define the algebra $\CA_r^\emph{ext}$ as the quotient of $\wCA^{\uparrow \emph{ext}}$ by the relations:
\begin{equation}
\label{eqn:impose rels bis}
W_r(x) = u \left[ \sum_{n=0}^\infty \frac {h_{-n}}{x^{-n}} \right] \left[\sum_{n=0}^\infty \frac {h_n}{x^n} \right] \qquad \text{and} \qquad W_k(x) = 0
\end{equation}
for all $k>r$, together with $c = q^r$. We call $\CA_r$ the $_qW$--algebra of type $\fgl_r$. \\

\end{definition}

%\noindent As a consequence of Theorem \ref{thm:shuf}, the algebra $\wCA^{\uparrow \sqcup 0}$ is generated by the $p_{\pm n}$ and the coefficients of the current $W_1(x)$, and therefore so is $\CA_{r,u}$. Although it may seem redundant, we will refer to the expressions $W_k(x)$ defined by \eqref{eqn:w k}, \eqref{eqn:modes}, \eqref{eqn:explicit shuffle} as the generating currents of the algebra $\CA_{r,u}$. \\

\noindent The parameter $u$ plays the role of the zero modes of Heisenberg currents from the physics literature (see Section \ref{sec:miura}), and we will abuse notation by ignoring it from our notation. The reader can think of $u$ as a complex number which will be specialized to the product of equivariant parameters in the representations $K$ studied in the next Section. Note that the inclusion:
$$
\wCA^\uparrow \hookrightarrow \wCA^{\uparrow \ext}
$$ 
induces a homomorphism of algebras:
\begin{equation}
\label{eqn:homo}
\CA_r \rightarrow \CA_r^\ext
\end{equation}
Although we will not need this fact, we expect this homomorphism to be injective. Note that it is also ``almost" surjective, or it would be if one were able to express the $_q$Heisenberg generators $p_n$ in terms of the coefficients of the current: 
$$
W_r(x) = u \exp \left[ \sum_{n=1}^\infty \frac {p_{-n}}{n x^{-n}} \right] \exp \left[\sum_{n=1}^\infty \frac {p_n}{nx^n} \right] \in \CA^\ext_r
$$
This would be possible only if we dropped the phrase ``for bounded $k_1+...+k_t \in \BN$" from the definition of the completion in \eqref{eqn:def completion}. Since doing so would be a technical annoyance for us, we accept the fact that the algebras $\CA_r$ and $\CA_r^\ext$ are a little different, although we note that both act by the same formulas on the representations $K$ defined in the next Section. Another feature of this ``equivalence", which we leave as an easy exercise to the interested reader, is that relations \eqref{eqn:w rel 0 minus} and \eqref{eqn:w rel 0 plus} imply \eqref{eqn:w rel} for arbitrary $k$ and $k' = r$ (the fact that $W_l(x) = 0$ for $l>r$ in either $\CA_r$ or $\CA_r^\ext$ implies that the RHS of \eqref{eqn:w rel} vanishes). \\

\begin{proposition} 
\label{prop:w gens and rels}

The algebra $\CA_r$ is isomorphic to the $\BF$--algebra generated by symbols $\{W_{d,k}\}_{k\geq 1}^{d\in \BZ}$ modulo relations \eqref{eqn:w rel} and \eqref{eqn:impose rels}. \\

\end{proposition}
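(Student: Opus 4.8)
The plan is to write down the obvious candidate isomorphism and then prove that relations \eqref{eqn:w rel} and \eqref{eqn:impose rels} exhaust the ideal of relations. Let $\CW_r$ denote the abstract $\BF$--algebra on the symbols $\{W_{d,k}\}_{k\ge 1}^{d\in\BZ}$ modulo \eqref{eqn:w rel} and \eqref{eqn:impose rels}, and define $\Phi\colon \CW_r\to \CA_r$ by sending each symbol $W_{d,k}$ to the mode \eqref{eqn:modes} of the corresponding current in $\wCA^\uparrow$. This map is well defined: relations \eqref{eqn:w rel} hold in $\wCA^\uparrow$ by Proposition \ref{prop:w}, while \eqref{eqn:impose rels} holds in $\CA_r$ by Definition \ref{def:w}. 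It is surjective because $W_{d,1}=P_{d,1}$ by \eqref{eqn:w1} and \eqref{eqn:identify}, and the $P_{d,1}$ generate $\CA^\uparrow\cong\CS$ by Theorem \ref{thm:shuf}. Since every relation is homogeneous for the shuffle grading, $\Phi$ is a map of graded algebras, so injectivity can be tested one graded piece at a time, where all spaces are finite dimensional.

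It then remains to prove injectivity, i.e. that \eqref{eqn:w rel} and \eqref{eqn:impose rels} generate the full ideal of relations. The two--sided ideal generated by $\{W_{d,k}\}_{k>r}$ on the abstract side maps precisely onto the defining ideal of $\CA_r$ inside $\wCA^\uparrow$, so I would first pass to the untruncated statement: that the algebra $\CW_\infty$ presented by $\{W_{d,k}\}_{k\ge1}$ modulo \eqref{eqn:w rel} alone is isomorphic to $\wCA^\uparrow$, and then quotient both sides by $\{W_{d,k}\}_{k>r}$. The key structural input is formula \eqref{eqn:w k}: since each $\delta\!\left(z_i/xq^{1-i}\right)$ forces $z_i=xq^{1-i}$ and $\eta_k$ absorbs the resulting values of $\zeta$, the current $W_k(x)$ is exactly the shuffle product of $W_1(x),W_1(x/q),\dots,W_1(x/q^{k-1})$. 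In parallel, the residue (delta--function) terms on the right of \eqref{eqn:w rel} express, for $k\ge2$, the current $W_k$ as a descendant of products of $W_1,\dots,W_{k-1}$; iterating, every $W_{d,k}$ lies in the subalgebra generated by $\{W_{d,1}\}=\{P_{d,1}\}$. Thus $\CW_\infty$ is generated by the single family $W_{d,1}$, subject to the exchange relation coming from \eqref{eqn:w rel} at $k=k'=1$ and its higher descendants.

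To conclude I would set up a straightening procedure. Using \eqref{eqn:w rel} one rewrites any product of currents, modulo residue terms, in the opposite order; each application either sorts two adjacent factors according to their slope $d_i/k_i$ or replaces a pair $W_k(x)W_{k'}(y)$ by pairs $W_{k'-i}W_{k+i}$ at the collapsed argument $x=yq^{-i}$. Refining the shuffle grading by the multiset of slopes, as in the ordering \eqref{eqn:sequence inequality}, gives a well--founded order for which every such move strictly decreases, so the rewriting terminates and every element of $\CW_\infty$ is a combination of slope--ordered monomials indexed by sequences $v$ as in \eqref{eqn:sequence}. Their images under $\Phi$ are related to the PBW basis $\{P_v\}$ of $\CS\cong\CA^\uparrow$ from \eqref{eqn:compositions}, established in \cite{BS}, \cite{Shuf}, by a change of basis that is triangular and invertible in each bidegree. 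Hence the slope--ordered monomials are linearly independent in $\CA_r$, and together with the spanning statement this forces $\Phi$ to be injective; reimposing \eqref{eqn:impose rels} yields $\CW_r\cong\CA_r$.

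I expect the main obstacle to be exactly this completeness (injectivity) step, and within it the two points that make the straightening rigorous. First, the delta--function terms of \eqref{eqn:w rel} preserve both the number of factors and the total shuffle degree, so one must exhibit a genuinely well--founded refinement of the grading for which they are nonetheless strictly decreasing, ensuring termination and confluence. Second, one must verify that the transition between ordered monomials in the $W_{d,k}$ and the basis $\{P_v\}$ is triangular and invertible in each finite--dimensional bidegree; this is the quantitative heart of the argument, amounting to the statement that $\CS$ is generated in shuffle degree one with all its relations consequences of the quadratic exchange relation, equivalently that \eqref{eqn:w rel} at $k=k'=1$, propagated through the grading, recovers relations \eqref{eqn:relation 1}--\eqref{eqn:relation 2}. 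The comparison with \cite{BS}, \cite{Shuf} and the finiteness of the graded pieces do the real work there.
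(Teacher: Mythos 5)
Your proposal follows essentially the same route as the paper: span $\CA_r$ by slope--ordered monomials in the $W_{d,k}$ via a straightening (``convexification'') procedure based on \eqref{eqn:w rel}, then prove linear independence of these monomials by a triangular comparison with a known PBW--type basis of the shuffle algebra. The paper likewise only sketches the straightening step and defers its details to \cite{W gen}, so your identification of that step as the main obstacle is accurate. Two small points of divergence. First, the paper compares the ordered $W$--monomials with the basis $E_{d_1,k_1}\dots E_{d_t,k_t}$ rather than with $\{P_v\}$: formula \eqref{eqn:elementary} exhibits $E_{d,k}$ as the single--factor leading term of $W_{d,k}$ among terms indexed by strictly increasing slopes, which makes the upper--triangularity immediate, whereas in the $P_v$ basis the expansion \eqref{eqn:power} allows repeated slopes and the leading--term bookkeeping is messier (though still workable). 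Second, your justification ``injectivity can be tested one graded piece at a time, where all spaces are finite dimensional'' is false as stated: the bigraded pieces $\CS_{k,d}$ are infinite dimensional (the numerators are Laurent polynomials), and the $W_{d,k}$ live in the completion $\wCA^\uparrow$ as genuinely infinite sums of $P_v$'s. This does not sink the argument, because the triangularity you also invoke is what actually carries the independence claim in both your proposal and the paper, but the finite--dimensionality crutch should be removed.
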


\begin{remark}

Similarly, the algebra $\CA^{\emph{ext}}_r$ is isomorphic to the $\BF$--algebra generated by symbols $\{W_{d,k}\}_{k\geq 1}^{d\in \BZ}$ and $\{p_n\}_{n\in \BZ \backslash 0}$ modulo relations \eqref{eqn:w rel 0 minus}, \eqref{eqn:w rel 0 plus}, \eqref{eqn:w rel}, \eqref{eqn:impose rels bis}. \\

\end{remark}

%By Proposition \ref{prop:w}, there is a surjection $\bar{\CA}_{r,u} \twoheadrightarrow \CA_{r,u}$, which we must show to be an isomorphism. In other words, we must show that any relation between the generators $W_{d,k}, p_{\pm n} \in \CA_{r,u}$ lies in the ideal generated by the relations \eqref{eqn:w rel 0 minus}, \eqref{eqn:w rel 0 plus}, \eqref{eqn:w rel}, \eqref{eqn:impose rels}. 

\noindent We only sketch the main ideas of the proof, and leave the technical details to \cite{W gen}. The reader who does not wish to rely on the proof may replace $\CA_r$ of Definition \ref{def:w} by the algebra $\CA_r'$ generated by $W_{d,k}$ modulo relations \eqref{eqn:w rel} and \eqref{eqn:impose rels}.  The constructions in the present paper give a map $\CA_r' \rightarrow \CA_r$, which the results of \loccit imply is an isomorphism. All modules in the present paper are naturally $\CA_r$ --modules, and everything in the present paper continues to apply to them as such. \\

\begin{proof}  By definition, $\CA_r$ is spanned as an $\BF$--module by all possible products:
\begin{equation}
\label{eqn:gente}
\Big\{ W_{d_1,k_1}...W_{d_t,k_t}, \ d_i \in \BZ, k_i \in \BN \Big\}
\end{equation}
The product \eqref{eqn:gente} may be characterized by the following \textbf{path} (broken line):
$$
(0,0),(d_1,k_1),\dots,(d_1+...+d_t,k_1+...+k_t)
$$
In \cite{W gen}, we will unpackage relation \eqref{eqn:w rel} using the normal-ordering conventions of \cite{O}, and show that it allows us to write the general product \eqref{eqn:gente} as a (possibly infinite) linear combination of specific products which satisfy:
$$
\frac {d_1}{k_1} \leq \frac {d_2}{k_2} \leq ... \leq \frac {d_t}{k_t}
$$
In the language of paths, such products correspond to convex lattice paths in the upper half-plane. In other words, repeated applications of \eqref{eqn:w rel} allows one to ``convexify" any path in the upper half-plane. Finally, relation \eqref{eqn:impose rels} shows that we may restrict to the products \eqref{eqn:gente} for which $k_1,..., k_t \leq r$. To show that there are no relations among $W_{d,k} \in \CA_r$ except for \eqref{eqn:w rel} and \eqref{eqn:impose rels}, it is therefore enough to show that the elements \eqref{eqn:gente} are linearly independent. To this end, it suffices to show that they are linearly independent in $\wCA^{\uparrow}$, in which case the result follows from the fact that the basis \eqref{eqn:gente} is upper-triangular in terms of the basis:
\begin{equation}
\label{eqn:marc}
E_{d_1,k_1}...E_{d_t,k_t}
\end{equation}
(we show this explicitly in \cite{W gen}, although it is already visible from \eqref{eqn:elementary}). The fact that the elements \eqref{eqn:marc} are linearly independent in $\CA$ follows from \cite{BS}, Remark 5.1.

%\noindent The analogous statement for $\CA_r^\ext$ follows because a linear spanning set of $\CA^{\uparrow\ext}$ is given by products of $W_{d,k}$ and $p_n$. Such a product is uniquely determined by a path in the upper half-plane together with the $x$-axis, where every time we encounter $W_{d,k}$ we draw the segment $(d,k)$ and every time we encounter $p_n$ we draw the segment $(n,0)$. By analogy with the first part of the proof, we may use \eqref{eqn:w rel 0 minus}, \eqref{eqn:w rel 0 plus} and \eqref{eqn:w rel} to write the corresponding product of generators as:
%\begin{equation}
%\label{eqn:zona}
%p_{-n_1}...p_{-n_s} W_{d_1,k_1}...W_{d_t,k_t} p_{n_1'}...p_{n'_u}
%\end{equation}
%where $n_1 \geq ... \geq n_s >0 < n_1' \leq ... \leq n_u'$ and $v = \{(d_1,k_1),...,(d_t,k_t)\}$ goes over convex paths in the strict upper half-plane. We may even assume $k_1,...,k_t < r$, because every time we encounter $W_{d,k}$ with $k>r$ we may eliminate the corresponding product \eqref{eqn:zona}, and whenever we encounter $W_{d,r}$ we may replace it by a product of $p_{\pm n}$ using \eqref{eqn:impose rels bis} and then repeat the argument above. As before, the conclusion of the Proposition follows because the elements \eqref{eqn:zona} are linearly independent in $\CA_r^\ext$, which follows because they are upper triangular in the elements: 
%$$
%p_{-n_1}...p_{-n_s} E_{d_1,k_1}...E_{d_t,k_t} p_{n_1'}...p_{n'_u}
%$$
%which are linearly independent in $\CA$ as a consequence of \cite{BS}, Remark 5.1.

\end{proof}

\subsection{}
\label{sub:verma}

Let us now define the main representation of $_qW$--algebras that we will study. \\

\begin{definition}
\label{def:verma}

Given parameters $u_1,...,u_r$ with product $u$, the \textbf{Verma module} $M_{u_1,...,u_r}$ is the $\CA_r$--module generated by a single vector $|\emptyset\rangle$ modulo relations:
\begin{equation}
\label{eqn:verma}
M_{u_1,...,u_r} = \CA_r|\emptyset \rangle \Big /_{W_{d,k} | \emptyset \rangle = 0, \ W_{0,k}| \emptyset \rangle = e_k(u_1,...,u_r)| \emptyset \rangle, \ \ \forall k \in \{1,...,r\}, \ d>0}
\end{equation}
where $e_k$ denotes the $k$--th elementary symmetric function. \\

\end{definition}

\noindent A more rigorous way to define $M_{u_1,...,u_r}$ is to say that it is the quotient of $\CA_r$ by the right ideal $(W_{d,k}, W_{0,k} - e_k(u_1,...,u_r))_{1 \leq k \leq r, d>0}$, endowed with the induced left action of $\CA_r$. Therefore, we could also describe the Verma module as:
$$
M_{u_1,...,u_r} \cong \frac {\wCA^\uparrow}{\text{two-sided ideal }(W_{d,k})^{d\in \BZ}_{k>r}, \text{right ideal }(W_{d,k}, W_{0,k} - e_k(u_1,...,u_r))^{d>0}_{k>0}} 
$$
$$
\rightarrow  \frac {\wCA^{\uparrow \ext}}{\text{two-sided ideal of relations \eqref{eqn:impose rels bis}}, \text{right ideal }(p_k, W_{d,k}, W_{0,k} - e_k(u_1,...,u_r))^{d>0}_{k>0}}
$$
as linear maps of $\BF$--vector spaces. \\

\begin{proposition}

When $r=1$, we have $M_u \cong F_u$ of Subsection \ref{sub:fock}. \\

\end{proposition}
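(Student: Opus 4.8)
The plan is to build a surjection $M_u \to F_u$ from the universal property of the Verma module and then promote it to an isomorphism by a graded dimension count. By Proposition \ref{prop:fock act}, $F_u$ is a good level $1$ representation of $\CA$, hence (by the discussion around \eqref{eqn:completion}) it carries an action of the completion $\wCA^\uparrow$. To view $F_u$ as an $\CA_1$--module, and so be entitled to invoke Definition \ref{def:verma}, I must check that $W_k(x)$ acts by $0$ on $F_u$ for every $k\geq 2$; granting this, the vacuum $1\in F_u$ satisfies the highest weight conditions of \eqref{eqn:verma}. Indeed, by \eqref{eqn:vertex 1} and \eqref{eqn:w1} we have $W_1(x)\cdot 1 = u\exp[\sum_{n\geq 1}p_{-n}x^n/n]\cdot 1$, since the positive modes $p_n$ kill $1$; reading off coefficients, $W_{d,1}\cdot 1 = 0$ for $d>0$ while $W_{0,1}\cdot 1 = u\cdot 1 = e_1(u)\cdot 1$. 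The universal property then yields an $\CA_1$--module map $M_u\to F_u$ sending $|\emptyset\rangle\mapsto 1$.

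The main obstacle is the vanishing $W_k(x)|_{F_u}=0$ for all $k\geq 2$. Reading the definition \eqref{eqn:w k} of $W_k(x)$ through the integral realization \eqref{eqn:vertex 3} of the action $\CA^\uparrow\cong\CS\curvearrowright F_u$, the operator $W_k(x)$ is $\eta_k$ times a symmetrized contour integral whose integrand carries the scalar factor $\prod_{1\leq i\neq j\leq k}\zeta(z_i/z_j)^{-1}$, while the $k$ delta functions of \eqref{eqn:w k} localize the variables $z_1,\dots,z_k$ onto the set of points $\{x,xq^{-1},\dots,xq^{1-k}\}$. For $k\geq 2$ this set always contains two points whose ratio is $q^{-1}$, so the localized factor contains $\zeta(q^{-1})^{-1}$; this is $0$, because the denominator factor $(1-qx)$ of \eqref{eqn:def zeta} gives $\zeta$ a pole at $x=q^{-1}$. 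Since the prefactor $\eta_k=\prod_{1\leq i<j\leq k}\zeta(q^{j-i})$ is a finite nonzero constant, every term of the symmetrization vanishes and hence $W_k(x)=0$ on $F_u$. Conceptually this is the $r=1$ instance of the vanishing $W_k=0$ in the level $r$ representation, and the point is simply that the operator product of two copies of the current $W_1$ evaluated at points in ratio $q^{-1}$ acquires a zero from $\zeta^{-1}$.

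It remains to upgrade $M_u\to F_u$ to an isomorphism. For surjectivity I would use cyclicity of $F_u$: expanding \eqref{eqn:vertex 1} shows that the creation operators $W_{-m,1}$ ($m\geq 1$) act on $1$ with leading term $\tfrac um\,p_{-m}$ plus products of lower creation operators, so an induction on degree shows that repeated application of the $W_{-m,1}$ produces every monomial in $p_{-1},p_{-2},\dots$, whence $1$ generates $F_u$ over $\CA_1$ and the map is onto. For injectivity I would compare Hilbert series. With $\deg p_{-n}=n$, the space $F_u=\BF[p_{-1},p_{-2},\dots]$ has graded dimension equal to the number $p(n)$ of partitions of $n$, i.e. Hilbert series $\prod_{n\geq 1}(1-t^n)^{-1}$. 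On the other hand, by Proposition \ref{prop:w gens and rels} specialized to $r=1$ (where only the currents $W_{d,1}$ survive, all of slope $d$), $\CA_1$ is spanned by ordered monomials $W_{d_1,1}\cdots W_{d_t,1}$ with $d_1\leq\cdots\leq d_t$; applying such a monomial to $|\emptyset\rangle$ and using the highest weight relations to discard factors with $d_i\geq 0$ (those with $d_i>0$ kill the vacuum, those with $d_i=0$ contribute the scalar $u$), a spanning set of $M_u$ is indexed by partitions $\lambda=(-d_1\geq\cdots\geq -d_s\geq 1)$, the corresponding state lying in degree $|\lambda|=-\sum d_i$. Thus $\dim (M_u)_n\leq p(n)=\dim (F_u)_n$ for all $n$, which together with surjectivity forces equality in every degree, so $M_u\to F_u$ is an isomorphism.
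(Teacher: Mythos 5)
Your overall architecture coincides with the paper's: establish that $W_k(x)$ annihilates $F_u$ for $k \geq 2$ so that the $\CA$--action factors through $\CA_1$, produce the map $M_u \to F_u$ from the highest weight conditions, prove surjectivity by a cyclicity/triangularity argument, and finish by comparing graded dimensions against the number of partitions. Those last steps are fine and essentially reproduce the paper's reasoning.

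The gap is in the step you yourself identify as the main obstacle, namely the vanishing $W_k(x)|_{F_u}=0$ for $k\geq 2$. You localize the contour integral \eqref{eqn:vertex 3} at the support of the delta functions and evaluate $\prod_{1\leq i\neq j\leq k}\zeta(z_i/z_j)^{-1}$ there, obtaining $\zeta(q^{-1})^{-1}=0$. But the identity $\delta(z/a)F(z)=\delta(z/a)F(a)$ is only legitimate when the series expansion of $F$ being used converges at $z=a$. Here $\prod_{i \neq j}\zeta(z_i/z_j)^{-1}$ is expanded on the contours $|z_i|=1$, i.e.\ in the annulus $\max(|q_1|,|q_2|)<|z_i/z_j|<\min(|q_1|^{-1},|q_2|^{-1})$, whereas the localization points satisfy $z_{i+1}/z_i=q^{-1}$ with $|q|<\min(|q_1|,|q_2|)$: they lie outside this annulus, on the far side of the poles of $\zeta^{-1}$ at $z_i/z_j=q_1^{\pm1},q_2^{\pm1}$. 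Concretely, computing $\langle\emptyset|W_{0,2}|\emptyset\rangle$ mode by mode from \eqref{eqn:w k} and \eqref{eqn:vertex 3} gives $2u^2\eta_2\sum_{m\in\BZ}q^mg_m$, where $g_m$ are the Laurent coefficients of $[\zeta(w)\zeta(1/w)]^{-1}$ in the annulus containing $|w|=1$; this series diverges (its terms for $m\to-\infty$ grow like $(q_1/q)^{-m}$ and $(q_2/q)^{-m}$), so the naive term-by-term evaluation does not converge to anything, let alone to the value $0$ of the rational function at the localization point. The correct mechanism is the one you gesture at in the last sentence of that paragraph: normal-order the product $W_1(x)W_1(y)$ using $[p_{-n},p_n]=n(1-q_1^n)(1-q_2^n)$, observe that $W_1(x)W_1(y)\zeta(x/(yq))$ then equals a normal-ordered, hence regular and symmetric, expression — this is identity \eqref{eqn:swal} — and extract $W_2$ from relation \eqref{eqn:luna} as the coefficient of a delta function whose residue therefore vanishes. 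That is exactly what the paper does (and, in general, the resummation that makes the currents well defined is supplied by Proposition \ref{prop:explicit shuffle}); your localization argument needs to be replaced by, or reduced to, such an operator identity.
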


\begin{proof} When $k'=1$, relation \eqref{eqn:w rel} states that:
$$
W_k(x) W_1(y) \zeta \left(\frac x{yq^k} \right) - W_1(y) W_k(x) \zeta \left(\frac y{xq} \right) = 
$$
\begin{equation}
\label{eqn:luna}
= \frac {(1-q_1)(1-q_2)}{1-q} \left[\delta \left(\frac y{xq} \right) W_{k+1}(y) - \delta \left( \frac x{yq^k} \right)W_{k+1}(x) \right]
\end{equation}
Taking the series coefficient of $x^{-a}y^{-b}$ allows one to express $W_{k+1,a+b}$ as a $\BF$--linear combination of $W_{k,d}$'s and $W_{1,e}$'s. In order to show that the action $\CA \curvearrowright F_u$ of Proposition \ref{prop:fock act} factors through its quotient $\CA_1$, we need to show that $W_k(x)$ acts by 0 for all $k>1$. By iterating \eqref{eqn:luna}, it is enough to prove that $W_2(x)$ acts by 0. Again in virtue of \eqref{eqn:luna}, this boils down to showing that:
\begin{equation}
\label{eqn:swal}
W_1(x) W_1(y) \zeta \left(\frac x{yq} \right) = W_1(y) W_1(x) \zeta \left(\frac y{xq} \right)
\end{equation}
holds in the module $F_u$. By \eqref{eqn:vertex 1}, this identity reduces to the expression:
$$
u^2 \exp \left[ \sum_{n=1}^\infty \frac {p_{-n}}{n x^{-n}} \right] \exp \left [ \sum_{n=1}^\infty \frac {p_n}{nx^n} \right] \exp \left[ \sum_{n=1}^\infty \frac {p_{-n}}{n y^{-n}} \right] \exp \left [ \sum_{n=1}^\infty \frac {p_n}{ny^n} \right] \zeta \left(\frac x{yq} \right)
$$
being symmetric in $x$ and $y$, as an endomorphism of the module $F_u$. This is a straightforward computation, that one can derive from $[p_{-n},p_n] = n(1-q_1^n)(1-q_2^n)$, in the same way that the second line of \eqref{eqn:equality} implies the third line of \eqref{eqn:equality}. \\

\noindent Having shown that $\CA_1$ acts on $F_u$, let us show that $M_u \cong F_u$ as $\CA_1$--modules. The equation right before the statement of the Proposition gives a $\BF$--linear map:
\begin{equation}
\label{eqn:thalia}
M_u \stackrel{\phi}\rightarrow \frac {\wCA^{\uparrow\ext}}{(I,J)}
\end{equation}
where $I$ is the two-sided ideal generated by $c-q$ and the coefficients of relations:
\begin{equation}
\label{eqn:natalia}
W_k(x) = \begin{cases} u \exp \left[ \sum_{n=1}^\infty \frac {p_{-n}}{n x^{-n}} \right] \exp \left[\sum_{n=1}^\infty \frac {p_n}{nx^n} \right] & \text{ if }k=1 \\ 0 & \text{ if }k>1 \end{cases}
\end{equation}and $J$ is the right ideal generated by $p_k$ for all $k>0$. Note that the right-hand side of \eqref{eqn:thalia} is the Fock space $F_u$, because the ideal of relations allows one to reduce any element to a product of $p_{-1},p_{-2},...$. The fact that the action matches that of \eqref{eqn:vertex 0} and \eqref{eqn:vertex 1} is forced upon us by relations \eqref{eqn:heisenberg}, $c=q$ and \eqref{eqn:natalia}. \\

\noindent Thus we obtain a map of $\BF$--vector spaces $M_u \stackrel{\phi}\rightarrow F_u$. However, both of these vector spaces are non-negatively graded with the vacuum vector in degree zero. The dimension of $F_u$ in degree $k$ is equal to the number of partitions of size $k$. Meanwhile, the dimension of $M_u$ in degree $k$ is less than or equal to the number of partitions of size $k$, because the collection $\{W_{-d_1,1}...W_{-d_s,1}|\emptyset\rangle \}_{d_1 \geq ... \geq d_s>0}$ is a spanning set of $M_u$ over $\BF$ (this follows from the proof of Proposition \ref{prop:w gens and rels}, or by \eqref{eqn:swal}). Thus, to prove that the map $\phi$ is an isomorphism, it is enough to show that it is surjective. \\

\noindent Recall that $h_{\pm n}$ are in relation to $p_{\pm n}$ as complete symmetric functions are in relation to power sum functions. As $d_1 \geq ... \geq d_s$ range over the natural numbers, the monomials $p_{-d_1} ... p_{-d_s}|\emptyset\rangle$ form a $\BF$--basis of $F_u$, which is upper triangular in the basis consisting of monomials $h_{-d_1} ... h_{-d_s}|\emptyset\rangle$. Meanwhile, \eqref{eqn:natalia} implies that:
$$
W_{-d,1} = u h_{-d} + u\sum_{i=1}^\infty h_{-d-i}h_i
$$
If $d_1 \geq ... \geq d_s >0$, we may iterate the equality above to obtain:
\begin{equation}
\label{eqn:mihai}
W_{-d_1,1}...W_{-d_s,1} = u^s h_{-d_1}...h_{-d_s} + u^s\sum_{i_1,...,i_s \in \BN \sqcup \{0\}}^{i_1+...+i_s > 0} h_{-d_1-i_1}h_{i_1}...h_{-d_s-i_s}h_{i_s}
\end{equation}
The commutation relation \eqref{eqn:heisenberg} implies that for all $a,b>0$, the product $h_a h_{-b}$ is a linear combination of $h_{-b+j}h_{a-j}$ as $j \geq 0$. Therefore, equation \eqref{eqn:mihai} implies:
$$
W_{-d_1,1}...W_{-d_s,1} =  u^s h_{-d_1}...h_{-d_s} + \sum_{e_1 \geq ... \geq e_s \geq 0}^{f_1,...,f_s \geq 0} \text{ constant} \cdot h_{-e_1}...h_{-e_s} h_{f_1}...h_{f_s} 
$$
where $e_1 \geq d_1$, if $e_1 = d_1$ then $e_2\geq d_2$, if $e_1 = d_1$ and $e_2 = d_2$ then $e_3 \geq d_3$ etc. We conclude that if $d_1 \geq d_2 \geq ... \geq d_s >0$, we have the following equality in $M_u$:
$$
W_{-d_1,1}...W_{-d_s,1} |\emptyset\rangle = h_{-d_1}...h_{-d_s} |\emptyset\rangle + \text{higher order terms}
$$
where we say that the monomial $h_{-e_1}...h_{-e_t}$ (with $e_1 \geq ... \geq e_t > 0$) has higher order than $h_{-d_1}...h_{-d_s}$ if $e_1 > d_1$, or if $e_1 = d_1$ and $e_2 > d_2$, or if $e_1 = d_1$ and $e_2 = d_2$ and $e_3 > d_3$ etc. Therefore, the basis $W_{-d_1,1}...W_{-d_s,1} |\emptyset\rangle$ is upper triangular in the basis  $h_{-d_1} ... h_{-d_s}|\emptyset\rangle$, and this concludes the surjectivity of the map $M_u \rightarrow F_u$.

\end{proof}

\section{The Moduli Space of Sheaves on $\BP^2$}
\label{sec:geom}

\subsection{}
\label{sub:sheaf}

In this Section, we construct level $r$ good representations of $\CA$, in the guise of the $K$--theory of the moduli space of rank $r$ sheaves. To introduce this algebraic variety, consider the projective plane and fix a line $\infty \subset \BP^2$. We let $\CM$ denote the moduli space of rank $r$ torsion free sheaves $\CF$ on $\BP^2$, together with an isomorphism:
\begin{equation}
\label{eqn:framing}
\CF|_\infty \cong \CO_\infty^{\oplus r}
\end{equation}
This latter condition forces $c_1(\CF)=0$, but $c_2(\CF)$ is still free to range over the non-negative integers. For $n \geq 0$, we denote by $\CM_n \subset \CM$ the connected component of rank $r$ sheaves of second Chern class $n \cdot [\text{pt}]$. Its tangent spaces are given by:
\begin{equation}
\label{eqn:ks}
\Tan_{\CF} \CM_n = \textrm{Ext}^1(\CF, \CF(-\infty))
\end{equation}
by the Kodaira-Spencer isomorphism. Using the Riemann-Roch theorem, one can easily prove that $\CM_n$ is smooth of dimension $2rn$. We have a universal sheaf $\CU_n$ on $\CM_n \times \BP^2$, and its first derived direct image under the standard projection $\text{pr}_1: \CM_n \times \BP^2 \rightarrow \CM_n$ is called the \textbf{tautological vector bundle}:
\begin{equation}
\label{eqn:taut}
\CV_n = R^1\textrm{pr}_{1*} (\CU_n(-\infty))
\end{equation}
on $\CM_n$. The twist in formula \eqref{eqn:taut} is by the pull-back of the divisor $\infty \subset \BP^2$ to $\CM_n \times \BP^2$, and it forces $R^0$ and $R^2$ to vanish. Therefore, $\CV_n$ is a vector bundle, and a standard application of the Riemann-Roch Theorem shows that it has rank $n$. \\

\subsection{} 
\label{sub:tor act} 

Consider the torus $T = \BC^* \times \BC^* \times (\BC^*)^r$, which acts on the moduli space $\CM$ in the following way: the first two factors rescale the coordinate directions of $\BP^2$ and keep the line $\infty$ invariant, and the torus $(\BC^*)^r$ acts by left multiplication on the framing isomorphism \eqref{eqn:framing}. We will consider the equivariant $K-$theory groups $K_T(\CM_n)$, which are all modules over: 
$$
K_T(\pt) = \BZ[q_1^{\pm 1},q_2^{\pm 1},u_1^{\pm 1},...,u_r^{\pm 1}]
$$
Here, $q_1$ and $q_2$ are equivariant parameters in the factors of $\BC^* \times \BC^*$ (which will be identified with the homonymous parameters of Subsection \ref{sub:def shuf}), and $u_i$ are equivariant parameters of the maximal torus of $GL_r$ (which will be identified with the parameters of Subsection \ref{sub:verma}). It will be convenient to localize these groups:
$$
K_n = K_T(\CM_n) \bigotimes_{K_T(\pt)} \text{Frac} \left( K_T(\pt) \right)
$$
and work with them all together:
$$
K = \bigoplus_{n \geq 0} K_n
$$
In the remainder of this Section, we will recall the action $\CA \curvearrowright K$ which makes $K$ into a good level $r$ representation. Before doing so, let us describe the spaces $K_n$ using tautological classes. Consider the assignment:
\begin{equation}
\label{eqn:def taut}
f(x_1,x_2,...) \text{ symmetric} \quad \stackrel{\Upsilon_n}\mapsto \quad \bar{f}_n := f \left(e^{\text{Chern roots of }\CV_n} \right) \in K_n
\end{equation}
When $f = e_k(x_1,x_2,...)$ is an elementary symmetric function, we have $\bar{f}_n = [\wedge^k \CV_n]$. Since the assignment $\Upsilon_n$ is additive and multiplicative, any class in its image is a linear combination of products of exterior powers of $\CV_n$. As $\CV_n$ is the tautological bundle, elements in the image of $\Upsilon_n$ will be called \textbf{tautological classes}. \\

\begin{proposition}
\label{prop:gen}

\emph{(see, for example, \cite{Mod}):} For all $n \geq 0$, the map $\Upsilon_n$ is surjective, i.e. the $\BQ(q_1,q_2,u_1,...,u_r)$--algebra $K_n$ is generated by tautological classes. \\

\end{proposition}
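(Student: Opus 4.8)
The plan is to prove surjectivity of $\Upsilon_n$ by torus localization, reducing the statement to a purely combinatorial separation property of the weights of $\CV_n$ at the fixed points. Recall that $\CM_n$ is smooth (of dimension $2rn$) and that the $T$--fixed points are isolated, indexed by $r$--tuples of partitions $\bla = (\lambda^1,\dots,\lambda^r)$ of total size $n$, the fixed point $\bla$ corresponding to the direct sum of the monomial ideals cut out by the $\lambda^k$ in the $r$ framing directions. Writing $F = \BQ(q_1,q_2,u_1,\dots,u_r)$ for the fraction field of $K_T(\pt)$, I would first record the localization isomorphism
$$
K_n \ \cong \ \bigoplus_{|\bla| = n} F \cdot [\bla],
$$
where $[\bla]$ is the class of the fixed point $\bla$ and the isomorphism is given by restriction to the fixed locus.

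Next I would compute the restriction of a tautological class to a fixed point. At $\bla$, the fiber of $\CV_n$ is the $T$--representation with character $\sum_{k=1}^r u_k \sum_{(i,j) \in \lambda^k} q_1^{i} q_2^{j}$ (up to an immaterial convention-dependent shift in the exponents), so its $K$--theoretic Chern roots are precisely the characters
$$
X_\bla \ = \ \Big\{ u_k \, q_1^{i} q_2^{j} \ : \ 1 \leq k \leq r, \ (i,j) \in \lambda^k \Big\}.
$$
Hence the restriction of $\bar f_n = \Upsilon_n(f)$ to $\bla$ equals $f(X_\bla)$. Composing $\Upsilon_n$ with restriction to all fixed points therefore yields, after extending scalars to $F$, an $F$--algebra homomorphism
$$
\Phi \ : \ \Lambda_F \longrightarrow \bigoplus_{|\bla|=n} F, \qquad f \longmapsto \big( f(X_\bla) \big)_\bla,
$$
from the ring $\Lambda_F$ of symmetric functions with coefficients in $F$, and by the localization isomorphism above, $\im \Phi$ is exactly the $F$--span of the tautological classes. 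Thus proving Proposition \ref{prop:gen} is equivalent to showing that $\Phi$ is surjective.

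The heart of the matter is the separation statement: the multisets $X_\bla$ are pairwise distinct as $\bla$ ranges over $r$--tuples of partitions of size $n$. Since all equivariant parameters have been inverted, the monomials $u_k q_1^i q_2^j$ are pairwise distinct for distinct triples $(k,i,j)$; hence $X_\bla$ is genuinely a set, and knowing which monomials occur recovers, for each $k$, the exact collection of boxes of $\lambda^k$, and so recovers $\bla$. Because symmetric functions separate distinct sets, it follows that the evaluation characters $\mathrm{ev}_\bla = \pi_\bla \circ \Phi : \Lambda_F \to F$ are pairwise distinct $F$--algebra homomorphisms.

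Finally I would conclude by counting characters. Let $A = \im \Phi \subseteq \bigoplus_\bla F =: F^N$, where $N$ is the number of fixed points; this is a finite--dimensional commutative $F$--algebra, and the coordinate projections restrict to $N$ pairwise distinct characters $\pi_\bla|_A : A \to F$ (they are distinct because the $\mathrm{ev}_\bla$ are). Since distinct $F$--algebra characters of a finite--dimensional algebra are $F$--linearly independent in $\Hom_F(A,F)$, we obtain $N \leq \dim_F A \leq \dim_F F^N = N$, forcing $A = F^N$. Therefore $\Phi$, and hence $\Upsilon_n$, is surjective. The main obstacle is the separation statement: the entire argument rests on checking that the weight systems $X_\bla$ distinguish the fixed points, which in turn relies on the genericity afforded by localizing at the equivariant parameters; once that is secured, the passage from separation of points to full surjectivity is the standard linear independence of characters.
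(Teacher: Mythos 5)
The paper does not actually prove this Proposition here; it defers to \cite{Mod}. Your argument is correct and is essentially the standard one: after localization, restriction to the finitely many fixed points identifies $K_n$ with $F^N$, tautological classes restrict to evaluations of symmetric functions at the weight sets $X_\bla$, these sets are pairwise distinct because the monomials $u_kq_1^iq_2^j$ are distinct over $\BQ(q_1,q_2,u_1,\dots,u_r)$, and linear independence of the resulting $N$ distinct characters of $\im\Phi$ forces $\im\Phi = F^N$. The only input you take as given is the localization isomorphism \eqref{eqn:localization} itself (which requires an argument for the non-proper $\CM_n$, but the paper assumes it too), and the same separation mechanism is what the paper invokes in the proof of Theorem \ref{thm:geom} via the commuting operators $P_{0,d}$ with distinct spectra.
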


\subsection{} 
\label{sub:part}

In this Subsection, we will use the language of partitions $\lambda = (\lambda_1 \geq \lambda_2 \geq ...)$. To any such partition, we can associate its Young diagram, which is a collection of lattice squares in the first quadrant. For example, the following is the Young diagram of the partition $\la = (4,3,1)$:

\begin{picture}(100,160)(-90,-15)
\label{fig}

\put(0,0){\line(1,0){160}}
\put(0,40){\line(1,0){160}}
\put(0,80){\line(1,0){120}}
\put(0,120){\line(1,0){40}}

\put(0,0){\line(0,1){120}}
\put(40,0){\line(0,1){120}}
\put(80,0){\line(0,1){80}}
\put(120,0){\line(0,1){80}}
\put(160,0){\line(0,1){40}}

%\put(160,40){\circle*{5}}
%\put(120,80){\circle*{5}}
%\put(40,120){\circle*{5}}

%\put(160,0){\circle{5}}
%\put(120,40){\circle{5}}
%\put(40,80){\circle{5}}
%\put(0,120){\circle{5}}

%\put(162,3){\scriptsize{$(4,0)$}}
%\put(162,43){\scriptsize{$(4,1)$}}
%\put(122,43){\scriptsize{$(3,1)$}}
%\put(122,83){\scriptsize{$(3,2)$}}
%\put(42,83){\scriptsize{$(1,2)$}}
%\put(42,123){\scriptsize{$(1,3)$}}
%\put(2,123){\scriptsize{$(0,3)$}}

\put(65,-20){\mbox{Figure \ref{fig}}}

\end{picture}

%The hollow circles indicate the \textbf{inner} corners of the partition, while the solid circles indicate the \textbf{outer} corners. The notions of inner/outer corners will continue to make sense for $r$--partitions

\tab 
Given two partitions, we will write $\lambda \geq \mu$ if the Young diagram of $\lambda$ completely contains that of $\mu$. In this case, the collection of boxes $\lamu$ will be called a \textbf{skew partition}. If such a skew partition $\lamu$ has $n$ boxes, then a labeling of these boxes with the numbers $1,...,n$ is called a \textbf{standard Young tableau} if the numbers decrease as we go up and to the right in the partition.

\tab 
For a fixed natural number $r$, we will use the term $r$--\textbf{partition}:
$$
\bla = (\la^1,...,\la^r)
$$
for an ordered collection of ordinary partitions. The size of $\bla$ is defined to be $|\bla| = |\lambda^1|+...+|\lambda^r|$, and we mimic the notation of usual partitions by writing $\bla \vdash n$ for ``$\bla$ is an $r$--partition of size $n$". A box in an $r$--partition is determined not only by its position in the plane, but also by which of $\lambda^1,...,\lambda^r$ it lies in. We collect this information in the \textbf{weight} of the box, namely the monomial:
\begin{equation}
\label{eqn:weight}
\chi_\sq = u_k q_1^i q_2^j
\end{equation}
for the box $\sq \in \la^k$ whose bottom left corner lies at coordinates $(i,j)$. We will say that $\bla \geq \bmu$ if we have $\la^i \geq \mu^i$ for all $i\in \{1,...,r\}$. If this is the case, the $r$--tuple $\blamu = \{\la^1 \backslash \mu^1,...,\la^r \backslash \mu^r\}$ will be called a \textbf{skew $r$--partition}. If such a skew $r$--partition consists of $n$ boxes, then a labeling of these boxes with the numbers $1,...,n$ is called a \textbf{standard Young tableau} if the numbers decrease as we go up and to the right in each constituent partition $\la^i \backslash \mu^i$, $i\in \{1,...,r\}$. Note that there is no restriction between the numbers placed in $\sq \in \la^i \backslash \mu^i$ and $\sq' \in \la^j \backslash \mu^j$ if $i\neq j$. \\

\subsection{} 
\label{sub:fixed points}

The reason why we introduced the language of the previous Subsection is that the torus fixed points of the action $T \curvearrowright \CM$ are naturally indexed by $r$--partitions $\bla = (\la^1,...,\la^r)$. Specifically, fixed points are of the form:
$$
\CI_\bla := \CI_{\lambda^1} \oplus ... \oplus \CI_{\lambda^r}
$$ 
where for any usual partition $\mu=(\mu_1 \geq \mu_2 \geq ...)$, we write $\CI_\mu \subset \CO_{\BP^2}$ for the sheaf corresponding to the ideal $I_\mu = (x^{\mu_1}, x^{\mu_2}y,x^{\mu_3}y^2,...) \subset \BC[x,y]$. The $K$--theory class of the skyscraper sheaf at $\CI_\bla$ will be denoted by $[\bla] \in K$, and for convenience we will renormalize it to:
\begin{equation}
\label{eqn:renormalize}
|\bla \rangle := \frac {[\bla]}{[ \wedge^\bullet \Tan_\bla \CM_n ]} \in K_{n}
\end{equation}
if $n = |\bla| \Leftrightarrow \bla \vdash n$. The symbol $\wedge^\bullet$ is defined in \eqref{eqn:exterior power} below. The reason for the normalization \eqref{eqn:renormalize} is a consequence of the Thomason localization theorem, also known as the $K$--theoretic version of Atiyah-Bott localization, which reads:
\begin{equation}
\label{eqn:localization}
c = \sum_{\bla \vdash n} c|_\bla \cdot |\bla \rangle \quad \forall \ c \in K_n
\end{equation}
The situation we will mostly be concerned with is when $c = \bar{f}_n$ is a tautological class, for some symmetric polynomial $f$. In this case, we have:
$$
\bar{f}_n|_\bla = f(\bla) := f(...,\chi_\sq,...)_{\sq \in \bla}
$$
The notation above means that we plug the weights of the boxes of the $r$--partition $\bla$, which are defined in \eqref{eqn:weight}, into the arguments of the symmetric polynomial $f$. Combining this with \eqref{eqn:localization}, we conclude that:
$$
\bar{f}_n = \sum_{\bla \vdash n} f(\bla) \cdot |\bla\rangle
$$

%The notation $\wwedge$ refers to a small modification of the exterior power of the tangent bundle, which we will define explicitly in Subsection \ref{sub:adjoint}, and will have little impact on what follows. 

%Note the slight ambiguity in the above formula: if we wanted $\bar{f}_n$ in the left-hand side, then in the right-hand side we would have to only sum over those $r$--partitions $\bla$ of total size $n$.  \\

\subsection{}
\label{sub:action}

We are now ready to define the action $\CA \curvearrowright K$, following \cite{Mod} (note that this action also appears in \cite{FT} and \cite{SV}, in either case presented in a different language). Let us write $Z = z_1+...+z_k$ for the finite alphabet of variables that appears in the definition of shuffle elements, and $X = x_1+x_2+...$ for the infinite alphabet of variables that appears in the definition of tautological classes. Also write:
\begin{equation}
\label{eqn:def tau}
\tau(z) = \prod_{i=1}^r \left(1 - \frac {z}{u_i} \right)
\end{equation}
We will often use the language of plethysms when it comes to symmetric polynomials. Specifically, given a symmetric polynomial in infinitely many variables $f(x_1,x_2,...)$ and a finite collection of variables $Z = z_1+...+z_k$, we will write:
$$
f(X+Z) = f(x_1,x_2,...,z_1,...,z_k)
$$
and think of it as a symmetric polynomial in the $x_i$'s with coefficients depending on the $z_j$'s.
The notation $f(X-Z)$ does not have as straightforward a presentation as above, but we may rigorously define the assignments $f(X) \leadsto f(X \pm Z)$ as the homomorphisms defined on the ring of symmetric polynomials in $X$ by setting:
$$
\sum_{i=1}^\infty x_i^n \leadsto \sum_{i=1}^\infty x_i^n \pm (z_1^n+...+z_k^n)
$$
for all $n \in \BN$. Since power sum functions generate the ring of symmetric polynomials, this completely defines the notation $f(X \pm Z)$ for any symmetric $f$. \\

\begin{theorem}
\label{thm:action}

\emph{(\cite{Mod}):} For any $k,d>0, n \geq 0$, any symmetric polynomial $f(X) = f(x_1,x_2,...)$ and any shuffle element $R(Z) = R(z_1,...,z_k)$, the assignments $c=q^r$,
\begin{equation}
\label{eqn:act 0}
P_{0,\pm d} \cdot \bar{f}_n = \pm \overline{f \cdot \left[ \sum_{i=1}^r u_i^{\pm d}q^{\delta_\pm^- d} - (1-q_1^d)(1-q_2^d) \sum_{j=1}^\infty x_j^{\pm d} \right]}_n
\end{equation}
\begin{equation}
\label{eqn:act minus}
R^\leftarrow \cdot \bar{f}_n  = \int_+ \overline{f(X-Z)\zeta \left(\frac ZX \right)}_{n+k} \frac {R(Z) \tau(qZ)}{\zeta \left(\frac {Z}{Z} \right)}
\end{equation}
\begin{equation}
\label{eqn:act plus}
R^\rightarrow \cdot \bar{f}_n = q^{k(1-r)} \int_- \overline{f(X+Z)\zeta \left(\frac XZ \right)^{-1}}_{n-k} \frac {R(Z) \tau\left( Z \right)^{-1} }{\zeta \left(\frac {Z}{Z} \right)}
\end{equation}
give rise to an action $\CA \curvearrowright K$. The integrals $\int_+, \int_-$ will be defined in Remark \ref{rem:normal} below. The right-hand sides of \eqref{eqn:act minus} and \eqref{eqn:act plus} use the multiplicative notation:
\begin{equation}
\label{eqn:multiplicative}
\zeta\left(\frac XZ \right) := \prod_{i=1}^\infty \prod_{j=1}^k \zeta \left( \frac {x_i}{z_j} \right), \qquad \zeta \left( \frac ZZ \right) := \prod_{1 \leq i \neq j \leq k} \zeta \left( \frac {z_i}{z_j} \right)
\end{equation}
If $n<k$, then the right-hand side of \eqref{eqn:act plus} is defined to be 0. \\

%where $\zeta^*(w) = \zeta(w)$, unless $w=1$, in which case we set $\zeta^*(1) = \frac {(1-q_1)(1-q_2)}{1-q}$. \\
\end{theorem}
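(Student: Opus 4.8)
The plan is to verify that the assignments \eqref{eqn:act 0}, \eqref{eqn:act minus} and \eqref{eqn:act plus} respect the defining relations of $\CA = \CA^\leftarrow \otimes \CA^{\diag} \otimes \CA^\rightarrow$. Since $\CA^\leftarrow = \CS$ and $\CA^\rightarrow = \CS^\op$ are generated in degree one by Theorem \ref{thm:shuf}, and $\CA^{\diag}$ is the $_q$Heisenberg algebra on the $a_n$, it is enough to check four things: that the operators $R^\leftarrow, R^\rightarrow$ actually land in $K$ rather than merely producing rational functions; that $R \mapsto R^\leftarrow$ is an algebra map for the shuffle product \eqref{eqn:mult} while $R \mapsto R^\rightarrow$ is its opposite; that \eqref{eqn:act 0} defines a Heisenberg action satisfying \eqref{eqn:heisenberg}; and finally that the cross relations \eqref{eqn:comm1}, \eqref{eqn:comm2}, \eqref{eqn:comm3} hold. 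Throughout, the workhorse is Thomason localization \eqref{eqn:localization}, which lets me evaluate every operator on the fixed-point basis $|\bla\rangle$ and reduce each identity to a manipulation of the integrands in \eqref{eqn:act minus} and \eqref{eqn:act plus}.

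First I would settle well-definedness. By Proposition \ref{prop:gen} the space $K_n$ is spanned by tautological classes, so I only need to check that the contour integral of a shuffle element against $\overline{f(X-Z)\zeta(Z/X)}$ returns a class with no spurious poles; the denominator $\prod_{i\neq j}(z_i - z_j q)$ of \eqref{eqn:shuf} is cancelled by the factor $\zeta(Z/Z)^{-1}$ together with the wheel conditions \eqref{eqn:wheel}, which force the residues along the forbidden hyperplanes to vanish, while the framing factor $\tau(qZ)$ of \eqref{eqn:def tau} records the rank $r$ and sends the output into the correct component $K_{n+k}$. Multiplicativity $(R_1 * R_2)^\leftarrow = R_1^\leftarrow R_2^\leftarrow$ is then a contour computation: composing two raising operators produces an iterated integral whose variables merge into a single symmetric contour, and the $\prod \zeta(z_i/z_j)$ cross terms generated when the first group of variables is pulled past the second reproduce exactly the kernel of the shuffle product \eqref{eqn:mult}. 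Running the same computation with the contours nested in the opposite order yields $(R_1 * R_2)^\rightarrow = R_2^\rightarrow R_1^\rightarrow$, matching $\CA^\rightarrow = \CS^\op$.

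The mixed relations \eqref{eqn:comm1} and \eqref{eqn:comm2} should follow almost immediately from \eqref{eqn:act 0}: commuting $P_{0,d}$ through a raising (respectively lowering) operator amounts to applying the plethystic substitution $f(X) \leadsto f(X \mp Z)$ inside the integral, and the discrepancy between $\overline{f(X\mp Z)}$ and $\overline{f(X)}$ is precisely the power sum $\pm(z_1^d + \dots + z_k^d)$ appearing there. The Heisenberg relation \eqref{eqn:heisenberg} for the $P_{0,\pm d}$ is a direct check against \eqref{eqn:act 0}, and it forces the central charge $c = q^r$ because the zero-mode contribution $\sum_{i=1}^r u_i^{\pm d} q^{\delta d}$ carries exactly $r$ summands.

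The crux, and the step I expect to be hardest, is the cross relation \eqref{eqn:comm3} between a single lowering generator $(z^d)^\rightarrow$ and a single raising generator $(z^{d'})^\leftarrow$. Applying the two orders of composition to a tautological class yields two double contour integrals in a lowering variable $z$ and a raising variable $w$ whose integrands agree but whose contours differ only in their relative nesting. By the residue theorem the commutator collapses to the residues swept out as one contour is pushed past the other; these poles arise solely from $\zeta(z/w)$ and its reciprocal in \eqref{eqn:act minus} and \eqref{eqn:act plus}. After a careful bookkeeping one checks that the off-diagonal residues cancel, that the surviving one-variable integral assembles into the generators $A_{d+d'}$ of \eqref{eqn:defp}, and that the framing factor $\tau$ evaluated at the residue produces the powers of $c = q^r$ distinguishing the $\delta_{d+d' \geq 0}$ and $\delta_{d+d' \leq 0}$ terms, together with the overall constant $(1-q_1)(1-q_2)/(q^{-1}-1)$. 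This residue accounting is the delicate part; everything else is formal. Finally, the $\BN$-grading by $n$ and the degree $-k$ shift of $P_{k,d}$ give conditions (1) and (2) of Subsection \ref{sub:rep theory}, and $c = q^r$ identifies the level as $r$ by Definition \ref{def:level}.
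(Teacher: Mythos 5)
Your outline is correct and follows the same strategy the paper relies on: the theorem is quoted from \cite{Mod}, and the verification there (mirrored in this paper's proof of the level--one case, Proposition \ref{prop:fock act}) proceeds exactly as you describe --- reduce to the degree--one generators via Theorem \ref{thm:shuf} and Proposition \ref{prop:half is shuffle}, check the Heisenberg and mixed relations by plethystic substitution, and obtain the cross relation \eqref{eqn:comm3} by moving one contour past the other and collecting the residues of $\zeta$ at $z = w$ and $z = wq$, with the framing factors $\tau$ supplying the powers of $c = q^r$. No gaps in the plan; the only work left is the residue bookkeeping you already flag as the delicate step.
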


%In particular, we observe the following commutation relation between right and left shuffle elements:
%$$
%\left[ \delta\left(\frac {z^\rightarrow}{w_1} \right), \delta\left(\frac {z^\leftarrow}{w_2} \right) \right] = \frac {(1-q_1)(1-q_2)}{1-q} \cdot \left[ \delta\left( \frac {w_1}{w_2} \right) H^-(w_1) - \delta\left( \frac {w_1}{w_2} \right) H^+(w_2) \right]
%$$
%$$
%h^-(z) = \exp \left[ - \sum_{n=1}^\infty \frac {a_{-k} z^k}{k} (1-q_1^k)(1-q_2^k)(1-q^{-k}) \right]
%$$
%$$
%h^+(z) = c \cdot \exp \left[ \sum_{n=1}^\infty \frac {a_k z^{-k}}{k} (1-q_1^k)(1-q_2^k)(1-q^{-k}) \right]
%$$
%where $c = q^r$. 

\begin{remark}
\label{rem:normal}

The normal ordered integrals $\int_\pm$ are defined in \cite{Mod}, but we will only need to invoke them when $R$ is a shuffle element of the form:
\begin{equation}
\label{eqn:our r}
R(z_1,...,z_k) = \esym \left[ \frac {\rho(z_1,...,z_k)}{\prod_{i=1}^{k-1} \left(1 - \frac {qz_{i+1}}{z_i} \right)} \prod_{1 \leq i < j \leq k} \zeta \left( \frac {z_i}{z_j} \right) \right]
\end{equation}
for a Laurent polynomial $\rho$. For this choice of $R$, the right-hand sides of \eqref{eqn:act minus} and \eqref{eqn:act plus} are \textbf{defined} as the following contour integrals (write $Dz = \frac {dz}{2\pi i z}$):
$$
\int_+ = \int_{z_k \succ ... \succ z_1 \succ X} \overline{ f \left(X-\sum_{i=1}^k z_i \right)\prod_{i=1}^k \left[ \zeta \left(\frac {z_i}X \right) \tau(qz_i)\right]}  \frac {\rho(z_1,...,z_k) Dz_1 ... Dz_k}{\prod_{i=1}^{k-1} \left(1 - \frac {qz_{i+1}}{z_i} \right) \prod_{i < j} \zeta \left( \frac {z_j}{z_i} \right)} 
$$
\begin{equation}
\label{eqn:dank}
\int_- = \int_{z_1 \succ ... \succ z_k \succ X} \overline {\frac {f (X+\sum_{i=1}^k z_i)}{\prod_{i=1}^k \left[ \zeta \left(\frac X{z_i} \right) \tau(z_i) \right]}} \cdot \frac {\rho(z_1,...,z_k) Dz_1 ... Dz_k}{\prod_{i=1}^{k-1} \left(1 - \frac {qz_{i+1}}{z_i} \right) \prod_{i < j} \zeta \left( \frac {z_j}{z_i} \right)} 
\end{equation}
In the right-hand side, the notation $z \succ X$ means that the variable $z$ runs over a contour that surrounds all of the $X$ variables. More generally, in the integral $\int_-$, the contour of $z_1$ surrounds the contour of $z_2$, which surrounds the contour of $z_3$,..., which surrounds the contour of $z_k$, which in turn surrounds all the $X$ variables. 0 and $\infty$ \textbf{must not} be contained in any of the contours. \\

\end{remark}

\begin{remark}
\label{rem:normal 2}

Formulas \eqref{eqn:act minus} and \eqref{eqn:act plus} continue to hold if we multiply $f(X)$ by a product of the form $\prod_j \prod_{i=1}^\infty (x_i - t_j)^{\pm 1}$ for some formal symbols $t_j$. Then the notation $f(X\pm Z)$ may pick up poles from various factors $\prod_j \prod_{i=1}^k (z_i - t_j)^{\pm 1}$, and in this case, the normal-ordered integrals $\int_\pm$ must be defined such that the complex numbers $\{t_j\}$, together with $0$ and $\infty$, lie outside the $z_1,...,z_k$ contours. \\

\end{remark}

\subsection{}
\label{sub:fixed points}

In \cite{Mod}, we proved that formulas \eqref{eqn:act 0}--\eqref{eqn:act plus} imply the following relations in the basis of fixed points $| \bla \rangle$, and are in fact equivalent to them upon computing the integrals \eqref{eqn:dank} by a residue computation:
\begin{equation}
\label{eqn:act 0 fixed}
\langle \bmu |P_{0,\pm d} | \bla \rangle = \pm \delta_\bla^\bmu \left[ \sum_{i=1}^r u_i^{\pm d}q^{\delta_\pm^- d} - (1-q_1^d)(1-q_2^d) \sum_{\sq \in \bla} \chi_\sq^{\pm d} \right]
\end{equation}
\begin{equation}
\label{eqn:act minus fixed}
\langle \bla | R^\leftarrow |\bmu \rangle = R(\blamu) \prod_{\bsq \in \blamu} \left[\frac {(1-q_1)(1-q_2)}{1-q} \zeta\left(\frac {\chi_\bsq}{\chi_\bmu} \right) \tau \left( q \chi_\bsq \right) \right]
\end{equation}
\begin{equation}
\label{eqn:act plus fixed}
\langle \bmu | R^\rightarrow |\bla \rangle = R(\blamu) \prod_{\bsq \in \blamu} \left[ \frac {(1-q_1)(1-q_2)}{q^{r-1}(1-q)} \zeta\left(\frac {\chi_\bla}{\chi_\bsq} \right)^{-1} \tau \left( \chi_\bsq  \right)^{-1} \right]
\end{equation}
In formulas \eqref{eqn:act 0 fixed}--\eqref{eqn:act plus fixed}, $\zeta\left(\frac {\chi_\bla}z \right)$ is multiplicative notation for $\prod_{\sq \in \bla} \zeta\left(\frac {\chi_\sq}z \right)$. If $\bla \not \geq \bmu$, then the right-hand sides of \eqref{eqn:act minus fixed} and \eqref{eqn:act plus fixed} are defined to be zero. In particular, if $R$ is a shuffle element of the form \eqref{eqn:our r}, then it is easy to see that:
\begin{equation}
\label{eqn:act minus fixed 2}
\langle \bla | R^\leftarrow |\bmu \rangle = \sum^{T \text{ a standard Young}}_{\text{tableau of shape } \blamu} \ \frac {\rho(\chi_1,...,\chi_k)}{\prod_{i=1}^{k-1} \left(1 - \frac {q\chi_{i+1}}{\chi_i} \right)} 
\end{equation}
$$
\prod_{1 \leq i < j \leq k} \zeta \left( \frac {\chi_i}{\chi_j} \right)  \prod_{i=1}^k \left[\frac {(1-q_1)(1-q_2)}{1-q} \zeta\left(\frac {\chi_i}{\chi_\bmu} \right) \tau \left( q \chi_i \right) \right]
$$
\begin{equation}
\label{eqn:act plus fixed 2}
\langle \bmu | R^\rightarrow |\bla \rangle = \sum^{T \text{ a standard Young}}_{\text{tableau of shape } \blamu} \ \frac {\rho(\chi_1,...,\chi_k)}{\prod_{i=1}^{k-1} \left(1 - \frac {q\chi_{i+1}}{\chi_i} \right)} 
\end{equation}
$$
\prod_{1 \leq i < j \leq k} \zeta \left( \frac {\chi_i}{\chi_j} \right)  \prod_{i=1}^k \left[\frac {(1-q_1)(1-q_2)}{q^{r-1}(1-q)} \zeta\left(\frac {\chi_\bla}{\chi_i} \right)^{-1} \tau \left( \chi_i  \right)^{-1} \right]
$$
where $\chi_1,...,\chi_k$ denote the weights of the boxes labeled $1,...,k$ in the standard Young tableau $T$. The argument for why formulas \eqref{eqn:act minus fixed}--\eqref{eqn:act plus fixed} imply \eqref{eqn:act minus fixed 2}--\eqref{eqn:act plus fixed 2} is explained in detail in \loccitt: the main idea is that, up to a product of linear factors, \eqref{eqn:act minus fixed} and \eqref{eqn:act plus fixed} are given by evaluating a shuffle element $R$ at the weights of a skew $r$--partition $\blamu$. When $R$ is presented as a symmetrization $\sym [...]$ as in \eqref{eqn:our r}, this evaluation may be computed by adding together the specializations of $[...]$ at all labelings of the boxes of $\blamu$ with the numbers $1,...,k$. However, because $\zeta(q_1^{-1}) = \zeta(q_2^{-1}) = 0$, such a labeling produces a non-zero contribution if and only if the labels decrease as we go up and to the right in the $r$--partition. This precisely says that the labeling must be a standard Young tableau. \\

\subsection{}
\label{sub:w action}

Formulas \eqref{eqn:act minus fixed 2} and \eqref{eqn:act plus fixed 2} allow us to write down the operators $W_{d,k} \in \wCA^\uparrow$ in the basis $\{|\bla \rangle, \bla \ r\text{--partition} \}$ of the level $r$ representation $K$. We will do so by invoking formula \eqref{eqn:explicit shuffle}:
\begin{equation}
\label{eqn:ldu reloaded}
W_{d,k} =  \sum^{d_\rightarrow - d_\leftarrow = d}_{k_\leftarrow + k_0 + k_\rightarrow = k} T_{d_\leftarrow,k_\leftarrow}^\leftarrow E_{0,k_0} T_{d_\rightarrow, k_\rightarrow}^\rightarrow \cdot q^{(k-1)d_\rightarrow}
\end{equation}
where the sum goes over $d_\leftarrow, d_\rightarrow \in \BN$ and $k_\leftarrow, k_0, k_\rightarrow \in \BN \sqcup \{0\}$. Since the shuffle elements $T_{d,k}$ are given by \eqref{eqn:awesome shuffle}, then \eqref{eqn:act minus fixed 2}--\eqref{eqn:act plus fixed 2} imply:
\begin{equation}
\label{eqn:lower}
\langle \bmu | T_{d_\leftarrow,k_\leftarrow}^\leftarrow |\bnu \rangle = \sum^{T \text{ a standard Young}}_{\text{tableau of shape } \bmunu} \ \frac {(-1)^{k_\leftarrow-1} \chi_{d_\leftarrow}^{k_\leftarrow}}{\prod_{i=1}^{d_\leftarrow-1} \left(1 - \frac {q\chi_{i+1}}{\chi_i} \right)} 
\end{equation}
$$
\prod_{1 \leq i < j \leq d_\leftarrow} \zeta \left( \frac {\chi_i}{\chi_j} \right)  \prod_{i=1}^{d_\leftarrow} \left[\frac {(1-q_1)(1-q_2)}{1-q} \zeta\left(\frac {\chi_i}{\chi_\bnu} \right) \tau \left( q \chi_i \right) \right]
$$
\begin{equation}
\label{eqn:upper}
\langle \bnu | T_{d_\rightarrow, k_\rightarrow}^\rightarrow |\bla \rangle = \sum^{T \text{ a standard Young}}_{\text{tableau of shape } \blanu} \ \frac {(-1)^{k_\rightarrow
-1} \chi_{d_\rightarrow}^{k_\rightarrow}}{\prod_{i=1}^{d_\rightarrow-1} \left(1 - \frac {q\chi_{i+1}}{\chi_i} \right)} 
\end{equation}
$$
\prod_{1 \leq i < j \leq d_\rightarrow} \zeta \left( \frac {\chi_i}{\chi_j} \right)  \prod_{i=1}^{d_\rightarrow} \left[\frac {(1-q_1)(1-q_2)}{q^{r-1}(1-q)} \zeta\left(\frac {\chi_\bla}{\chi_i} \right)^{-1} \tau \left( \chi_i  \right)^{-1} \right]
$$
for any skew partitions $\bmunu$ and $\blanu$ of size $d_\leftarrow$ and $d_\rightarrow$, respectively. Finally, because of \eqref{eqn:act 0 fixed}, we have:
\begin{equation}
\label{eqn:diagonal}
\left \langle \bnu' | E_{0,k_0} | \bnu \right \rangle = \delta_{\bnu'}^\bnu \resy \left[ (-1)^{k_0} y^{k_0} \prod_{i=1}^r \left(1 - \frac {u_i}y\right) \prod_{\sq \in \bnu} \zeta \left(\frac {\chi_\sq}{y} \right) \frac {dy}y \right]
\end{equation}
Therefore, we may restate \eqref{eqn:ldu reloaded} by saying that the matrix coefficient $\langle \bmu | W_{d,k} |\bla \rangle$ is a sum over all $r$--partitions $\bnu \subset \bla \cap \bmu$ of products of the matrix coefficients as in \eqref{eqn:lower}, \eqref{eqn:upper}, \eqref{eqn:diagonal}, thus proving the second formula in Theorem \ref{thm:ldu}. This formula will allow us to show that the action $\CA \curvearrowright K$ factors through an action of the $_qW$--algebra $\CA_r$, as introduced in Definition \ref{def:w}: \\

\begin{theorem}
\label{thm:geom}

There is an action $\CA_r \curvearrowright K$, with respect to which the latter is isomorphic to the Verma module $M_{u_1,...,u_r}$ of \eqref{eqn:verma}. The parameters $u_1,...,u_r$ are defined as the equivariant parameters of $K$, hence there is no abuse of notation. \\

\end{theorem}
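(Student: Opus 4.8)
The plan is to establish two things: first, that the action $\CA \curvearrowright K$ constructed in Theorem \ref{thm:action} descends to an action of the quotient $\CA_r$, i.e. that the defining relations \eqref{eqn:impose rels} hold as operators on $K$; and second, that the resulting $\CA_r$--module is isomorphic to the Verma module $M_{u_1,\dots,u_r}$. For the first part, I would use the explicit LDU formula \eqref{eqn:explicit shuffle} together with the fixed-point matrix coefficients \eqref{eqn:lower}, \eqref{eqn:upper}, \eqref{eqn:diagonal} to show that $\langle \bmu | W_k(x) | \bla \rangle = 0$ whenever $k > r$. The key mechanism is geometric: a nonzero contribution to \eqref{eqn:lower} or \eqref{eqn:upper} requires a standard Young tableau on a skew $r$--partition, and the diagonal factor \eqref{eqn:diagonal} involves the product $\prod_{i=1}^r (1 - u_i/y)$ coming from $\tau$. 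The point is that each of the three factors can only ``use up'' boxes in at most $r$ distinct columns of the Miura/torus structure, so once $k$ exceeds $r$ the residue in \eqref{eqn:diagonal} together with the vanishing from $\zeta(q_1^{-1}) = \zeta(q_2^{-1}) = 0$ forces the total coefficient to vanish. Concretely, I expect the cleanest argument to compare \eqref{eqn:w currents} with the Miura presentation \eqref{eqn:miura 0}: in the level $r$ representation $K$ there are exactly $r$ ``bosonic fields,'' so $W_k = 0$ for $k>r$ is the statement that an elementary symmetric function in $r$ quantities vanishes beyond degree $r$.

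Next I would identify the highest weight vector. The vacuum $|\emptyset\rangle \in K_0$ is the fixed point attached to the empty $r$--partition, and I would verify directly from \eqref{eqn:act 0 fixed}--\eqref{eqn:act plus fixed} (or their integral forms) that $W_{d,k} |\emptyset\rangle = 0$ for all $d > 0$ and that $W_{0,k}|\emptyset\rangle = e_k(u_1,\dots,u_r)|\emptyset\rangle$. The first vanishing follows because $T^\rightarrow_{d_\rightarrow, k_\rightarrow}$ lowers degree and annihilates $K_0$ when $d_\rightarrow > 0$, so only the $d_\rightarrow = d_\leftarrow = 0$ term survives, and that term is purely diagonal; then $W_{0,k}$ acts on $|\emptyset\rangle$ through \eqref{eqn:diagonal} with $\bnu = \emptyset$, where the residue of $(-1)^{k}y^{k}\prod_{i=1}^r(1 - u_i/y)\,\tfrac{dy}{y}$ at $y = \infty$ is precisely $e_k(u_1,\dots,u_r)$. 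These are exactly the Verma module relations \eqref{eqn:verma}, so the universal property gives a surjective $\CA_r$--module map $M_{u_1,\dots,u_r} \twoheadrightarrow K$.

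To finish I would prove this surjection is an isomorphism by a dimension count in each graded piece. Both sides are non-negatively graded with one-dimensional degree-zero component spanned by the vacuum. On the geometric side, $\dim K_n = \#\{\bla \vdash n\}$ is the number of $r$--partitions of $n$, computed from the fixed-point basis $\{|\bla\rangle\}$. On the algebraic side, the spanning argument from the proof of Proposition \ref{prop:w gens and rels} shows $M_{u_1,\dots,u_r}$ is spanned in degree $n$ by the convex, $k$--bounded products \eqref{eqn:gente} applied to $|\emptyset\rangle$, and counting these with $k_i \le r$ matches the number of $r$--partitions of $n$. Since the map is graded and surjective with equal finite dimensions in each degree, it is an isomorphism. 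The final abuse-of-notation remark is immediate: the equivariant parameters $u_i$ of $K$ (entering through $\tau$ in \eqref{eqn:def tau}) are by construction the same $u_i$ appearing in the highest weight $e_k(u_1,\dots,u_r)$.

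The main obstacle I anticipate is the vanishing $W_k = 0$ for $k > r$ on $K$. Unlike the highest-weight computation, which localizes cleanly at the vacuum, this vanishing must hold on \emph{all} matrix coefficients, and \eqref{eqn:explicit shuffle} does not make it manifest (as the paper itself notes just after Remark \ref{rem:paragraph}). The delicate point is that the three LDU factors individually contribute nonzero terms; the vanishing emerges only after summing over all intermediate $\bnu$ and all standard Young tableaux, exploiting both the $\tau$--factor's degree--$r$ truncation in \eqref{eqn:diagonal} and the wheel-condition zeros $\zeta(q_1^{-1}) = \zeta(q_2^{-1}) = 0$ that restrict admissible tableaux. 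I would organize this by matching the shuffle computation to the Miura factorization \eqref{eqn:miura 0}, where the vanishing is structural, rather than attempting a brute-force cancellation in the fixed-point sum.
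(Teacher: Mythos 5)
Your outline of the highest-weight computation and the final dimension count matches the paper's argument, and both are correct as you state them: $T^\rightarrow_{d,k}$ kills the vacuum for $d>0$, the residue of $(-y)^k\prod_i(1-u_i/y)\,\frac{dy}{y}$ at $y=\infty$ gives $e_k(u_1,\dots,u_r)$, and the convex monomials \eqref{eqn:gente} with $k_i\le r$ are counted by $r$--partitions. The gap is in the step you yourself flag as the main obstacle: you never actually prove $W_k(x)=0$ on $K$ for $k>r$. Your first suggestion (direct cancellation of $\langle\bmu|W_k(x)|\bla\rangle$ over all intermediate $\bnu$ and all tableaux) is exactly the brute-force computation the paper warns is not manifest from \eqref{eqn:explicit shuffle}, and the heuristic you give — that the three LDU factors ``use up'' at most $r$ columns — is not an argument; the individual summands are nonzero and the cancellation is invisible term by term. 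Your second suggestion, to transport the structural vanishing from the Miura presentation \eqref{eqn:miura 0}, presupposes that the geometric action on $K$ is intertwined with the free-field realization on $F_{u_1}\otimes\cdots\otimes F_{u_r}$; but that compatibility is precisely Claim \ref{claim:stab}, which the paper explicitly does not prove and which Theorem \ref{thm:geom} is designed to avoid.

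The missing idea is the reduction to the vacuum vector (Claim \ref{claim:claim} in the paper). Since $P_{n,1}$ lies in the $_qW$--algebra and these elements generate $\CA^\uparrow$, which in turn generates $K$ from $|\emptyset\rangle$, the module $K$ is cyclic over the $_qW$--algebra. The commutation relation \eqref{eqn:w rel} expresses $[W_k(x),W_{k'}(y)]$ (up to the $f$--factors) in terms of products $W_{k''}W_{k'''}$ with $k'''>\max(k,k')$, so if $W_l(x)$ annihilates a vector for all $l\ge k$, then $W_k(x)$ can be commuted past any $W_{d,k'}$ at the cost of terms that also vanish. Hence it suffices to check $W_k(x)|\emptyset\rangle=0$ for $k>r$, where the LDU decomposition collapses to a single residue integral \eqref{eqn:stephen}: the apparent pole at $y=\chi_d$ is cancelled because $\chi_d\in\{u_1,\dots,u_r\}$ in any standard Young tableau, and for $k>r$ there is no pole at $y=0$ either, so the integral vanishes. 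Without this reduction your proof does not close; with it, the rest of your argument goes through as written.
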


\begin{proof} We will actually show that $\CA_r^\ext$ acts on $K$, hence the desired action will be a consequence of the homomorphism \eqref{eqn:homo}. In fact, we have actions of:
$$
\wCA^\uparrow \subset \wCA^{\uparrow \ext} \subset \text{a completion of }\CA
$$
on $K$, all induced by the fact that Theorem 3.7 provides an action $\CA \curvearrowright K$, for which the latter is a good representation. To show that the action factors through the subquotient $\CA_{r}^\ext$, we need to show that this action respects the relations:
\begin{equation}
\label{eqn:want 0}
W_r(x) = u \left[ \sum_{n=0}^\infty \frac {h_{-n}}{x^{-n}}\right] \left[ \sum_{n=0}^\infty \frac {h_n}{x^n}\right] 
\end{equation}
\begin{equation}
\label{eqn:want 1}
W_k(x) = 0 \qquad \forall \ k>r
\end{equation}
where $u = u_1...u_r$. First of all, let us prove that $K$ is generated by the coefficients of the $_qW$--currents acting on the vacuum vector $|\emptyset \rangle$ (here, $\emptyset$ denotes the empty $r$--partition). The reason for this is that the operators $P_{n,1} \in \CA$ lie in the $_qW$--algebra, according to the case $k=1$ of \eqref{eqn:w k}. From formula \eqref{eqn:relation 2}, we see that the operators $P_{n,1}$ generate the entire upper shuffle algebra $\CA^\uparrow$ (alternatively, this follows from Theorem \ref{thm:shuf} and Proposition \ref{prop:half is shuffle}), and the upper shuffle algebra acting on $|\emptyset \rangle$ generates $K$ (a quick argument for this is to observe that $P_{0,d} \in \CA^\uparrow$, and $\{P_{0,d}\}_{d>0}$ form a commuting family of operators which are diagonal in the basis $|\bla \rangle$ with distinct spectra). Thus, proving \eqref{eqn:want 0} and \eqref{eqn:want 1} reduces to proving that:
\begin{equation}
\label{eqn:want 2}
W_r(x) \prod^{\text{various}}_{(d,k')} W_{d,k'} |\emptyset \rangle = u \left[ \sum_{n=0}^\infty \frac {h_{-n}}{x^{-n}}\right] \left[ \sum_{n=0}^\infty \frac {h_n}{x^n}\right] \prod^{\text{various}}_{(d,k')} W_{d,k'} |\emptyset \rangle
\end{equation}
\begin{equation}
\label{eqn:want 3}
W_k(x) \prod^{\text{various}}_{(d,k')} W_{d,k'} |\emptyset \rangle = 0 \qquad \forall \ k>r
\end{equation}

\begin{claim}
\label{claim:claim}

Formulas \eqref{eqn:want 2} and \eqref{eqn:want 3} reduce to the fact that:
\begin{equation}
\label{eqn:want 4}
W_r(x)|\emptyset \rangle = u  \left[ \sum_{n=0}^\infty \frac {h_{-n}}{x^{-n}}\right] |\emptyset \rangle \quad \text{and} \quad W_k(x)|\emptyset \rangle = 0  \qquad \forall \ k>r
\end{equation}

\end{claim}

\noindent \textbf{Proof of Claim \ref{claim:claim}:} Suppose that \eqref{eqn:want 4} holds. Then one may prove that the left-hand side of \eqref{eqn:want 3} vanishes by commuting the current $W_k(x)$ past the various $W_{d,k'}$'s. This uses the commutation relations \eqref{eqn:w rel}, which express the commutator of $W_k(x)$ and $W_{k'}(y)$ in terms of currents $W_{k''}(x)W_{k'''}(y)$ with $k'''>\max(k,k')$. The same argument shows that \eqref{eqn:want 4} $\Rightarrow$ \eqref{eqn:want 2}, once one shows that $W_r(x)$ and: 
$$
u \left[ \sum_{n=0}^\infty \frac {h_{-n}}{x^{-n}}\right] \left[ \sum_{n=0}^\infty \frac {h_n}{x^n}\right]
$$
have the same commutation relations with $W_1(y),...,W_r(y)$. This is an immediate consequence, which we leave as an exercise to the interested reader, of \eqref{eqn:w rel 0 minus}--\eqref{eqn:w rel} and the already proved fact that $W_k(x) = 0$ for $k>r$. \qed \\

\noindent It therefore remains to prove \eqref{eqn:want 4}, so let us recall the LDU decomposition \eqref{eqn:ldu reloaded}. Since $T_{d,k}^\rightarrow$ annihilates the vacuum vector unless $d=0$, we have:
$$
W_{d,k} |\emptyset \rangle =  \sum^{k_\leftarrow, k_0 \geq 0}_{k_\leftarrow + k_0 = k} T_{d,k_\leftarrow}^\leftarrow E_{0,k_0} |\emptyset\rangle 
$$
We may sum the above expression over all $d\in \BZ$, and obtain the formula:
\begin{equation}
\label{eqn:big factor}
W_k(x) = \resy \left[ T(x,y)^\leftarrow E(y) |\emptyset\rangle \cdot (-y)^k \frac {dy}y \right]
\end{equation}
where $E(y) = \sum_{k=0}^\infty (-1)^k \frac {E_{0,k}}{y^k}$ and:
\begin{equation}
\label{eqn:pam pam}
T(x,y) = \sum_{d=0}^\infty \sym \left[ \frac {x^d}{\left(1 - \frac y{z_d} \right)\prod_{i=1}^{d-1} \left(1 - \frac {qz_{i+1}}{z_i} \right)} \prod_{1 \leq i < j \leq d} \zeta \left( \frac {z_i}{z_j} \right) \right]
\end{equation}
Recall that \eqref{eqn:diagonal} implies that $E(y) |\emptyset\rangle = \prod_{i=1}^r \left(1 - \frac {u_i}y \right)|\emptyset\rangle$. Thus \eqref{eqn:big factor} becomes:
$$
W_k(x)|\emptyset \rangle = \resy \left[ T(x,y)^\leftarrow |\emptyset \rangle \cdot (-y)^k \prod_{i=1}^r \left(1 - \frac {u_i}y \right) \frac {dy}y \right]
$$
Now we will use \eqref{eqn:act minus fixed 2}, or equivalently \eqref{eqn:lower}, to compute the matrix coefficients of the above vector in terms of the basis $|\bla \rangle$. Letting $|\bla| = d$, we have:
\begin{equation}
\label{eqn:stephen}
\langle \bla | W_k(x)|\emptyset \rangle = \resy \left[ (-y)^k \prod_{i=1}^r \left(1 - \frac {u_i}y \right)  \sum^{T \text{ a standard Young}}_{\text{tableau of shape } \bla} \right.  
\end{equation}
$$
\left. \frac {x^d}{\left(1 - \frac y{\chi_d} \right)\prod_{i=1}^{d-1} \left(1 - \frac {q\chi_{i+1}}{\chi_i} \right)}  \prod_{1 \leq i < j \leq d} \zeta \left( \frac {\chi_i}{\chi_j} \right)  \prod_{i=1}^d \left[\frac {(1-q_1)(1-q_2)}{1-q} \tau \left( q \chi_i \right) \right] \frac {dy}y \right]
$$
The integral of \eqref{eqn:stephen} is equal to the sum of its residues at the poles $y = 0$ and $y = \chi_d$. However, note that the integrand does not actually have a pole at $y = \chi_d$, because in any standard Young tableau of shape $\bla$, we have $\chi_d \in \{u_1,...,u_r\}$, and thus the pole at $y = \chi_d$ is canceled by one of the factors on the first line of \eqref{eqn:stephen}. Moreover, when $k>r$ there is also no pole at $y=0$, and hence the integral is 0. This implies that $W_k(x) |\emptyset \rangle = 0$ for all $k>r$, as in \eqref{eqn:want 4}. By the same reason, when $k=r$ the integral is equal to the residue at the simple pole at $y = 0$:
$$
\langle \bla | W_r(x)|\emptyset \rangle = u  \sum^{T \text{ a standard Young}}_{\text{tableau of shape } \bla} \frac {x^d \prod_{i < j} \zeta \left( \frac {\chi_i}{\chi_j} \right) }{\prod_{i=1}^{d-1} \left(1 - \frac {q\chi_{i+1}}{\chi_i} \right)}   \prod_{i=1}^d \left[\frac {(1-q_1)(1-q_2)}{1-q} \tau \left( q \chi_i \right) \right]
$$
Comparing this with formula \eqref{eqn:hkd} for $H_{-d,0}$ and \eqref{eqn:act minus fixed 2}, we infer that:
$$
\langle \bla | W_r(x)|\emptyset \rangle = \langle \bla | u \sum_{n=0}^\infty \frac {h_{-n}}{x^{-n}} |\emptyset \rangle \quad \forall \bla \qquad \Rightarrow \qquad  W_r(x)|\emptyset \rangle = u \sum_{n=0}^\infty \frac {h_{-n}}{x^{-n}} |\emptyset \rangle
$$
and this implies \eqref{eqn:want 4}. Now that we have shown that $\CA_r \rightarrow \CA_r^\ext \curvearrowright K$, let us show that the representation $K$ is isomorphic to the Verma module $M_{u_1,...,u_r}$. The map:
\begin{equation}
\label{eqn:fock to k}
M_{u_1,...,u_r} \rightarrow K, \qquad | \emptyset \rangle \mapsto | \emptyset \rangle 
\end{equation}
is well-defined because the currents $W_{d,k}$ for $d>0$ annihilate the vacuum vector for degree reasons. Meanwhile, the LDU decomposition \eqref{eqn:big factor} implies:
$$
W_{0,k} | \emptyset \rangle = \resy \left[ (-y)^k E(y) |\emptyset \rangle  \frac {dy}y \right] = e_k(u_1,...,u_r) |\emptyset\rangle
$$
Moreover, the map \eqref{eqn:fock to k} is surjective, because as we have shown in the beginning of the proof, the vector space $K$ is generated by the $_qW$--algebra acting on the vacuum. Therefore, it is enough to show that the graded dimension of $M_{u_1,...,u_r}$ is less than or equal to that of $K$. As a consequence of the fact that the monomials \eqref{eqn:gente} linearly span the $_qW$--algebra, the monomials:
\begin{equation}
\label{eqn:mon mon}
W_{d_1,k_1}... W_{d_t,k_t} |\emptyset\rangle
\end{equation}
with $\frac {d_1}{k_1} \leq ... \leq \frac {d_t}{k_t}$ and any $d_i<0$, $k_i \in \{1,...,r\}$, linearly generate $M_{u_1,...,u_r}$. Monomials \eqref{eqn:mon mon} are therefore in 1-to-1 correspondence with unordered collections:
$$
\underbrace{(d_{a_1},1), \dots ,(d_{a_e},1)}_{a_1 \geq ... \geq a_e >0}, \underbrace{(d_{b_1},2), \dots, (d_{b_f},2)}_{b_1 \geq ... \geq b_f > 0}, \dots, \underbrace{(d_{c_1},r), \dots, (d_{c_g},r)}_{c_1 \geq ... \geq c_g > 0}
$$
The number of such collections of any total degree $n = \sum d_i$ is equal to the number of $r$--partitions of size $r$, which is also equal to the dimension of the degree $n$ piece of $K$. This proves that the surjective map \eqref{eqn:fock to k} is bijective, hence an isomorphism.

%with $\{d^i_j\} \in \BN$, $d^i_j \geq d^i_{j+1}$ and $\sum d^i_j = n$. 

%Our proof will therefore be complete once we show that the vectors \eqref{eqn:mon mon} linearly generate the degree $n$ piece of the Fock space $M_{u_1,...,u_r}$. To this end, it is enough to show that any product $\prod_{(d,k)} W_{d,k} |\emptyset \rangle$ can be expressed as a linear combination of the elements \eqref{eqn:mon mon}. However, note that expanding \eqref{eqn:w rel} gives us relations of the form:
%\begin{equation}
%\label{eqn:to to}
%W_{d,k} W_{d',k'} = \sum_{\bar{d} < d, \bar{d}' > d'} \text{coeff} \cdot W_{\bar{d},k} W_{\bar{d}',k'} + \sum_{\bar{k} < k, \bar{k}' > k'}^{\bar{d}, \bar{d}'} \text{coeff} \cdot W_{\bar{d},\bar{k}} W_{\bar{d}',\bar{k}'}
%\end{equation}
%whenever $k \geq k'$ and $d \geq d'$. The sums are infinite, but they become finite when applied to the vacuum vector $|\emptyset \rangle$. Repeated applications of formula \eqref{eqn:to to} allow us to reorder the factors in the product $\prod_{(d,k)} W_{d,k} |\emptyset \rangle$ such that the indices $(k,d)$ increase in lexicographic ordering. This precisely gives rise to a linear combination of the elements \eqref{eqn:mon mon} since any appearance of some $W_{d,k}$ with $k > r$ annihilates the vacuum vector by \eqref{eqn:want 3}. 

\end{proof}

\subsection{}
\label{sub:adjoint}

Given an equivariant vector bundle $\CV$ on a space $X$ with an action of a torus $T$, we will define two operations on it. The first is the total exterior power:
\begin{equation}
\label{eqn:exterior power}
\wedge^\bullet \CV = \sum_{i=0}^{\text{rank }\CV} (-1)^i [ \wedge^i \CV^\vee] \in K_T(X)
\end{equation}
Note the dual sign in the right-hand side, which is placed there to ensure that $\wedge^\bullet$ naturally deforms the Euler class of the vector bundle $\CV$, in the same way $1-e^{-x}$ deforms $x$. The total exterior power is multiplicative in $\CV$. The second operation on vector bundles is $T$--equivariant Euler characteristic, which is additive in $\CV$:
$$
\chi_T(X,\CV) = \sum_{i=0}^{\infty} (-1)^i \left[ T\text{--character of } H^i(X,\CV) \right] \in \text{Rep}(T) 
$$
We will now show that for any shuffle element $R \in \CS$, the operators $R^\leftarrow, R^\rightarrow$ of \eqref{eqn:act minus}, \eqref{eqn:act plus} are (up to a constant) adjoint with respect to the inner product:
\begin{equation}
\label{eqn:inner 1}
(\cdot, \cdot) : K_n \otimes K_n \rightarrow \BQ(q_1, q_2, u_1,...,u_r)
\end{equation}
$$
(\alpha, \beta) = \chi_T\left(\CM_n, \frac {\alpha \otimes \beta}{(\det \CV_n)^{\otimes r}} \right) \prod_{i=1}^r (-u_i)^n
$$
The basis $|\bla \rangle$ is orthogonal with respect to this pairing:
\begin{equation}
\label{eqn:euler}
(| \bla \rangle, | \bmu \rangle ) = \frac {\delta_\bla^\bmu}{\left[ \wwedge^\bullet \Tan_\bla \CM_n \right]}
\end{equation}
where $n=|\bla|$ and:
$$
\wwedge^\bullet \Tan_\bla \CM_n = \wedge^\bullet \Tan_\bla \CM_n \otimes \frac {(\det \CV_{|\bla|})^{\otimes r}}{\prod_{i=1}^r (-u_i)^{|\bla|}}
$$
We will use the following expression (see, for example, \cite{Mod}) for the $K$--theory class of the tangent space to $\CM_n$:
\begin{equation}
\label{eqn:tan}
[\Tan \ \CM_n] = \sum_{i=1}^r \left(\frac {\CV_n}{u_i} + \frac {u_i}{q\CV_n} \right) - \left(1 - \frac 1{q_1} \right)\left(1 - \frac 1{q_2} \right) \frac {\CV_n}{\CV_n}
\end{equation}
In the right-hand side of \eqref{eqn:tan} we employ the shorthand notation:
$$
\frac {\CV}{\CW} := [\CV] \otimes [\CW^\vee]
$$
for any vector bundles (or $K$--theory classes) $\CV$ and $\CW$. If we recall that the restriction of $\CV_n$ to the fixed point $\bla$ is given by $\CV_n|_\bla = \sum_{\sq \in \bla} \chi_\sq$, we obtain:
$$
\left[ \wedge^\bullet \Tan_\bla \CM_n \right] = \prod^{1 \leq i \leq r}_{\sq \in \bla} \left(1- \frac {u_i}{\chi_\sq}\right)\left(1- \frac {q\chi_\sq}{u_i} \right) \prod_{\sq,\sq' \in \bla} \zeta \left(\frac {\chi_\sq}{\chi_{\sq'}} \right) \quad \Rightarrow
$$
$$
\Rightarrow \quad \left[ \wwedge^\bullet \Tan_\bla \CM_n \right] = \prod^{1 \leq i \leq r}_{\sq \in \bla} \left(1- \frac {\chi_\sq}{u_i}\right)\left(1- \frac {q\chi_\sq}{u_i} \right) \prod_{\sq,\sq' \in \bla} \zeta \left(\frac {\chi_\sq}{\chi_{\sq'}} \right)
$$
We conclude that \eqref{eqn:euler} reads:
\begin{equation}
\label{eqn:orthonorm}
(| \bla \rangle, | \bmu \rangle ) = \frac {\delta_\bla^\bmu}{\prod_{\sq \in \bla} \tau(\chi_\sq) \tau(q\chi_\sq) \prod_{\sq,\sq' \in \bla} \zeta \left( \frac {\chi_\sq}{\chi_{\sq'}} \right)} %\delta_\bla^\bmu \prod_{\sq \in \bla}  \frac {\chi_\sq^r}{(-u_1)...(-u_r) \tau(\chi_\sq) \tau(q\chi_\sq)} \prod_{\sq,\sq' \in \bla} \zeta \left( \frac {\chi_\sq}{\chi_{\sq'}} \right)^{-1}
\end{equation}
Together with \eqref{eqn:act minus fixed} and \eqref{eqn:act plus fixed}, this implies that:
\begin{equation}
\label{eqn:adjoint}
\left(R^\rightarrow \right)^\dagger = q^{(1-r) \deg R} \cdot R^\leftarrow \qquad \forall \ R \in \CS % \left( R(z_1,...,z_k)^\rightarrow \right)^\dagger = (-u_1)^k...(-u_r)^k \cdot \left( \frac {R(z_1,...,z_k)}{z_1^r...z_k^r} \right)^\leftarrow
\end{equation}
where adjoint is defined with respect to the inner product \eqref{eqn:inner 1}. \\

\subsection{}
\label{sub:correspondences}

Although not apparently clear from formulas \eqref{eqn:act minus} and \eqref{eqn:act plus}, the operators $R^\leftarrow$ and $R^\rightarrow$ are given by geometric correspondences. To be precise, this is only true for shuffle elements $R$ of the form \eqref{eqn:our r}, which we will assume for the remainder of this Section. Consider the so-called \textbf{fine correspondence} defined in \cite{Mod}:
\begin{equation}
\label{eqn:diag fine}
\xymatrix{& \fZ_{n+k,n} \ar[ld]_{\pi_+} \ar[rd]^{\pi_-} \\ 
\CM_{n+k} & & \CM_{n}}
\end{equation}
for any $k>0$ and any natural number $n$, which is defined as the locus:
\begin{align}
\fZ_{n+k,n} = \Big\{ &\text{flags of sheaves } \CF_{n+k} \subset ... \subset \CF_n \text{ s.t. } \CF_{n+i-1}/\CF_{n+i} \text{ for } 1 \leq i \leq k \nonumber \\
&\text{are all length 1 skyscraper sheaves supported at the same point} \Big\} \label{eqn:fine}
\end{align}
We will write $\fZ_k = \sqcup_{n \in \BN} \fZ_{n+k,n}$. Note that $\fZ_k$ is endowed with line bundles:
\begin{equation}
\label{eqn:picard}
\CL_1,...,\CL_k \in \text{Pic}_T(\fZ_k)
\end{equation}
with fibers given by:
$$
\CL_i|_{\CF_{n+k} \subset ... \subset \CF_n} = \Gamma \left(\BP^2, \CF_{n+k-i}/\CF_{n+k-i+1} \right)
$$
In \eqref{eqn:diag fine}, the maps $\pi_+$ and $\pi_-$ send a flag $\CF_{n+k} \subset ... \subset \CF_n$ to the sheaves $\CF_{n+k}$ and $\CF_n$, respectively. In \emph{loc. cit.}, we defined a virtual class on $\fZ_k$ which allows us to make sense of push--forward maps $\pi_{+*}$ and $\pi_{-*}$ on $K$--theory. For convenience, let us renormalize these maps as:
$$
\wpi_{+*}(c) = \pi_{+*}(c), \qquad \wpi_{-*}(c) = \pi_{-*} \left(c \cdot \CL_1^{-r}...\CL_k^{-r} \right) \prod_{i=1}^r (-u_i)^k
$$
This normalization was done precisely to ensure that the pairs $(\wpi_{\pm *}, \pi_\pm^*)$ are adjoint with respect to the inner product \eqref{eqn:inner 1} on $K_T(\CM_n)$ and the inner product:
\begin{equation}
\label{eqn:inner 2}
(\alpha, \beta) = \chi_T\left(\fZ_{n+k,n}, \frac {\alpha \otimes \beta}{(\det \CV_{n+k})^{\otimes r}} \right) \prod_{i=1}^r (-u_i)^{n+k}
\end{equation}
on the $K$--theory of $\fZ_k$. We have the following: \\

\begin{theorem}
\label{thm:corr}
\emph{(\cite{Mod}):} For a shuffle element $R$ as in \eqref{eqn:our r}, we have:
$$
R^\leftarrow \ = \ \wpi_{+*}\left(\rho(\CL_1,...,\CL_k) \cdot \pi_-^* \right)
$$
$$
R^\rightarrow = \wpi_{-*}\left(\frac {\rho(\CL_1,...,\CL_k)}{q^{k(r-1)}} \cdot \pi_+^* \right)
$$

\end{theorem}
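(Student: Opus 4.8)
The plan is to verify both identities in the fixed-point basis $\{|\bla\rangle\}$, computing the right-hand sides via Thomason localization and matching them with the explicit shuffle formulas \eqref{eqn:act minus fixed 2} and \eqref{eqn:act plus fixed 2}. First I would note that the renormalized push-forwards $\wpi_{\pm *}$ were designed precisely so that $(\wpi_{+*}, \pi_+^*)$ and $(\wpi_{-*}, \pi_-^*)$ are adjoint for the pairings \eqref{eqn:inner 1} and \eqref{eqn:inner 2}. Since multiplication by the class $\rho(\CL_1,\dots,\CL_k)$ is self-adjoint for \eqref{eqn:inner 2}, the adjoint of $\wpi_{+*}(\rho(\CL_1,\dots,\CL_k)\cdot\pi_-^*)$ is $\wpi_{-*}(\rho(\CL_1,\dots,\CL_k)\cdot\pi_+^*)$. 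Combining this with the shuffle-side adjunction \eqref{eqn:adjoint}, the two displayed formulas become equivalent up to the explicit power of $q$ recorded there, so it suffices to establish the one for $R^\leftarrow$.

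The key geometric input is the structure of the $T$-fixed locus of $\fZ_{n+k,n}$. A fixed flag $\CF_{n+k}\subset\dots\subset\CF_n$ must consist of monomial ideal sheaves, so $\CF_n = \CI_\bmu$ and $\CF_{n+k}=\CI_\bla$ for $r$--partitions $\bmu \subseteq \bla$ with $|\blamu|=k$, and the intermediate quotients prescribe an order in which the $k$ boxes of $\blamu$ are adjoined one at a time while each partial step remains an honest partition. This is exactly the datum of a standard Young tableau $T$ of shape $\blamu$, and by \eqref{eqn:picard} the fiber of $\CL_i$ at such a fixed point is the weight $\chi_i$ of the $i$-th box of $T$. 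Hence $\rho(\CL_1,\dots,\CL_k)$ restricts to $\rho(\chi_1,\dots,\chi_k)$, matching the numerator of \eqref{eqn:act minus fixed 2}.

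With the fixed points identified, I would expand $\langle\bla|\wpi_{+*}(\rho(\CL_1,\dots,\CL_k)\cdot\pi_-^*)|\bmu\rangle$ using \eqref{eqn:localization} on $\fZ_{n+k,n}$, $\CM_n$ and $\CM_{n+k}$ simultaneously. The contribution of each tableau $T$ is $\rho(\chi_1,\dots,\chi_k)$ times the ratio of $\wedge^\bullet$--classes of the relevant virtual tangent spaces, together with the normalization $\prod_i(-u_i)^k$ and the $\CL_i^{-r}$ twists built into $\wpi_{+*}$. The combinatorics of adding the boxes one by one converts this ratio into exactly $\prod_{1\le i<j\le k}\zeta(\chi_i/\chi_j)\cdot\prod_{i=1}^k\frac{(1-q_1)(1-q_2)}{1-q}\,\zeta(\chi_i/\chi_\bmu)\,\tau(q\chi_i)$ divided by $\prod_{i=1}^{k-1}(1-q\chi_{i+1}/\chi_i)$, reproducing \eqref{eqn:act minus fixed 2} term-by-term; the $\CL_i^{-r}$ twist and the factor $\prod_i(-u_i)^k$ are what produce the $\tau$ factors through \eqref{eqn:def tau} and remove the dual factor appearing in \eqref{eqn:orthonorm}.

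The main obstacle is the precise evaluation of the virtual normal bundles of $\pi_+$ and $\pi_-$ at the fixed points --- equivalently, the $K$--theory class of the (virtual) tangent space to $\fZ_{n+k,n}$ --- and in particular the verification that the denominator $\prod_{i=1}^{k-1}(1-q\chi_{i+1}/\chi_i)$ emerges correctly. This is exactly where the virtual class on the singular, non-reduced correspondence $\fZ_k$ constructed in \cite{Mod} is indispensable: it is engineered so that, box by box, each one-step piece of the correspondence contributes the single factor $(1-q\chi_{i+1}/\chi_i)^{-1}$ coupling consecutive boxes, while the self-pairing among the adjoined boxes yields the $\zeta(\chi_i/\chi_j)$ terms. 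Carrying out this normal-bundle bookkeeping, and checking that the renormalizations defining $\wpi_{\pm *}$ convert the Euler-class denominators of \eqref{eqn:orthonorm} into the $\tau$-factors and $q$-power prefactors of \eqref{eqn:act minus fixed 2}--\eqref{eqn:act plus fixed 2}, is the technical heart of the argument, and is precisely what is worked out in detail in \emph{loc. cit.}
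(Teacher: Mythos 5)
The paper does not actually prove Theorem \ref{thm:corr} --- it is imported wholesale from \cite{Mod} --- so the only internal ``proof'' available is the combination of Theorem \ref{thm:action} (the normal-ordered integrals $\int_\pm$ defining $R^\leftarrow, R^\rightarrow$), Proposition \ref{prop:corr} (the contour-integral formulas for $\wpi_{\pm*}$), and Remarks \ref{rem:normal} and \ref{rem:change contours}, which together reduce the theorem to matching integrands and contours. Your route is the residue-evaluated version of the same argument: you localize to the torus-fixed points of $\fZ_{n+k,n}$, identify them with standard Young tableaux of shape $\blamu$ together with the fibers $\CL_i|_T = \chi_i$, and match against \eqref{eqn:act minus fixed 2}--\eqref{eqn:act plus fixed 2}. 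That identification of the fixed locus is correct, and your reduction of the $R^\rightarrow$ formula to the $R^\leftarrow$ formula via the geometric adjointness of $(\wpi_{\pm*},\pi_\pm^*)$ combined with \eqref{eqn:adjoint} is clean and does correctly reproduce the prefactor $q^{-k(r-1)}$, since $\deg R = k$ there.

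Two caveats. First, the technical heart --- the $K$-theory class of the virtual tangent space of $\fZ_{n+k,n}$ at a fixed point, which is what actually produces the factors $\prod_{i=1}^{k-1}(1-q\chi_{i+1}/\chi_i)^{-1}$, $\prod_{i<j}\zeta(\chi_i/\chi_j)$, $\zeta(\chi_i/\chi_\bmu)$ and $\tau(q\chi_i)$ --- is exactly the content of \cite{Mod} and you leave it as a black box; your proposal is therefore an outline rather than a proof, though it is no less complete than the paper's own treatment. Second, there is a small slip: you attribute the $\tau$-factors in the $\wpi_{+*}$ computation partly to ``the $\CL_i^{-r}$ twists built into $\wpi_{+*}$,'' but by definition $\wpi_{+*} = \pi_{+*}$ carries no twist; only $\wpi_{-*}$ is twisted by $\CL_1^{-r}\cdots\CL_k^{-r}\prod_i(-u_i)^k$ (this is what converts $\prod_j(1-u_j/z_i)^{-1}$ into $\tau(z_i)^{-1}$, as noted after Proposition \ref{prop:corr}). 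In the $+$ direction the factors $\tau(q\chi_i)$ come entirely from the virtual normal bundle of $\pi_+$, not from any renormalization. This does not break the argument, but the bookkeeping you describe would need to be corrected before it could be carried out.
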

 
\noindent This Theorem establishes two things: first of all, the operators $R^\leftarrow$ and $R^\rightarrow$ of Theorem \ref{thm:action} are geometric in nature. Secondly, relation \eqref{eqn:adjoint} on the adjointness of these operators follows from the adjointness of the correspondences $\wpi_{+*}\pi_-^*$ and $\wpi_{-*}\pi_+^*$. In the next Section, we will make use of the following Proposition, whose proof follows that of Proposition 3.52 of \cite{Ext} almost word-by-word, so we omit it. \\

\begin{proposition}
\label{prop:corr}

Assume $\rho(z_1,...,z_k)$ is a Laurent polynomial, divided by linear factors of the form $z_i-y$, for constants $y$ in a certain finite set $Y$. Then:
\begin{align*}
&\wpi_{+*} \left( \rho(\CL_1,...,\CL_k) \right) = \int_{ X^+  \succ z_1 \succ ... \succ z_k \succ Y \cup \{0,\infty\}} \frac {\overline{\rho\left(z_1,...,z_k\right) \prod_{i=1}^k \left[ \zeta \left(\frac {z_i}{X^+} \right) \tau(qz_i) Dz_i \right]}}{\prod_{i=1}^{k-1} \left(1 - \frac {qz_{i+1}}{z_i} \right) \prod_{i < j} \zeta \left( \frac {z_j}{z_i} \right)} \\
&\wpi_{-*} \left( \rho(\CL_1,...,\CL_k) \right) = \int_{Y \cup \{0,\infty\} \succ z_1 \succ ... \succ z_k \succ X^-} \overline {\frac {\rho\left(z_1,...,z_k\right)\prod_{i=1}^k \left[ \zeta \left(\frac {X^-}{z_i} \right)^{-1} \frac {Dz_i}{\tau(z_i)} \right]}{\prod_{i=1}^{k-1} \left(1 - \frac {qz_{i+1}}{z_i} \right) \prod_{i < j} \zeta \left( \frac {z_j}{z_i} \right)}} 
\end{align*}
where the notation $z \succ z'$ is defined in Remark \ref{rem:normal}. \\

\end{proposition}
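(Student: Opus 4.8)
The plan is to prove both identities by torus localization, checking equality of the two sides as tautological $K$--theory classes on the target of the relevant pushforward. Since $\Upsilon_n$ is surjective (Proposition \ref{prop:gen}) and restriction to the fixed point basis $|\bla\rangle$ is injective by \eqref{eqn:localization}, it is enough to match the restriction of each side to every torus fixed point $\bla$. I will treat $\wpi_{+*}$ first and obtain $\wpi_{-*}$ by the symmetric argument.

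First I would describe the fixed loci of $\fZ_{n+k,n}$ sitting in a fiber $\pi_+^{-1}(\bla)$. Because the successive quotients in \eqref{eqn:fine} are length one skyscrapers supported at a common point, which for a fixed flag must be torus--fixed, such a flag amounts to peeling $k$ boxes off $\bla$ one at a time and recording the order in which this is done. This data is exactly a standard Young tableau on a skew $r$--partition $\blamu$ with $\bmu \vdash n$, and the line bundle $\CL_i$ restricts to the weight $\chi_i$ of the $i$--th removed box. Applying the Thomason localization theorem to the proper map $\pi_+$ then expresses the restriction $\wpi_{+*}(\rho(\CL_1,\ldots,\CL_k))|_\bla$ as a sum, over these tableaux, of $\rho(\chi_1,\ldots,\chi_k)$ times the reciprocal $K$--theoretic Euler class of the normal directions. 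I would compute that Euler class from the tangent formula \eqref{eqn:tan} together with the renormalization built into $\wpi_{+*}$, and verify that, after substituting $z_i = \chi_i$, it factors as $\prod_{i<j}\zeta(\chi_i/\chi_j)\,\prod_i \tau(q\chi_i)$ divided by $\prod_{i}\left(1 - q\chi_{i+1}/\chi_i\right)$, which matches the integrand of the claimed formula evaluated at the box weights.

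The final step, which I expect to be the main obstacle, is to recognize the resulting sum over standard Young tableaux as the iterated contour integral on the right--hand side. Here the nested convention $X^+ \succ z_1 \succ \cdots \succ z_k \succ Y \cup \{0,\infty\}$, read through the residue theorem, must reproduce the tableau sum exactly: the only poles of the integrand that lie between consecutive contours are at $z_i$ equal to a weight of a removable box, which records the choice of box at step $i$, and at $z_{i+1} = z_i/q$ coming from the factor $1 - q z_{i+1}/z_i$, which encodes the case of two removed boxes being adjacent, while the vanishing $\zeta(q_1^{-1}) = \zeta(q_2^{-1}) = 0$ forces the decreasing--label condition defining a standard tableau. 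Excluding $0$, $\infty$ and the set $Y$ from all contours is precisely what kills the spurious residues that do not correspond to honest box removals, so that the iterated residue enumerates exactly the fixed loci counted by localization.

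The $\wpi_{-*}$ statement then follows by the identical argument with the roles of $\CF_{n+k}$ and $\CF_n$ exchanged: the fiber of $\pi_-$ over $\bmu$ parametrizes adding $k$ boxes, the renormalization contributes the factor $q^{k(r-1)}$ already visible in \eqref{eqn:act plus}, and the contour ordering reverses to $Y \cup \{0,\infty\} \succ z_1 \succ \cdots \succ z_k \succ X^-$. Alternatively one may deduce it from the first via the adjunction \eqref{eqn:adjoint} between $R^\leftarrow$ and $R^\rightarrow$ under the pairing \eqref{eqn:inner 1}, which interchanges the two contour presentations. A cleaner but less self--contained route to the whole Proposition is to start from Theorem \ref{thm:corr}, specialize the input of $R^\leftarrow = \wpi_{+*}(\rho \cdot \pi_-^*)$ to the unit class so that $R^\leftarrow(\mathbf{1}) = \wpi_{+*}(\rho)$, and then rewrite \eqref{eqn:dank} in the target frame using $X^+ = X + Z$; the plethystic substitution absorbing $Z$ into the target tautological bundle is exactly what converts the contour $z_i \succ X$ of \eqref{eqn:dank} into the contour $X^+ \succ z_i$ claimed here.
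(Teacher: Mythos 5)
Your overall strategy --- localize both sides at torus fixed points, identify the fixed flags of $\fZ_{n+k,n}$ lying over $\CI_\bla$ with standard Young tableaux on skew $r$--partitions (with $\CL_i$ restricting to the weight $\chi_i$ of the $i$--th removed box), and then match the resulting tableau sum with the iterated residues of the nested contour integral --- is precisely the argument of Proposition 3.52 of \cite{Ext} that the paper invokes; the proof is omitted here because it is that argument almost word for word. The one step you gloss over is, however, the technical heart of the matter: $\fZ_{n+k,n}$ is not smooth, and the push-forwards $\wpi_{\pm *}$ are only defined via the virtual class constructed in \cite{Mod}. The ``reciprocal $K$--theoretic Euler class of the normal directions'' you need is therefore $\wedge^\bullet$ of the \emph{virtual} conormal space of the fixed flag inside $\fZ$, and this is not computable from \eqref{eqn:tan}, which only records the tangent space of the ambient $\CM_n$. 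In particular the factors $\prod_{i=1}^{k-1}(1-q\chi_{i+1}/\chi_i)^{-1}$ --- the whole point of the fine correspondence --- come from the explicit local description of $\fZ_k$ and its virtual structure sheaf in \cite{Mod}, and your localization route cannot be completed without importing that input.

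That said, your final paragraph contains what is in effect a complete proof from results already quoted in this paper: Theorem \ref{thm:corr} gives $\wpi_{+*}(\rho(\CL_1,...,\CL_k))$ as $R^\leftarrow$ (for $R$ as in \eqref{eqn:our r}) applied to the class $\bar{f}_n$ with $f=1$; formula \eqref{eqn:act minus}, together with Remark \ref{rem:normal} and Remark \ref{rem:normal 2} to place the poles at $Y$ outside the contours, evaluates this as a normal-ordered integral; and the passage from the contour $Y \cup \{0,\infty\} \succ z_k \succ ... \succ z_1 \succ X^+$ to $X^+ \succ z_1 \succ ... \succ z_k \succ Y \cup \{0,\infty\}$ is the substitution $z \leftrightarrow 1/z$ of Remark \ref{rem:change contours}. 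The same applies verbatim to $\wpi_{-*}$ via \eqref{eqn:act plus}. One small correction there: the prefactor $q^{k(1-r)}$ of \eqref{eqn:act plus} is cancelled by the $q^{-k(r-1)}$ appearing in Theorem \ref{thm:corr} and does not enter the statement of the Proposition, so your remark that the renormalization ``contributes the factor $q^{k(r-1)}$'' is misplaced; what the renormalization of $\wpi_{-*}$ versus $\pi_{-*}$ actually does is convert $\prod_j (1-u_j/z_i)^{-1}$ into $\tau(z_i)^{-1}$, as the paper explains immediately after the Proposition.
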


\noindent Because of the normal weights of $\pi_-$, the analogous formula for $\pi_{-*}$ instead of $\widetilde{\pi}_{-*}$ involves replacing the factors:
$$
\prod_{i=1}^k \frac 1{\tau(z_i)} = \prod_{i=1}^k \prod_{j=1}^r \left(1 - \frac {z_i}{u_j} \right)^{-1} \qquad \text{by} \qquad \prod_{i=1}^k \prod_{j=1}^r \left(1 - \frac {u_j}{z_i} \right)^{-1}
$$
The discrepancy between the two expressions is precisely the factor $\prod_{i=1}^k \prod_{j=1}^r \frac {-u_j}{z_i}$, which accounts for the discrepancy between the operators $\widetilde{\pi}_{-*}$ and $\pi_{-*}$. \\

\begin{remark}
\label{rem:notation}

Let us explain the notation in Proposition \ref{prop:corr}, since it will feature in the following Section. Let $X^+$ and $X^-$ denote tautological classes on the spaces $\CM_{n+k}$ and $\CM_{n}$ in \eqref{eqn:diag fine}, respectively, as well as their pull-backs to $\fZ_{n+k,n}$. For example, the notation $\overline{e_a(X^+) e_b(X^-)}$ will refer to the class:
$$
\wedge^a\left(\pi_+^*\CV_{n+k}\right) \cdot \wedge^b\left(\pi_-^*\CV_n\right) \in K_T \left( \fZ_{n + k, n} \right)
$$
There will be a slight ambiguity as to whether the notation $\overline{e_a(X^+)}$ refers to a $K$--theory class on $\CM_{n+k}$ or its pull-back to $\fZ_{n+k, n}$, but the situation will always be clear from context. For example, because the right-hand sides of the formulas in Proposition \ref{prop:corr} are $K$--theory classes in the targets of $\wpi_{+*}$ and $\wpi_{-*}$, respectively, they have no choice but to live on the spaces $\CM_{n+k}$ and $\CM_{n}$, respectively. \\

\end{remark}

\begin{remark}
\label{rem:change contours}

The contour that gives the integral for $\widetilde{\pi}_{-*}$ in Proposition \ref{prop:corr} precisely matches that of $\int_-$ in Remark \ref{rem:normal}. However, the contour in the integral for $\wpi_{+*}$ is ordered as $ X^+  \succ z_1 \succ ... \succ z_k \succ Y \cup \{0,\infty\}$, while the corresponding contour that defines $\int_+$ in Remark \ref{rem:normal} would be $Y \cup \{0,\infty\} \succ z_k \succ ... \succ z_1 \succ X_+$. The discrepancy between the two contours is merely cosmetic, since it just involves looking at the Riemann sphere from ``behind", i.e. changing the variable $z \leftrightarrow \frac 1z$. \\

\end{remark}

\section{The $\Ext$ operator}
\label{sec:ext}

\subsection{}
\label{sub:def e}

Keep the natural number $r$ fixed. We will now consider two different moduli spaces of rank $r$ sheaves, of degrees $n$ and $n'$, respectively:
$$
\CM_{n,\bu} \quad \text{and} \quad \CM_{n',\bu'}
$$
We assume that these moduli spaces are acted on by two different rank $r$ tori, whose equivariant parameters will be denoted by $\bu = (u_1,...,u_r)$ and $\bu' = (u_1',...,u_r')$, respectively. Therefore, we will write $K_{n,\bu} = K_{\BC^* \times \BC^* \times (\BC^*)^r}(\CM_{n,\bu})$, with the understanding that the rank $r$ torus $(\BC^*)^r$ has equivariant parameters $\bu$. With this in mind, consider the vector bundle $\CE_{n,n'}$ on $\CM_{n,\bu} \times \CM_{n',\bu'}$ with fibers given by:
$$
\CE_{n,n'}|_{\CF,\CF'} = \Ext^1(\CF',\CF(-\infty))
$$
The fact that $\CE_{n,n'}$ is a vector bundle follows from the vanishing of the corresponding $\Hom$ and $\Ext^2$ groups. In fact, an easy application of the Riemann-Roch theorem allows one to see that the rank of $\CE_{n,n'}$ is $r(n+n')$. Its $K$--theory class is given by: 
\begin{equation}
\label{eqn:ext}
[\CE_{n,n'}] = \sum_{i=1}^r \left(\frac {\CV_n}{u'_i} + \frac {u_i}{q\CV_{n'}} \right) - \left(1 - \frac 1{q_1} \right)\left(1 - \frac 1{q_2} \right) \frac {\CV_n}{\CV_{n'}}
\end{equation}
where the vector bundles $\CV_n$ and $\CV_{n'}$ are the pull-backs of the tautological bundles from $\CM_{n,\bu}$ and $\CM_{n',\bu'}$, respectively, to the product $\CM_{n,\bu} \times \CM_{n',\bu'}$ (see formula 6.16 of \cite{Mod}). We use the $K$--theory class of $\CE_{n,n'}$ as a correspondence:
$$
\xymatrix{ & \CE_{n,n'} \ar@{-->}[d] \\
& \CM_{n,\bu} \times \CM_{n',\bu'} \ar[ld]_{p_1} \ar[rd]^{p_2} \\ 
\CM_{n,\bu} & & \CM_{n',\bu'}}
$$

\begin{definition}
\label{def:am}

For a formal parameter $m$, consider the operators:
\begin{equation}
\label{eqn:a correspondence}
A_m|_n^{n'} \ : \ K_{n',\emph{\bu}'} \longrightarrow K_{n,\emph{\bu}}
\end{equation}
$$
A_m|_n^{n'} = \tp_{1*} \left(\wwedge^\bullet (\CE_{n, n'} \otimes m ) \cdot p_2^{*} \right)
$$
where we define the ``tilde" quantities as small modifications of the usual ones:
$$
\tp_{1*}(c) = p_{1*}\left(c \otimes \frac {\prod_{i=1}^r (-u_i')^{n'}}{(\det \CV'_n)^{\otimes r}} \right) %p_{1*}\left(c \otimes \frac {\prod_{i=1}^r (-u_i)^n}{(\det \CV_n)^{\otimes r}} \right) 
$$
$$
\wwedge^\bullet (\CE_{n, n'} \otimes m ) = \wedge^\bullet(\CE_{n, n'} \otimes m ) \otimes \left(\det  \CV_n \right)^{\otimes r} \prod_{i=1}^r \left( - \frac m{u_i} \right)^n %\otimes \frac {\prod_{i=1}^r \left(- u_im q^{-1} \right)^n}{\left(\det  \CV'_n \right)^{\otimes r}} 
$$

\end{definition}

\noindent Note that the latter formula may be recast using \eqref{eqn:ext} as:
\begin{equation}
\label{eqn:ext 2}
\left[ \wwedge^\bullet (\CE_{n, n'} \otimes m ) \right] = \tau'\left( mX \right) \tau\left(\frac {qX'}m \right) \zeta \left( \frac {X'}{mX} \right)
\end{equation}
where:
$$
\tau(z) = \prod_{i=1}^r \left(1 - \frac z{u_i} \right) \quad \text{ and } \quad \tau'(z) = \prod_{i=1}^r \left(1 - \frac z{u_i'} \right)
$$
In formula \eqref{eqn:ext 2} and throughout the remainder of this paper, $X$ and $X'$ denote tautological classes on the two factors of $\CM_{n,\bu} \times \CM_{n',\bu'}$, on which $\CE_{n,n'}$ is defined. \\

%Note that in virtue of \eqref{eqn:ks}, we see that the restriction of $E$ to the diagonal $\Delta \subset \CM_{n,\bu} \times \CM_{n,\bu}$ is precisely the tangent space to $\CM_{n,\bu}$. 

\subsection{}
\label{sub:def a}

Formula \eqref{eqn:ext} implies the following expression in the basis of fixed points:
$$
\CE|_{\bla,\bla'} := \CE_{|\bla|,|\bla'|}|_{\CI_\bla,\CI_{\bla'}} = \sum^{1 \leq i \leq r}_{\sq \in \bla} \frac {\chi_\sq}{u'_i} + \sum^{1 \leq i \leq r}_{\sq' \in \bla'} \frac {u_i}{q\chi_{\sq'}} - \sum^{\sq \in \bla}_{\sq' \in \bla'} \left(1 - \frac 1{q_1} \right)\left(1 - \frac 1{q_2} \right) \frac {\chi_\bla}{\chi_{\bla'}}
$$
One understands the above formula by interpreting the right-hand side as the character of $\BC^* \times \BC^* \times (\BC^*)^r \times (\BC^*)^r$ in the fiber of $\CE$ above the fixed point indexed by $r$--partitions $\bla, \bla'$. The two factors $(\BC^*)^r$ act with equivariant parameters $\bu$ and $\bu'$, respectively. If we let $n=|\bla|$ and $n'=|\bla'|$, then the definition of $A_m|_n^{n'}$ as a correspondence in \eqref{eqn:a correspondence} allows us to compute its matrix coefficients in the basis of renormalized fixed points:
\begin{equation}
\label{eqn:matrix coefficients}
\langle \bla | A_m|_n^{n'} |\bla' \rangle = \frac {\wwedge^\bullet (\CE|_{\bla,\bla'} \otimes m )}{\wwedge^\bullet ( \Tan_{\bla'} \CM_{n',\bu'} )} =
\end{equation}
$$
= \frac {\prod^{1 \leq i \leq r}_{\sq \in \bla} \left(1 - \frac {m \chi_\sq}{u'_i} \right) \prod^{1 \leq i \leq r}_{\sq' \in \bla'} \left(1 - \frac {q\chi_{\sq'}}{mu_i} \right) \prod^{\sq \in \bla}_{\sq' \in \bla'} \zeta \left( \frac {\chi_\sq'}{m \chi_\sq} \right)}{\prod^{1 \leq i \leq r}_{\sq \in \bla'} \left(1 - \frac {\chi_\sq}{u_i'} \right) \prod^{1 \leq i \leq r}_{\sq' \in \bla'} \left(1 - \frac {q\chi_{\sq'}}{u_i'} \right) \prod^{\sq \in \bla'}_{\sq' \in \bla'} \zeta \left( \frac {\chi_\sq'}{\chi_\sq} \right)}
$$
To group all of the operators $A_m|_n^{n'}$ together, we introduce the generating current:
\begin{equation}
\label{eqn:gen current}
A_m(x) = \sum_{n,n' \geq 0} A_m|_n^{n'} \cdot x^{n-n'}
\end{equation}
and note that this was the main actor in the Introduction. Then \eqref{eqn:matrix coefficients} implies the following formula for the Nekrasov partition function \eqref{eqn:nek part}:
$$
Z_{m_1,...,m_k}(x_1,...,x_k) = \Tr \left(A_{m_1}(x_1)... A_{m_k}(x_k) \right) = \sum^{r\text{--partitions}}_{\bla_1,...,\bla_k} \prod_{a=1}^k x_a^{|\bla_a| - |\bla_{a+1}|} 
$$
\begin{equation}
\label{eqn:partition function}
\frac {\prod^{1 \leq i \leq r}_{\sq \in \bla_a} \left(1 - \frac {m \chi_\sq}{u^{a+1}_i} \right) \prod^{1 \leq i \leq r}_{\sq' \in \bla_{a+1}} \left(1 - \frac {q\chi_{\sq'}}{mu^a_i} \right) \prod^{\sq \in \bla_a}_{\sq' \in \bla_{a+1}} \zeta \left( \frac {\chi_\sq'}{m \chi_\sq} \right)}{\prod^{1 \leq i \leq r}_{\sq \in \bla_{a+1}} \left(1 - \frac {\chi_\sq}{u^{a+1}_i} \right) \prod^{1 \leq i \leq r}_{\sq' \in \bla_{a+1}} \left(1 - \frac {q\chi_{\sq'}}{u^{a+1}_i} \right) \prod^{\sq \in \bla_{a+1}}_{\sq' \in \bla_{a+1}} \zeta \left( \frac {\chi_\sq'}{\chi_\sq} \right)}
\end{equation}
where $\bu^{a} = (u_1^{a},...,u_r^{a})$ is the collection of equivariant parameters corresponding to the moduli space of sheaves whose fixed points are indexed by $r$--partitions $\bla_a$, as in the Introduction, and we identify $\bu^{k+1} = \bu^1$, $\bla_{k+1} = \bla_1$. The right-hand side of \eqref{eqn:partition function} is the partition function of gauge theory with matter on a length $k$ cyclic quiver, and justifies physical interest in the operators $A_m(x)$. Note that in our definition, there is a single rank $r$ involved in the definition of the operators $A_m(x)$, which amounts to the fact that all the vertices in the cyclic quiver are given gauge group $U(r)$. If one wanted to study quiver gauge theory for various groups $U(r_1),...,U(r_k)$, one would have to consider the operators $A_m(x)$ for some integer $r \geq r_1,...,r_k$ and then send some of the equivariant parameters in \eqref{eqn:partition function} to $\infty$. \\

\subsection{}
\label{sub:comm rels}

For brevity, when we will work with a single operator $A_m(x)$, we will set $x=1$ without losing any information (the variable $x$ comes in handy when computing compositions of vertex operators, as we will see in Subsection \ref{sub:work out}). We will now turn to the main purpose of this Section, which is to compute the commutation relations of $A_m:=A_m(1)$ with the operators $P_{k,0}$ and $P_{k,1}$ (let us remark that in principle, our method allows us to compute the commutation relations with any $P_{k,d}$, but the answer becomes more complicated as $d$ grows larger). \\

\begin{theorem}
\label{thm:commute}

For any rank $r$, we have the following relations between the \emph{Ext} operator $A_m$ and the generators of the algebra $\CA$:
\begin{equation}
\label{eqn:Comm 1}
[A_m, p_{-k}] = A_m  \left[ \left(\frac {q^r u'}{m^r u} \right)^k - 1\right], \quad 
[A_m, p_{k}] = A_m \left[ 1 - \left( \frac {m^r u}{u'} \right)^k \right]
\end{equation}
for all $k>0$, and:
\begin{equation}
\label{eqn:Comm 2}
A_m P_{k,1} - \frac {m^r u}{q^r u'} A_m P_{k-1,1}  = \frac mq P_{k,1} A_m  - \frac {m^{r+1} u}{q^r u'}  P_{k-1,1} A_m 
\end{equation}
for all $k \in \BZ$. Here we use the notation $u = u_1...u_r$ and $u' = u_1'...u_r'$. \\

\end{theorem}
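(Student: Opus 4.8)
The plan is to establish both families of relations by computing matrix coefficients in the torus fixed--point basis $\{|\bla\rangle\}$, using the explicit expression \eqref{eqn:matrix coefficients} for $\langle\bla|A_m|\bla'\rangle$ together with the fixed--point action formulas of Section~\ref{sec:geom}, and then converting the resulting sums over boxes into contour integrals whose residues can be matched. The only structural input I need is that $p_{-k}$ and $p_k$ are the generators of $\CA$ at the lattice points $(\mp k,0)$ — hence raising and lowering operators for the grading $n$ on $K$ — while $P_{k,1}$ sits at $(k,1)$ and is realized through the fine correspondence $\fZ$ of \eqref{eqn:diag fine}; recall also that $A_m=A_m(1)=\sum_{n,n'}A_m|_n^{n'}$ is the total operator on $K$. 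Throughout I follow the strategy of \cite{Ext} in the undeformed case, with the three functions $\zeta$, $\tau$ and $\tau'$ of \eqref{eqn:ext 2} playing the role of their cohomological degenerations.

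For the boson relations \eqref{eqn:Comm 1}, the content is that $A_m$ shifts each Heisenberg generator by a scalar, i.e.\ $A_m\,p_{-k}\,A_m^{-1}=p_{-k}+\big[(\tfrac{q^r u'}{m^r u})^k-1\big]$ and similarly for $p_k$; this is the coherent--state property of a vertex operator. I would prove it by realizing $A_m$ through Definition~\ref{def:am} as the correspondence with kernel $\tau'(mX)\,\tau(qX'/m)\,\zeta(X'/(mX))$ from \eqref{eqn:ext 2}, and the bosons through their contour--integral action, and then computing the commutator as a single residue. The exponential presentation $\zeta(w)=\exp[\sum_n (1-q_1^n)(1-q_2^n)w^n/n]$ of \eqref{eqn:zeta exponential} is what forces all box--dependent contributions to telescope, leaving only the constant coming from the framing factors $\tau,\tau'$; the $r$ linear factors in these products are exactly what assembles the powers $q^r,m^r$ and the products $u=u_1\cdots u_r$, $u'=u_1'\cdots u_r'$ in the answer. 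The two relations in \eqref{eqn:Comm 1} are interchanged by the adjunction \eqref{eqn:adjoint} under the pairing \eqref{eqn:inner 1}, so it suffices to treat one of them.

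For the main relation \eqref{eqn:Comm 2}, I would first repackage it as a single identity of currents. Multiplying by $x^{-k}$, summing over $k\in\BZ$, and using $W_1(x)=\sum_d P_{d,1}\,x^{-d}$ from \eqref{eqn:w1} together with $x^{-1}W_1(x)=\sum_k P_{k-1,1}x^{-k}$, relation \eqref{eqn:Comm 2} becomes equivalent to
\begin{equation}
A_m\,W_1(x)\left(1-\frac{m^r u}{q^r u' x}\right)=\frac mq\,W_1(x)\,A_m\left(1-\frac{m^r u}{q^{r-1} u' x}\right),\nonumber
\end{equation}
where $W_1(x)$ acts through the $\bu'$--theory on the left and through the $\bu$--theory on the right, and where the two clearing factors are genuinely different ($q^r$ versus $q^{r-1}$), so that this is not a naive commutation. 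I would then realize both compositions as contour integrals using the correspondences of Section~\ref{sec:geom}: $A_m$ via its kernel \eqref{eqn:matrix coefficients}, and $W_1(x)$ via the LDU decomposition \eqref{eqn:ldu reloaded} into its box--adding, diagonal and box--removing pieces \eqref{eqn:lower}, \eqref{eqn:diagonal}, \eqref{eqn:upper}. Comparing $A_mW_1(x)$ with $W_1(x)A_m$, the normal--ordering of the two currents (cf.\ Remark~\ref{rem:currents}) amounts to expanding the spectral variable $x$ in opposite regions relative to the weights of the intermediate sheaf; their difference is a sum of residues, whose new poles come precisely from the factors $\zeta$ and $\tau,\tau'$ appearing in \eqref{eqn:matrix coefficients} and \eqref{eqn:lower}--\eqref{eqn:upper}. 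The role of the clearing factors is to cancel the spurious pole contributed by these framing terms on each side, so that the residue theorem returns exactly the prescribed combination, with the scalars $\frac mq$ and $\frac{m^{r+1}u}{q^r u'}$ produced by the $r$--fold products in $\tau,\tau'$ evaluated at the residue.

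The main obstacle, as in \cite{Ext}, is the residue bookkeeping of this last step: one must check that transporting the $x$--contour across all of the contours recording the Chern roots of the intermediate sheaf collects no residues beyond those building the right--hand side, and that after clearing denominators the identity reduces to a polynomial (equivalently finite, coefficient by coefficient in $k$) statement which can be verified box by box from \eqref{eqn:act minus fixed}--\eqref{eqn:act plus fixed}. Keeping track of how the powers of $q^r$, $m^r$ and the products $u,u'$ emerge from the $r$ linear factors of $\tau$ and $\tau'$ — and in particular the asymmetry $q^r$ versus $q^{r-1}$ between the two clearing factors — is the delicate part; the organization of the computation mirrors Proposition~3.52 of \cite{Ext} and its continuation almost verbatim.
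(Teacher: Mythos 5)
Your overall strategy---realizing $A_m$ and the shuffle generators as geometric correspondences, writing both orderings of each composition as contour integrals with a common integrand over two different contours, extracting the difference as residues at $0$ and $\infty$, letting the framing factors $\tau,\tau'$ supply the scalars $q^r$, $m^r$, $u$, $u'$, and using adjunction to deduce half of the relations from the other half---is exactly the paper's. Your repackaging of \eqref{eqn:Comm 2} as the current identity $A_m W_1(x)\big(1-\frac{m^r u}{q^r u' x}\big)=\frac mq W_1(x) A_m\big(1-\frac{m^r u}{q^{r-1} u' x}\big)$ is a correct and equivalent reformulation of the mode-by-mode recursion the paper proves. (Invoking the LDU decomposition for $W_1(x)$ is an unnecessary detour: $P_{\pm k,1}$ is realized directly by the fine correspondence $\fZ_k$ decorated with the line bundle $\CL_k$, since the shuffle element $P_{k,1}$ is of the form \eqref{eqn:our r} with $\rho=z_k$, and that is all the computation requires.)

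The genuine gap is in your treatment of \eqref{eqn:Comm 1}. For $k\geq 2$ the operator $p_{-k}=P_{-k,0}$ is a $k$-variable symmetrized shuffle element (indeed a sum of $k$ terms of the form \eqref{eqn:our r}), so $[A_m,p_{-k}]$ is not ``a single residue'', and no telescoping of $\zeta$-factors will directly produce the $k$-th power $\big(\frac{q^r u'}{m^r u}\big)^k$ on the right-hand side. The step you are missing is the one the paper takes first: trading the power sums for the complete symmetric functions $h_{\mp k}$, using the multiplicativity of the functionals $\ph_s(p_{\pm k})=-s^k$ and the coproduct on $\Lambda_\pm$ to show that \eqref{eqn:Comm 1} is equivalent to the three-term relations $A_m h_{-k}-h_{-k}A_m=\frac{q^r u'}{m^r u}A_m h_{-k+1}-h_{-k+1}A_m$ and its $+$ analogue. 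For the $h$'s the fine-correspondence integrand $I_k$ has exactly two surviving boundary residues (at $z_1\to\infty$ and $z_k\to 0$), which yields precisely this two-term recursion in $k$; the $k$-th powers in \eqref{eqn:Comm 1} then emerge from the group-like structure of $\sum_k h_{\pm k}z^k$, not from any one-step residue computation. Without this reduction (or an equivalent device for handling the symmetrization in $P_{k,0}$), your computation of $[A_m,p_{-k}]$ for $k\geq 2$ does not go through as described. A smaller point: the adjoint of $A_m$ under \eqref{eqn:inner 1} is $A_{q/m}$ (with $\bu$ and $\bu'$ exchanged), not $A_m$ itself, so deducing the $p_k$ relation from the $p_{-k}$ relation requires the substitution $m\mapsto q/m$, which is exactly what turns $\big(\frac{q^r u'}{m^r u}\big)^k$ into $\big(\frac{m^r u}{u'}\big)^k$.
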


\begin{proof} We will actually prove slightly different, but equivalent, formulas than \eqref{eqn:Comm 1} and \eqref{eqn:Comm 2}. For example, since the $p_{\pm k} = P_{\pm k,0}$ generate a copy $\Lambda_\pm$ of the ring of symmetric polynomials, then we may recast formula \eqref{eqn:Comm 1} in terms of the following commutation relations involving the corresponding complete symmetric functions:
\begin{equation}
\label{eqn:comm 1}
A_m h_{-k} - h_{-k} A_m \ = \ \frac {q^r u'}{m^r u}  A_m h_{-k+1} - h_{-k+1} A_m
\end{equation}
\begin{equation}
\label{eqn:comm 2}
A_m h_k - h_k  A_m = A_m h_{k-1}  - \frac {m^r u}{u'} h_{k-1}  A_m 
\end{equation}
The way we prove that \eqref{eqn:Comm 1} is equivalent to \eqref{eqn:comm 1}--\eqref{eqn:comm 2} is to observe that both are particular cases of the following formulas:
\begin{equation}
\label{eqn:zz 1}
A_m x^{(1)}_- \cdot \ph_{\frac {q^r u'}{m^r u}} \left( x^{(2)}_- \right) = x^{(1)}_- A_m  \cdot \ph_1 \left(x^{(2)}_- \right)
\end{equation}
\begin{equation}
\label{eqn:zz 2}
A_m x^{(1)}_+ \cdot \ph_1 \left( x^{(2)}_+ \right) = x^{(1)}_+ A_m  \cdot \ph_{\frac {m^r u}{u'}} \left(x^{(2)}_+ \right)
\end{equation}
for all $x_\pm \in \Lambda_\pm$, where $\Delta(x_\pm) = x^{(1)}_\pm \otimes x^{(2)}_\pm$ denotes the usual coproduct on $\Lambda_\pm$ in Sweedler notation (recall that this coproduct has $p_{\pm k}$ as primitive elements and $\sum_{k=0}^\infty h_{\pm k}z^k$ as a group-like element). The functionals $\ph_s:\Lambda_\pm \rightarrow \BQ(q,m,u,u')$ are ring homomorphisms defined by either of the equivalent sets of assignments:
$$
\ph_s(h_{\pm k}) = \begin{cases} 1 & \text{ if } k = 0 \\ - s & \text{ if } k = 1 \\ 0 & \text{ otherwise} \end{cases} \qquad \Leftrightarrow \qquad \ph_s(p_{\pm k}) = - s^k
$$
Therefore, formulas \eqref{eqn:zz 1}--\eqref{eqn:zz 2} are multiplicative in $x_\pm$: if they hold for $x_\pm$ and $y_\pm$, then they also hold for $(xy)_\pm$. Therefore, proving these formulas for $x_\pm = p_{\pm k}$ is equivalent to proving them for $x_\pm = h_{\pm k}$, hence \eqref{eqn:Comm 1} is equivalent to \eqref{eqn:comm 1}--\eqref{eqn:comm 2}. \\

\noindent Meanwhile, by changing the index $k \leftrightarrow -k+1$, one can easily see that formula \eqref{eqn:Comm 2} is equivalent to the two following relations for all $k>0$:
\begin{equation}
\label{eqn:comm 3}
A_m P_{-k,1} - m P_{-k,1} A_m = \frac {q^r u'}{m^r u} \left( A_m P_{-k+1,1} - \frac mq P_{-k+1,1} A_m \right) %A_m P_{-k,1} - m P_{-k,1} A_m = \frac {q^r}{m^r} \left( A_m P_{-k+1,1} - \frac mq P_{-k+1,1} A_m \right)
\end{equation}
\begin{equation}
\label{eqn:comm 4}
A_m P_{k,1} - \frac mq P_{k,1} A_m  = \frac {m^r u}{q^r u'} \Big( A_m P_{k-1,1} - m P_{k-1,1} A_m\Big) 
\end{equation}
To prove \eqref{eqn:comm 1} and \eqref{eqn:comm 3}, consider the following diagrams of spaces and arrows:
\begin{equation}
\label{eqn:big diagram 1}
\xymatrix{ & & \CM_{n,\bu} \times \CM_{n',\bu'}  \ar@/_3pc/[llddd]_{p_1} \ar@/^3pc/[rrddd]^{p_2} & & \\
& & \CM_{n,\bu} \times \fZ_{n'+k,n'} \ar[ld]_{\Id \times \pi_+} \ar[rd]^{\nu}   \ar[u]^{\Id \times \pi_-} & & \\
& \CM_{n,\bu} \times \CM_{n'+k,\bu'} \ar[ld] \ar[rd] & & \fZ_{n'+k,n'} \ar[ld]_{\pi_+} \ar[rd]^{\pi_-} & \\ 
\CM_{n,\bu} & & \CM_{n'+k,\bu'} & & \CM_{n',\bu'}}
\end{equation}

%$$
%\xymatrix{ & & & \CM_{n,\bu} \times \CM_{n',\bu'}  \ar@/_5pc/[lllddd]_{q_1} \ar@/^2pc/[lddd]^{q_3} \\
%& & \CM_{n,\bu} \times \CM_{n'+k,\bu'} \times \CM_{n',\bu'} \ar[ld]_{q_{12}} \ar[d]^{q_{23}}   \ar[ru]^{q_{13}} & \\
%& \CM_{n,\bu} \times \CM_{n'+k,\bu'} \ar[ld] \ar[d] & \CM_{n'+k,\bu'} \times \CM_{n',\bu'} \ar[ld]_{p_2} \ar[d]^{p_3} & \\ 
%\CM_{n,\bu} & \CM_{n'+k,\bu'} & \CM_{n',\bu'} &}
%$$

\begin{equation}
\label{eqn:big diagram 2}
\xymatrix{ & & \CM_{n,\bu} \times \CM_{n',\bu'}  \ar@/_3pc/[llddd]_{p_1'} \ar@/^3pc/[rrddd]^{p'_2} & & \\
& & \fZ_{n,n-k} \times \CM_{n',\bu'} \ar[ld]_{\nu'} \ar[rd]^{\pi'_- \times \Id}   \ar[u]^{\pi_+' \times \Id} & & \\
& \fZ_{n,n-k} \ar[ld]_{\pi'_+} \ar[rd]^{\pi'_-} & & \CM_{n-k,\bu} \times \CM_{n',\bu'} \ar[ld] \ar[rd] & \\ 
\CM_{n,\bu} & & \CM_{n-k,\bu} & & \CM_{n',\bu'}}
\end{equation}

%$$
%\xymatrix{ & & & \CM_{n,\bu} \times \CM_{n',\bu'}  \ar@/_4pc/[lllddd]_{q_1'} \ar@/^2pc/[lddd]^{q'_3} \\
%& & \CM_{n,\bu} \times \CM_{n-k,\bu} \times \CM_{n',\bu'} \ar[ld]_{q'_{12}} \ar[d]^{q'_{23}}   \ar[ru]^{q'_{13}} & \\
%& \CM_{n,\bu} \times \CM_{n-k,\bu} \ar[ld]_{p_1'} \ar[d]^{p_2'} & \CM_{n-k,\bu} \times \CM_{n',\bu'} \ar[ld] \ar[d] & \\ 
%\CM_{n,\bu} & \CM_{n-k,\bu} & \CM_{n',\bu'} &}
%$$

\noindent As a consequence of \eqref{eqn:hkd} and Theorem \ref{thm:corr}, we have:
$$
H_{-k,0} = \wpi_{+*} \left( \pi_-^* \right) = \wpi'_{+*}  \left( \pi'^{*}_- \right) 
$$
Then the usual composition of operators gives us the following formulas:
\begin{equation}
\label{eqn:comp}
A_m H_{-k,0} = \tp_{1*}(\Gamma_k \cdot p_2^{*}) \qquad \text{and} \qquad H_{-k,0} A_m = \tp'_{1*}(\Gamma'_k \cdot {p_2'}^{*})
\end{equation}
where $\tp_{1*}$, $\tp_{1*}'$ are the renormalizations of $p_{1*}$, $p_{1*}'$ defined in Subsection \ref{sub:def e}, and:
$$
\Gamma_k = \left( \Id \times \tpi_- \right)_* \left[\wwedge^\bullet( (\Id \times \pi_+)^* \CE_{n,n'+k} \otimes m) \right]
$$
$$
\Gamma'_k = \left( \tpi_+' \times \Id \right)_* \left[\wwedge^\bullet( (\pi'_- \times \Id)^* \CE_{n-k,n'} \otimes m) \right]
$$
Using \eqref{eqn:ext 2}, we may rewrite $\Gamma_k$ and $\Gamma'_k$ in terms of tautological classes:
$$
\Gamma_k \ = \ \left( \Id \times \tpi_- \right)_* \left[ \tau'\left( m{X} \right) \tau\left(\frac {qU}m \right) \zeta \left( \frac {U}{mX} \right)  \right]
$$
$$
\Gamma'_k = \left( \tpi_+' \times \Id \right)_* \left[ \tau'\left( mU'\right) \tau\left(\frac {qX'}m \right) \zeta \left( \frac {X'}{mU'} \right) \right]
$$
where the alphabets of variables $X$, $X'$ denote tautological classes on the spaces $\CM_{n,\bu}$, $\CM_{n',\bu'}$, respectively, and $U$, $U'$ denote tautological classes on the middle spaces of the bottom row of \eqref{eqn:big diagram 1}, \eqref{eqn:big diagram 2}, respectively. We may identify:
$$
U = X'+\CL_1+...+\CL_k \qquad \text{and} \qquad U' = X-\CL_1-...-\CL_k
$$
because the classes $U$ and $U'$ do not just live on the product $\CM_{\bullet+k} \times \CM_\bullet$, but on the subvariety $\fZ_k$, which comes endowed with the line bundles $\CL_1,...,\CL_k$ of \eqref{eqn:picard}. Thus: 
$$
\Gamma_k \ \ = \ \ \left(\Id \times \tpi_- \right)_* \left[\Upsilon \cdot  \prod_{i=1}^k \tau \left( \frac {q\CL_i}m \right) \zeta \left(\frac {\CL_i}{mX} \right)\right]
$$
and:
$$
\Gamma'_k = \left( \tpi_+' \times \Id \right)_* \left[\Upsilon  \prod_{i=1}^k \tau' \left( m\CL_i \right)^{-1} \zeta \left(\frac {X'}{m\CL_i} \right)^{-1} \right]
$$
where: 
\begin{equation}
\label{eqn:upsilon}
\Upsilon =  \tau'\left( m{X} \right) \tau\left(\frac {qX'}m \right) \zeta \left( \frac {X'}{mX} \right)
\end{equation}
Note that in each of the formulas above, $\Upsilon$ is pulled back from $\CM_{n,\bu} \times \CM_{n',\bu'}$, so we can slide it in front of the the direct image. Therefore, Proposition \ref{prop:corr} implies:
\begin{equation}
\label{eqn:gam 1}
\Gamma_k \ = \ \Upsilon \cdot \int_{X \cup \{0,\infty\} \succ z_1 \succ ... \succ z_k \succ X'} \frac {\prod_{i=1}^k \overline{ \zeta \left(\frac {z_i}{mX} \right) \zeta \left(\frac {X'}{z_i} \right)^{-1} \tau \left( \frac {qz_i}m \right)  \tau'(z_i)^{-1} }  Dz_i}{\prod_{i=1}^{k-1} \left(1 - \frac {qz_{i+1}}{z_i} \right) \prod_{i < j} \zeta \left( \frac {z_j}{z_i} \right)} \qquad 
\end{equation}
and:
\begin{equation}
\label{eqn:gam 2}
\Gamma'_k = \Upsilon \cdot \int_{X \succ z_1 \succ ... \succ z_k \succ X' \cup \{0,\infty\}} \frac {\prod_{i=1}^k \overline{ \zeta \left(\frac {z_i}{X} \right) \zeta\left( \frac {X'}{mz_i}\right)^{-1} \tau \left( qz_i \right)   \tau'(mz_i)^{-1}} Dz_i}{\prod_{i=1}^{k-1} \left(1 - \frac {qz_{i+1}}{z_i} \right) \prod_{i < j} \zeta \left( \frac {z_j}{z_i} \right)} \qquad
\end{equation}
Let us write:
$$
I_k(z_1,...,z_k) = \frac {\prod_{i=1}^k \overline{ \zeta \left(\frac {z_i}{mX} \right) \zeta \left(\frac {X'}{z_i} \right)^{-1} \tau \left( \frac {qz_i}m \right)  \tau'(z_i)^{-1} }}{\prod_{i=1}^{k-1} \left(1 - \frac {qz_{i+1}}{z_i} \right) \prod_{i < j} \zeta \left( \frac {z_j}{z_i} \right)} 
$$
The change of variables $z_i \mapsto z_i m$ implies that $\Gamma_k - \Gamma'_k = $
\begin{equation}
\label{eqn:van}
= \Upsilon \left[ \int_{X \cup \{0,\infty\} \succ z_1 \succ ... \succ z_k \succ X'} I_k \prod_{i=1}^k Dz_i - \int_{X \succ z_1 \succ ... \succ z_k \succ X' \cup \{0,\infty\}} I_k\prod_{i=1}^k Dz_i \right]
\end{equation}
The two integrals have the same integrand, so their difference consists of the residues when one of the variables $z_1,...,z_k$ passes over either 0 or $\infty$. However, because $\lim_{w\rightarrow 0} \zeta(w) = \lim_{w \rightarrow \infty} \zeta(w) = 1$ we have the limits:
\begin{equation}
\label{eqn:limmy 1}
\lim_{z_1 \rightarrow 0} I_k(z_1,...,z_k) = 0, \qquad \lim_{z_1 \rightarrow \infty} I_k(z_1,...,z_k) = \frac {q^r u'}{m^r u} I_{k-1}(z_2,...,z_k) 
\end{equation}
\begin{equation}
\label{eqn:limmy 2}
\lim_{z_s \rightarrow 0} I_k(z_1,...,z_k) = \lim_{z_s \rightarrow \infty} I_k(z_1,...,z_k) = 0 \ \ \qquad \forall \ s\in \{2,...,k-1\}
\end{equation}
\begin{equation}
\label{eqn:limmy 3}
\lim_{z_k \rightarrow 0} I_k(z_1,...,z_k) = I_{k-1}(z_1,...,z_{k-1}), \ \ \ \qquad \lim_{z_k \rightarrow \infty} I_k(z_1,...,z_k) = 0
\end{equation}
where $u = u_1...u_r$ and $u'=u_1'...u_r'$. Therefore, the only  two residues which appear in the difference \eqref{eqn:van} are when $z_1$ passes over $\infty$ and when $z_k$ passes over 0. We conclude that the difference equals:
$$
\Gamma_k - \Gamma_k' = \Upsilon \left[ \int_{X \cup \{0,\infty\} \succ z_2 \succ ... \succ z_{k} \succ X'} \frac {q^r u'}{m^r u} I_{k-1}(z_2,...,z_k)  \prod_{i=2}^k Dz_i - \right.
$$
$$
\left. - \int_{X \succ z_1 \succ ... \succ z_{k-1} \succ X' \cup \{0,\infty\}} I_{k-1}(z_1,...,z_{k-1}) \prod_{i=1}^{k-1} Dz_i \right] = \frac {q^r u'}{m^r u} \cdot \Gamma_{k-1} - \Gamma'_{k-1}
$$
which is precisely \eqref{eqn:comm 1}. Similarly, we have:
$$
A_m P_{-k,1} = \tp_{1*}(\wGamma_k \cdot p_2^{*}) \qquad \text{and} \qquad P_{-k,1} A_m = \tp'_{1*}(\wGamma'_k \cdot {p_2'}^{*})
$$
where:
$$
\wGamma_k = \left( \Id \times \tpi_- \right)_* \left[\CL_k \otimes \wwedge^\bullet( (\Id \times \pi_+)^* \CE_{n,n'+k} \otimes m)\right]
$$
$$
\wGamma'_k = \left( \tpi_+' \times \Id \right)_* \left[\CL_k \otimes \wwedge^\bullet( (\pi'_- \times \Id)^* \CE_{n-k,n'} \otimes m)\right]
$$
By analogy with \eqref{eqn:gam 1} and \eqref{eqn:gam 2}, we have:
$$
\wGamma_k = \Upsilon \cdot \int_{X \cup \{0,\infty\} \succ z_1 \succ ... \succ z_k \succ X'} z_k \cdot I_k(z_1,...,z_k) \prod_{i=1}^k Dz_i
$$
and:
$$
\wGamma_k' = \Upsilon \int_{X \succ z_1 \succ ... \succ z_k \succ X'  \cup \{0,\infty\}} z_k \cdot I_k(z_1 m,...,z_k m) \prod_{i=1}^k Dz_i = 
$$
$$
= \frac {\Upsilon}m \cdot \int_{X \succ z_1 \succ ... \succ z_k \succ X'  \cup \{0,\infty\}} z_k \cdot I_k(z_1,...,z_k) \prod_{i=1}^k Dz_i
$$
where in the last equality we changed the variable $z_i \mapsto z_i m$. Computing the difference between the expressions above, we see that $\wGamma_k - m \wGamma_k' = $
\begin{equation}
\label{eqn:gro}
= \Upsilon \left[ \int_{X \cup \{0,\infty\} \succ z_1 \succ ... \succ z_k \succ X'} z_k I_k \prod_{i=1}^k Dz_i - \int_{X \succ z_1 \succ ... \succ z_k \succ X' \cup \{0,\infty\}} z_k I_k \prod_{i=1}^k Dz_i \right]
\end{equation}
To compute the difference, we must once again sum the residues of the integrand as one of the variables passes over $0$ or $\infty$. Formulas \eqref{eqn:limmy 1} and \eqref{eqn:limmy 2} continue to hold for $I_k$ replaced by $z_kI_k$, but formula \eqref{eqn:limmy 3} must be replaced by:
\begin{equation}
\label{eqn:limmy 4}
\lim_{z_k \rightarrow 0} z_k I_k  = 0, \qquad \lim_{z_k \rightarrow \infty} z_k I_k(z_1,...,z_k) = - \frac {q^{r-1}u'}{m^ru} z_{k-1}I_{k-1}(z_1,...,z_{k-1})
\end{equation} 
With this in mind, \eqref{eqn:gro} yields:
$$
\wGamma_k - m \wGamma_k' = \Upsilon \left[ \int_{X \cup \{0,\infty\} \succ z_2 \succ ... \succ z_{k} \succ X'} \frac {q^ru'}{m^ru} z_k I_{k-1}(z_2,...,z_k) \prod_{i=2}^k Dz_i -  \right.
$$
$$
\left. \int_{X \succ z_1 \succ ... \succ z_{k-1} \succ X' \cup \{0,\infty\}} \frac {q^{r-1}u'}{m^ru} z_{k-1}I_{k-1}(z_1,...,z_{k-1}) \prod_{i=1}^{k-1} Dz_i \right] = \frac {q^r u'}{m^r u} \left( \wGamma_{k-1} - \frac mq \wGamma_{k-1}' \right)
$$
Note that this relation is precisely \eqref{eqn:comm 3}, written in terms of correspondences. The proofs of \eqref{eqn:comm 2} and \eqref{eqn:comm 4} are completely analogous, so we leave them as exercises to the interested reader. More formally, they follow from \eqref{eqn:comm 1} and \eqref{eqn:comm 3} by transposition, since $h_k, P_{k,1}$ are the adjoints of $h_{-k}, P_{-k,1}$ under the inner product \eqref{eqn:inner 1} and Serre duality implies that $A_{\frac qm}$ is the adjoint of $A_m$. To see the latter claim explicitly, one can also combine formulas \eqref{eqn:orthonorm} and \eqref{eqn:matrix coefficients} to obtain:
$$
\langle \bla' | A_{\frac qm} | \bla \rangle = \langle \bla | A_m | \bla' \rangle \cdot \frac {(|\bla \rangle, |\bla \rangle)}{(|\bla' \rangle, |\bla' \rangle)}
$$

%$$
%\zeta \left( \frac zw \right) = 1 + (1-q_1)(1-q_2) \frac zw + O \left(\frac {z^2}{w^2} \right) =  1 + (1-q_1)(1-q_2) \frac w{zq} + O \left(\frac {w^2}{z^2} \right)
%$$
%$$
%\frac {\tau \left(\frac {qz}m \right)}{\tau(z)} = 1 + \left(\frac qm\right)^r \left( \frac 1{u_1} + ... + \frac 1{u_r} \right)\left(1 - \frac qm \right) \cdot z + O\left( z^2 \right) =
%$$
%$$
%= \left(\frac qm\right)^r + \left(\frac qm\right)^r (u_1+...+u_r)\left(1 - \frac mq \right) \cdot \frac 1z + O\left(\frac 1{z^2} \right)
%$$

\end{proof}

\subsection{}
\label{sub:work out}

Let us now use formulas \eqref{eqn:Comm 1}--\eqref{eqn:Comm 2} to compute the commutation relations of $A_m$ with the $_{q}W$--algebra currents. We revert to the notation $A_m(x)$ for the generating series \eqref{eqn:gen current}. To assure uniformity in our formulas, it will be convenient to study instead of $A_m(x)$ the operator: 
$$
\Phi_m(x) : K_{\bu'} \rightarrow K_{\bu}
$$
given by:
\begin{equation}
\label{eqn:def phi}
\Phi_m(x) = A_m(x) \exp \left[- \sum_{n=1}^\infty \frac {p_n}{n x^n} \cdot \left(\frac {u'}{m^r u} \right)^n \frac {1-q^n}{(1-q_1^n)(1-q_2^n)} \right]
\end{equation}

\begin{proposition}
\label{prop:vertex}
The operator $\Phi_m(x)$ has the following commutation relations with the first $_qW$--algebra generating current:
\begin{equation}
\label{eqn:phi w1}
\Phi_m(x) W_1(y) \cdot \left(1 - \frac {m^r u x}{q^{r-1} u' y} \right) = W_1(y) \Phi_m(x) \cdot m\left(1 - \frac {m^r u x}{q^{r-1} u' y} \right)
\end{equation}
Moreover, it makes sense to ask for the commutation relations between $\Phi_m$ and the $_q$Heisenberg generators $p_n \in \CA_r^{\emph{ext}}$, which take the form:
\begin{equation}
\label{eqn:phi w0}
[\Phi_m(x), p_{\pm k}] = \pm \Phi_m(x) x^{\pm k} \left[1 - \left(\frac {m^r u}{u'} \right)^{\pm k}\right]
\end{equation}

\end{proposition}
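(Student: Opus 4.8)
The plan is to derive both relations from Theorem \ref{thm:commute}, i.e. from the commutators of the Ext operator $A_m$ with the Heisenberg generators $p_{\pm k}$ and with the modes $P_{k,1}$ of the first current $W_1(y)=\sum_k P_{k,1}y^{-k}$ (see \eqref{eqn:w1}), and then to conjugate by the exponential factor relating $\Phi_m(x)$ to $A_m(x)$ in \eqref{eqn:def phi}. Throughout I write $N$ for the grading operator on $K$, so that $A_m(x)=x^N A_m x^{-N}$ with $A_m=A_m(1)$, and I use that an operator $O$ of degree $d$ obeys $x^N O x^{-N}=x^d O$. Since $p_{\pm k}$ has degree $\mp k$ and $P_{k,1}$ has degree $-k$, this conjugation is exactly the mechanism that attaches the weights $x^{\pm k}$ (respectively $W_1(y)\mapsto W_1(xy)$) to the constants appearing in Theorem \ref{thm:commute}. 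I abbreviate $\Phi_m(x)=A_m(x)G(x)$, where $G(x)=\exp[-\sum_n \frac{p_n}{n x^n}\gamma_n]$ and $\gamma_n=(\frac{u'}{m^r u})^n\frac{1-q^n}{(1-q_1^n)(1-q_2^n)}$, and I set $t=\frac{m^r u x}{q^{r-1}u'y}$.

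First I would establish the Heisenberg relations \eqref{eqn:phi w0}. Conjugating \eqref{eqn:Comm 1} gives $[A_m(x),p_{\pm k}]=x^{\pm k}c_{\pm k}A_m(x)$ with $c_{-k}=(\frac{q^r u'}{m^r u})^k-1$ and $c_k=1-(\frac{m^r u}{u'})^k$. Then $[\Phi_m(x),p_{\pm k}]=A_m(x)[G(x),p_{\pm k}]+[A_m(x),p_{\pm k}]G(x)$. Because positive bosons commute, $[G(x),p_k]=0$ and the $p_k$ case is immediate. For $p_{-k}$ the commutator $[G(x),p_{-k}]$ is a central scalar computed from \eqref{eqn:heisenberg} at $c=q^r$; the normalization $\gamma_n$ is chosen precisely so that this scalar equals $x^{-k}(\frac{u'}{m^r u})^k(1-q^{rk})$, which adds to $c_{-k}$ and collapses $(\frac{q^r u'}{m^r u})^k-1$ into $(\frac{u'}{m^r u})^k-1$, matching \eqref{eqn:phi w0}.

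Next, for \eqref{eqn:phi w1}, I would repackage \eqref{eqn:Comm 2}: multiplying by $y^{-k}$ and summing gives, for $A_m=A_m(1)$, the identity $A_m W_1(y)(1-\frac{m^r u}{q^r u'y})=\frac{m}{q}W_1(y)A_m(1-\frac{m^r u}{q^{r-1}u'y})$. Conjugating by $x^N$ (which sends $W_1(y)\mapsto W_1(xy)$) and substituting $y\mapsto y/x$ produces the $x$-dependent form $A_m(x)W_1(y)(1-t/q)=\frac{m}{q}W_1(y)A_m(x)(1-t)$. Separately, I would commute $G(x)$ past $W_1(y)$ using the $k=1$ case of \eqref{eqn:w rel 0 plus} at $c=q^r$: each $p_n$ is shifted by a scalar, so $G(x)W_1(y)=C(x,y)^{-1}W_1(y)G(x)$ with $C(x,y)^{-1}=\frac{1-q/t}{1-1/t}$ expanded in $y/x$.

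Finally I would assemble $\Phi_m(x)W_1(y)=C^{-1}A_m(x)W_1(y)G(x)$, multiply on the right by the Laurent polynomial $(1-t/q)$, insert the repackaged \eqref{eqn:Comm 2}, and use the exact rational identity $\frac{(1-q/t)(1-t)}{1-1/t}=q-t$, i.e. $\frac{m}{q}C^{-1}(1-t)=m(1-t/q)$. This yields $\Phi_m(x)W_1(y)(1-t/q)=m(1-t/q)W_1(y)\Phi_m(x)$; cancelling the nonzero factor $(1-t/q)$ gives $\Phi_m(x)W_1(y)=mW_1(y)\Phi_m(x)$, which is \eqref{eqn:phi w1} after multiplying by $(1-t)$. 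I expect the main obstacle to be precisely the bookkeeping: keeping every prefactor as a genuine Laurent polynomial so that I never divide by a formal series (which would introduce spurious delta-function contributions), and verifying that the pole of $C^{-1}$ at the resonance $t=1$ is exactly cancelled by the polynomial factors before any cancellation is carried out.
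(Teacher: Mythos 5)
Your treatment of \eqref{eqn:phi w0} is correct and is essentially the paper's argument: the $+$ case is immediate, and for $p_{-k}$ the scalar $[G(x),p_{-k}]G(x)^{-1}=x^{-k}\left(\frac {u'}{m^ru}\right)^k(1-q^{rk})$ combines with \eqref{eqn:Comm 1} exactly as you say. The ingredients you assemble for \eqref{eqn:phi w1} --- the repackaged \eqref{eqn:Comm 2} and the conjugation $G(x)W_1(y)=\frac{1-q/t}{1-1/t}W_1(y)G(x)$ --- are also the paper's. The gap is in the assembly, at the step $\frac mq C^{-1}\cdot W_1(y)A_m(x)\cdot(1-t)=m(1-t/q)\,W_1(y)A_m(x)$. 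The scalar identity $C^{-1}(1-t)=q-t$ is a correct identity of formal series, but you apply it across the operator $W_1(y)A_m(x)$: since $A_m(x)$ has components raising the degree by arbitrary amounts, infinitely many modes of $W_1(y)A_m(x)$ act nontrivially on any given vector, so the double sum $\sum_{n\geq 0}\sigma_n (y/x)^n\bigl(P_{k+n,1}A_m(x)-\text{const}\cdot P_{k+n-1,1}A_m(x)\bigr)$ may not be split into two separately re-telescoped sums. Concretely (one sees this already at $r=1$ by writing everything as vertex operators), $C^{-1}\cdot\bigl[W_1(y)A_m(x)(1-t)\bigr]$ is the $|y|\ll|x|$ expansion of a rational function with a simple pole at $t=1$, while $\bigl[C^{-1}(1-t)\bigr]\cdot W_1(y)A_m(x)$ is the $|y|\gg|x|$ expansion of the same function; the two differ by a nonzero multiple of $\delta\left(\frac{m^rux}{q^{r-1}u'y}\right)$. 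Your manipulation silently discards that delta function, which is exactly the commutator you are trying to compute.

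As a result your intermediate identity $[\Phi_m(x),W_1(y)]_m\cdot(1-t/q)=0$ is false, and so is the statement $[\Phi_m(x),W_1(y)]_m=0$ obtained by ``cancelling'' $(1-t/q)$ (that cancellation is independently illegitimate, since multiplication by $1-t/q$ annihilates $\delta(t/q)$). The true commutator is a nonzero multiple of $\delta\left(\frac{m^rux}{q^{r-1}u'y}\right)$, supported precisely where the factor $1-t$ in \eqref{eqn:phi w1} vanishes --- this is why that factor is present at all, and why Remark \ref{rem:future} treats these factors as the essential content of the relations. The repair is the paper's order of operations: in the repackaged \eqref{eqn:Comm 2} write $A_m(x)=\Phi_m(x)G(x)^{-1}$ and push $G(x)^{-1}$ to the right past $W_1(y)$, picking up the series $\frac{1-1/t}{1-q/t}$; that series then sits immediately against the Laurent polynomial $(1-t/q)$ with only the degree-non-increasing operator $G(x)^{-1}$ (and the vector) to its right, so collapsing their product to $\frac 1q(1-t)$ is a locally finite, hence legitimate, rearrangement, and \eqref{eqn:phi w1} follows after stripping the invertible $G(x)^{-1}$ from the right of both sides.
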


\begin{proof} Formula \eqref{eqn:phi w0} is equivalent to \eqref{eqn:Comm 1} when the sign is $+$. When the sign is $-$, we must supplement \eqref{eqn:Comm 1} with the computation of the commutator of:
\begin{equation}
\label{eqn:def z}
Z_m(x) := \exp \left[- \sum_{n=1}^\infty \frac {p_n}{n x^n} \left(\frac {u'}{m^r u} \right)^n \frac {1-q^n}{(1-q_1^n)(1-q_2^n)} \right] = A_m(x)^{-1} \Phi_m(x)
\end{equation}
with $p_{-n}$. To do so, we directly apply \eqref{eqn:heisenberg}:
$$
\left[Z_m(x) , p_{-k} \right] = Z_m(x) x^{-k} \left[ \left(\frac {u'}{m^r u} \right)^k - \left(\frac {q^r u'}{m^r u} \right)^k \right] 
$$
and then use \eqref{eqn:Comm 1} to obtain \eqref{eqn:phi w0}. As for \eqref{eqn:phi w1}, let us rewrite relation \eqref{eqn:Comm 2} as:
$$
A_m(x) \left( \sum_{k \in \BZ} \frac {P_{k,1}}{y^k} \right) \left(1 - \frac {m^r u x}{q^r u' y} \right)  = \left( \sum_{k \in \BZ} \frac {P_{k,1}}{y^k} \right) A_m(x) \left(\frac mq - \frac {m^{r+1} u x}{q^r u' y} \right) 
$$
Using \eqref{eqn:def phi} and \eqref{eqn:w k}, we obtain:
$$
\Phi_m(x) Z_m(x)^{-1} W_1(y) \left(1 - \frac {m^r u x}{q^r u' y} \right)   =
$$
\begin{equation}
\label{eqn:Comm 5}
= W_1(y) \Phi_m(x) Z_m(x)^{-1} \left(\frac mq - \frac {m^{r+1} u x}{q^r u' y} \right) 
\end{equation}
Since $Z_m(x)$ of \eqref{eqn:def z} is an exponential in the annihilation bosons $p_n$, in any normal ordered expression it should be placed at the very right. This means that in the left-hand side of \eqref{eqn:Comm 5}, we must move this exponential past $W_1(y)$. To this end, recall that formula \eqref{eqn:relation 2} and the fact that $p_n = q^{n(r-1)} P_{n,0}$ imply:
$$
\left[ \sum_{k\in \BZ} \frac {P_{k,1}}{y^k} , p_{n} \right] = (1-q_1^n)(1-q_2^n) q^{n(r-1)} y^{n}  \sum_{k\in \BZ} \frac {P_{k,1}}{y^k}
$$
Exponentiating this relation, we conclude that $Z_m(x)^{-1} W_1(y)$ equals:
$$
W_1(y) Z_m(x)^{-1} \exp \left[ \sum_{n=1}^\infty \frac {y^n}{n x^n} \left(\frac {q^r u'}{m^r u} \right)^n(1-q^{-n}) \right] = W_1(y) Z_m(x)^{-1} \cdot \frac {1- \frac {q^{r-1} u' y}{m^r ux}}{1- \frac {q^r u' y}{m^r u x}}
$$
Plugging this formula into \eqref{eqn:Comm 5} gives us precisely \eqref{eqn:phi w1}. 

\end{proof}

%\subsection{}
%\label{sub:end}

%Let us observe that if $r = 1$, formula \eqref{eqn:phi w0} implies that $\Phi_m(x)$ is a constant (which is easily seen to equal 1 from the normalization of its vacuum matrix coefficient). For arbitrary $r$ and $m = q^d$ for $d \in \BZ$, then the exponentials in \eqref{eqn:om} become rational functions, and relation \eqref{eqn:phi w1} reduces to:
%$$
%\Phi_{q^d}(x) W_1(y) = W_1(y) \Phi_{q^d}(x) 
%$$
%In virtue of \eqref{eqn:relations}, we conclude that $\Phi_{q^d}$ commutes with all the $_qW$--currents, and so \eqref{eqn:factorization} relates the Ext operator $A_{q^d}(x)$ with a \textbf{bona fide} $_qW$--algebra intertwiner. For arbitrary $m$, we can use \eqref{eqn:phi w1} and the commutation of $_qW$--currents in \eqref{eqn:relations} to prove our main result. 

\noindent \textbf{Proof of Theorem \ref{thm:main}: } We will prove \eqref{eqn:phi wk} by induction on $k$, whose base case $k=1$ is precisely \eqref{eqn:phi w1}. For the induction step, assume that \eqref{eqn:phi wk} is proved for some $k$ and let us prove it for $k+1$. Applying relation \eqref{eqn:relations} for $k' = 1$ gives us:
$$
W_k(y')W_1(y)\zeta \left(\frac {y'}{yq^k} \right) - W_1(y) W_k(y') \zeta \left( \frac {y}{y'q} \right) =
$$
\begin{equation}
\label{eqn:home}
= \frac {(1-q_1)(1-q_2)}{1-q} \left[\delta \left( \frac {y}{y'q} \right) W_{k+1}(y) - \delta \left(\frac {y'}{yq^k}\right) W_{k+1}(y') \right]
\end{equation}
Since $\delta(z)(1-z) = 0$, we may isolate $W_{k+1}(y)$ in the right-hand side by multiplying both sides of the expression above with $1- \frac {y'}{yq^k}$:
$$
W_k(y')W_1(y)\zeta \left(\frac {y'}{yq^k} \right)\left(1 - \frac {y'}{yq^k}\right) - W_1(y) W_k(y') \zeta \left( \frac {y}{y'q} \right) \left(1 - \frac {y'}{yq^k}\right) =
$$
\begin{equation}
\label{eqn:home}
= \frac {(1-q_1)(1-q_2)}{1-q} \delta \left( \frac {y}{y'q} \right) W_{k+1}(y)  \left(1-\frac 1{q^{k+1}}\right)
\end{equation}
Therefore, one can obtain the series $W_{k+1}(y)$ by taking the constant term in $y'$ of the series in the left-hand side of formula \eqref{eqn:home}. Take the identity of commutators:
$$
[\Phi_m(x), W_k(y')W_1(y)]_{m^{k+1}} = [\Phi_m(x), W_k(y')]_{m^k} W_1(y) + m^k W_k(y') [\Phi_m(x), W_1(y)]_m
$$
and multiply it by:
\begin{equation}
\label{eqn:rea}
\left(1 - \frac {m^rux}{q^{r-1}u'y} \right) \prod_{i=1}^k \left(1 - \frac {m^rux}{q^{r-i}u'y'} \right)
\end{equation}
Then the induction hypothesis implies that $[\Phi_m(x), W_k(y')W_1(y)]_{m^{k+1}}$ multiplied by \eqref{eqn:rea} vanishes. The same reasoning implies that $[\Phi_m(x), W_1(y)W_k(y')]_{m^{k+1}}$ multiplied by \eqref{eqn:rea} vanishes, so we conclude that the same must be true for the right-hand side of \eqref{eqn:home}:
$$
0 = \frac {(1-q_1)(1-q_2)(1-q^{-k-1})}{1-q} \delta \left( \frac {y}{y'q} \right) [\Phi_m(x), W_{k+1}(y)]_{m_{k+1}} \Big( \text{expression \eqref{eqn:rea}} \Big)
$$
Because of the $\delta$ function, we may replace $y'$ by $\frac yq$ in \eqref{eqn:rea} and obtain:
$$
0 = \frac {(1-q_1)(1-q_2)(1-q^{-k-1})}{1-q} \delta \left( \frac {y}{y'q} \right) [\Phi_m(x), W_{k+1}(y)]_{m_{k+1}} \prod_{i=1}^{k+1} \left(1 - \frac {m^rux}{q^{r-i}u'y} \right) 
$$
Taking the constant term in $y'$ of this expression establishes \eqref{eqn:phi wk} for $k+1$. \\

\begin{remark}
\label{rem:future}

In \cite{AGT}, we will improve formula \eqref{eqn:phi wk} by showing that:
\begin{equation}
\label{eqn:future}
[\Phi_m(x), W_k(y)]_{m^k} \cdot \left(1 - \frac {m^r u x}{q^{r-k} u' y} \right) = 0
\end{equation}
for all $k \geq 1$, and that relations \eqref{eqn:future} uniquely determine the operator $\Phi_m(x)$ up to constant multiple. In fact, we leave it as an exercise to the interested reader to prove \eqref{eqn:future} when $k=r$ using the tools we already have on hand, specifically \eqref{eqn:impose rels bis} and \eqref{eqn:phi w0}, and observe that in this case \eqref{eqn:future} is already stronger than \eqref{eqn:phi wk}. \\

\end{remark}

\section{The quantum Miura transformation}
\label{sec:miura}

\subsection{} 
\label{sub:miura}

The original definition of the $_qW$--algebra is through the \textbf{quantum Miura transformation}. In this Section, we will define the $_qW$--algebra of type $\fgl_r$ by analogy with \cite{AKOS} and \cite{FF}, and show that it matches $\CA_{r}$ of Definition \ref{def:w}. Consider the following deformed Heisenberg algebra of type $\fgl_r$:
$$
\CH_r = \BF \left \langle b_n^i \right \rangle_{n \in \BZ \backslash 0}^{1\leq i \leq r}
$$
modulo the commutation relations:
\begin{equation}
\label{eqn:bosons}
[b_{-n}^i, b_n^j] = n (1-q_1^n)(1-q_2^n) \cdot \begin{cases} 1 - q^{-n} & \text{if } i < j \\ 1 & \text{if } i = j \\ 0 & \text{if } i > j \end{cases}
\end{equation}
for all $n>0$. All other commutators are defined to be zero. Consider the elements:
\begin{equation}
\label{eqn:def pn}
p_{n} = \sum_{i=1}^r b_{n}^i q^{n(i-1)} \in \CH_r
\end{equation}
for all $n \in \BZ \backslash 0$. Let us form currents out of these generators:
$$
b^i(x) = \sum_{n \in \BZ \backslash 0} \frac {b^i_n}{|n| x^n}, \qquad \qquad p(x) = \sum_{n \in \BZ \backslash 0} \frac {p_n}{|n| x^n} = \sum_{i=1}^r b\left(\frac x{q^{i-1}} \right)
$$
and consider the normal ordered exponentials:
$$
\Lambda^i(x) \ = \ u_i : \exp \left[ b^i(x)\right] : \ = \ u_i \exp \left[ \sum_{n = 1}^\infty \frac {b^i_{-n}}{n x^{-n}} \right] \exp \left[ \sum_{n = 1}^\infty \frac {b^i_n}{n x^n} \right]
$$
Then by analogy with \cite{AKOS} and \cite{FF}, we \textbf{define} the $_qW$--algebra currents as the expressions in $\Lambda^1(x),...,\Lambda^r(x)$ given by the following equality of difference operators:
\begin{equation}
\label{eqn:quantum miura}
\sum_{k=0}^\infty (-1)^k W_k(x) D^{r-k}_x = 
\end{equation}
$$
= \ :\left(D_x - \Lambda^1(x) \right)\left(D_x - \Lambda^2 \left( \frac xq \right) \right)...\left(D_x - \Lambda^r \left( \frac x{q^{r-1}} \right) \right) :
$$
Here, $D_x$ denotes the difference operator $f(x) \leadsto f(x q)$, and it commutes past the currents $\Lambda^i(x)$ according to the rule:
$$
D_x \Lambda^i(x) = \Lambda^i (xq) D_x
$$
Then one makes sense of \eqref{eqn:quantum miura} by foiling out the right-hand side and moving all of the difference operators to the right of each summand. After doing so, we are left with the following formulas, which are equivalent to \eqref{eqn:quantum miura}:
\begin{equation}
\label{eqn:quantum miura 2}
W_k(x) = \sum_{1\leq i_1 < ... < i_k \leq r} : \Lambda^{i_1}(x)\Lambda^{i_2}\left(\frac xq\right)... \Lambda^{i_k} \left(\frac x{q^{k-1}} \right) : 
\end{equation}
and the normal ordered product simply means that in all expressions, the creation operators $\{b^i_n\}_{n<0}$ must be placed to the left of the annihilation operators $\{b^i_n\}_{n>0}$. Very roughly, one may interpret the $_qW$--currents of \eqref{eqn:quantum miura 2} as normal ordered ``elementary symmetric functions" in the bosonic fields $\Lambda^i(x)$. \\

\noindent Note that, as a $\BF$--vector space, we have:
\begin{equation}
\label{eqn:complet}
\CH_r = \mathop{\mathop{\bigoplus_{s'_1 \geq ... \geq s_{k'}' > 0}^{t_1 \geq ... \geq t_{l} > 0}}^{\cdots}_{\cdots}}_{t'_1 \geq ... \geq t'_{l'} > 0}^{s_1 \geq ... \geq s_{k} > 0} \BF \cdot b_{-s_1}^1...b_{-s_k}^1 ... b_{-t_1}^r... b_{-t_l}^r b_{s_1'}^1...b_{s_{k'}'}^1 ... b_{t_1'}^r... b_{t_{l'}'}^r
\end{equation}
Let us define:
$$
\wCH_r \supset \CH_r
$$
as the completion consisting of infinite sums of the basis vectors \eqref{eqn:complet}, for finite $k+...+l+k'+...+l'$. Note that the coefficients of $W_{k}[[x^{\pm 1}]]$ lie in this completion. \\

\subsection{} One observes several things from \eqref{eqn:quantum miura 2}. First of all, we have $W_0(x) = 1$, while:
\begin{equation}
\label{eqn:mish}
W_r(x) \ = \ : \Lambda^{1}(x)\Lambda^{2}\left(\frac xq\right)... \Lambda^{r} \left(\frac x{q^{r-1}} \right) : \ = u_1...u_r : \exp \left[  p(x)\right] : 
\end{equation}
and $W_k(x) = 0$ for $k>r$. We give the following definition, by analogy with \cite{AKOS}, \cite{FF}: \\

\begin{definition}
\label{def:w algebra}

Define the deformed $W$--algebra $\CB_r \subset \wCH_r$ to be generated by:
$$
\Big\{ W_{d,k} \Big\}_{d\in \BZ}^{1\leq k \leq r} \qquad \text{where} \qquad W_k(x) = \sum_{d\in \BZ} \frac {W_{d,k}}{x^d}
$$

\end{definition}

\begin{proposition}
\label{prop:sanity check}

The elements $W_{d,k} \in \wCH_r$ and $p_n \in \CH_r$ satisfy relations \eqref{eqn:w rel 0 minus}, \eqref{eqn:w rel 0 plus} and \eqref{eqn:w rel} with $c=q^r$. \\

\end{proposition}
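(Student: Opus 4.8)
The plan is to prove all three relations by a direct free--field (operator product) computation with the bosonic currents $\Lambda^i(x)$, exactly in the spirit of \cite{AKOS}, \cite{FF}, the only new feature being the Heisenberg factor that distinguishes $\fgl_r$ from $\fsl_r$. The entire computation rests on two elementary contractions: the commutator of a single mode $p_{\pm n}$ with one factor $\Lambda^i$, and the normal ordering of a product $\Lambda^i(x)\Lambda^j(y)$.

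First I would dispatch the Heisenberg relations \eqref{eqn:w rel 0 minus} and \eqref{eqn:w rel 0 plus}. Using \eqref{eqn:bosons} and \eqref{eqn:def pn} one finds $[p_{-n},b^j_n]=n(1-q_1^n)(1-q_2^n)$ and $[b^i_{-n},p_n]=n(1-q_1^n)(1-q_2^n)q^{n(r-1)}$; in both cases a telescoping of the geometric sum coming from the $1-q^{-n}$ entries of \eqref{eqn:bosons} makes the answer \emph{independent} of the chosen index. Hence $[p_{\pm n},\Lambda^i(y)]$ is a scalar multiple of $\Lambda^i(y)$ with a coefficient that does not depend on $i$. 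Since $[p_{\pm n},-]$ acts as a derivation on the normal--ordered product \eqref{eqn:quantum miura 2}, it follows that $[p_{\pm n},W_k(x)]$ is proportional to $W_k(x)$, and summing the coefficient over the $k$ factors $\Lambda^{i_a}(xq^{1-a})$ produces $\sum_{a=1}^k q^{\pm(a-1)n}=\frac{1-q^{\pm kn}}{1-q^{\pm n}}$, which is exactly the rational prefactor in \eqref{eqn:w rel 0 minus}--\eqref{eqn:w rel 0 plus} once $c=q^r$ is substituted. The index--independence is the conceptual point: it is what forces the right--hand sides to be multiples of $W_k(x)$.

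For the quadratic relation \eqref{eqn:w rel} I would first record the single contraction
\[
\Lambda^i(x)\Lambda^j(y)=g^{ij}\!\left(\tfrac yx\right):\Lambda^i(x)\Lambda^j(y):,\qquad
g^{ij}(z)=\begin{cases}1 & i<j\\ \zeta(z)^{-1} & i=j\\ \zeta(zq^{-1})\,\zeta(z)^{-1} & i>j\end{cases}
\]
which follows by moving the annihilation exponential of $\Lambda^i(x)$ past the creation exponential of $\Lambda^j(y)$ and invoking \eqref{eqn:zeta exponential} and \eqref{eqn:bosons}. Expanding $W_k(x)W_{k'}(y)$ over the index sets $I=\{i_1<\dots<i_k\}$, $J=\{j_1<\dots<j_{k'}\}$ and contracting every $\Lambda^{i_a}(xq^{1-a})$ against every $\Lambda^{j_b}(yq^{1-b})$, one extracts the minimal product of $\zeta$'s common to all terms, which turns out to be $f_{kk'}(y/x)=\prod_{l=\max(0,k-k')}^{k-1}\zeta(\tfrac yx q^l)$ --- precisely \eqref{eqn:def f} after using $\zeta(z)=\zeta(1/(zq))$ from \eqref{eqn:inv zeta}. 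After multiplying by $f_{kk'}$ the surviving normal--ordered operator is manifestly symmetric under $(k,x)\leftrightarrow(k',y)$, and its scalar coefficient is a rational function whose only poles lie at $x=yq^i$ and $y=xq^i$ in the ranges \eqref{eqn:poles 1}--\eqref{eqn:poles 2}; this reproduces Proposition \ref{prop:explicit shuffle 2} in the free--field language. Consequently the antisymmetrized combination $W_k(x)W_{k'}(y)f_{kk'}-W_{k'}(y)W_k(x)f_{k'k}$, being one and the same symmetric operator expanded in the two regions $|y|\ll|x|$ and $|x|\ll|y|$, collapses to the sum of residues at those poles, i.e. to a sum of $\delta$--functions, exactly as in Remark \ref{rem:contours}.

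The main obstacle --- and the only genuinely intricate step --- is computing each residue and checking that it reassembles into $W_{k'-i}(x)W_{k+i}(y)$ (resp. $W_{k-i}(y)W_{k'+i}(x)$) with the precise coefficient $\theta$ of \eqref{eqn:def theta}. At the pole $x=yq^i$ a block of $i$ consecutive factors of the two currents resonates: the residue of a single $\zeta$ supplies the prefactor $\frac{(1-q_1)(1-q_2)}{1-q}$, the $i-1$ factors $\zeta(q)\cdots\zeta(q^{i-1})$ arise from the already--coincident neighbouring arguments, and the surviving normal--ordered product is exactly that of the merged current. The bookkeeping of which subsets $I,J$ contribute, and the verification that the pole is one--sided --- so that the two sums in \eqref{eqn:w rel} appear with the stated ranges and the indicator $\theta(\min(\cdots))$ --- is where the asymmetry of \eqref{eqn:bosons} (the $1-q^{-n}$, $1$, $0$ trichotomy) does all the work. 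To keep this tractable I would first settle the base case $k'=1$, where \eqref{eqn:w rel} reduces to \eqref{eqn:luna} and only a single box resonates, and then bootstrap to general $k'$ by the same inductive scheme used in the proof of Theorem \ref{thm:main}, since the $k'=1$ relations already determine all higher currents.
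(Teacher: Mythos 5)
Your proposal follows essentially the same route as the paper: the Heisenberg relations \eqref{eqn:w rel 0 minus}--\eqref{eqn:w rel 0 plus} via the index--independent single--mode commutators $[p_{\pm n},\Lambda^i]$ together with the derivation property and the geometric sum over the $k$ factors, and \eqref{eqn:w rel} via the three--case contraction of $\Lambda^i(x)\Lambda^j(y)$ followed by a residue analysis of the symmetric normal--ordered operator. The paper itself only writes out the $k=k'=1$ case explicitly and defers the general residue bookkeeping to Section 2.2 of \cite{O}, which is precisely the step you correctly identify as the intricate one.
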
 

\begin{proof} Formula \eqref{eqn:bosons} and the definition of $p_n$ in \eqref{eqn:def pn} implies that:
\begin{equation}
\label{eqn:dj}
[b_m^i, p_{-n}] \ = \ - \delta_{m-n}^0 \cdot n (1-q_1^n)(1-q_2^n)
\end{equation}
\begin{equation}
\label{eqn:jd}
[b_m^i, p_n] = \delta_{m+n}^0 n (1-q_1^n)(1-q_2^n) q^{n(r-1)}
\end{equation}
for all $n>0$. An easy consequence of these relations is the fact that:
\begin{equation}
\label{eqn:sc}
\left[\Lambda^i(x), p_{- n} \right] = \ - (1-q_1^n)(1-q_2^n) \cdot x^{-n} \Lambda^i(x)
\end{equation}
\begin{equation}
\label{eqn:cs}
\left[\Lambda^i(x), p_{n} \right] = (1-q_1^n)(1-q_2^n)q^{n(r-1)} x^{n} \Lambda^i(x)
\end{equation}
for all $i$. Applying formula \eqref{eqn:quantum miura 2}, formulas \eqref{eqn:sc}--\eqref{eqn:cs} precisely imply \eqref{eqn:w rel 0 minus}--\eqref{eqn:w rel 0 plus} with $c = q^r$, respectively. Meanwhile, note that \eqref{eqn:bosons} implies the following relations:
$$
\Lambda^i(x) \Lambda^i(y) = u_i^2 \exp \left[ \sum_{n > 0 } \frac {b^i_{-n}}{n x^{-n}} \right] \exp \left[ \sum_{n > 0 } \frac {b^i_n}{n x^n} \right] \exp \left[ \sum_{n > 0 } \frac {b^i_{-n}}{n y^{-n}} \right] \exp \left[ \sum_{n > 0 } \frac {b^i_n}{n y^n} \right] = 
$$
$$
= u_i^2 \exp \left[ \sum_{n > 0 } \frac {b^i_{-n}}{n}\left( x^n + y^n \right) \right] \left[ \sum_{n > 0 } \frac {b^i_n}{n}\left(\frac 1{x^n} + \frac 1{y^n} \right) \right]  \frac 1{\zeta \left(\frac yx\right)} = \ : \Lambda^i(x) \Lambda^i(y): \frac 1{\zeta \left(\frac yx\right)}
$$
while:
$$
\Lambda^i(x) \Lambda^j(y) = u_iu_j \exp \left[ \sum_{n > 0 } \frac {b^i_{-n}}{n x^{-n}} + \frac {b^j_{-n}}{n y^{-n}} \right] \left[ \sum_{n > 0 } \frac {b^i_n}{n x^n} + \frac {b^j_n}{n y^n} \right]  = \ : \Lambda^i(x) \Lambda^j(y):
$$
for $i<j$ and:
$$
\Lambda^i(x) \Lambda^j(y) = u_iu_j \exp \left[ \sum_{n > 0 } \frac {b^i_{-n}}{n x^{-n}} \right] \exp \left[ \sum_{n > 0 } \frac {b^i_n}{n x^n} \right] \exp \left[ \sum_{n > 0 } \frac {b^j_{-n}}{n y^{-n}} \right] \exp \left[ \sum_{n > 0 } \frac {b^j_n}{n y^n} \right] = 
$$
$$
= u_iu_j \exp \left[ \sum_{n > 0 } \frac {b^i_{-n}}{n x^{-n}} + \frac {b^j_{-n}}{n y^{-n}} \right] \left[ \sum_{n > 0 } \frac {b^i_n}{n x^n} + \frac {b^j_n}{n y^n} \right] \frac {\zeta\left(\frac y{xq} \right)}{\zeta\left(\frac y{x} \right)} = \ : \Lambda^i(x) \Lambda^j(y): \frac {\zeta\left(\frac xy \right)}{\zeta\left(\frac y{x} \right)}
$$
for $i>j$. Therefore, we conclude that:
\begin{equation}
\label{eqn:boson comm 1}
\Lambda^i(x) \Lambda^i(y) \zeta \left(\frac yx \right) - \Lambda^i(y) \Lambda^i(x) \zeta \left(\frac xy \right) = 0
\end{equation}
because the normal-ordered product is symmetric under $x \leftrightarrow y$, while:
\begin{equation}
\label{eqn:boson comm 2}
\Lambda^i(x) \Lambda^j(y) \zeta \left(\frac yx  \right) - \Lambda^j(y) \Lambda^i(x) \zeta \left(\frac xy  \right) =
\end{equation}
$$
= \ \left( : \Lambda^i(x) \Lambda^j(y): \zeta \left(\frac yx  \right) \text{ for } |y| \ll |x| \right) - \left( : \Lambda^j(y) \Lambda^i(x) : \zeta \left(\frac yx  \right) \text{ for } |x| \ll |y| \right) \ = 
$$
$$
=  \frac {(1-q_1)(1-q_2)}{1-q} \delta\left(\frac {x}{y} \right) : \Lambda^i(x) \Lambda^j(x): -  \frac {(1-q_1)(1-q_2)}{1-q} \delta\left(\frac {x}{yq} \right) : \Lambda^i(x) \Lambda^j \left(\frac xq \right): 
$$
for $i<j$ and:
\begin{equation}
\label{eqn:boson comm 3}
\Lambda^i(x) \Lambda^j(y) \zeta \left(\frac yx  \right) - \Lambda^j(y) \Lambda^i(x) \zeta \left(\frac xy  \right) = 
\end{equation}
$$
= \left( : \Lambda^i(x) \Lambda^j(y): \zeta \left(\frac xy  \right) \text{ for } |y| \ll |x| \right) - \left( : \Lambda^j(y) \Lambda^i(x) : \zeta \left(\frac xy  \right) \text{ for } |x| \ll |y| \right) = 
$$
$$
= - \frac {(1-q_1)(1-q_2)}{1-q}  \delta\left(\frac {y}{x} \right) : \Lambda^i(y) \Lambda^j(y): + \frac {(1-q_1)(1-q_2)}{1-q}  \delta\left(\frac {y}{xq} \right) : \Lambda^i \left( \frac yq \right) \Lambda^j(y):
$$
for $i>j$. In either formula \eqref{eqn:boson comm 2} or \eqref{eqn:boson comm 3}, the equality between the second and third lines follows from a general fact about rational functions, which we now explain. Since $:\Lambda^i(x) \Lambda^j(y):$ is a Laurent polynomial with coefficients $\in \CB_r$, then:
$$
R(x,y) = \ :\Lambda^i(x) \Lambda^j(y): \zeta \left( \frac xy \right) \in \CB_r[[x^{\pm 1}, y^{\pm 1}]] \cdot \frac {(x-y/q_1)(x-y/q_2)}{(x-y)(x-y/q)}
$$
Therefore, the second line of \eqref{eqn:boson comm 3} is the difference between the expansions of $R(x,y) $ at $|y|\ll|x|$ and at $|x| \ll |y|$. The third line of \eqref{eqn:boson comm 3} is equal to:
$$
\sum_{\alpha \notin \{0,\infty\}} \delta \left( \frac {y\alpha}x \right) \underset{x = y\alpha}{\text{Res}} R(x,y)
$$
and so the left and right-hand sides of \eqref{eqn:boson comm 3} are equal. By \eqref{eqn:quantum miura 2}, we have $W_1(x) = \Lambda^1(x)+...+\Lambda^r(x)$, and so relations \eqref{eqn:boson comm 1}, \eqref{eqn:boson comm 2}, \eqref{eqn:boson comm 3} imply the following relation:
$$
W_1(x)W_1(y)\zeta \left(\frac yx \right) - W_1(y)W_1(x) \zeta\left(\frac xy \right) = \frac {(1-q_1)(1-q_2)}{1-q} \cdot
$$
$$
\sum_{1\leq i < j \leq r} \left[ \delta\left(\frac {y}{xq} \right) : \Lambda^j \left( \frac yq \right) \Lambda^i(y):  - \delta\left(\frac {x}{yq} \right) : \Lambda^i(x) \Lambda^j \left(\frac xq \right): \right] = 
$$
$$
= \ \frac {(1-q_1)(1-q_2)}{1-q} \cdot \left[ \delta\left(\frac {y}{xq} \right) W_2(y)  - \delta\left(\frac {x}{yq} \right) W_2(x) \right]
$$
This is precisely \eqref{eqn:w rel} for $k = k'=1$. As for higher values of $k$ and $k'$, the computation follows the same lines as in the proof above, and is presented in detail in Section 2.2 of \cite{O}. We refer the reader to \loccit for the remainder of the computation, as it is simply an exercise in commuting bosonic currents.

\end{proof}

\begin{proposition}
\label{prop:all of them}

Relations \eqref{eqn:w rel} generate the ideal of relations between the elements $\{W_{d,k}\}_{d\in \BZ}^{k \in \{1,...,r\}} \in \CB_r$. Combining this with Proposition \ref{prop:w gens and rels}, we infer that:
$$
\CB_r \cong \CA_r
$$
hence we refer to either of these algebras as ``the deformed $W$--algebra of type $\fgl_r$". \\

\end{proposition}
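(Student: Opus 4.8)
The statement to prove is that the free--field realization is not merely a homomorphism but an isomorphism, i.e. that relations \eqref{eqn:w rel} (together with the vanishing $W_k(x)=0$ for $k>r$) exhaust all relations among the $W_{d,k}$ inside $\CB_r$. The plan is as follows. Proposition \ref{prop:sanity check} shows that the concrete elements $W_{d,k}\in\CB_r\subset\wCH_r$ satisfy \eqref{eqn:w rel}, while \eqref{eqn:quantum miura 2} gives $W_k(x)=0$ for $k>r$, which is relation \eqref{eqn:impose rels}. Hence the presentation of $\CA_r$ furnished by Proposition \ref{prop:w gens and rels} produces a surjective homomorphism of graded algebras
\begin{equation*}
\Psi:\CA_r \longrightarrow \CB_r, \qquad W_{d,k}\mapsto W_{d,k},
\end{equation*}
and everything reduces to proving that $\Psi$ is injective.

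Since Proposition \ref{prop:w gens and rels} exhibits the convex monomials $W_{d_1,k_1}\dots W_{d_t,k_t}$ (with $\tfrac{d_1}{k_1}\leq\dots\leq\tfrac{d_t}{k_t}$ and $1\leq k_i\leq r$) as an $\BF$--basis of $\CA_r$, and $\Psi$ carries these to the analogous monomials in $\CB_r$, injectivity of $\Psi$ is equivalent to linear independence of the convex monomials inside $\CB_r$. I would detect this independence inside a single faithful module. Let $\Pi=\BF[b_{-n}^i]_{n>0}^{1\leq i\leq r}$ be the Fock module of $\CH_r$, on which $b^i_{-n}$ act by multiplication and $b^i_n$ by the contractions dictated by \eqref{eqn:bosons}. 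The commutation matrix in \eqref{eqn:bosons} is unitriangular in $(i,j)$, hence nondegenerate, so $\CH_r\curvearrowright\Pi$ is faithful; grading by $\sum n$ shows the faithfulness passes to the completion $\wCH_r$, hence to $\CB_r$. It therefore suffices to prove that the convex $W$--monomials act as linearly independent operators on $\Pi$.

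Here I would use the highest weight structure of $\Pi$: since $b^i_n\cdot 1=0$ for $n>0$, a convex monomial acts nontrivially on the vacuum only if it has no positive modes, and on such monomials the zero modes contribute the scalars $W_{0,k}\cdot 1=e_k(\bu)\cdot 1$, so everything reduces to the creation monomials $W_{-d_1,k_1}\dots W_{-d_s,k_s}\cdot 1$ with $d_i>0$. The key point, and the main obstacle, is to show these are linearly independent in $\Pi$. This is a leading--term computation in the free bosons: with respect to a suitable order on the monomials $\prod b^i_{-n}$, the dominant free--field contribution of $W_{-d,k}$ is the term $:\!\Lambda^1(x)\Lambda^2(x/q)\dots\Lambda^k(x/q^{k-1})\!:$ coming from $(i_1,\dots,i_k)=(1,\dots,k)$ in \eqref{eqn:quantum miura 2}, and one must verify that distinct convex creation monomials have distinct dominant terms. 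Granting this, the creation monomials are triangular against the monomial basis of $\Pi$; since the number of convex creation monomials of degree $N$ equals $\#\{r\text{--partitions of }N\}=\dim_\BF\Pi_N$ (both count $r$--tuples of partitions of total size $N$, the former by the bijection used in the proof of Theorem \ref{thm:geom}), they in fact form a basis of $\Pi$. In particular $\Pi$ is cyclic over $\CB_r$, so the universal property of the Verma module gives a surjection $M_{u_1,\dots,u_r}\twoheadrightarrow\Pi$ through $\Psi$, which the graded dimension count forces to be an isomorphism; thus $\Psi$ is injective on the subalgebra generated by the negative modes.

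Finally I would upgrade this to full injectivity using the triangular decomposition $\CA_r=\CA_r^{-}\otimes\CA_r^{0}\otimes\CA_r^{+}$ into negative, zero and positive $d$--modes, which follows from \eqref{eqn:triangular}. The negative factor is handled by the previous paragraph; the positive factor is handled by the same argument applied to the adjoint action, using the duality \eqref{eqn:adjoint} between $R^\leftarrow$ and $R^\rightarrow$ (equivalently an anti--automorphism of $\CH_r$ sending $b^i_n\mapsto b^i_{-n}$) which interchanges $W_{d,k}$ with $W_{-d,k}$; and the zero factor, generated by the $W_{0,k}$ acting as the scalars $e_k(\bu)$ together with the Heisenberg modes $p_n$ in the extended algebra, is immediate. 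Writing a general convex monomial as the product of its three triangular pieces then yields its independence. The bulk of the work is thus the leading--term analysis of the third paragraph — that is, the faithfulness of the quantum Miura realization — which is precisely the technical computation (parallel to the one underlying Proposition \ref{prop:w gens and rels}) deferred to \cite{W gen}; the remaining assembly is formal.
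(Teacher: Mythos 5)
Your overall skeleton agrees with the paper's: Propositions \ref{prop:w gens and rels} and \ref{prop:sanity check} give a surjection $\CA_r \twoheadrightarrow \CB_r$, and everything reduces to the linear independence of the convex monomials \eqref{eqn:gente} inside $\wCH_r$. But your route through the Fock module $\Pi$ and its vacuum has a genuine gap at the assembly stage. Applying a putative relation $\sum c_{L,D,U}\, LDU = 0$ to $|\emptyset\rangle$ kills every term with $U\neq 1$ and collapses each zero-mode factor $D = W_{0,1}^{a_1}\cdots W_{0,r}^{a_r}$ to the single scalar $\prod_k e_k(\bu)^{a_k}$; what you extract is one linear relation $\sum_D c_{L,D,1}\chi(D)=0$ for each $L$, not the vanishing of each coefficient. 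Knowing separately that the creation monomials are independent on the vacuum and (by adjointness) that the annihilation monomials are independent does not imply that the products $LDU$ are independent as operators on a single highest-weight module --- that implication is exactly the content of a PBW/Shapovalov-type nondegeneracy statement, which you have not supplied. The paper sidesteps the module entirely: since $\wCH_r$ is visibly a (completed) free module on normal-ordered monomials in the $b_n^i$, independence is tested directly in $\wCH_r$, and the non-commutativity is killed by specializing $q_1=1$ (justified by clearing denominators over $\BZ[q_1^{\pm1},q_2^{\pm1}]\otimes_{\BZ[q]}\BQ(q)$), after which \eqref{eqn:w specialized} expresses each $W_{d,k}$ as an explicit polynomial in commuting symbols $\lambda^i_e$ and a leading-monomial argument with respect to the order \eqref{eqn:mony} finishes the job.

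The second problem is that you defer ``the bulk of the work'' --- the leading-term analysis --- to \cite{W gen}, but that computation is precisely the content of this Proposition and the paper carries it out here (what is deferred to \cite{W gen} is the convexification step in Proposition \ref{prop:w gens and rels}, a different statement). Moreover your proposed dominant term, the $(i_1,\dots,i_k)=(1,\dots,k)$ summand of \eqref{eqn:quantum miura 2}, does not obviously separate distinct convex monomials with the same multiset of $k$'s and the same total degree; the paper's ordering needs the auxiliary large parameter $M$ (the monomial $\lambda^1_{-M}\cdots\lambda^2_{-M}\cdots$ inserted in the proof) precisely to break such ties. To repair your argument you would either need to carry out a genuine triangularity statement for matrix coefficients $\langle v'|LDU|v\rangle$ over varying $v,v'$ (and formal highest weights $u_1,\dots,u_r$ to separate the zero modes), or simply abandon the module and argue in $\wCH_r$ as the paper does.
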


\begin{proof} Propositions \ref{prop:w gens and rels} and \ref{prop:sanity check} yield an epimorphism $\CA_r \twoheadrightarrow \CB_r$. As shown in Proposition \ref{prop:w gens and rels}, a basis of $\CA_r$ as a $\BF$--module is given by the elements \eqref{eqn:gente}. Thus it is enough to show that these elements are independent in $\CB_r \subset \wCH_r$. Since $\wCH_r$ is the completion of a free $\BF$--module with basis given by the normal-ordered products of $\{b_n^i\}_{n \in \BZ \backslash 0}^{1 \leq i \leq r}$, it suffices to show that the elements \eqref{eqn:gente} are independent in $\wCH_r$. To this end, we claim that it suffices to show independence when we specialize $q_1 = 1$ and leave $q = q_2$ generic (the reader may object to the latter claim, given that $\CH_r$ is an infinite-dimensional $\BF$--module, but one can repeat the contents of this paragraph with the field $\BF = \BQ(q_1,q_2)$ replaced with the ring $\BZ[q_1^{\pm 1}, q_2^{\pm 1}] \otimes_{\BZ[q]} \BQ(q)$, over which all of our algebras are well-defined; all we are saying in this sentence is that $1 - q_1$ never appears in the denominator of any commutation relation). \\

\noindent Therefore, setting $q_1=1$, the algebras $\CH_r \subset \wCH_r$ become commutative, and:
\begin{equation}
\label{eqn:w specialized}
W_{d,k} = \sum_{1 \leq i_1 < ... < i_k \leq r}^{e_1+...+e_k = d} \lambda^{i_1}_{e_1}... \lambda^{i_k}_{e_k} \cdot q^{\sum_{i=1}^k (i-1)e_i}
\end{equation}
where:
$$
\lambda_e^i = \text{coefficient of }x^{-e} \text{ in } u_i \exp \left[ \sum_{n=1}^\infty  \frac {b_{-n}^i}{nx^{-n}} \right] \exp \left[ \sum_{n=1}^\infty  \frac {b_n^i}{nx^n} \right] 
$$
Since unordered monomials in the $b_e^i$ form a linear basis of the commutative algebra $\CH_r|_{q_1=1}$, unordered monomials in the $\lambda_e^i$ are linearly independent in $\wCH_r|_{q_1=1}$. We will consider unordered products in the $W_{d,k}$ (since the algebra is commutative, the order is immaterial) and define the following total ordering:
\begin{equation}
\label{eqn:mony}
\prod_{i=1}^r W_{d_i^1,i}... W_{d_i^{m_i},i} \qquad \text{is greater than} \qquad \prod_{i=1}^r W_{e_i^1,i}... W_{e_i^{n_i},i}
\end{equation}
(we always write such products assuming that $d_i^1 \leq ... \leq d_i^{m_i}$ and $e_i^1 \leq ... \leq e_i^{n_i}$) if the transpose of the partition $\mu = 1^{m_1}...r^{m_r}$ is greater than the transpose of the partition $\nu = 1^{n_1}...r^{n_r}$ in lexicographic ordering. If the two partitions $\mu$ and $\nu$ are equal, then we impose the order \eqref{eqn:mony} if $(d_r^{m_r},...,d_r^1,....,d_1^{m_1},...,d_1^1)$ is greater than $(e_r^{n_r},...,e_r^1,....,e_1^{n_1},...,e_1^1)$ lexicographically. If there exists a linear relation between the unordered products of $W_{d,k}$ in the algebra $\wCH_r|_{q_1=1}$, we may write it as:
\begin{equation}
\label{eqn:fictitious relation}
\prod_{i=1}^r W_{d_i^1,i}... W_{d_i^{m_i},i} \in \sum^{\text{smaller}}_{\text{products}} \BQ(q) \prod_{i=1}^r W_{e_i^1,i}... W_{e_i^{n_i},i} 
\end{equation}
The contradiction to the existence of such a relation is that the term:
$$
\underbrace{\lambda^1_{d_1^1}...\lambda^1_{d_1^{m_1}} \lambda^1_{-M} ... \lambda^1_{-M}}_{m_1 + ... + m_r \text{ terms}}   \underbrace{\lambda^2_{d_2^1+M}...\lambda^2_{d_2^{m_2}+M} \lambda^{2}_{-M} ... \lambda^{2}_{-M}}_{m_2 + ... + m_r \text{ terms}} \ ... \  \underbrace{\lambda^r_{d_r^1+M(r-1)}...\lambda^r_{d_r^{m_r}+M(r-1)}}_{m_r \text{ terms}}
$$
(for some $-M \leq d_i^j \ \forall i,j$) appears in the left-hand side of \eqref{eqn:fictitious relation} upon applying \eqref{eqn:w specialized}, but does not appear in the right-hand side in virtue of our choice of ordering. 

%Assume all the $d_i^j$'s are positive, otherwise just translate all the indices according to $W_{d,k} \mapsto W_{d+ck,k}, \lambda_e^i \mapsto \lambda_{e+c}^i$ for a large enough natural number $c$ (translation of indices is an automorphism of the commutative algebra $\wCH_r|_{q_1 = 1}$). Then the contradiction to the existence of the linear relation \eqref{eqn:fictitious relation} is that the monomial:
%\begin{equation}
%\label{eqn:monomial}
%\prod_{i=1}^r \prod_{j=1}^{m_i} \lambda^1_0...\lambda^{i-1}_0 \lambda^i_{d_i^j}
%\end{equation}
%appears in the left-hand side of \eqref{eqn:fictitious relation}, but it does not appear in any of the terms in the right-hand side. Indeed, by inspecting the upper indices of the $\lambda$'s in the monomial \eqref{eqn:monomial}, the only products in the right-hand side of \eqref{eqn:fictitious relation} which could produce it are those with $(n_1,...,n_r) = (m_1,...,m_r)$. And among all these, none can produce the monomial \eqref{eqn:monomial} due to the fact that the sequence $(e_r^{n_r},...,e_r^1,....,e_1^{n_1},...,e_1^1)$ is smaller than $(d_r^{m_r},...,d_r^1,....,d_1^{m_1},...,d_1^1)$ lexicographically. 

\end{proof}

%\begin{remark} 

%Although we will not prove (or need) this fact in the present paper, we claim that the algebra generated by the currents \eqref{eqn:quantum miura 2} and the bosons $p_n$ is actually \textbf{isomorphic} to the algebra $\CA_{r,u}$. Proving this statement should be broken down into two parts: \\

%\begin{itemize}

%\item the ideal of relations between the currents $W_k(x)$ of \eqref{eqn:quantum miura 2} and the $p_n$ of \eqref{eqn:def pn} is generated by \eqref{eqn:w rel 0 minus}, \eqref{eqn:w rel 0 plus}, \eqref{eqn:w rel} (for the algebra generated just by $W_k(x)$, without the $p_n$, this was carried out in \cite{AKOS}); \\

%\item the ideal of relations between the currents $W_k(x)$ of \eqref{eqn:explicit shuffle} and the $p_n$ of \eqref{eqn:def bosons} in the algebra $\wCA^{\uparrow \sqcup 0}$ is generated by \eqref{eqn:w rel 0 minus}, \eqref{eqn:w rel 0 plus}, \eqref{eqn:w rel}. This will be carried out in \cite{W gen}, when we will unpackage relations \eqref{eqn:w rel} and show that they imply the fact that ordered products of the generators $W_{d,k}$ form a linear basis of $\wCA^{\uparrow \sqcup 0}$, which is triangular in the basis \eqref{eqn:compositions}. \\

%\end{itemize}

%\end{remark}

\subsection{}
\label{sub:comparison}

Let us recall from \cite{AKOS}, \cite{FF} the construction of the $_qW$--algebra of type $\fsl_r$. Consider the deformed Heisenberg algebra of type $\fsl_r$:
$$
\tCH_r = \BF \left \langle h_n^i \right \rangle_{n \in \BZ \backslash 0}^{1\leq i \leq r}
$$
subject to the commutation relations:
\begin{equation}
\label{eqn:bosons 2}
[h_{-n}^i, h_n^j] = n (1-q_1^n)(1-q_2^n) \cdot \frac {1-q^{(r\delta_i^j-1)n}}{1-q^{rn}}q^{rn\delta_{i>j}}
\end{equation}
for all $n>0$, the fact that all other commutators vanish, and the linear relation: 
$$
\sum_{i=1}^r h^i_n q^{n(i-1)} = 0
$$
for all $n \in \BZ \backslash 0$. Then the $_qW$--algebra of type $\fsl_r$, denoted by $\tCB_r \subset \widehat{\tCH_r}$, is defined to be generated by the coefficients of currents $\wW_k(x)$ from the following formula:
\begin{equation}
\label{eqn:quantum miura 3}
:\left(D_x - \wLambda^1(x) \right)...\left(D_x - \wLambda^r\left( \frac x{q^{r-1}} \right) \right) :\  = \sum_{k=0}^\infty (-1)^k \wW_k(x) D_x^{r-k}
\end{equation}
where:
$$
\wLambda^i(x) = u_i : \exp \left[ h^i(x)\right] : \ = \ u_i \exp \left[ \sum_{n > 0 } \frac {h^i_{-n}}{n x^{-n}} \right] \exp \left[ \sum_{n > 0 } \frac {h^i_n}{n x^n} \right]
$$
By analogy with \eqref{eqn:quantum miura 2}, we may rewrite formula \eqref{eqn:quantum miura 3} as:
\begin{equation}
\label{eqn:quantum miura 4}
\wW_k(x) = \sum_{1\leq i_1 < ... < i_k \leq r} : \wLambda^{i_1}(x) \wLambda^{i_2}\left(\frac xq\right)... \wLambda^{i_k} \left(\frac x{q^{k-1}} \right) : 
\end{equation}
for any $k \in \{1,...,r\}$. \\

\begin{proposition}
\label{prop:iso}

The map $h^i_n \mapsto b^i_n - p_n \cdot \frac {1-q^n}{1-q^{rn}}$ gives rise to a homomorphism:
\begin{equation}
\label{eqn:emb}
\tCB_r \rightarrow \CB_r
\end{equation}
Moreover, the $_q$Heisenberg generators $p_m$ commute with the image of \eqref{eqn:emb}, and therefore we obtain a homomorphism:
\begin{equation}
\label{eqn:emb iso}
\tCB_r \otimes _q\emph{Heisenberg} \rightarrow \CB_r
\end{equation}
In terms of currents, the map \eqref{eqn:emb} takes the form:
\begin{equation}
\label{eqn:jon}
\wW_k(x) = \exp \left[ - \sum_{n=1}^\infty \frac {p_{-n}}{nx^{-n}} \frac {1-q^{-kn}}{1-q^{-rn}} \right] W_k(x) \exp \left[ - \sum_{n=1}^\infty \frac {p_{n}}{nx^{n}} \frac {1-q^{kn}}{1-q^{rn}} \right]
\end{equation}
for all $k \in \{1,...,r\}$. \\

\end{proposition}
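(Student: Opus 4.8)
The plan is to prove everything first at the level of the underlying Heisenberg algebras, and then transport the result to the $_qW$--currents through the quantum Miura presentations \eqref{eqn:quantum miura 2} and \eqref{eqn:quantum miura 4}. Write $\phi$ for the linear map $h^i_n \mapsto b^i_n - p_n\, c_n$ with $c_n = \frac{1-q^n}{1-q^{rn}}$, and abbreviate $\beta_n = n(1-q_1^n)(1-q_2^n)$; a computation I would do at the very start records the useful identity $c_{-n} = q^{(r-1)n}c_n$. First I would check that $\phi$ is a well-defined homomorphism $\tCH_r \to \CH_r$, i.e. that it respects the linear constraint $\sum_i h^i_n q^{n(i-1)} = 0$ and the relations \eqref{eqn:bosons 2}. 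The constraint is immediate from \eqref{eqn:def pn}: $\sum_i \phi(h^i_n)q^{n(i-1)} = p_n - c_n p_n \sum_i q^{n(i-1)} = p_n - c_n p_n \frac{1-q^{rn}}{1-q^n} = 0$.

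For the commutation relations, the only nonzero brackets are $[\phi(h^i_{-n}),\phi(h^j_n)]$, which expand bilinearly as
\[
[b^i_{-n},b^j_n] - c_n[b^i_{-n},p_n] - c_{-n}[p_{-n},b^j_n] + c_{-n}c_n[p_{-n},p_n].
\]
All four brackets are already in the excerpt: \eqref{eqn:jd}, \eqref{eqn:dj} and \eqref{eqn:heisenberg} (with $c=q^r$) give $[b^i_{-n},p_n]=\beta_n q^{(r-1)n}$, $[p_{-n},b^j_n]=\beta_n$ and $[p_{-n},p_n]=\beta_n\frac{1-q^{rn}}{1-q^n}$. Using $c_{-n}=q^{(r-1)n}c_n$ and $c_n\frac{1-q^{rn}}{1-q^n}=1$, the last three terms collapse to $-\beta_n c_{-n}$, so the whole bracket equals $\beta_n\big(g_{ij}-c_{-n}\big)$ with $g_{ij}$ the $\fgl_r$ structure constant of \eqref{eqn:bosons}. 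A one-line verification in each of the cases $i<j$, $i=j$, $i>j$ then confirms $g_{ij}-c_{-n} = \frac{1-q^{(r\delta^j_i-1)n}}{1-q^{rn}}q^{rn\delta_{i>j}}$, which is the $\fsl_r$ constant \eqref{eqn:bosons 2}; this case check is the computational heart of the first step.

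Next I would derive the current identity \eqref{eqn:jon}. Applying $\phi$ to $\wLambda^i(x) = u_i:\exp[h^i(x)]:$ and using linearity on the exponent gives $\phi(\wLambda^i(x)) = u_i:\exp[b^i(x)]\exp[-C(x)]:$, where the correction $C(x)=\sum_{n>0}\big(\frac{c_{-n}p_{-n}}{nx^{-n}}+\frac{c_n p_n}{nx^n}\big)$ is \emph{independent of} $i$. Normal-ordering the $k$-fold product in \eqref{eqn:quantum miura 4}, the $b$--part reassembles into $:\Lambda^{i_1}(x)\cdots\Lambda^{i_k}(x/q^{k-1}):$ while the corrections accumulate as $\sum_{a=1}^k C(x/q^{a-1})$. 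The key computation here is the telescoping of the geometric sums $\sum_{a=1}^k q^{\mp n(a-1)}=\frac{1-q^{\mp nk}}{1-q^{\mp n}}$ together with $c_{-n}\frac{1-q^{-nk}}{1-q^{-n}}=\frac{1-q^{-kn}}{1-q^{-rn}}$ and $c_n\frac{1-q^{nk}}{1-q^n}=\frac{1-q^{kn}}{1-q^{rn}}$, which turns the accumulated correction into exactly the two exponentials of \eqref{eqn:jon}. Since all creation operators commute among themselves (and likewise all annihilation operators), these $p$--exponentials factor to the far left and far right of the normal-ordered product; being independent of $(i_1,\dots,i_k)$, they survive the sum over $1\le i_1<\dots<i_k\le r$ and yield $\phi(\wW_k(x))=\exp[\cdots]\,W_k(x)\,\exp[\cdots]$, which is \eqref{eqn:jon}.

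Finally, the tensor factorization \eqref{eqn:emb iso} reduces to $[p_m,\phi(h^i_n)]=0$, nonzero only for $m=-n$, and follows from the same brackets: for $n>0$, $[p_{-n},\phi(h^i_n)] = \beta_n - c_n\beta_n\frac{1-q^{rn}}{1-q^n}=0$, and symmetrically for $[p_n,\phi(h^i_{-n})]$. I expect the genuine obstacle to lie not in any single calculation but in the bookkeeping around completions: one must check that $\phi$ extends to $\widehat{\tCH_r}\to\wCH_r$, that the factored exponentials are legitimate elements there, that the splitting of the normal-ordered exponential into left/right factors is valid against these completions rather than only formally, and that the image of $\tCB_r$ indeed lands in $\CB_r$ (using that $p_n$ is encoded by $W_r(x)=u:\exp[p(x)]:$ in \eqref{eqn:mish}). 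These are exactly the subtleties already flagged in the discussion of \eqref{eqn:homo}, and handling them carefully is where the care of the argument should be concentrated.
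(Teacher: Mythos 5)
Your proposal is correct and follows essentially the same route as the paper's proof: verify that $h^i_n := b^i_n - p_n\frac{1-q^n}{1-q^{rn}}$ satisfies \eqref{eqn:bosons 2} and commutes with $p_m$, then obtain \eqref{eqn:jon} by pulling the $i$--independent $p_{\pm n}$--exponentials out of each factor of the Miura product \eqref{eqn:quantum miura 4} and telescoping the geometric sums over $j=1,\dots,k$, exactly as in \eqref{eqn:jack}. You supply more detail than the paper in the case-check of \eqref{eqn:bosons 2} and in verifying the linear constraint $\sum_i h^i_n q^{n(i-1)}=0$, and you flag the same completion caveats the paper notes after the statement.
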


\noindent We note a slight imprecision in the definition of the homomorphisms \eqref{eqn:emb} and \eqref{eqn:emb iso}, and we will leave it as is, because we will not revisit the issue in the present paper. These homomorphisms are well-defined only if the $_q$Heisenberg generators $\{p_n\}_{n\in \BZ \backslash 0}$, as well as arbitrary normal-ordered exponentials of these generators, lie in $\CB_r$. This is not automatically obvious, because although the series coefficients $\{W_{r,d}\}_{d\in \BZ}$ of the normal-ordered exponential \eqref{eqn:mish} lie in $\CB_r$, the individual $p_n$ can be obtained from the $W_{r,d}$ only if we suitably complete $\CB_r$. \\

\begin{proof} It is easy to see that if the $b^i_n$ satisfy the commutation relations \eqref{eqn:bosons} and the $p_m$ are defined as the linear combinations \eqref{eqn:def pn}, then: 
\begin{equation}
\label{eqn:ricky}
h_n^i := b^i_n - p_n \cdot \frac {1-q^n}{1-q^{rn}}
\end{equation}
satisfy the commutation relations \eqref{eqn:bosons 2}. This induces an embedding $\tCH_r \subset \CH_r$, and it is clear that $p_m$ commute with the image of this map, because:
$$
[p_m, h_n^i] = \left[ \sum_{j=1}^r b_m^j q^{m(j-1)}, b_n^i - \sum_{j=1}^r b_n^j q^{n(j-1)}\frac {1-q^n}{1-q^{rn}} \right] = \delta_{m+n}^0 n(1-q_1^n)(1-q_2^n) \cdot 
$$
$$
\left(q^{n(1-i)} + \sum_{j=1}^{i-1} (1-q^{-n})q^{n(1-j)} - \sum_{j=1}^r  \frac {1-q^n}{1-q^{rn}} - \sum_{j<j'} (1-q^{-n}) q^{n(j'-j)} \frac {1-q^n}{1-q^{rn}} \right) = 0
$$
Therefore, \eqref{eqn:emb} follows from the fact that formulas \eqref{eqn:quantum miura 2} and \eqref{eqn:quantum miura 4} are connected by \eqref{eqn:jon}. In more detail, we note that as a consequence of \eqref{eqn:quantum miura 4}, we have:
\begin{equation}
\label{eqn:jack}
\wW_k(x) = \sum_{1\leq i_1 < ... < i_k \leq r} : \wLambda^{i_1}(x)\wLambda^{i_2}\left(\frac xq\right)... \wLambda^{i_k} \left(\frac x{q^{k-1}} \right) : \ = \sum_{1\leq i_1 < ... < i_k \leq r}
\end{equation}
$$
: \prod_{j=1}^k \exp \left[ - \sum_{n > 0} \frac {p_{-n}}{n x^{-n}} q^{-n(j-1)} \frac {1-q^{-n}}{1-q^{-rn}} \right] \Lambda^{i_j}\left( \frac x{q^{j-1}} \right) \exp \left[ - \sum_{n > 0} \frac {p_{n}}{n x^{n}} q^{n(j-1)}\frac {1-q^{n}}{1-q^{rn}} \right] : 
$$
$$
= \exp \left[ - \sum_{n>0} \frac {p_{-n}}{nx^{-n}} \frac {1-q^{-kn}}{1-q^{-rn}} \right] \sum_{1\leq i_1 < ... < i_k \leq r} : \prod_{j=1}^k \Lambda^{i_j}\left( \frac x{q^{j-1}} \right) : \exp \left[ - \sum_{n>0} \frac {p_{n}}{nx^{n}} \frac {1-q^{kn}}{1-q^{rn}} \right]
$$
which is precisely the right-hand side of \eqref{eqn:jon}. 

\end{proof}

\subsection{}
\label{sub:stable basis} 

Let us now recall the construction of the stable basis isomorphism introduced by Maulik--Okounkov (\cite{MO}) in the context of symplectic resolutions. In the case of the moduli space of rank $r$ sheaves on the plane, this construction takes the form:
\begin{equation}
\label{eqn:level r iso}
\text{Stab} : F_{u_1} \otimes ... \otimes F_{u_r} \stackrel{\cong}\longrightarrow K
\end{equation}
where the RHS is the $K$--theory group over equivariant parameters $\bu = (u_1,...,u_r)$. The algebra $\CH_r$ acts on the left-hand side via:
\begin{equation}
\label{eqn:boson miura}
b_{-n}^i = p_{-n}^{(i)}, \qquad \qquad b_n^i = p_n^{(i)} + \sum_{j=1}^{i-1} (1-q^{-n}) p_n^{(j)}
\end{equation}
where $p_{\pm n}^{(i)}$ denotes the operators \eqref{eqn:vertex 0} acting in the $i$--th factor of the tensor product in \eqref{eqn:level r iso}. Since the deformed $W$--algebra maps into the completion of $\CH_r$ according to Definition \ref{def:w algebra} and Proposition \ref{prop:all of them}, we obtain actions of the deformed $W$--algebra on both sides of relation \eqref{eqn:level r iso}. The hallmark of \eqref{eqn:level r iso} is that it is an isomorphism of deformed $W$--algebra modules. Let us describe this isomorphism. \\

\noindent Let $\{s_\lambda \in F_{u}\}_{\lambda \text{ partition}}$ denote the basis of plethystically modified Schur functions. We will shortly recall the definition of stable basis $\{s_\bla \in K\}_{\bla \text{ a } r\text{--tuple of partitions}}$ from \loccitt, but let us first state the main idea of the present Subsection: \\

%\begin{equation}
%\label{eqn:boson miura}
%\Lambda^i(x) = u_i \exp \left(\sum_{n=1}^\infty \frac { ... 1 \otimes p^{(i)}_{-n} \otimes 1  ...}{nx^{-n}} \right) \exp \left(\sum_{n=1}^\infty \frac { ... 1 \otimes p^{(i)}_{n} \otimes 1 ...}{nx^n} \right) 
%\end{equation}

\begin{claim}
\label{claim:stab}

The assignment of \eqref{eqn:level r iso}, which is explicitly defined as:
\begin{equation}
\label{eqn:def stab}
\emph{Stab} \left( s_{\la^1} \otimes ... \otimes s_{\la^r} \right) = s_{(\la^1,...,\la^r)}
\end{equation}
intertwines the deformed $W$--algebra action on the LHS defined by \eqref{eqn:quantum miura 2} and \eqref{eqn:boson miura}, with the action of the same algebra on the RHS that we defined in Section \ref{sec:geom}.

\end{claim}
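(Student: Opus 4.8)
The plan is to prove the statement by recognizing that both sides of \eqref{eqn:level r iso} are, as modules over the deformed $W$--algebra $\CB_r \cong \CA_r$, isomorphic to the single Verma module $M_{u_1,\dots,u_r}$, and that $\text{Stab}$ is then forced to be the unique intertwiner between these two realizations. The first step is to check that \eqref{eqn:boson miura} genuinely defines an action of $\CH_r$ on $F_{u_1}\otimes\dots\otimes F_{u_r}$. Since each factor is a level $1$ representation (Proposition \ref{prop:fock act}), the only nonzero brackets among the $p_{\pm n}^{(i)}$ are $[p_{-n}^{(i)}, p_n^{(j)}] = \delta_i^j\, n(1-q_1^n)(1-q_2^n)$; substituting this into \eqref{eqn:boson miura} and separating the cases $i<j$, $i=j$, $i>j$ reproduces exactly the three cases of \eqref{eqn:bosons}. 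Thus $\CH_r$ acts, and via the quantum Miura transformation \eqref{eqn:quantum miura 2} together with Proposition \ref{prop:all of them} the tensor product becomes a module over the deformed $W$--algebra.

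Next I would identify the left--hand side with the Verma module. Applying \eqref{eqn:quantum miura 2} to the vacuum $|\emptyset\rangle^{\otimes r}$, the annihilation exponentials act trivially and the creation exponentials contribute only strictly positive powers of $x$; hence $W_{d,k}$ kills the vacuum for $d>0$, while the $x^0$--coefficient collapses to the product of zero modes $\sum_{i_1<\dots<i_k} u_{i_1}\dots u_{i_k} = e_k(u_1,\dots,u_r)$. This matches the highest weight prescribed in \eqref{eqn:verma}, so the universal property produces a surjection $M_{u_1,\dots,u_r} \twoheadrightarrow F_{u_1}\otimes\dots\otimes F_{u_r}$ (cyclicity of the target follows as in the proofs of Theorem \ref{thm:geom} and the case $r=1$: the lowest creation modes of the currents $W_1(x),\dots,W_r(x)$ already span the $r$ generators $p_{-n}^{(1)},\dots,p_{-n}^{(r)}$ at each level $n$, and an upper--triangularity argument against the completely symmetric basis closes the claim). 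Comparing graded dimensions — the degree $n$ piece of either side has dimension equal to the number of $r$--tuples of partitions of total size $n$, which by Theorem \ref{thm:geom} equals $\dim K_n = \dim (M_{u_1,\dots,u_r})_n$ — forces this surjection to be an isomorphism. Since $K \cong M_{u_1,\dots,u_r}$ by Theorem \ref{thm:geom} as well, and a Verma module is cyclic with one--dimensional highest weight space, there is a unique $W$--module isomorphism $\Psi : F_{u_1}\otimes\dots\otimes F_{u_r} \to K$ sending vacuum to vacuum.

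It then remains to identify $\text{Stab}$, defined by \eqref{eqn:def stab}, with $\Psi$. Because $s_\emptyset\otimes\dots\otimes s_\emptyset$ and $s_\emptyset$ are the respective vacua, $\text{Stab}$ preserves vacuum, so by the uniqueness just established it suffices to check that $\text{Stab}$ is a morphism of $W$--modules; equivalently, that the combinatorial assignment \eqref{eqn:def stab} is compatible with the two actions. Here one invokes the defining triangularity of the Maulik--Okounkov stable basis \cite{MO}: in the relevant chamber $s_\bla = |\bla\rangle + (\text{lower terms in the fixed point basis})$, while each plethystic Schur function $s_{\la^i}$ satisfies the parallel triangularity on its Fock factor, and one shows that $\Psi$ carries the triangular Schur basis onto the triangular stable basis. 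The main obstacle — and the genuinely nontrivial input — is precisely the compatibility of the geometric $\CA$--action on $K$ (Theorem \ref{thm:action}) with the coproduct of the double shuffle algebra under the stable--basis tensor decomposition: this is what realizes $K$ as $F_{u_1}\otimes\dots\otimes F_{u_r}$ at the level of $\CH_r$--modules and, in particular, what forces the staircase shape of \eqref{eqn:boson miura}, the factors $(1-q^{-n})$ being the signature of the non--cocommutativity recorded in \eqref{eqn:bosons}. Modulo this $R$--matrix/stable--envelope input, which is the content of \loccit transported to the present $K$--theoretic shuffle setting, the remaining verifications are the routine highest weight and triangularity computations indicated above.
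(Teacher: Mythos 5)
You should first be aware that the paper does not actually prove Claim \ref{claim:stab}: immediately after stating it, the text explains that a proof would require comparing the Hopf algebra that Maulik--Okounkov attach to the moduli space with the algebra $\CA_r$ built here from the shuffle algebra, that the $K$--theoretic version of their construction is unpublished, and that the claim is therefore included ``only for the sake of our exposition.'' So there is no in-paper argument to measure yours against; the question is whether your proposal closes the gap on its own. It does not.

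The sound parts of your write-up are the verification that \eqref{eqn:boson miura} defines an $\CH_r$--action (the three-case commutator check against \eqref{eqn:bosons} is correct) and the identification of $F_{u_1}\otimes\dots\otimes F_{u_r}$ with the Verma module $M_{u_1,\dots,u_r}$, which runs parallel to the paper's own proofs of Proposition \ref{prop:fock act}, of the $r=1$ statement $M_u\cong F_u$, and of Theorem \ref{thm:geom}. The gap is in the final step. Having produced the unique vacuum-preserving $W$--module isomorphism $\Psi$, you write that ``it suffices to check that $\text{Stab}$ is a morphism of $W$--modules'' --- but that is precisely the statement of the claim, so the Verma-module uniqueness has bought you nothing: it only says that \emph{if} $\text{Stab}$ intertwines the actions and fixes the vacuum, \emph{then} it equals $\Psi$. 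Your fallback, that $\Psi$ carries the Schur basis to the stable basis because both are triangular, also does not work: upper-triangularity in the fixed-point basis does not characterize $\{s_\bla\}$; the stable basis is pinned down only by the additional degree conditions \eqref{eqn:stable 1}--\eqref{eqn:stable 2} on the off-diagonal coefficients, and proving that $\Psi\left(s_{\la^1}\otimes\dots\otimes s_{\la^r}\right)$ satisfies those bounds is exactly the hard content. You ultimately acknowledge this by deferring to ``the content of \cite{MO} transported to the present $K$--theoretic shuffle setting,'' but that transport --- the $K$--theoretic stable envelope and $R$--matrix formalism, and the identification of the resulting Hopf algebra with $\CA$ together with its coproduct --- is precisely what is missing and what the paper itself declares unavailable. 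The proposal is thus a reduction of the claim to the same unproven input the authors cite, not a proof.
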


\tab 
The proof of Claim \ref{claim:stab} involves comparing the Hopf algebra that Maulik--Okounkov associate to the moduli space of rank $r$ sheaves, with the algebra $\CA_r$ that we construct as a subquotient of the completed double shuffle algebra. Their construction is detailed in the cohomological case in \cite{MO}, but the $K$--theoretic version has not yet been published. Therefore, we do not attempt a proof of Claim \ref{claim:stab}, and include its statement only for the sake of our exposition. We believe showing Claim \ref{claim:stab} would be a good challenge for anyone interested in studying \cite{MO}. \\

\noindent Let us recall the definition of stable bases in the greater generality in which they were defined by \cite{MO}. Consider any symplectic resolution $\CM$ acted on by a generic rank $1$ torus $\BC^*$, and suppose for simplicity that the fixed point set $\CM^{\BC^*}$ is isolated and indexed by symbols $\bla$. The fixed point set is partially ordered by the transitive closure of the relation which has $\bmu \unlhd \bla$ if the fixed point $\bmu$ is in the closure of the attracting locus of $\bla$ under the one-parameter subgroup generated by the action $\BC^* \curvearrowright \CM$. Then \loccit define the \textbf{stable basis} corresponding to this data as:
$$
\{ s_\bla \}_{\bla \text{ fixed points}} \in K_{\BC^*}(\CM)
$$
which is upper triangular in the basis of fixed points:
$$
s_\bla = \sum_{\bmu \unlhd \bla} c_\bla^\bmu(t) |\bmu \rangle
$$
where $t$ denotes the equivariant parameter of $\BC^* \curvearrowright \CM$. The coefficients $c_\bla^\bmu(t) \in \BZ[t^{\pm 1}]$ are uniquely determined by the following conditions:
\begin{equation}
\label{eqn:stable 1}
c_\bla^\bla(t) = \ \left[ \wedge^\bullet \Tan_{\bla}^{\text{attr}}\CM \right] \ \in \ K_{\BC^*}(\pt) \ = \ \BZ[t^{\pm 1}]
\end{equation}
for all $\bla$, while for all $\bmu \lhd \bla$ we require that:
\begin{equation}
\label{eqn:stable 2}
c_\bla^\bmu(t) = t^{\#_{\min}} \text{coefficient} + ... + t^{\#_{\max}-1} \text{coefficient} 
\end{equation}
for certain coefficients that do not depend on $t$, where $\#_{\min}$ and $\#_{\max}$ denote the smallest and largest power of $t$ that appears in the Laurent polynomial $\left[ \wedge^\bullet \Tan_{\bmu}^{\text{attr}}\CM \right]$. When $\CM$ is the space of framed rank 1 sheaves (i.e. the Hilbert scheme of points on $\BC^2$), \loccit observe that the $K$--theory classes $s_\la$ thus defined coincide with plethystically modified Schur functions under the isomorphism $\oplus_{n = 0}^\infty K_{\BC^* \times \BC^*}((\BC^2)^{[n]}) \cong F_u$ given by the Bridgeland--King--Reid--Haiman equivalence. In the higher rank case, the stable basis is described by Claim \ref{claim:stab}. \\

\begin{remark}
\label{rem:slope}

The definition of the $K$--theoretic stable basis of Maulik and Okounkov also depends on an extra ``slope" parameter, which in this paper is equal to 0. In the setup of the moduli spaces of rank $r$ sheaves on the plane, the slope is a real number with which we shift the numbers $\#_{\min}$ and $\#_{\max}$ in relation \eqref{eqn:stable 2}. This results in an infinite family of stable bases, as were studied in \cite{GN} and \cite{Pieri}. \\

\end{remark}

\noindent Finally, let us reiterate the fact that the $K$--theoretic stable basis would make \eqref{eqn:level r iso} into an isomorphism of deformed $W$--algebra representations (see \cite{MO} for the cohomological version). The corresponding coproduct on the algebra $\CA$, as well as the action of the $W$--currents on the left-hand side of \eqref{eqn:level r iso}, have been studied in \cite{Kernel}. In the present paper, we take the opposite point of view from \loccit and define the level $r$ representation as the right-hand side of \eqref{eqn:level r iso}. \\

\section{The classical limit}
\label{sec:classical}

\subsection{} The usual $W$--algebra is the ``classical limit" of the $_qW$--algebra. Specifically, this limit means that we specialize the parameters studied in this paper to:
$$
q_1 = e^{\e \hbar_1}, \quad q_2 = e^{\e \hbar_2}, \quad q = e^{\e \hbar} \quad \text{ where } \hbar = \hbar_1+\hbar_2
$$
$$
\bu = e^{\e \barbu}, \qquad u_i = e^{\e \baru_i}, \qquad m = e^{\e \barm}, \qquad y = e^{\e \bary}
$$
and we take the leading term of all our formulas in the limit $\e \rightarrow 0$. This is quite natural, since many of our formulas are sums of products of expressions:
$$
1 - q_1^{-k_1}q_2^{-k_2}... =  1- e^{-\e(k_1\hbar_1+k_2\hbar_2+...)} = \e(k_1\hbar_1+k_2\hbar_2+...) + O(\e^2)
$$
for various integers $k_1,k_2,...$. As a matter of notation, a bar over a symbol means the classical limit of that symbol from the $q$--deformed theory. Let us recall the additive shuffle algebra studied in \cite{Ext}:
$$
\barCS = \bigoplus_{k=0}^\infty \left \{ \text{symmetric rational functions } \frac {r(\barz_1,...,\barz_k)}{\prod_{1\leq i \neq j \leq k} (\barz_i - \barz_j - \hbar)} \right \}
$$
that satisfy the wheel conditions $r|_{\{\barz_1 - \barz_2, \barz_2 - \barz_3,\barz_3 - \barz_1\} = \{\hbar_1, \hbar_2,-\hbar\}} = 0$. The shuffle product is defined as in \eqref{eqn:mult}, with the rational function $\zeta$ replaced by its additive analogue:
\begin{equation}
\label{eqn:bar zeta}
\barzeta(\barz) = \lim_{\e \rightarrow 0} \zeta(e^{\e\barz}) = \frac {(\barz+\hbar_1)(\barz+\hbar_2)}{\barz(\barz+\hbar)}
\end{equation}
The analogue of our level $r$ modules in the case of the additive shuffle algebra is the equivariant cohomology of the same moduli spaces $\CM_n$ of rank $r$ sheaves on $\BP^2$:
$$
H = \bigoplus_{n=0}^\infty H^*_T(\CM_{n}) \bigotimes_{H^*_T(\pt)} \text{Frac}(H^*_T(\pt))
$$
This vector space has a basis of fixed points $|\barbla \rangle$, also indexed by $r$--tuples of partitions. By analogy with Theorem \ref{thm:action}, we have two opposite $\barCS$ actions on $H$:
\begin{equation}
\label{eqn:fixed minus clas}
\langle \barbla | \barR^\leftarrow | \barbmu \rangle = \barR(\blamu) \prod_{\bsq \in \blamu} \left[ \frac {\hbar_1\hbar_2}{-\hbar} \barzeta\left( \barchi_\bsq - \barchi_{\bmu} \right) \bartau \left( \barchi_\bsq + \hbar \right) \right]
\end{equation}
\begin{equation}
\label{eqn:fixed plus clas}
\langle \barbmu | \barR^\rightarrow |\barbla \rangle = \barR(\blamu) \prod_{\bsq \in \blamu} \left[ \frac {\hbar_1\hbar_2}{-\hbar} \barzeta\left( \barchi_\bla - \barchi_\bsq \right)^{-1} \bartau \left( \barchi_\bsq  \right)^{-1} \right]
\end{equation}
where $\barR(\blamu) = \barR(..., \barchi_\sq,...)_{\sq \in \blamu}$ and:
$$
\barchi_\sq = \baru_k + i \hbar_1 + j \hbar_2
$$
for a square $\sq$ lying at coordinates $(i,j)$ in the $k$--th constituent partition of the $r$--tuple $\bla$. Moreover, we write $\bartau$ for the additive analogue of the function $\tau$ of \eqref{eqn:def tau}:
\begin{equation}
\label{eqn:bar tau}
\bartau(\barz) = \prod_{i=1}^r (\baru_i - \barz)
\end{equation}
Finally, we have a series of diagonal operators on $H$, with matrix coefficients:
\begin{equation}
\label{eqn:fixed diag clas}
\left \langle \barbmu | \barE(\bary) | \barbnu \right \rangle = \delta_\bnu^\bmu \prod_{i=1}^r (\bary - \baru_i) \prod_{\sq \in \bnu} \barzeta (\barchi_\sq - \bary)
\end{equation}

\subsection{}
\label{sub:degeneration}

To take the classical limit $\CS \leadsto \barCS$ at the shuffle algebra level, simply set the variables to $z_i = e^{\e \barz_i}$ and take the leading order term as $\e \rightarrow 0$. Explicitly, for a shuffle element $R$ of the form $\eqref{eqn:our r}$, its classical limit is:
\begin{equation}
\label{eqn:our r bar}
R(z_1,...,z_k) \leadsto \barR(\barz_1,...,\barz_k) = \sym \left[ \frac {\text{l.o. } \rho\left(e^{\e \barz_1},...,e^{\e \barz_k} \right)}{\prod_{i=1}^{k-1} \left(\barz_i - \barz_{i+1} - \hbar \right)} \prod_{i < j} \barzeta (\barz_i - \barz_j) \right]
\end{equation}
where ``l.o." stands for the leading order term in $\e$. Note that we do not claim that the assignment \eqref{eqn:our r bar} has any property whatsoever (it is not a homomorphism). In particular, the classical limit of the shuffle elements \eqref{eqn:pam pam} is: 
\begin{equation}
\label{eqn:awesome shuffle 3}
T(x,y) \leadsto \barT(x,\bary) = \sum_{d=0}^\infty \sym \left[ \frac {x^d}{(\barz_d - \bary) \prod_{i=1}^{d-1} \left(\barz_i - \barz_{i+1} - \hbar \right)} \prod_{i < j} \barzeta (\barz_i - \barz_j) \right] 
\end{equation}
To give meaning to the procedure $\leadsto$ outlined above, we will connect the action of $T(x,y)$ on $K$ with the action of $\barT(x,\bary)$ on $H$. Define the classical limit $K \leadsto H$ by:
\begin{equation}
\label{eqn:renormalization}
| \bla \rangle \leadsto \lim_{\e \rightarrow 0} \ \e^{r|\bla|} |\bla \rangle  = : |\barbla\rangle
\end{equation}
Because of this renormalization of the basis vectors, the classical limit of Theorem \ref{thm:ldu} is given by the following limit of the operators \eqref{eqn:w currents}: \\

\begin{proposition}
\label{prop:class lim}

Let:
\begin{equation}
\label{eqn:def def}
\bar{W}(x,\bary) = \bar{T}(x,y^{-1})^\leftarrow \cdot \bar{E}(\bary) \cdot \bar{T}(x,y)^\rightarrow
\end{equation}
Then we have $W(x,yD_x) \leadsto \bar{W}(x,\bary)$, by which we mean the fact that:
\begin{equation}
\label{eqn:classical limit}
\langle \bmu | W(x,y D_x )  |\bla \rangle = \e^r \langle \barbmu | \bar{W}(x,\bar{y})  |\barbla \rangle + O(\e^{r+1})
\end{equation}
$$$$
\end{proposition}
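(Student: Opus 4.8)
The plan is to degenerate the LDU decomposition \eqref{eqn:w currents} factor by factor. By Theorem \ref{thm:ldu} (equation \eqref{eqn:ldu reloaded}), the matrix coefficient $\langle\bmu|W(x,yD_x)|\bla\rangle$ is a finite sum over intermediate fixed points $\bnu\subset\bla\cap\bmu$ and over the splittings $k_\leftarrow+k_0+k_\rightarrow=k$, $d_\rightarrow-d_\leftarrow=d$, of products of the three explicit matrix coefficients \eqref{eqn:lower}, \eqref{eqn:upper}, \eqref{eqn:diagonal}. Since the classical limit (substitute $q_i=e^{\e\hbar_i}$, $u_i=e^{\e\baru_i}$, each box weight $\chi_\sq=e^{\e\barchi_\sq}$ and $y=e^{\e\bary}$, then extract the leading order in $\e$) is compatible with products, it suffices to degenerate each of the three factors separately and to track the resulting powers of $\e$. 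The degeneration of an individual factor is governed by the recipe \eqref{eqn:our r bar}: one replaces $\zeta$ by $\barzeta$ as in \eqref{eqn:bar zeta}, uses $1-e^{\e\bar a}=-\e\bar a+O(\e^2)$, and reads off the additive analogue of the shuffle element.

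For the off-diagonal factors this is essentially the passage from \eqref{eqn:act minus fixed}--\eqref{eqn:act plus fixed} to \eqref{eqn:fixed minus clas}--\eqref{eqn:fixed plus clas}, established in \cite{Mod} and \cite{Ext}. Concretely, in \eqref{eqn:lower} and \eqref{eqn:upper} one has $\frac{(1-q_1)(1-q_2)}{1-q}\to\e\frac{\hbar_1\hbar_2}{-\hbar}$, $\zeta(\chi_i/\chi_j)\to\barzeta(\barchi_i-\barchi_j)$, $\tau(q\chi_i)\to\e^r\bartau(\barchi_i+\hbar)$ and $\tau(\chi_i)^{-1}\to\e^{-r}\bartau(\barchi_i)^{-1}$, while the denominators $\prod(1-q\chi_{i+1}/\chi_i)\to\e^{\,d-1}\prod(\barchi_i-\barchi_{i+1}-\hbar)$. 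Collecting these yields
\[ \langle\bmu|T^\leftarrow_{d_\leftarrow,k_\leftarrow}|\bnu\rangle=\e^{\,rd_\leftarrow+1}\langle\barbmu|\bar{T}^\leftarrow_{d_\leftarrow,k_\leftarrow}|\barbnu\rangle+O(\e^{\,rd_\leftarrow+2}), \]
\[ \langle\bnu|T^\rightarrow_{d_\rightarrow,k_\rightarrow}|\bla\rangle=\e^{\,1-rd_\rightarrow}\langle\barbnu|\bar{T}^\rightarrow_{d_\rightarrow,k_\rightarrow}|\barbla\rangle+O(\e^{\,2-rd_\rightarrow}), \]
the leading coefficients being exactly \eqref{eqn:fixed minus clas}, \eqref{eqn:fixed plus clas} evaluated on the additive shuffle elements \eqref{eqn:awesome shuffle 3}. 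The diagonal factor must be degenerated as a generating function in $y$, not mode by mode: from \eqref{eqn:diagonal} one has $\langle\bnu|E(y)|\bnu\rangle=\prod_{i=1}^r(1-u_i/y)\prod_{\sq\in\bnu}\zeta(\chi_\sq/y)$, whose leading order is $\e^r\prod_i(\bary-\baru_i)\prod_\sq\barzeta(\barchi_\sq-\bary)=\e^r\langle\barbnu|\bar{E}(\bary)|\barbnu\rangle$ by \eqref{eqn:fixed diag clas}. The factor $\e^r$ here comes entirely from $\prod_i(1-u_i/y)$ and is invisible for the individual modes $E_{0,k_0}$ (which are $O(\e^0)$), so the resummation in $y$ must precede the limit.

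It then remains to assemble the three factors and verify that the total power of $\e$ is uniformly $\e^r$. Here one invokes the renormalization \eqref{eqn:renormalization}: passing from the $K$--theoretic classes $|\bla\rangle$ to the cohomological classes $|\barbla\rangle=\lim_{\e\to0}\e^{r|\bla|}|\bla\rangle$ cancels exactly the degree--dependent powers ($\propto rd_\leftarrow$ and $rd_\rightarrow$) carried by the $\tau$--factors, so that after renormalization every summand of \eqref{eqn:ldu reloaded} contributes the same leading power and the surviving constant is $\e^r$ times the corresponding product \eqref{eqn:fixed minus clas}$\cdot$\eqref{eqn:fixed diag clas}$\cdot$\eqref{eqn:fixed plus clas}. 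Finally, since $q=e^{\e\hbar}\to1$, the difference operator $D_x$ tends to the identity and $q^{(k-1)d_\rightarrow}\to1$; the corrections are $O(\e)$ and do not affect the leading term. This is why $\bar{W}(x,\bary)$ in \eqref{eqn:def def} contains no $D_x$ and carries the arguments $y^{-1}$ and $y$, the additive images of $y^{\mp1}$.

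The main obstacle is precisely this bookkeeping of powers of $\e$. The delicate points are (i) the coincident--weight degeneracies in \eqref{eqn:lower} and \eqref{eqn:upper}: when an added or removed box shares its weight with some $u_i$ or with a box of $\bnu$, the factors $\zeta(\cdots)$ and $\tau(\cdots)^{\pm1}$ produce $0\cdot\infty$ indeterminacies that must be resolved through the contour definitions \eqref{eqn:dank} (equivalently the standard--Young--tableau expansion \eqref{eqn:act plus fixed 2}) rather than by naive substitution; and (ii) the fact that the genuine $\e^r$ only emerges after the $y$--modes have been resummed. The cleanest route, which I would follow, is to degenerate the generating--function forms \eqref{eqn:awesome shuffle 1} and \eqref{eqn:awesome shuffle 2} directly into \eqref{eqn:awesome shuffle 3} and \eqref{eqn:fixed diag clas}, and to import the coincident--weight analysis from the additive computation in \cite{Ext}, where the identical cancellations were already carried out.
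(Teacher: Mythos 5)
Your proposal is correct and follows essentially the same route as the paper: degenerate the LDU decomposition \eqref{eqn:w currents} factor by factor, with the renormalization \eqref{eqn:renormalization} balancing the degree--dependent powers of $\e$ so that the three factors contribute $\e^0$, $\e^r$, $\e^0$ and the product is $\e^r$ times the classical expression. Your closing observation --- that one must degenerate the generating functions in $y$ rather than the individual modes (for the $T$--factors as well as for $E$, since the $\e^{-1}$ pole of $\left(1 - yD_x/z_d\right)^{-1}$ is invisible mode by mode, which is why your intermediate per--mode powers $\e^{rd_\leftarrow+1}$ and $\e^{1-rd_\rightarrow}$ do not directly assemble into \eqref{eqn:awesome shuffle 3}) --- is exactly the form in which the paper carries out the computation.
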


\begin{proof} By applying \eqref{eqn:w currents}, we see that:
$$
\langle \bmu | W(x,y D_x )  |\bla \rangle = \langle \bmu | W(x,e^{\e\bar{y}+\e \hbar x\partial_x} )  |\bla \rangle = 
$$
$$
= \sum_{\bnu \subset \bla \cap \bmu} \langle \bmu |T(x^{-1},e^{\e\bar{y}+\e \hbar x\partial_x})^\leftarrow| \bnu \rangle \langle \bnu | E(e^{\e\bar{y}+\e \hbar x \partial_x}) | \bnu \rangle \langle \bnu | \barT(x e^{\e\hbar},e^{\e\bar{y}+\e \hbar x\partial_x})^\rightarrow | \bla \rangle =
$$
$$
= \e^r \sum_{\bnu \subset \bla \cap \bmu} \langle \barbmu | \barT(x^{-1},\bary)^\leftarrow| \barbnu \rangle \langle \barbnu | \barE(\bary) | \barbnu \rangle \langle \barbnu | \barT(x,\bary)^\rightarrow | \barbla \rangle + O(\e^{r+1})  = \text{RHS of \eqref{eqn:classical limit}}
$$
The next-to-last equality holds because the leading powers of $\langle \bmu |T(x^{-1},e^{\e\bar{y}+\e \hbar x\partial_x})^\leftarrow| \bnu \rangle$, $\langle \bnu | E(e^{\e\bar{y}+\e \hbar x\partial_x}) | \bnu \rangle$, $\langle \bnu | \barT(x e^\hbar,e^{\e\bar{y}+\e \hbar x\partial_x})^\rightarrow | \bla \rangle$ are $\e^0$, $\e^r$, $\e^0$, respectively, due to the normalization \eqref{eqn:renormalization}. The leading order terms are given by the corresponding coefficients of $\barT$, $\barE$, $\barT$, respectively, as in the equation above.

\end{proof}

\subsection{}
\label{sub:classical bosons} 

Formulas \eqref{eqn:fixed minus clas}--\eqref{eqn:fixed plus clas} may be interpreted as the classical limit of the algebra $\CA_r$ acting on $H$, but this action had already been presented in a different language in \cite{MO} and \cite{SV2}. In \cite{Ext}, we asked about a geometric interpretation of the action of the usual $W$--currents on the cohomology group $H$. Since the operators $\barT(x,\bary)^\leftarrow$ and $\barT(x,\bary)^\rightarrow$ are given by the correspondences $\fZ_d$ as $d$ ranges over $\BN$ (this is the content of the additive version of Theorem \ref{thm:corr}, see \cite{Ext}), formulas \eqref{eqn:def def} and \eqref{eqn:classical limit} would give a manifestly geometric answer, if we could relate the usual $W$--currents with $\bar{W}(x,\bary)$. We will prove this in Proposition \ref{prop:classical limit} and Theorem \ref{thm:ldu clas}, but let us start by recalling the classical limit of the quantum Miura transformation \eqref{eqn:quantum miura}. Specifically, set the bosons of Subsection \ref{sub:miura} equal to:
$$
b^i(x) \ = \ \e x \partial_x ( \barb^i(x) ) + O(\e^2)
$$
in such a way that relation \eqref{eqn:bosons} yields in the limit $\e \rightarrow 0$:
\begin{equation}
\label{eqn:bosons classical}
[\barb_{m}^i, \barb_n^j] = \delta_j^i \delta_{m+n}^0 m \hbar_1 \hbar_2
\end{equation}
Note that:
$$
\Lambda^i(x) = 1 + \e \barLambda^i(x) + O(\e^2) \qquad \text{where} \qquad \bar\Lambda^i(x) = \baru_i -  \sum_{n \in \BN} \frac {\barb^i_n}{x^n} + \sum_{n \in \BN} \frac {\barb^i_{-n}}{x^{-n}}
$$
Then the quantum Miura transformation \eqref{eqn:quantum miura} reads:
\begin{equation}
\label{eqn:quantum miura 5}
\sum_{k=0}^r (-1)^k W_k(x) e^{\e \hbar (r-k)x\partial_x} = \ : \prod_{i=1}^r \left(e^{\e \hbar x \partial_x} - 1 - \e \bar\Lambda^i(x) - O(\e^2)\right)  : \ =  
\end{equation}
$$
= \e^r : \prod_{i=1}^r (\hbar x \partial_x - \barLambda^i(x)) : + O(\e^{r+1}) =  \e^r \sum_{k=0}^r (-1)^k \barW_k(x) (\hbar x \partial_x)^{r-k} + O(\e^{r+1})
$$
The right-hand side of \eqref{eqn:quantum miura 5} \textbf{defines} the currents of the $W$--algebra of type $\fgl_r$ (our convention differs from the usual one by a sign). We expect this algebra to be the tensor product of a Heisenberg algebra and the usual $W$--algebra of type $\fsl_n$ (defined in \cite{FLu}, see also \cite{BoS} for reference). Specifically, the Heisenberg subalgebra is generated by:
$$
\barW_1(x) = \sum_{i=1}^r \barLambda^i(x) = \baru_1 + ... + \baru_r -  \sum_{n \in \BN} \frac {B_n}{x^n} + \sum_{n \in \BN} \frac {B_{-n}}{x^{-n}}
$$
Explicitly, we have $B_n = \barb^1_n+...+\barb^r_n$ and so $[B_m, B_n] = \delta_{m+n}^0 r m \hbar_1\hbar_2$. \\

%To obtain a feel for our ``twisted" presentation of the $W$--algebra, let us consider the first two currents:
%$$
%\barW_1(x) = \sum_{i=1}^r \barLambda^i(x) = : \sum_{n \in \BZ \backslash 0} \frac {B_n}{x^n}
%$$
%$$
%\barW_2(x) = \sum_{1\leq i < j \leq r} : \bar\Lambda^i(x) \bar\Lambda^j(x) : + \sum_{i=1}^r (i-1)\hbar  x \partial_x \bar\Lambda^i(x) = : 
%$$
%$$
%= : - \hbar_1 \hbar_2  \sum_{n \in \BZ \backslash 0} \frac {L_n}{x^n} + \frac 12 \sum_{n,m \in \BZ \backslash 0} \frac {:B_n B_m:}{x^{n+m}} + \frac {r-1}2 \sum_{n \in \BZ \backslash 0} (-n) \frac {B_n}{x^n}
%$$
%Explicitly, we have:
%$$
%B_n = \barb^1_n+...+\barb^r_n, \quad \qquad L_n = \sum_{i=1}^r \left[\sum_{s+t = n} \frac {:\barb^i_s \barb^i_t:}2 - \left(i-\frac {r+1}2 \right) n\barb^i_n \right]
%$$
%In virtue of \eqref{eqn:bosons classical}, we see that:
%$$
%[B_m, B_n] = \delta_{m+n}^0 r m \hbar_1\hbar_2, \qquad [L_m, B_n] = - n B_{m+n}
%$$
%$$
%[L_m,L_n] = ...
%$$
%Therefore, we conclude that the first two currents of the $W$--algebra generate a Heisenberg--Virasoro algebra, precisely the one which has been studied in \cite{Ext}.

\subsection{} 
\label{sub:q to clas}

Formula \eqref{eqn:quantum miura 5} has an interesting consequence: the left-hand side vanishes to order $\e^r$. Identifying the coefficients of $(\hbar x\partial_x)^{r-i} \e^{r-i+j}$ for all $0 \leq j \leq i \leq r$ in the two sides of equation \eqref{eqn:quantum miura 5} gives us:
$$
\text{coefficient of }\e^j \text{ in } \sum_{k=0}^r (-1)^k W_k(x) \frac {(r-k)^{r-i}}{(r-i)!} = \delta_j^i (-1)^i \barW_{i}(x)
$$
$$
\Longrightarrow \quad \sum_{k=0}^r (-1)^k W_k(x) \frac {(r-k)^{r-i}}{(r-i)!}  = \e^i (-1)^i  \barW_i(x) + O(\e^{i+1})
$$
Since this relation holds for all $0 \leq i \leq r$, we infer that:
$$
\sum_{k=0}^r (-1)^k W_k(x) \pi(r-k)  = \e^i (-1)^i  \barW_i(x) + O(\e^{i+1})
$$
for any polynomial $\pi$ of degree exactly $r-i$, with leading term $1/(r-i)!$ We will take this polynomial to be $\pi(n) = {n \choose r-i}$, so we obtain the relation:
\begin{equation}
\label{eqn:limitation}
\barW_i(x) = \lim_{\e \rightarrow 0} \e^{-i} \sum_{k=0}^{i} (-1)^k W_k(x) {r-k \choose r-i}
\end{equation}
Recall the notation $W(x,y) = \sum_{k=0}^r W_k(x) (-y)^{-k}$ from Subsection \ref{sub:q-W currents}. With it, we conclude the following formula for the classical limit of $_qW$--currents: \\

\begin{proposition}
\label{prop:classical limit}

The currents of the usual $W$--algebra are given by:
\begin{equation}
\label{eqn:classical current}
\barW_i(x) = \lim_{\e \rightarrow 0} \e^{-i} \cdot \frac { \partial_y^{r-i}}{(r-i)!} \Big[ y^r W(x,y) \Big] \Big|_{y=1} 
\end{equation}
$$$$
\end{proposition}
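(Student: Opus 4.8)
The plan is to deduce the statement directly from relation \eqref{eqn:limitation}, which was already established in Subsection \ref{sub:q to clas} by extracting the leading $\e$--order term of the quantum Miura transformation \eqref{eqn:quantum miura 5}. That relation expresses $\barW_i(x)$ as the limit of $\e^{-i}$ times the binomial--weighted sum $\sum_{k=0}^{i}(-1)^k W_k(x)\binom{r-k}{r-i}$, so the only remaining work is to recognize this sum as the Taylor--coefficient operator appearing in \eqref{eqn:classical current}.

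First I would unfold the generating series. Recalling $W(x,y) = \sum_{k=0}^r W_k(x)(-y)^{-k}$, multiplication by $y^r$ turns it into a genuine polynomial in $y$:
$$
y^r W(x,y) = \sum_{k=0}^r (-1)^k W_k(x)\, y^{r-k}.
$$
Next I would apply the operator $\partial_y^{r-i}/(r-i)!$ and set $y=1$. Differentiating $y^{r-k}$ a total of $r-i$ times annihilates every term with $k>i$ (for then $r-k<r-i$) and produces $\frac{(r-k)!}{(i-k)!}\,y^{i-k}$ when $k\le i$; evaluation at $y=1$ leaves $\frac{(r-k)!}{(i-k)!}$. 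Dividing by $(r-i)!$ gives the coefficient $\frac{(r-k)!}{(i-k)!\,(r-i)!}$ of $W_k(x)$, which is exactly $\binom{r-k}{i-k}=\binom{r-k}{r-i}$, the last equality holding because $(r-k)-(i-k)=r-i$. Thus
$$
\frac{\partial_y^{r-i}}{(r-i)!}\Big[y^r W(x,y)\Big]\Big|_{y=1} = \sum_{k=0}^{i}(-1)^k W_k(x)\binom{r-k}{r-i}.
$$

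Comparing this with \eqref{eqn:limitation} and multiplying by $\e^{-i}$ before letting $\e\to 0$ yields \eqref{eqn:classical current}, completing the argument. I do not expect a genuine obstacle here: the substantive content — namely that the left--hand side of the quantum Miura transformation vanishes to order $\e^r$ and that its $\e^i$--coefficients reproduce the classical currents — is precisely \eqref{eqn:limitation}. The present proposition is a repackaging of that fact into generating--function form, and the only point demanding care is the elementary binomial bookkeeping above.
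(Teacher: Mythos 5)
Your proposal is correct and follows exactly the paper's route: the paper derives \eqref{eqn:limitation} in Subsection \ref{sub:q to clas} and then states Proposition \ref{prop:classical limit} as an immediate consequence, leaving the binomial bookkeeping implicit. Your computation that $\frac{\partial_y^{r-i}}{(r-i)!}\bigl[y^r W(x,y)\bigr]\big|_{y=1}=\sum_{k=0}^{i}(-1)^k W_k(x)\binom{r-k}{r-i}$ is exactly the missing elementary step, carried out correctly.
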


\noindent Set $y = e^{\e\bary}$ and recall from \eqref{eqn:classical limit} that $W(x,y)$ is of order $\e^r$ when degenerating the representation on $K$--theory into the representation on cohomology $H$ (the operator $D_x$ is of order 0 in $\e$, so it does not contribute anything to the leading order term). The $r-i$ derivatives in \eqref{eqn:classical current} can bring down no more than $r-i$ powers of $\e^{-1}$, so we conclude that only the leading term of \eqref{eqn:classical limit} contributes to the limit \eqref{eqn:classical current}. Explicitly, we obtain: \\

\begin{theorem}
\label{thm:ldu clas}

The action of the $W$--algebra on the level $r$ module $H$ is given by:
\begin{equation}
\label{eqn:factor clas}
\barW_i(x) = \frac {\partial_{\bary}^{r-i}}{(r-i)!} \left[ \bar{W}(x,\bary) \right] \Big|_{\bary = 0} 
\end{equation}
Combining this result with \eqref{eqn:def def}, we obtain the following factorization for the currents of the classical $W$--algebra in the level $r$ representation $H$:
\begin{equation}
\label{eqn:factor clas 2}
\barW_i(x) = \frac {\partial_{\bary}^{r-i}}{(r-i)!} \left[ \barT(x^{-1}, \bary)^\leftarrow \cdot \barE(\bary) \cdot \barT(x, \bary)^\rightarrow \right] \Big|_{\bary = 0}
\end{equation}
The expression $\bar{W}(x,\bary)$ is interpreted as a rational function in $\bary$, via \eqref{eqn:def def}. \\

\end{theorem}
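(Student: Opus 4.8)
The plan is to obtain Theorem \ref{thm:ldu clas} purely by combining the two preceding results: Proposition \ref{prop:classical limit}, which writes $\barW_i(x)$ as the $\e \to 0$ limit of $\e^{-i}(r-i)!^{-1}\partial_y^{r-i}[y^r W(x,y)]|_{y=1}$, and Proposition \ref{prop:class lim}, which identifies the leading $\e^r$ coefficient of the matrix coefficients of $W(x,yD_x)$ with those of $\barW(x,\bary)$. I would argue entirely at the level of matrix coefficients in the fixed-point basis, since the asserted operator identity \eqref{eqn:factor clas} is equivalent to its entry-by-entry version, and the factorized identity \eqref{eqn:factor clas 2} then follows by inserting the definition \eqref{eqn:def def} of $\barW(x,\bary)$.

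First I would set $y = e^{\e\bary}$ and transfer the classical limit of $W(x,yD_x)$ computed in Proposition \ref{prop:class lim} to the plain current $W(x,y)$ appearing in Proposition \ref{prop:classical limit}. This is legitimate because $D_x = e^{\e\hbar x\partial_x} = 1 + O(\e)$, so in any fixed matrix coefficient the difference operator contributes only powers $q^{\#} = e^{\e\hbar\#} = 1+O(\e)$; since Proposition \ref{prop:class lim} already extracts the leading $\e^r$ coefficient, and that coefficient is governed by the $\e \to 0$ values of these factors, the same leading term controls $W(x,y)$, giving
\begin{equation*}
\langle \bmu | W(x,e^{\e\bary}) | \bla \rangle = \e^r \langle \barbmu | \barW(x,\bary) | \barbla \rangle + O(\e^{r+1}).
\end{equation*}
Next I would pass the $y$-derivatives through the substitution via $\partial_y = \e^{-1}e^{-\e\bary}\partial_{\bary}$, so that $\partial_y^{r-i} = \e^{-(r-i)}[\partial_{\bary}^{r-i} + O(\e)]$ as operators in $\bary$, the $O(\e)$ collecting the terms in which a derivative lands on a factor $e^{-\e\bary}$. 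The prefactor $y^r = 1+O(\e)$ and its derivatives are likewise harmless. Multiplying by the $\e^{-i}$ of Proposition \ref{prop:classical limit} produces an overall $\e^{-i}\cdot\e^{-(r-i)} = \e^{-r}$, which exactly cancels the $\e^r$ above; the $O(\e)$ corrections survive only as $O(\e)$ after this rescaling and vanish in the limit. Evaluating at $y=1$, i.e. $\bary=0$, then yields $\barW_i(x) = (r-i)!^{-1}\partial_{\bary}^{r-i}[\barW(x,\bary)]|_{\bary=0}$, which is \eqref{eqn:factor clas}.

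The step I expect to be the main obstacle is the one bridging $W(x,yD_x)$ and $W(x,y)$ while simultaneously applying $r-i$ derivatives in $y$: one must be sure that neither the degeneration of $D_x$ nor the chain-rule corrections can be promoted from $O(\e)$ to leading order once the $\e^{-r}$ prefactor is applied. The structural reason this works is that in the cohomological limit there is genuinely no difference operator left --- $D_x$ degenerates to the order-$\e$ shift $\hbar x\partial_x$ inside \eqref{eqn:quantum miura 5}, so $\barW(x,\bary)$ is an honest rational function of the commuting variable $\bary$ and the $\bary$-derivatives are ordinary. Making this decoupling of the $y$-differentiation from the $\e \to 0$ collapse of $D_x$ precise, rather than the elementary power counting itself, is where the argument requires care.
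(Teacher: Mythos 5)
Your proposal is correct and follows essentially the same route as the paper: it combines Proposition \ref{prop:classical limit} with the leading-order expansion \eqref{eqn:classical limit} of Proposition \ref{prop:class lim}, observes that $D_x = 1 + O(\e)$ plays no role at leading order, and uses the power count $\e^{-i}\cdot\e^{-(r-i)} = \e^{-r}$ against the $\e^r$ of the leading term to see that only that term survives the limit. The extra care you take with the chain-rule corrections from $\partial_y = \e^{-1}e^{-\e\bary}\partial_{\bary}$ is a slightly more explicit version of the paper's remark that the $r-i$ derivatives can bring down no more than $r-i$ powers of $\e^{-1}$.
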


%\begin{remark}
%\label{rem:classical} 

%Theorem \ref{thm:ldu clas} answers a question from \cite{Ext}, where we asked for a geometric interpretation of the action of the $W$--currents on the cohomology group $H$ of the moduli space of rank $r$ sheaves on the plane (which was also constructed by different means in \cite{MO} and \cite{SV2}). Indeed, the LDU decomposition \eqref{eqn:factor clas} is geometric, because $\barT(x^{-1},\bary)^\leftarrow$ and $\barT(x,y)^\rightarrow$ are given by a power of the Chern class of the last tautological line bundle on the moduli spaces $\fZ_d$. Meanwhile, $\barE(\bary)$ is just the operator of multiplication by a series in $\bary$ of tautological classes. \\

%\end{remark}

\subsection{} The classical analogue of $A_m(x)$ is given by the Chern polynomial of the Ext bundle, used as a correspondence on cohomology (\cite{Ext}). We will denote it by:
$$
\barA_{\barm}(x) : H_{\barbu'} \longrightarrow H_{\barbu}
$$
where $H_{\barbu}$ and $H_{\barbu'}$ are the cohomology groups of two different moduli spaces of sheaves, acted on by two different rank $r$ tori, with equivariant parameters given by $\barbu$ and $\barbu'$, respectively. All we need to use is the classical limit of formula \eqref{eqn:matrix coefficients}:
\begin{equation}
\label{eqn:callatis}
\langle \bla | A_m(x) |\bla' \rangle = \langle \barbla | \barA_{\barm}(x) |\barbla' \rangle + O(\e)
\end{equation}
since the operator $\barA_{\barm}(x)$ can be easily seen to have the following matrix coefficients (recall the notation of \eqref{eqn:bar zeta}, \eqref{eqn:bar tau} and \eqref{eqn:renormalization}):
$$
\langle \barbla | \barA_{\barm}(x) |\barbla' \rangle = \frac {\prod_{\sq \in \bla} \bartau'(\barm + \barchi_{\sq}) \prod_{\sq' \in \bla'} \bartau(\barchi_{\sq'} + \hbar - \barm) \prod^{\sq \in \bla}_{\sq' \in \bla'} \barzeta \left( \barchi_\sq' - \barchi_\sq - \barm \right)}{x^{|\bla'| - |\bla|} \prod_{\sq \in \bla'} \bartau'(\barchi_\sq) \prod_{\sq' \in \bla'} \bartau'(\barchi_{\sq'} + \hbar) \prod^{\sq \in \bla'}_{\sq' \in \bla'} \barzeta \left( \barchi_\sq' - \barchi_\sq \right)}
$$
As in relation \eqref{eqn:def phi}, we will replace the study of $\barA_{\barm}(x)$ with the closely related operator $\barPhi_{\barm}(x) : H_{\barbu'} \rightarrow H_{\barbu}$, defined by the formula:
$$
\barPhi_{\barm}(x) = \barA_{\barm}(x) \exp \left[\sum_{n=1}^\infty \frac {\barp_n}{x^n} \cdot \frac {\hbar}{\hbar_1 \hbar_2} \right]
$$
where $\barp_n = -\frac {B_n}n$ is the leading order term of the classical limit of the bosons $p_n$. The operator $\barPhi_{\barm}(x)$ was connected with vertex operators for the Toda conformal field theory in \cite{FL}. In \cite{Ext}, we used geometric techniques similar to those of Section \ref{sec:ext} to show that when $r=2$, the operator $\barPhi_{\barm}(x)$ is (up to conjugation by certain explicit exponentials in the Heisenberg subalgebra) equal to the Liouville vertex operator. In arbitrary rank $r$, we may obtain a similar result by taking the classical limit of formula \eqref{eqn:phi wk}. Specifically, this formula implies that:
$$
\Phi_m(x) \left[\sum_{k=0}^i (-1)^k W_k(y) {r-k \choose r-i} \right] \prod_{j=1}^i \left(1 - \frac {m^r u x}{q^{r-j} u' y} \right) = 
$$
$$
= \left[\sum_{k=0}^i (-1)^k m^k W_k(y) {r-k \choose r-i} \right] \Phi_m(x) \prod_{j=1}^i \left(1 - \frac {m^r u x}{q^{r-j} u' y} \right)
$$
By \eqref{eqn:limitation} and \eqref{eqn:callatis}, the leading order term (namely $\e^i$) of the above relation is:
\begin{equation}
\label{eqn:classical commutation}
\barPhi_{\barm}(x) \barW_i(y) (x-y)^i = \barW_i(y) \barPhi_{\barm}(x) (x-y)^i
\end{equation}
which yields the locality of the fields $\bar\Phi_{\barm}(x)$ and $\barW_i(y)$ (note that $m^k = 1 + O(\e)$ in the right-hand side does not contribute anything to the leading order term). Up to the fact that our conventions are somewhat non-standard, formula \eqref{eqn:classical commutation} is of the same nature as the realization of $\barPhi_{\barm}(x)$ as a vertex operator in \cite{FL}. \\ 

%According to the general rules of vertex operator algebras, we conclude:
%$$
%[\barPhi_{\barm}(x), \barW_i(y) ] = \sum_{j=0}^{i-1} C_j(x) \cdot \sum_{k \in \BZ} {k \choose j} \frac {y^k}{x^k}
%$$
%for certain operator-valued series $C_0(x), ..., C_{i-1}(x)$. We may determine these coefficients as the commutators of $\barPhi_{\barm}(x)$ with the first $i$ coefficients of $\barW_i(y)$:
%$$
%[\barPhi_{\barm}(x), \barW_i(y) ] = \sum_{j=0}^{i-1} [\barPhi_{\barm}(x), \barW_{-j,i} ] \sum_{k \in \BZ} \frac {\prod_{0\leq a < i}^{a\neq j} (k-a)}{\prod_{0\leq a < i}^{a\neq j} (j-a)} \cdot \frac {y^k}{x^{k-j}}
%$$

\section{Appendix}
\label{sec:appendix}

\noindent \textbf{Proof of formulas \eqref{eqn:def h}, \eqref{eqn:def e}, \eqref{eqn:def q}:} Fix any pair of coprime integers $a,b$, and recall that the subalgebra $\Lambda_{b/a} = \BF[P_{a,b},P_{2a,2b},...]$ is commutative. Moreover, as shown in \cite{Shuf}, $\Lambda_{b/a}$ is actually a bialgebra with respect to the coproduct:
$$
\Delta_{b/a}(R) = \sum_{i=0}^{na} \lim_{\xi \rightarrow \infty} \frac {R(z_1,..., z_i \otimes \xi z_{i+1},..., \xi z_{na})}{\xi^{\frac {b(na-i)}a}}
$$
for any $R \in \CS_{na,nb} \cap \Lambda_{b/a}$. The fact that $P_{na,nb}$ is primitive with respect to $\Delta_{b/a}$:
$$
\Delta_{b/a}(P_{na,nb}) = P_{na,nb} \otimes 1 + 1 \otimes P_{na,nb}
$$
was established in \loccit Moreover, it was shown therein that any other primitive element of $\Lambda_{b/a}$ is a constant multiple of $P_{na,nb}$. Because of this, the elements $H_{na,nb}$, $E_{na,nb}$, $Q_{na,nb}$ are exponentials in the shuffle elements $P_{na,nb}$ (times constant multiples) if and only if they are group-like with respect to $\Delta_{b/a}$:
\begin{equation}
\label{eqn:cop h}
\Delta_{b/a}(H_{na,nb}) = \sum_{m=0}^n H_{ma,mb} \otimes H_{(n-m)a,(n-m)b} 
\end{equation}
and the analogous formulas for $E_{na,nb}$ and $Q_{na,nb}$. These formulas follow from relation (6.10) of \loccitt, in the notation of which we have:
\begin{align}
&H_{na,nb} = X_{na,nb}^{(0,...,0)} \label{eqn:s h} \\
&E_{na,nb} =  (-q)^{n-1} X_{na,nb}^{(1,...,1)} \label{eqn:s e} \\
&Q_{na,nb} = \left(1 - \frac 1q\right) \left( X_{na,nb}^{(0,...,0)} + X_{na,nb}^{(0,...,0,1)} + ... + X_{na,nb}^{(0,1...,1)} + X_{na,nb}^{(1,...,1)} \right) \label{eqn:s q}
\end{align}
Indeed, the fact that \eqref{eqn:s h} implies \eqref{eqn:cop h} is an immediate application of (6.10) of \loccitt, as is the analogous statement for $E_{na,nb}$. The fact that the same holds for \eqref{eqn:s q} is proved analogously with Proposition 6.5 of \loccitt, and we leave the details to the interested reader.\footnote{The proof is actually in Subsection 5.7 of the first version of the paper \loccitt, which is unpublished, but can be found at ar$\chi$iv:1209.3349v1} It is a well-known fact concerning the bialgebra structure on the ring of symmetric functions that relation \eqref{eqn:cop h} implies that:
\begin{equation}
\label{eqn:truck}
\sum_{n=0}^\infty \frac {H_{an,bn}}{x^n} = \exp \left( \sum_{n=1}^\infty \frac {P_{an,bn}}{n x^n}  \cdot \alpha_n \right)
\end{equation}
for some constants $\alpha_1,\alpha_2,...$. Similarly, we have formulas analogous to \eqref{eqn:truck} for $E_{na,nb}$ and $Q_{na,nb}$, with respect to other sets of constants $\{\alpha_n\}_{n \in \BN}$. Therefore, to prove \eqref{eqn:def h}, \eqref{eqn:def e}, \eqref{eqn:def q}, it remains to compute these constants. As in \loccitt, we will do so by considering the linear functional:
$$
\phi : \Lambda_{b/a} \rightarrow \BF, \quad \phi(R(z_1,...,z_{na})) = \frac {R(1,q_1^{-1}...,q_1^{-na+1})}{\prod_{1\leq i < j \leq na} \zeta(q_1^{j-i})} \cdot q_1^{\frac {n^2ab-nb+na-n}2} (1-q_2)^{na}
$$
It is shown in (6.12) of \loccit that the functional $\phi$ is multiplicative. Moreover, from \eqref{eqn:pkd}--\eqref{eqn:qkd}, we see that:
\begin{align*}
&\phi(P_{na,nb}) = 1-q_2^n & &\phi(H_{na,nb}) = 1-q_2 \\
&\phi(E_{na,nb}) = (1 - q_2)(-q_2)^{n-1} & &\phi(Q_{na,nb}) = \frac {(1-q^{-1}) (1-q_2)(1-q_1^{-n})}{1-q_1^{-1}} 
\end{align*}
The reason for these very simple formulas is that $\zeta(q_1^{-1}) = 0$, and so only the identity permutation survives evaluation at $z_i = q_1^{-i+1}$ in formulas \eqref{eqn:pkd}--\eqref{eqn:qkd}. For brevity, we leave out the elementary manipulations involving integer parts that have produced the formulas above. Then applying the multiplicative function $\phi$ to \eqref{eqn:truck}, we conclude that:
$$
1 + \sum_{n=1}^\infty \frac {1-q_2}{x^n} = \exp \left( \sum_{n=1}^\infty \frac {\alpha_n (1-q_2^n)}{n x^n} \right) \quad \stackrel{\log}\Longrightarrow \quad \alpha_n = 1
$$
for all $n$. Then \eqref{eqn:truck} leads to \eqref{eqn:def h}. Formulas \eqref{eqn:def e}--\eqref{eqn:def q} are proved similarly. \\

\noindent \textbf{Proof of Proposition \ref{prop:explicit shuffle}:} Let us recall the following pairing on $\CS$ from \cite{Shuf}:
\begin{equation}
\label{eqn:shuffle pairing}
\langle \cdot, \cdot \rangle : \CS \otimes \CS \rightarrow \BF
\end{equation}
$$
\langle R, R' \rangle = \ :\int: \frac {R(z_1,...,z_k)R'\left(\frac 1{z_1}, ..., \frac 1{z_k} \right)}{\prod_{1\leq i \neq j \leq n} \zeta\left(\frac {z_i}{z_j} \right)} Dz_1... Dz_k
$$
The normal-ordered integral $:\int:$ is defined for all Laurent polynomials $\rho$ and:
$$
R' = \sym \left[\rho(z_1,...,z_k) \prod_{1\leq i < j \leq k} \zeta \left(\frac {z_i}{z_j} \right) \right]
$$
by the formula:
\begin{equation}
\label{eqn:normal shuffle pairing}
\langle R, R' \rangle := \int_{|z_1| \gg ... \gg |z_k|} \frac {R(z_1,...,z_k) \rho\left(\frac 1{z_1},..., \frac 1{z_k} \right)}{\prod_{1\leq i < j \leq n} \zeta\left(\frac {z_i}{z_j} \right)} Dz_1... Dz_k
\end{equation}
for all $R \in \CS$. Let us now consider the isomorphism $\CS \cong \CA^\uparrow$, under which the shuffle element $z^d$ in a single variable corresponds to the generator $P_{d,1}$. The situation we will mostly be concerned with is when: 
$$
\rho(z_1,...,z_k) = \delta \left(\frac {z_1}x \right) ... \delta \left( \frac {z_k}{xq^{1-k}} \right)
$$
in which case \eqref{eqn:normal shuffle pairing} implies:
\begin{equation}
\label{eqn:delta shuffle pairing 0}
\Big \langle R (z_1,...,z_k), W_k(x) \Big \rangle = \frac {R\left(\frac 1x, \frac q{x},..., \frac {q^{k-1}}x \right)}{\prod_{1\leq i < j \leq k} \zeta(q^{i-j})}
\end{equation}
because integrating against a $\delta$ function is equivalent to evaluation. A priori, the right-hand side of \eqref{eqn:delta shuffle pairing 0} is an undefined expression of the form $\frac {\infty}{\infty}$, but we make it precise as the linear functional $\ph^k_x : \CS_k \longrightarrow \BF [x^{\pm 1}]$ defined by:
\begin{equation}
\label{eqn:evaluation 0}
\ph^k_x(R) = \lim_{z_k \mapsto \frac {q^{k-1}}x} \left( ... \lim_{z_1 \mapsto \frac 1x} \left( \frac {R(z_1,...,z_k)}{\prod_{1\leq i < j \leq k} \zeta \left( \frac {z_i}{z_j} \right)} \right) ... \right)
\end{equation}
Indeed, each successive limit is well-defined for all rational functions $R$ of the form \eqref{eqn:shuf}, because at the $i$--th step, the pole $z_i - q^{i-1}/x$ of $R$ is precisely canceled by a zero of $\zeta^{-1}$. Therefore, \eqref{eqn:delta shuffle pairing 0} and \eqref{eqn:evaluation 0} imply that:
\begin{equation}
\label{eqn:delta shuffle pairing 1}
\Big \langle R (z_1,...,z_k), W_k(x) \Big \rangle = \ph^k_x(R)
\end{equation}
$$$$

\begin{claim}
\label{claim:1}
The functional $\ph_x$ satisfies the following multiplicativity property:
\begin{equation}
\label{eqn:pseudo}
\ph^{k_1+k_2}_x(R_1 * R_2) = \ph^{k_1}_{x}(R_1) \ph^{k_2}_{x/q^{k_1}}(R_2)
\end{equation}
for any shuffle elements $R_1,R_2$ of degrees $k_1,k_2$. \\
\end{claim}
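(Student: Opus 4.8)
The plan is to compute $\ph^{k_1+k_2}_x(R_1 * R_2)$ directly from the definitions \eqref{eqn:mult} and \eqref{eqn:evaluation 0}, exploiting the numerical coincidence that the evaluation points of the two functionals on the right of \eqref{eqn:pseudo} concatenate into the single evaluation ladder of the one on the left. Indeed, the rungs of $\ph^{k_1}_x$ are $x^{-1}, qx^{-1}, \dots, q^{k_1-1}x^{-1}$, those of $\ph^{k_2}_{x/q^{k_1}}$ are $q^{k_1}x^{-1}, \dots, q^{k_1+k_2-1}x^{-1}$, and together they form exactly the rungs $w_a := q^{a-1}x^{-1}$, $a = 1, \dots, k_1+k_2$, of $\ph^{k_1+k_2}_x$.

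First I would expand $R_1 * R_2 = \frac{1}{k_1!k_2!}\sym[\dots]$ inside $\ph^{k_1+k_2}_x$. Since $R_1$ and $R_2$ are each symmetric in their own variables, the $(k_1+k_2)!$ terms of the symmetrization collapse into a sum over subsets $S \subseteq \{1,\dots,k_1+k_2\}$ with $|S| = k_1$ (recording which rungs the $R_1$--variables land on), each subset occurring $k_1!k_2!$ times and cancelling the prefactor. After dividing by $\prod_{a<b}\zeta(w_a/w_b)$ as in \eqref{eqn:evaluation 0}, the term attached to $S$ factors, by separating within--$S$, within--$S^c$ and cross pairs, as
\[
V_S = \frac{R_1(w_S)}{\prod_{a<b,\ a,b\in S}\zeta(w_a/w_b)} \cdot \frac{R_2(w_{S^c})}{\prod_{a<b,\ a,b\in S^c}\zeta(w_a/w_b)} \cdot \frac{\prod_{s\in S,\ t\in S^c}\zeta(w_s/w_t)}{\prod_{a<b,\ \text{cross}}\zeta(w_a/w_b)}.
\]
The first two factors have precisely the shape \eqref{eqn:evaluation 0} on the point sets $w_S$ and $w_{S^c}$, and are finite: the only poles of $R_i$ come from \emph{consecutive} rungs lying in the same block, and these are exactly cancelled by the factors $\zeta(q^{-1})^{-1} = 0$ in the corresponding denominators, the same mechanism that makes $\ph$ well--defined.

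Next I would observe that for $S = \{1,\dots,k_1\}$ every cross pair $\{a,b\}$ with $a<b$ has $a\in S$ and $b\in S^c$, so the cross factor cancels to $1$ and $V_{\{1,\dots,k_1\}}$ equals $\ph^{k_1}_x(R_1)\cdot\ph^{k_2}_{x/q^{k_1}}(R_2)$, using that $\{w_{k_1+1},\dots,w_{k_1+k_2}\}$ is the $\ph^{k_2}_{x/q^{k_1}}$--ladder. It then remains to show $V_S = 0$ for every other $S$. The combinatorial lemma here is that $S$ fails to be the initial segment $\{1,\dots,k_1\}$ if and only if it admits an \emph{adjacent inversion}, namely an index $a$ with $a\notin S$ and $a+1\in S$. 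For such $a$ the cross factor contributes $\zeta(w_{a+1}/w_a)/\zeta(w_a/w_{a+1}) = \zeta(q)/\zeta(q^{-1})$, and since $\zeta$ has a genuine pole at $q^{-1}$, this ratio vanishes; as the two within--block factors are finite, $V_S = 0$.

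The step I expect to be the main obstacle is this last one, specifically confirming that the zero produced by an adjacent inversion is not secretly cancelled by a pole hidden in $R_1(w_S)$ or $R_2(w_{S^c})$. I would settle this by carrying out the iterated limit of \eqref{eqn:evaluation 0} rung by rung in the prescribed order $z_1\to w_1, z_2\to w_2,\dots$, and checking that at the step $z_{a+1}\to w_{a+1}$ (for the smallest adjacent inversion) the numerator remains finite while the single denominator factor $\zeta(z_a/z_{a+1})\to\zeta(q^{-1}) = \infty$. The crucial point, which I verified above, is that the potential poles of $R_1$ arise only from consecutive rungs lying \emph{both} in $S$, so they are absorbed into the within--$S$ factor and never interact with the cross denominator. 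Keeping the within--block cancellations cleanly separated from the cross--factor vanishing is the only delicate bookkeeping in the argument, and it reduces the whole claim to the computation of $V_{\{1,\dots,k_1\}}$.
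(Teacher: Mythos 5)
Your proof is correct and follows essentially the same route as the paper: expand the evaluation of $R_1*R_2$ as a sum over subsets $S$ of size $k_1$, identify the term $S=\{1,\dots,k_1\}$ with the right-hand side of \eqref{eqn:pseudo}, and kill every other term. The only cosmetic difference is bookkeeping: the paper computes the total order of vanishing of each summand as $-\#\{i : i\in S^c,\ i+1\in S\}$, whereas you factor the summand into within-block and cross pieces and localize the zero at an adjacent inversion via $\zeta(q)/\zeta(q^{-1})=0$ — the same cancellation pattern, organized slightly differently.
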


\begin{claim}
\label{claim:2}
For any $k>0$ and $d\in \BZ$ with greatest common divisor $n$, we have:
\begin{equation}
\label{eqn:ph r}
\ph^k_x(P_{d,k}) = \frac {q^{\alpha(k,d)}}{x^d} \cdot \frac {(1-q_1^{n})(1-q_2^{n})(1-q^{-1})^{k}}{(1-q_1)^{k}(1-q_2)^{k}(1-q^{-n})}
\end{equation}
where $\alpha(k,d) = \frac {kd+k-d-n}2$. We think of $P_{d,k}$ as lying in $\CA^\uparrow \cong \CS$. \\
\end{claim}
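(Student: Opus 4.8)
The plan is to evaluate the functional $\ph^k_x$ directly on the explicit rational function \eqref{eqn:pkd}, using the identification \eqref{eqn:delta shuffle pairing 1}, which recasts $\ph^k_x(P_{d,k})$ as the iterated limit \eqref{eqn:evaluation 0} of the $k$-variable shuffle element $P_{d,k}$ at the geometric progression $z_i = q^{i-1}/x$. Writing $P_{d,k} = \sym[g \prod_{i<j} \zeta(z_i/z_j)]$, where $g$ is the rational function inside the bracket of \eqref{eqn:pkd}, one expands $\ph^k_x(P_{d,k}) = \sum_{\sigma \in S(k)} L_\sigma$, where $L_\sigma$ is the iterated limit of $g(z_{\sigma(1)},\dots,z_{\sigma(k)}) \prod_{i<j} \zeta(z_{\sigma(i)}/z_{\sigma(j)})/\prod_{i<j}\zeta(z_i/z_j)$. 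The basic mechanism, already invoked to make \eqref{eqn:evaluation 0} well-defined, is that $\zeta(q_1^{-1}) = \zeta(q_2^{-1}) = 0$ while $\zeta$ has a pole at $q^{-1}$; along the progression the zeros of $1/\zeta$ cancel the poles coming from the chain factor $\prod_i (1 - qz_{i+1}/z_i)^{-1}$ and from $\zeta(z_i/z_j)$, which both guarantees finiteness and selects the surviving permutations.

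First I would dispose of the coprime case $n = 1$, where the inner sum $\sum_{s=0}^{n-1} q^s(\cdots)$ collapses to its $s=0$ term and $P_{d,k}$ coincides with $H_{d,k}$ of \eqref{eqn:hkd}. Here the monomial $\prod_i z_i^{\lfloor id/k \rfloor - \lfloor (i-1)d/k\rfloor}$ contributes $x^{-d}$, since its exponents sum to $d$, together with a power of $q$ equal to $\sum_i (i-1)(\lfloor id/k\rfloor - \lfloor (i-1)d/k\rfloor)$ from the progression, while each residue produced by a chain pole meeting a $1/\zeta$ zero contributes a factor $(1-q^{-1})/((1-q_1)(1-q_2))$ and an extra power of $q$. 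The main technical content is to show that the permutation sum telescopes to a single clean product with $k-1$ such residue factors: this is already visible at $k=2$, where the identity permutation gives $q/(x(1-q^2))$, the transposition gives $-(1-q_1q)(1-q_2q)/(x(1-q^2)(1-q_1)(1-q_2))$, and the two combine, after clearing $(1-q^2)$, into $q^{\alpha(2,1)} x^{-1}(1-q^{-1})/((1-q_1)(1-q_2))$, exactly matching the claim. I expect the cleanest organization of the general $k$ to be an induction that peels off the outermost variable $z_k \mapsto q^{k-1}/x$ using the chain, relating $\ph^k_x(P_{d,k})$ to a shifted $\ph^{k-1}$, with the twisted multiplicativity \eqref{eqn:pseudo} of Claim \ref{claim:1} controlling how the evaluation point shifts by a factor $q^{-1}$ at each step.

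For general $n = \gcd(k,d)$ I would then account for the extra factor $\sum_{s=0}^{n-1} q^s z_{a(n-1)+1}\cdots z_{a(n-s)+1}/(z_{a(n-1)}\cdots z_{a(n-s)})$, supported on the variables indexed by multiples of $a = k/n$. Summed against the permutation residues along the progression, this weighted sum is precisely engineered to replace the coprime factor $(1-q_1)(1-q_2)/(1-q^{-1})$ by $(1-q_1^n)(1-q_2^n)/(1-q^{-n})$ and to shift the exponent of $q$ to $\alpha(k,d) = (kd+k-d-n)/2$. There are two routes to this step: a direct residue evaluation of the $s$-sum, being careful that the interaction of the $s$-term with the selected permutations is not a naive substitution; or a bootstrap in which one first computes $\ph^k_x(H_{na,nb})$, which carries no $s$-sum and whose coproduct group-likeness \eqref{eqn:cop h} yields a recursion through \eqref{eqn:pseudo}, and then inverts the exponential relation \eqref{eqn:def h} together with \eqref{eqn:def q} to isolate the primitive $P_{na,nb}$.

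The step I expect to be the genuine obstacle is the combinatorial control of the permutation sum in the coprime case together with the exact tracking of the power of $q$: showing that the $k!$ residue contributions collapse to one product, and that the accumulated $q$-exponents, coming both from the progression monomial and from each residue, add up to $\alpha(k,d)$. The symmetry \eqref{eqn:inv zeta} of $\zeta$ and the minimality property of $P_{d,k}$ recorded after \eqref{eqn:qkd} should make the telescoping transparent, but the sign and $q$-power accounting is where the real work lies.
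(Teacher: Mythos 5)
Your plan leaves the decisive step unproved, and the mechanism you propose for it is not available. Everything hinges on the assertion that the $k!$-term permutation sum obtained by evaluating \eqref{eqn:pkd} at the progression $z_i = q^{i-1}/x$ ``telescopes to a single clean product'' with the exponent $\alpha(k,d)$; you verify this only for $k=2$ and explicitly defer the general case, so the proposal is a plan rather than a proof. Moreover, the induction you sketch for this step --- peeling off the variable $z_k$ and invoking the twisted multiplicativity \eqref{eqn:pseudo} --- cannot be run as stated: Claim \ref{claim:1} applies only to genuine shuffle products $R_1 * R_2$, and $P_{d,k}$ is not presented as (and for coprime $(k,d)$ cannot be written as) a product $R_1 * (z^e)$, so there is no ``shifted $\ph^{k-1}$'' to recurse to. The bootstrap through $H_{na,nb}$ does not escape the problem either, since $\ph^k_x(H_{na,nb})$ is a permutation sum of exactly the same nature. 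The obstacle you flag at the end is therefore a genuine gap, not a routine bookkeeping exercise.

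The paper's proof avoids the direct evaluation of the symmetrization entirely, and the way it does so is the missing idea: one must use the algebra relations to express $P_{d,k}$ through products of elements with \emph{fewer} variables, and only then apply \eqref{eqn:pseudo}. Concretely, the induction is on $k$: by \eqref{eqn:relation 2}, $Q_{d,k}$ equals (up to the explicit constant) the commutator $[P_{d_1,k_1},P_{d_2,k_2}]$ for a minimal decomposition $k_1+k_2=k$, $d_1+d_2=d$ with both pairs coprime and $k_1d_2-k_2d_1=n$; applying $\ph^k_x$ to this commutator via \eqref{eqn:pseudo} and the inductive hypothesis yields the closed form \eqref{eqn:t mob} for $\ph^k_x(Q_{d,k})$. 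Then the plethystic relation \eqref{eqn:def q} along the slope writes $Q_{na,nb}$ as $\sum_{\lambda\vdash n} z_\lambda^{-1} P_{\lambda(a,b)}\prod_i(1-q^{-n_i})$, in which every term except $\lambda=(n)$ is known by induction and \eqref{eqn:pseudo}, so one solves for $\ph^{nb}_x(P_{na,nb})$; the constants close up via the elementary identity $\sum_{\lambda \vdash n} z_\lambda^{-1}\prod_i(1-q_1^{n_i})(1-q_2^{n_i}) = (1-q_1)(1-q_2)(1-q^n)/(1-q)$. If you insist on the direct route, you would have to prove the collapse of the permutation sum by hand (essentially redoing the ``minimal degree'' analysis of \cite{Shuf}); the commutator route is far shorter and is where you should redirect your argument.
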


%We will apply the above formula only for \textbf{ordered} sequences, i.e. those:
%\begin{equation}
%\label{eqn:ordered}
%v = \text{ such that }\frac {d_1}{k_1} \leq ... \leq \frac {d_t}{k_t}
%\end{equation}
%and $k_i \leq k_{i+1}$ whenever $d_i/k_i = d_{i+1}/k_{i+1}$. 

\noindent We will prove these two claims after we show how they allow us to complete the proof of Proposition \ref{prop:explicit shuffle}. For any ordered collection $v$ as in \eqref{eqn:sequence} and $P_v$ defined as in \eqref{eqn:compositions}, relations \eqref{eqn:pseudo} and \eqref{eqn:ph r} imply that:
\begin{equation}
\label{eqn:ph rr}
\ph^k_x(P_v) = \frac {q^{\alpha(v)}}{x^d}  \prod_{i=1}^t \frac {(1-q_1^{n_i})(1-q_2^{n_i})(1-q^{-1})^{k_i}}{(1-q_1)^{k_i}(1-q_2)^{k_i}(1-q^{-n_i})}
\end{equation}
where we write $n_i = \gcd(k_i,d_i)$, $k = k_1+...+k_t$, $d = d_1+...+d_t$, and set:
\begin{equation}
\label{eqn:alpha}
\alpha(v) = \sum_{1\leq i < j \leq t} k_i d_j + \sum_{i=1}^t \frac {k_i d_i + k_i - d_i - n_i}2
\end{equation}
The products $P_v$ for ordered sequences $v$ are known (\cite{BS}, \cite{Shuf}) to form an orthogonal basis of the algebra $\CS$ with respect to the inner product \eqref{eqn:shuffle pairing}:
\begin{equation}
\label{eqn:basis}
\left \langle P_v, P_{v'} \right \rangle = \delta_{v'}^v \cdot z_v \prod_{i=1}^t \frac {(1-q_1^{n_i})(1-q_2^{n_i})(q^{-1}-1)^{k_i}}{(1-q_1)^{k_i}(1-q_2)^{k_i}(q^{-n_i}-1)}
\end{equation}
where:
\begin{equation}
\label{eqn:def zv}
z_v = \prod_{(k,d)} \left( \# \text{ of }(k,d) \text{ in }v \right)! \prod_{i=1}^t n_i
\end{equation}
Therefore, \eqref{eqn:delta shuffle pairing 1}, \eqref{eqn:ph rr} and \eqref{eqn:basis} imply the following formula:
\begin{equation}
\label{eqn:power}
W_k(x) = \sum_{t \geq 1} (-1)^{k-t} \mathop{\sum^{k_1+...+k_t = k}_{v = \left\{ \frac {d_1}{k_1} \leq ... \leq \frac {d_t}{k_t} \right\}}}^{k_i \in \BN, d_i \in \BZ}  \frac {P_v}{x^d} \cdot \frac { q^{\alpha(v)}}{z_v}
\end{equation}
Using relation \eqref{eqn:relation 1}, we conclude that the generators $P_{na,nb}$ for fixed coprime $(a,b)$ and all $n>0$ commute. Therefore, the multiplicative map that assigns to $P_{na,nb}$ the $n$--th power sum function $p_n$ also assigns to $E_{na,nb}$ the $n$--th elementary symmetric function $e_n$. Therefore, the well-known identity of symmetric functions:
$$
e_n = \sum^{\lambda \vdash n}_{\lambda = (n_1 \geq ... \geq n_t)} (-1)^{n-t} \frac {p_\lambda}{z_\lambda} \qquad \text{where } p_\lambda = p_{n_1}...p_{n_t}
$$
implies the following identity of shuffle elements for any slope $\frac ba$ with $\gcd(a,b) = 1$:
\begin{equation}
\label{eqn:dt}
E_{na,nb} = \sum^{\lambda \vdash n}_{\lambda = (n_1 \geq ... \geq n_t)} (-1)^{n-t} \frac {P_{\lambda(a,b)}}{z_\lambda}, \text{ where } P_{\lambda(a,b)} = P_{n_1a,n_1b}...P_{n_ta,n_tb}
\end{equation}
where we use the the well-known constant $z_\lambda = \prod_i (\# \text{ of }i \text{ in }\lambda)!\prod_{i=1}^t n_i$ defined for any partition $\lambda$ by analogy with \eqref{eqn:def zv}. Formula \eqref{eqn:dt} allows us to collect all terms $P_{d,k}$ of the same slope together in formula \eqref{eqn:power}, and rewrite it as:
\begin{equation}
\label{eqn:elementary}
W_k(x) = \mathop{\sum^{k_1+...+k_t = k}_{v = \left\{ \frac {d_1}{k_1} < ... < \frac {d_t}{k_t} \right\}}}^{t, k_i \in \BN, d_i \in \BZ}  \frac {E_v}{x^d} \cdot q^{\alpha(v)}(-1)^{\sum_{i=1}^t k_i  - \gcd(k_i,d_i)}
\end{equation}
where we still denote $E_v = E_{d_1,k_1}... E_{d_t,k_t}$. Since $E_{d,k}$ acts with degree $-d$ in any good representation, we may consider the LDU decomposition of \eqref{eqn:elementary}:
$$
W_k(x) = \sum^{d_\leftarrow, d_\rightarrow \geq 0}_{k_\leftarrow + k_0 + k_\rightarrow = k} \frac {L_{d_\leftarrow, k_\leftarrow} \cdot E_{0,k_0} \cdot U_{d_\rightarrow, k_\rightarrow}}{x^{d_\rightarrow - d_\leftarrow}} \cdot q^{(k_\leftarrow + k_0) d_\rightarrow}
$$
where we set $L_{0,k} = U_{0,k} = \delta_k^0$ and:
\begin{align}
&L_{d,k} = \mathop{\sum^{k_1+...+k_t = k}_{d_1+...+d_t = d}}^{t, k_i, d_i \in \BN}_{v = \left\{ -\frac {d_1}{k_1} < ... < -\frac {d_t}{k_t} \right\}}  E_v \cdot q^{\alpha(v)}(-1)^{\sum_{i=1}^t k_i  - \gcd(k_i,d_i)} \label{eqn:def for l} \\
&U_{d,k} = \mathop{\sum^{k_1+...+k_t = k}_{d_1+...+d_t = d}}^{t, k_i, d_i \in \BN}_{v = \left\{\frac {d_1}{k_1} < ... < \frac {d_t}{k_t} \right\}}  E_v \cdot q^{\alpha(v)}(-1)^{\sum_{i=1}^t k_i  - \gcd(k_i,d_i)} \label{eqn:def for u}
\end{align}
for $d>0$. The crucial fact is that $L_{d,k} \in \CA^\uparrow \cap \CA^\leftarrow$ and $U_{d,k} \in \CA^\uparrow \cap \CA^\rightarrow$. Up until now, we have thought about these operators as lying in the upper shuffle algebra, but from now on we wish to think about them in the left/right shuffle algebras: \\

\begin{claim}
\label{claim:3}

For any $k,d > 0$, we have the following equalities in the left and right shuffle algebras $\CA^\leftarrow \cong \CS$ and $\CA^\rightarrow \cong \CS^{\emph{\op}}$, respectively:
\begin{equation}
\label{eqn:lower upper}
L_{d,k} = T_{d,k}^\leftarrow, \qquad \qquad U_{d,k} = q^{d(k-1)} T_{d,k}^\rightarrow %\frac {q^{dk}}{c^d}
\end{equation}
where $T_{d,k}$ is the shuffle element of \eqref{eqn:awesome shuffle}.

%$$
%T_{d,k}(z_1,...,z_d) = \esym \left[ \frac {(-1)^{k-1} z_d^k}{\prod_{i=1}^{d-1} \left(1 - \frac {qz_{i+1}}{z_i} \right)} \prod_{1 \leq i < j \leq d} \zeta \left( \frac {z_i}{z_j} \right) \right]
%$$

\end{claim}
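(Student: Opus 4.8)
The plan is to prove the two identities \eqref{eqn:lower upper} as equalities inside the Drinfeld double $\CA$, organized around the uniqueness of the triangular decomposition \eqref{eqn:def double}. First I would make explicit the bidegree bookkeeping behind the phrase ``from now on we wish to think about them in the left/right shuffle algebras''. Each factor $E_{-d_i,k_i}$ (resp.\ $E_{d_i,k_i}$) appearing in \eqref{eqn:def for l} (resp.\ \eqref{eqn:def for u}) sits at a lattice point in the open second (resp.\ first) quadrant, so the triangular decomposition \eqref{eqn:triangular} for the line of slope $0$ guarantees $L_{d,k}\in\CA^\uparrow\cap\CA^\leftarrow$ and $U_{d,k}\in\CA^\uparrow\cap\CA^\rightarrow$. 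Transporting through the two isomorphisms $\CA^\uparrow\cong\CS$ and $\CA^\leftarrow\cong\CS$ of Proposition \ref{prop:half is shuffle}, which differ by the $\slz$--symmetry of $\CA$, the element that carries $k$ variables in the vertical presentation acquires $d$ variables and homogeneous degree $k$ in the horizontal one, i.e.\ it lands in the same graded piece $\CS_{d,k}$ as $T_{d,k}$ of \eqref{eqn:awesome shuffle}. This compatibility of shapes is precisely what makes \eqref{eqn:lower upper} well-posed.

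Second, I would reduce the $U$--identity to the $L$--identity. The definitions \eqref{eqn:def for l} and \eqref{eqn:def for u} are formally interchanged by reflecting the lattice across the vertical axis $(D,K)\mapsto(-D,K)$, which swaps the two half-shuffle algebras $\CA^\leftarrow\leftrightarrow\CA^\rightarrow$ and, via \eqref{eqn:inv zeta}, converts the products of $\zeta$'s in the defining symmetrizations into one another up to powers of $q$. Bookkeeping these powers against the exponent $\alpha(v)$ of \eqref{eqn:alpha} and the grading twist $q^{(k-1)d_\rightarrow}$ of \eqref{eqn:explicit shuffle} produces exactly the prefactor $q^{d(k-1)}$. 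Thus it suffices to establish $L_{d,k}=T_{d,k}^\leftarrow$.

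Third, the heart of the matter, I would prove $L_{d,k}=T_{d,k}^\leftarrow$ via the pairing. Both sides lie in $\CS_{d,k}$, the shuffle pairing \eqref{eqn:shuffle pairing} is non-degenerate, and the products $\{P_v\}$ form an orthogonal basis with known norms \eqref{eqn:basis}; hence it is enough to match $\langle T_{d,k},P_v\rangle$ with $\langle L_{d,k},P_v\rangle$ for every ordered $v$ of bidegree $(d,k)$. For $T_{d,k}$ the special shape \eqref{eqn:awesome shuffle} is tailor-made for this: the factor $\prod_{i=1}^{d-1}(1-qz_{i+1}/z_i)^{-1}$ forces the residues in \eqref{eqn:normal shuffle pairing} to telescope along a single chain, while the monomial $z_d^k$ isolates one component, so the pairing collapses to a closed product. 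The companion side is governed by the multiplicativity of the evaluation functional $\ph$ (Claim \ref{claim:1}) and its value on the $P_{d,k}$ (Claim \ref{claim:2}): together with \eqref{eqn:ph rr} these already compute all the coefficients that assemble $L_{d,k}$ out of \eqref{eqn:elementary}. Alternatively, and perhaps more transparently, I would verify directly that the explicit LDU product $T(x^{-1},yD_x)^\leftarrow E(yD_x)T(xq,yD_x)^\rightarrow$ reproduces the generating series $W(x,yD_x)$ of \eqref{eqn:big current}, by reordering $T^\rightarrow$ past $T^\leftarrow$ through \eqref{eqn:comm3} into vertical normal form and comparing with \eqref{eqn:elementary}; the uniqueness of the decomposition \eqref{eqn:def double} then forces the factors to coincide term by term.

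The step I expect to be the main obstacle is precisely this cross-presentation reconciliation: the combinatorial object $L_{d,k}$ is manufactured inside the \emph{vertical} shuffle algebra, where it is a length-$t$ product of $E$'s in $k$ variables, whereas $T_{d,k}$ is a single \emph{horizontal} shuffle element in $d$ variables, and passing between the two requires the $\slz$--transition of Proposition \ref{prop:half is shuffle} while keeping scrupulous track of every power of $q$ coming from $\alpha(v)$, from the normalizations \eqref{eqn:def bosons}, and from the grading twists. Making the residue telescoping in \eqref{eqn:normal shuffle pairing} rigorous uniformly in $v$ — rather than on small cases — is the genuinely technical point, and is the reason the verification is deferred to the Appendix.
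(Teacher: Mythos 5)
Your reduction of the $U$--identity to the $L$--identity via reflection across the vertical axis, with the prefactor $q^{d(k-1)}$ accounted for by the discrepancy between $\alpha(v)$ and $\alpha(v^\dagger)$, is exactly what the paper does. The framework of your third paragraph is also sound in principle: both sides live in the same bigraded piece of $\CS$, the pairing \eqref{eqn:shuffle pairing} is nondegenerate there, and the $P_v$ are orthogonal with the norms \eqref{eqn:basis}, so matching all pairings would suffice. (One small correction to your first paragraph: no $\slz$--transition is needed for the individual factors --- each $E_{-d_i,k_i}=E_{d_i,k_i}^\leftarrow$ appearing in \eqref{eqn:def for l} is already a horizontal shuffle element in $d_i$ variables, so $L_{d,k}$ sits in $\CS_{d}$ from the outset; the difficulty is computing the sum of their shuffle products, not transporting them between presentations.)

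The gap is that you have not supplied the actual computation, and neither of the two routes you sketch closes it. The ``uniqueness of the LDU decomposition'' route is circular: Claim \ref{claim:3} is precisely what upgrades the expansion \eqref{eqn:elementary} into the decomposition \eqref{eqn:explicit shuffle} with the explicit kernels \eqref{eqn:awesome shuffle}, so you cannot assume that decomposition in order to deduce the claim. The pairing route founders on exactly the step you flag as the main obstacle: showing $\langle T_{d,k},P_v\rangle=\langle L_{d,k},P_v\rangle$ for every $v$ is an alternating-sum identity over all convex refinements of $(d,k)$, and asserting that the residues in \eqref{eqn:normal shuffle pairing} ``telescope to a closed product'' does not prove it. What the paper actually does is bypass the pairing entirely at this stage: it computes the iterated shuffle product $E_{-d_1,k_1}\ast\cdots\ast E_{-d_t,k_t}$ in closed form as a single symmetrization (possible because of the chain structure of \eqref{eqn:ekd}), substitutes $w_i=z_iq^{i-1}$, and thereby reduces \eqref{eqn:lower upper} to the Laurent polynomial identity \eqref{eqn:iden}; the proof of that identity is the real content of the claim --- a sign-reversing enumeration of convex broken paths, organized as the vanishing of the reduced Euler characteristic of an abstract simplicial complex of hinges (Claims \ref{claim:broken} through \ref{claim:finally}). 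Nothing in your proposal plays the role of this combinatorial identity, so as written the argument does not go through.
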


\tab 
This completes the proof of Proposition \ref{prop:explicit shuffle}, modulo Claims \ref{claim:1}, \ref{claim:2} and \ref{claim:3}, to which we now turn. To establish the first of these claims, note that the shuffle product \eqref{eqn:mult} implies that:
\begin{equation}
\label{eqn:weird} 
\ph_x^{k_1+k_2}(R_1*R_2)  = 
\end{equation}
$$
\sum_{A_1 \sqcup A_2 = \{1,...,k_1+k_2\}}^{|A_1|=k_1, |A_2|=k_2} R_1\left( \left\{\frac {q^{i-1}}x\right\}_{i \in A_1} \right) R_2\left( \left\{ \frac {q^{j-1}}x \right\}_{j \in A_2} \right) \frac {\prod^{i \in A_1}_{j \in A_2} \zeta (q^{i-j})}{\prod_{1 \leq i < j \leq k_1+k_2} \zeta(q^{i-j})}
$$
Since $R_i$ is of the form \eqref{eqn:shuf} for each of $i \in \{1,2\}$, the number of poles produced by the evaluation of $R_i$ as in \eqref{eqn:weird} equals the number of pairs of consecutive indices in $A_i$. Therefore, the total number of poles in each summand of \eqref{eqn:weird} equals:
$$
\# \left( \text{pairs of consecutive numbers in }A_1 \right) + \#  \left( \text{pairs of consecutive numbers in }A_2 \right) - 
$$
\begin{equation}
\label{eqn:lana's number}
- \# \left(i\in \{1,...,k_1+k_2-1\} \text{ and }i \notin A_1 \text{ or } i+1 \notin A_2 \right)
\end{equation}
It is elementary to see that the number in \eqref{eqn:lana's number} is always strictly negative, unless $A_1 = \{1,...,k_1\}$ and $A_2 = \{k_1+1,...,k_1+k_2\}$, in which case it is equal to zero. In the former case, the corresponding term in \eqref{eqn:weird} vanishes. In the latter case, it produces precisely the right-hand side of \eqref{eqn:pseudo}, completing the proof of Claim \ref{claim:1}.

%Using this identity, let us establish the following chain of equivalences:
%$$
%\ph_x(P_{na,nb}) = \frac {q^{\frac {n^2ab+na-nb-n}2}}{x^{na}} \cdot \frac {(1-q_1^{n})(1-q_2^{n})(1-q^{-1})^{na}}{(1-q_1)^{na}(1-q_2)^{na}(1-q^{-n})} \qquad \forall \ n \in \BN \quad \Leftrightarrow 
%$$ 
%$$
%\Leftrightarrow \quad \sum_{n=1}^{\infty} \frac {\ph_x(P_{na,nb})}{ny^n} (1-q^{-n}) = \sum_{n=1}^\infty \frac {q^{\frac {n^2ab+na-nb-n}2}}{x^{na}} \frac {(1-q_1^{n})(1-q_2^{n})(1-q^{-1})^{na}}{ny^n(1-q_1)^{na}(1-q_2)^{na}}
%$$
%$$
%\Leftrightarrow \quad \ph_x \left(\exp \left[ \sum_{n=1}^{\infty} \frac {P_{na,nb}}{ny^n} (1-q^{-n}) \right] \right) = \sum_{n=0}^\infty \frac {q^{\frac {n^2ab+na-nb-n}2}}{x^{na}} \frac {(1-q^{-1})^{na}}{(1-q_1)^{na}(1-q_2)^{na}} \cdot
%$$
%$$
%\cdot \text{ coefficient of }y^{-n} \text{ in } \exp \left[ - \sum_{n=1}^{\infty} \frac {(1-q_1^n)(1-q_2^n)}{ny^n}\right]
%$$
%where in the final equivalence, we used the pseudo-multiplicativity of $\ph_x$ from formula \eqref{eqn:pseudo pseudo}. Extracting the coefficient of $y^{-n}$ in the formula above shows that the required formula \eqref{eqn:ph r} is equivalent to:
%\begin{equation}
%\label{eqn:ph q}
%\ph^k_x(Q_{k,d}) = \frac {q^{\alpha(k,d)}}{x^d} \cdot \frac {(1-q^{-1})^{k}}{(1-q_1)^{k}(1-q_2)^{k}} \cdot \frac {(1-q_1)(1-q_2)(1-q^n)}{1-q}
%\end{equation}

\tab 
Let us now prove Claim \ref{claim:2} by induction on $k$. The task reduces to proving the claims contained in the two bullets below: \\

\begin{itemize}

\item the induction hypothesis implies the formula:
\begin{equation}
\label{eqn:t mob}
\ph^k_x(Q_{d,k}) = \frac {q^{\alpha(k,d)}}{x^d} \cdot \frac {(1-q^{-1})^{k}}{(1-q_1)^{k}(1-q_2)^{k}} \cdot \frac {(1-q_1)(1-q_2)(1-q^n)}{1-q}
\end{equation}

\item formula \eqref{eqn:t mob} and the induction hypothesis imply \eqref{eqn:ph r} \\

\end{itemize}

\noindent Let us prove the first bullet. Choose numbers $k_1+k_2 = k$ and $d_1+d_2 = d$ such that $k_1d_2 - k_2d_1 = n$, and $\gcd(k_1,d_1) = \gcd(k_2,d_2) = 1$ (the existence of such numbers comes down to the existence of a lattice triangle of minimal area with $(0,0), (k,d)$ as an edge, which is always the case). Then \eqref{eqn:relation 2} and the linearity of $\ph_x^k$ imply:
$$
\ph^k_x(Q_{d,k}) = \frac {1-q^{-1}}{(1-q_1)(1-q_2)} \left[ \ph^k_x(P_{d_1,k_1} * P_{d_2,k_2}) - \ph^k_x(P_{d_2,k_2} * P_{d_1,k_1}) \right]
$$
Then formula \eqref{eqn:pseudo} and the induction hypothesis of \eqref{eqn:ph r} imply that $\ph^k_x(Q_{d,k}) = $
$$
\frac {1-q^{-1}}{(1-q_1)(1-q_2)} \cdot \frac {q^{\alpha(k_1,d_1)+\alpha(k_2,d_2)}}{x^d} \cdot \frac {(1-q_1)^2(1-q_2)^2(1-q^{-1})^k}{(1-q_1)^k(1-q_2)^k(1-q^{-1})^2} \cdot (q^{k_1d_2}-q^{d_1k_2}) 
$$
which proves \eqref{eqn:t mob} (note that one needs the elementary formulas $k_1d_2 = k_2 d_1 + n$ and $\alpha(k,d) = \alpha(k_1,d_1)+\alpha(k_2,d_2)+ k_2d_1 + 1$). Let us now prove the second bullet, to which end we write $d = na$, $k=nb$ for $a$ and $b$ coprime. Formula \eqref{eqn:def q} implies:
$$
Q_{na,nb} = \sum^{\lambda \vdash n}_{\lambda = (n_1 \geq ... \geq n_t)} \frac {P_{\lambda(a,b)}}{z_\lambda} \prod_{i=1}^t (1-q^{-n_i})
$$
where we use the notation in \eqref{eqn:dt}. Applying the linear map $\ph_x^{nb}$ gives us:
$$
\ph^{nb}_x(Q_{na,nb}) = \sum^{\lambda \vdash n}_{\lambda = (n_1 \geq ... \geq n_t)} \frac {q^{\sum_{1 \leq i < j \leq t} n_in_j ab}}{z_\lambda} \prod_{i=1}^t \ph^{n_ib}_x(P_{n_ia,n_ib}) (1-q^{-n_i})
$$
where we used $\ph_x^{nb}(P_{n_1a,n_1b}...P_{n_ta,n_tb}) = \ph^{n_1b}_x(P_{n_1a,n_1b})...\ph^{n_t b}_x(P_{n_ta,n_tb}) q^{\sum_{i < j} n_in_j ab}$, which is a consequence of \eqref{eqn:pseudo}. Since the second bullet tells us that we may assume formula \eqref{eqn:t mob} for the left-hand side and formula \eqref{eqn:ph r} for all summands in the right-hand side except for $\lambda = (n)$, proving formula \eqref{eqn:ph r} for $P_{na,nb}$ boils down to establishing the identity:
$$
\frac {q^{\alpha(nb,na)}}{x^{na}} \cdot \frac {(1-q^{-1})^{nb}}{(1-q_1)^{nb}(1-q_2)^{nb}} \cdot \frac {(1-q_1)(1-q_2)(1-q^n)}{1-q} = 
$$
$$
\sum^{\lambda \vdash n}_{\lambda = (n_1 \geq ... \geq n_t)}  \frac {q^{\sum_{1 \leq i < j \leq t} n_in_j ab}}{z_\lambda} \prod_{i=1}^t \left[ \frac {q^{\alpha(n_ib,n_ia)}}{x^{n_ia}} \frac {(1-q_1^{n_i})(1-q_2^{n_i})(1-q^{-1})^{n_ib}}{(1-q_1)^{n_i b}(1-q_2)^{n_i b}(1-q^{-n_i})} (1-q^{-n_i}) \right]
$$
Note that $\alpha(nb,na) = \sum_{i=1}^t \alpha(n_ib,n_ia) + \sum_{1 \leq i < j \leq t} n_in_j ab$, as follows from \eqref{eqn:alpha}. After canceling common factors, the required equality states:
$$
\frac {(1-q_1)(1-q_2)(1-q^n)}{1-q}  = \sum^{\lambda \vdash n}_{\lambda = (n_1 \geq ... \geq n_t)} \frac {\prod_{i=1}^t (1-q_1^{n_i})(1-q_2^{n_i})}{z_\lambda}
$$
for all $n>0$. We will prove this by establishing the identity of generating series:
$$
1 + \sum_{n=1}^\infty \frac {(1-q_1)(1-q_2)(1-q^n)y^n}{1-q}  = \sum_{\lambda = (n_1 \geq ... \geq n_t)} \frac {\prod_{i=1}^t (1-q_1^{n_i})(1-q_2^{n_i})y^{n_i}}{z_\lambda}
$$
Summing the two sides of the equation above gives us:
$$
\frac {(1-q_1y)(1-q_2y)}{(1-y)(1-qy)} = \exp \left[ \sum_{n=1}^\infty \frac {(1-q_1^n)(1-q_2^n)y^n}{n} \right]
$$
which is a straightforward identity. \\

\noindent Finally, let us prove Claim \ref{claim:3}. We will only prove the statement for $L_{d,k}$, as the statement for $U_{d,k}$ follows by applying the anti-automorphism $P_{-d,k} \mapsto P_{d,k}$ (the power of $q$ in the formula for $U_{d,k}$ stems from the difference between $\alpha(v)$ and $\alpha(v^\dagger)$, where for a collection of lattice points $v$, its reflection across the vertical axis is denoted by $v^\dagger$). From formula \eqref{eqn:ekd}, we see that:
$$
E_{-d_1,k_1}...E_{-d_t,k_t} (-1)^{\sum_{i=1}^t k_i - n_i} = q^{\sum_{i=1}^t n_i - d_i} (-1)^{\sum_{i=1}^t k_i - d_i} \cdot 
$$
$$
\sym \left[ \frac {\prod_{s=1}^t \left( \prod_{i=1}^{d_s} z_{i+d_1+...+d_{s-1}}^{\left \lceil \frac {ik_s}{d_s} \right \rceil - \left \lceil \frac {(i-1)k_s}{d_s} \right \rceil} \right) \prod_{s=1}^{t-1} \left(1 - \frac {z_{d_1+...+d_s}}{qz_{d_1+...+d_s+1}} \right)}{\prod_{i=1}^{d-1} \left(1 - \frac {z_i}{qz_{i+1}} \right)} \prod_{1 \leq i < j \leq d} \zeta \left( \frac {z_i}{z_j} \right) \right]^\leftarrow
$$
where $d = d_1+...+d_t$ and $n_s = \gcd(k_s,d_s)$. Meanwhile, it is elementary to prove that $\alpha((-d_1,k_1),...,(-d_t,k_t))$ from formula \eqref{eqn:alpha} equals:
$$
 = -k(d-1) + \sum_{s=1}^t \left[ d_s - n_s+ \sum_{i=1}^{k_s} (i+d_1+...+d_{s-1}-1) \left(\left \lceil \frac {ik_s}{d_s} \right \rceil - \left \lceil \frac {(i-1)k_s}{d_s} \right \rceil \right) \right]
$$
If we let $w_i = z_i q^{i-1}$, then the two formulas above together imply that:
$$
L_{d,k} = q^{-k(d-1)} (-1)^{k-d} \mathop{\sum^{k_1+...+k_t = k}_{d_1+...+d_t = d}}^{k_i, d_i \in \BN}_{v = \left\{- \frac {d_1}{k_1} < ... < - \frac {d_t}{k_t} \right\}}  \sym \left[ \frac {\prod_{1 \leq i < j \leq d} \zeta \left( \frac {w_i q^j}{w_j q^i} \right)}{\prod_{i=1}^{d-1} \left(1 - \frac {w_i}{w_{i+1}} \right)}  \right.
$$
$$
\left. \prod_{s=1}^t \left( \prod_{i=1}^{d_s} w_{i+d_1+...+d_{s-1}}^{\left \lceil \frac {ik_s}{d_s} \right \rceil - \left \lceil \frac {(i-1)k_s}{d_s} \right \rceil} \right) \prod_{s=1}^{t-1} \left(1 - \frac {w_{d_1+...+d_s}}{w_{d_1+...+d_s+1}} \right) \right]^\leftarrow
$$
Therefore, formula \eqref{eqn:lower upper} follows from the identity of Laurent polynomials:
\begin{equation}
\label{eqn:iden}
\mathop{\sum^{k_1+...+k_t = k}_{d_1+...+d_t = d}}^{k_i, d_i \in \BN}_{v = \left\{ \frac {k_1}{d_1} < ... < \frac {k_t}{d_t} \right\}} \prod_{1\leq s \leq t}^{1 \leq i \leq d_s} w_{i+d_1+...+d_{s-1}}^{\left \lceil \frac {ik_s}{d_s} \right \rceil - \left \lceil \frac {(i-1)k_s}{d_s} \right \rceil} \prod_{s=1}^{t-1} \left(1 - \frac {w_{d_1+...+d_s}}{w_{d_1+...+d_s+1}} \right) = w_1w_d^{k-1}
\end{equation}
Note that the summands in the left-hand side are indexed by convex piecewise--linear paths $(0,0),(d_1,k_1),(d_1+d_2,k_1+k_2),...,(d,k)$ between the origin and the point $(d,k)$, which lie in the first quadrant minus the coordinate axes. Since we will encounter this terminology often, we will refer to such paths as \textbf{broken paths}. We will prove \eqref{eqn:iden} by counting how many times a monomial $w_1^{a_1}...w_d^{a_d}$ with $a_1+...+a_d = k$ appears in the left-hand side (the point is to show that the answer should be 0, unless the monomial is $w_1w_d^{k-1}$, when the answer should be 1). Such monomials are in one--to--one correspondences with non-decreasing collections:
$$
0 \leq s_1 \leq ... \leq s_{d-1} < k, \qquad \text{where} \qquad s_i+1 = a_1+...+a_i
$$
(we also make the convention that $s_0 = -1$ and $s_d = k-1$). Note that we can assume that $a_1 \geq 1$, because all monomials in \eqref{eqn:iden} have $w_1$ raised to positive powers. We identify $(s_1,...,s_{d-1})$ with the collection of lattice points $(1,s_1),...,(d-1,s_{d-1})$, and refer to this set of points as a \textbf{collection of bullets}. \\

\begin{claim}
\label{claim:broken}

The coefficient of $\prod_i w_i^{s_i-s_{i-1}}$ in the left-hand side of \eqref{eqn:iden} counts the number of broken paths $P$ between the lattice points $(0,0)$ and $(d,k)$, which: \\

\begin{enumerate}

\item intersect each line $x=i$ between heights $y = s_i$ and $y = s_{i}+1$, $\forall \ 1 \leq i < d$\\

\item can only pass through the point $(i,s_i)$ if the broken path bends at this point; in this case, we will call such an $(i,s_i)$ a \textbf{hinge}, and note that this terminology depends on both the broken path and the collection of bullets. \\ 

\item are counted with sign $(-1)^{\# \emph{ hinges}}$ in the left-hand side of \eqref{eqn:iden}

\end{enumerate}

\tab 
For a given collection of bullets $\CC$, we denote by $\CP(\CC)$ the collection of broken paths with the above properties. Then identity \eqref{eqn:iden} reduces to the fact that:
\begin{equation}
\label{eqn:collection}
\sum_{P \in \CP(\CC)} (-1)^{\# \emph{ hinges of }P} = 0
\end{equation}

\end{claim}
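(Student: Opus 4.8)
The plan is to prove \eqref{eqn:collection} by exhibiting a sign--reversing involution on $\CP(\CC)$. By Claim \ref{claim:broken}, a broken path $P \in \CP(\CC)$ carries the weight $(-1)^{\# \text{ hinges}}$, and a hinge is precisely a column $i$ at which $P$ passes through the bullet $(i,s_i)$ and bends there; since $P$ is convex, strictly increasing, and confined by $H_P(i) \geq s_i$, condition (2) forces every column with $H_P(i) = s_i$ to be a hinge, so $\# \text{ hinges}(P) = \#\{i : H_P(i) = s_i\}$, where $H_P$ is the height function of $P$. Thus it suffices to build a fixed-point-free involution $\iota : \CP(\CC) \to \CP(\CC)$ that changes $\#\{i : H_P(i) = s_i\}$ by exactly one; the contributions of $P$ and $\iota(P)$ then cancel and the total vanishes.

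First I would define the local move underlying $\iota$. At a column $i$ with $s_i \geq 1$, a path either \emph{touches} the bullet ($H_P(i) = s_i$, forced to bend by (2)) or \emph{clears} it ($H_P(i) > s_i$, meaning either $H_P(i) = s_i+1$ at a lattice crossing or $s_i < H_P(i) < s_i+1$ strictly between). The toggle at column $i$ switches between these regimes: to lower $P$ onto the bullet one inserts the convex corner $(i,s_i)$, splitting the segment crossing column $i$ into two pieces of smaller and larger slope; to lift $P$ off the bullet one performs the inverse, either straightening the corner at $(i,s_i)$ into a single segment (when the resulting crossing still satisfies $H(i) \leq s_i+1$) or else raising the vertex from $(i,s_i)$ to $(i,s_i+1)$. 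In each case only the crossing at column $i$ changes, the number of bullet contacts changes by one, and the sign flips.

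To promote this into a genuine involution I would let $\iota$ act at a canonically chosen column, namely the leftmost $i$ with $s_i \geq 1$ at which the above toggle keeps $P$ inside $\CP(\CC)$. The steps are then: (i) check that the move preserves convexity, strict positivity of interior vertices (this is where $s_i \geq 1$ is needed, so that $(i,s_i)$ stays off the $x$-axis), and all window constraints $H(j) \in [s_j, s_j+1]$; (ii) verify that because only $H_P(i)$ is altered, the set of admissible toggle columns, hence the canonical choice, is unchanged, so $\iota^2 = \id$; (iii) show that an admissible toggle column always exists when $\CC \neq \{(1,0),\dots,(d-1,0)\}$, giving fixed-point-freeness and hence \eqref{eqn:collection}. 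For the excluded collection every bullet lies on the $x$-axis, no contact is possible, $\CP(\CC)$ collapses to a single path of weight $+1$, and this recovers the surviving monomial $w_1 w_d^{k-1}$ on the right-hand side of \eqref{eqn:iden}.

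The hard part will be step (iii) together with the convexity half of step (i): convexity couples all segments of $P$, so moving the crossing at column $i$ by one unit can a priori clash with the slopes of neighbouring segments. I expect the resolution to come from the rigidity imposed by the width-one windows $[s_i,s_i+1]$ around the non-decreasing staircase $(s_i)$, combined with the strict increase of $H_P$: analyzing the leftmost column where $H_P$ sits above the bullet, or where the incoming and outgoing slopes leave a gap, should show that the corner insertion or deletion always stays within the admissible slope range and that such a column must exist unless all bullets are on the axis. Verifying that this canonical toggle is simultaneously valid, parity-reversing, and invariant under $\iota$ is the crux of the argument.
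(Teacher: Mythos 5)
Your proposal does not actually prove the statement of Claim \ref{claim:broken}. The content of the claim is the combinatorial dictionary itself: that the coefficient of $\prod_i w_i^{s_i-s_{i-1}}$ in the left-hand side of \eqref{eqn:iden} equals the signed count of broken paths with hinges, so that \eqref{eqn:iden} reduces to \eqref{eqn:collection}. The paper proves this by expanding each product $\prod_{s=1}^{t-1}\left(1 - \frac{w_{d_1+...+d_s}}{w_{d_1+...+d_s+1}}\right)$ over subsets $S \subset \{1,...,t-1\}$ and matching each pair (broken path $v$, subset $S$) with a collection of bullets whose hinges are exactly the vertices indexed by $S$, the sign $(-1)^{|S|}$ becoming $(-1)^{\#\text{ hinges}}$. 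You take this dictionary for granted (``By Claim \ref{claim:broken}, a broken path $P$ carries the weight...'') and instead attack \eqref{eqn:collection}, which the claim explicitly defers and which the paper handles separately via Claims \ref{claim:inj} and \ref{claim:finally} (characterizing the admissible hinge sets as an abstract simplicial complex and computing its reduced Euler characteristic by induction on the marks).

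Even taken as a proof of \eqref{eqn:collection}, the involution sketch has a concrete false step. You assert that the toggle at column $i$ ``only changes the crossing at column $i$,'' but inserting (or deleting) a corner at $(i,s_i)$ on a straight segment of $P$ spanning columns $a < i < c$ changes the height $H_P(j)$ for every $j$ strictly between $a$ and $c$: the lowered path can drop below $s_j$, violating the window condition (1), or can pass through another bullet $(j,s_j)$ in the interior of a segment, violating condition (2). Consequently neither the admissibility of the toggle nor the invariance of your ``leftmost admissible column'' under the toggle --- which is exactly what you need for $\iota^2 = \mathrm{id}$ --- is established; you flag these as ``the hard part'' but do not resolve them. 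Finally, your treatment of the exceptional collection is wrong: for the bullets $(1,0),...,(d-1,0)$ the set $\CP(\CC)$ contains \emph{two} paths, with signs $+1$ and $-1$, so \eqref{eqn:collection} still holds there; the monomial $w_1 w_d^{k-1}$ survives in \eqref{eqn:iden} only because the negative path has $k_1 = 0$ and is therefore excluded from the sum in \eqref{eqn:iden}, not because $\CP(\CC)$ is a singleton.
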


\begin{remark} Based on \eqref{eqn:collection}, we need to explain why the right-hand side of \eqref{eqn:iden} is $1 \cdot w_1w_d^{k-1}$ instead of $0 \cdot w_1w_d^{k-1}$. There are only two broken paths corresponding to the collection of bullets  $\{(1,0),...,(d-1,0)\}$: one is $(0,0), (d-1,1), (d,k)$ and the other is $(0,0), (1,0), (d-1,1), (d,k)$. The first contributes sign $+$ to \eqref{eqn:collection} and the second contributes sign $-$ to \eqref{eqn:collection}. However, only the first path contributes to the left-hand side of \eqref{eqn:iden}, because of the fact that $k_1,..., k_t$ therein must be $>0$. 

\end{remark}

\tab 
Claim \ref{claim:broken} is an elementary bijection between the sums in \eqref{eqn:iden} and \eqref{eqn:collection}. Indeed, as we have mentioned immediately after \eqref{eqn:iden}, there is a bijection between summands in the left-hand side of \eqref{eqn:iden} and broken paths. Moreover, to every subset $S = \{u_1,u_2,...\} \subset \{1,...,t-1\}$, we may associate the monomial:
$$
\prod_{1 \leq s \leq t}^{1 \leq i \leq d_s} w_{i+d_1+...+d_{s-1}}^{\left \lceil \frac {ik_s}{d_s} \right \rceil - \left \lceil \frac {(i-1)k_s}{d_s} \right \rceil} \prod_{u \in S} \left( - \frac {w_{d_1+...+d_u}}{w_{d_1+...+d_u+1}} \right)
$$
to the collection of bullets $(1,s_1),...,(d-1,s_{d-1})$ with:
$$
s_{d_1+...+d_{u-1}+i} = k_1+...+ k_{u-1} + \left \lceil \frac {ik_u}{d_u} \right \rceil - 1 +\delta_{u\in S}\delta_{d_u}^i \qquad \forall i \in \{1,...,d_u\}, \ \forall u\in \{1,...,t\}
$$
To any monomial in \eqref{eqn:iden}, this procedure associates a term $\pm 1$ in \eqref{eqn:collection}, and we leave it to the interested reader to show that this assignment is a bijection. The challenging part is the proof of statement \eqref{eqn:collection}, to which we now turn. For any collection of bullets $\CC$, consider the assignment:
$$
\CP(\CC) \stackrel{\Xi_\CC}\longrightarrow \Big \{ \text{subsets of }\{1,...,d-1\} \Big\}
$$
which sends a path to the set of $i$'s such that $(i,s_i)$ is a hinge. \\

\begin{claim}
\label{claim:inj}

The assignment $\Xi_\CC$ is injective. Its image is the collection of subsets $H \subset \{1,...,d-1\}$ with the property that for all $0 \leq a < b < c \leq d$ we have:
\begin{equation}
\label{eqn:inj 1}
\frac {s_c-s_b+1}{c-b} > \frac {s_b - s_a-1}{b-a}
\end{equation}
\begin{equation}
\label{eqn:inj 2}
\frac {s_c-s_b+1}{c-b} > \frac {s_b - s_a}{b-a} \quad \text{if } a \in H
\end{equation}
\begin{equation}
\label{eqn:inj 3}
\frac {s_c-s_b}{c-b} > \frac {s_b - s_a - 1}{b-a} \quad \text{if } c \in H
\end{equation}
\begin{equation}
\label{eqn:inj 4}
\frac {s_c-s_b}{c-b} \ > \ \frac {s_b - s_a}{b-a} \quad \text{if } a,c \in H
\end{equation}
By convention, we write $s_0 = -1$ and $s_d = k-1$. The four conditions above can be represented pictorially by requiring that the following 4 situations do not occur:

\begin{picture}(100,170)(-40,-25)

\put(0,0){\circle*{2}}\put(20,0){\circle*{2}}\put(40,0){\circle*{2}}\put(60,0){\circle*{2}}\put(80,0){\circle*{2}}\put(100,0){\circle*{2}}\put(120,0){\circle*{2}}\put(140,0){\circle*{2}}\put(160,0){\circle*{2}}\put(180,0){\circle*{2}}\put(200,0){\circle*{2}}\put(220,0){\circle*{2}}\put(240,0){\circle*{2}}\put(260,0){\circle*{2}}

\put(0,20){\circle*{2}}\put(20,20){\circle*{2}}\put(40,20){\circle*{2}}\put(60,20){\circle*{2}}\put(80,20){\circle*{2}}\put(100,20){\circle*{2}}\put(120,20){\circle*{2}}\put(140,20){\circle*{2}}\put(160,20){\circle*{2}}\put(180,20){\circle*{2}}\put(200,20){\circle*{2}}\put(220,20){\circle*{2}}\put(240,20){\circle*{2}}\put(260,20){\circle*{2}}

\put(0,40){\circle*{2}}\put(20,40){\circle*{2}}\put(40,40){\circle*{2}}\put(60,40){\circle*{2}}\put(80,40){\circle*{2}}\put(100,40){\circle*{2}}\put(120,40){\circle*{2}}\put(140,40){\circle*{2}}\put(160,40){\circle*{2}}\put(180,40){\circle*{2}}\put(200,40){\circle*{2}}\put(220,40){\circle*{2}}\put(240,40){\circle*{2}}\put(260,40){\circle*{2}}

%\put(0,60){\circle*{2}}\put(20,60){\circle*{2}}\put(40,60){\circle*{2}}\put(60,60){\circle*{2}}\put(80,60){\circle*{2}}\put(100,60){\circle*{2}}\put(120,60){\circle*{2}}\put(140,60){\circle*{2}}\put(160,60){\circle*{2}}\put(180,60){\circle*{2}}\put(200,60){\circle*{2}}\put(220,60){\circle*{2}}\put(240,60){\circle*{2}}\put(260,60){\circle*{2}}

\put(0,80){\circle*{2}}\put(20,80){\circle*{2}}\put(40,80){\circle*{2}}\put(60,80){\circle*{2}}\put(80,80){\circle*{2}}\put(100,80){\circle*{2}}\put(120,80){\circle*{2}}\put(140,80){\circle*{2}}\put(160,80){\circle*{2}}\put(180,80){\circle*{2}}\put(200,80){\circle*{2}}\put(220,80){\circle*{2}}\put(240,80){\circle*{2}}\put(260,80){\circle*{2}}

\put(0,100){\circle*{2}}\put(20,100){\circle*{2}}\put(40,100){\circle*{2}}\put(60,100){\circle*{2}}\put(80,100){\circle*{2}}\put(100,100){\circle*{2}}\put(120,100){\circle*{2}}\put(140,100){\circle*{2}}\put(160,100){\circle*{2}}\put(180,100){\circle*{2}}\put(200,100){\circle*{2}}\put(220,100){\circle*{2}}\put(240,100){\circle*{2}}\put(260,100){\circle*{2}}

\put(0,120){\circle*{2}}\put(20,120){\circle*{2}}\put(40,120){\circle*{2}}\put(60,120){\circle*{2}}\put(80,120){\circle*{2}}\put(100,120){\circle*{2}}\put(120,120){\circle*{2}}\put(140,120){\circle*{2}}\put(160,120){\circle*{2}}\put(180,120){\circle*{2}}\put(200,120){\circle*{2}}\put(220,120){\circle*{2}}\put(240,120){\circle*{2}}\put(260,120){\circle*{2}}

%\put(0,0){\line(2,1){42}}
%\put(40,20){\line(1,1){40}}
%\put(80,60){\line(2,3){40}}

\put(130,-10){\line(0,1){140}}
\put(-20,60){\line(1,0){300}}

\put(-50,90){\eqref{eqn:inj 1}}
\put(-50,20){\eqref{eqn:inj 3}}
\put(280,90){\eqref{eqn:inj 2}}
\put(280,20){\eqref{eqn:inj 4}}

\put(20,80){\circle{8}}
\put(80,100){\circle{8}}
\put(40,120){\circle*{8}}
\put(20,80){\line(3,1){60}}

\put(160,80){\circle*{8}}
\put(220,100){\circle{8}}
\put(180,120){\circle*{8}}
\put(160,80){\line(3,1){60}}

\put(20,0){\circle{8}}
\put(80,20){\circle*{8}}
\put(40,40){\circle*{8}}
\put(20,0){\line(3,1){60}}

\put(160,0){\circle*{8}}
\put(220,20){\circle*{8}}
\put(180,40){\circle*{8}}
\put(160,0){\line(3,1){60}}

\put(150,65){$\in H$}\put(150,-15){$\in H$}\put(70,5){$\in H$}\put(210,5){$\in H$}

%\linethickness{0.7mm}
%\put(20,20){\color{red}{\line(0,-1){20}}}
%\put(40,40){\color{red}{\line(0,-1){20}}}
%\put(60,40){\color{red}{\line(0,-1){20}}}
%\put(80,60){\color{red}{\line(0,-1){20}}}
%\put(100,100){\color{red}{\line(0,-1){20}}}

\end{picture}

\noindent where the full circles depict the lattice points $(i,s_i)$ and the hollow circles depict the lattice points $(i,s_i+1)$. We conclude that $\emph{Im } \Xi_\CC$ is an \textbf{abstract simplicial complex}, namely a collection of sets with the property that if some $H$ belongs to the collection, so do all of the subsets of $H$. Formula \eqref{eqn:collection} then becomes a statement about the reduced Euler characteristic of this abstract simplicial complex:
\begin{equation}
\label{eqn:collection 2}
\sum_{H \in \emph{Im } \Xi_\CC} (-1)^{|H|} = 0
\end{equation}

\end{claim}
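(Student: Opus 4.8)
The plan is to establish the four assertions packaged into Claim \ref{claim:inj} in sequence: injectivity of $\Xi_\CC$, the description of $\mathrm{Im}\,\Xi_\CC$ by the inequalities \eqref{eqn:inj 1}--\eqref{eqn:inj 4}, the fact that this image is downward closed (so that it genuinely is a simplicial complex), and finally the vanishing \eqref{eqn:collection 2}, which by Claim \ref{claim:broken} is what remains in order to finish \eqref{eqn:iden}. The unifying device throughout is to pass from hinge data to a single convex curve and read the combinatorial conditions off convexity.

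First I would reconstruct a broken path from its hinge set, which gives injectivity for free. Given $H$, set the effective heights $\tilde s_i = s_i$ for $i \in H$ and $\tilde s_i = s_i + 1$ for $i \notin H$, treating the endpoints $0,d \notin H$ (so that $(0,\tilde s_0) = (0,0)$ and $(d,\tilde s_d) = (d,k)$). Any $P \in \CP(\CC)$ with hinge set $H$ must pass through $(h,s_h)$ for $h \in H$, stay weakly below each ceiling $(i,s_i+1)$, and stay strictly above each non-hinge bullet $(i,s_i)$. A convex lattice path is the lower boundary of the convex hull of its vertices, and its vertices sit at heights $s_h$ (hinges) or $s_i+1$ (the non-hinge bends at the band ceiling); every other $(i,\tilde s_i)$ lies on or above $P$. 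Hence $P$ is forced to be the lower convex hull of $\{(i,\tilde s_i)\}_{i=0}^{d}$, a set depending only on $H$ and $\CC$, so $\Xi_\CC$ is injective.

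Next, for the image, the reconstructed hull lies in $\CP(\CC)$ with hinge set exactly $H$ iff it bends upward at each prescribed hinge and strictly clears each non-hinge bullet. Both requirements are of the single form ``the middle bullet $(b,s_b)$ lies strictly below the chord through two outer points $(a,\tilde s_a)$ and $(c,\tilde s_c)$,'' i.e. $\mathrm{slope}(b\to c) > \mathrm{slope}(a\to b)$, tested for every triple $a<b<c$. Expanding the four combinations of whether $a,c$ are hinges (which toggles $\tilde s_a,\tilde s_c$ between $s_\bullet$ and $s_\bullet+1$), while always measuring against the bullet height $s_b$ at the middle index, reproduces precisely \eqref{eqn:inj 1}--\eqref{eqn:inj 4}; the four displayed pictures are exactly the forbidden non-convex configurations. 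The simplicial-complex property is then immediate: deleting an index $j$ from $H$ only raises $\tilde s_j$ by one, which for $j=a$ shrinks the right-hand side, for $j=c$ enlarges the left-hand side, and for $j=b$ changes nothing, so all strict inequalities survive and $\mathrm{Im}\,\Xi_\CC$ is downward closed.

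The hard part will be \eqref{eqn:collection 2}, the vanishing of the reduced Euler characteristic of $\Delta := \mathrm{Im}\,\Xi_\CC$. I would first dispose of the degenerate cases: if $\Delta = \varnothing$ the sum is empty, while the all-zero collection (the monomial $w_1 w_d^{k-1}$ of the Remark) is exactly the case $\Delta = \{\varnothing\}$, which is the $+1$ on the right of \eqref{eqn:iden} rather than an instance of \eqref{eqn:collection 2}. For every remaining $\CC$ with $\Delta \neq \varnothing$ the goal is to prove $\Delta$ contractible, so that $\sum_{H \in \Delta}(-1)^{|H|}=0$. The cleanest route is to exhibit a cone point: a bullet $v$ with $\{v\}\in\Delta$ and $H\cup\{v\}\in\Delta$ for all $H\in\Delta$, whence $H \leftrightarrow H\,\triangle\,\{v\}$ is a fixed-point-free sign-reversing involution. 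Geometrically $v$ should be the bullet lying deepest below the maximal valid path (the no-hinge lower hull), since pulling a convex path down onto such a bullet cannot destroy convexity. The genuine obstacle is that inserting a forced hinge at $v$ might, a priori, drag the curve below a neighbouring bullet, so one must verify that the deepest bullet can always be adjoined without breaking \eqref{eqn:inj 1}--\eqref{eqn:inj 4} for triples in which $v$ is an outer index; I expect this to follow from the convexity already certified by $H$. Should a single uniform apex fail to exist, the fallback is a discrete Morse matching: toggle the first index (in a chosen order) whose flip keeps the set in $\Delta$, and use the monotonicity of the conditions from the previous step to check that this toggle is involutive with no critical cells. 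Either argument yields contractibility of $\Delta$ and hence \eqref{eqn:collection 2}, completing the proof.
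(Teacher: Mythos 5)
Your treatment of the assertions that actually constitute Claim \ref{claim:inj} --- injectivity of $\Xi_\CC$, the characterization of its image by \eqref{eqn:inj 1}--\eqref{eqn:inj 4}, downward-closedness, and the translation of \eqref{eqn:collection} into \eqref{eqn:collection 2} --- is correct, and your route is a clean repackaging of the paper's. Where the paper reconstructs the path hinge-by-hinge (between consecutive hinges it is forced to be the convex hull of the ceiling points \eqref{eqn:hull}, which gives injectivity, and then convexity at each hinge and property (1) of Claim \ref{claim:broken} are checked against the four inequalities), you observe directly that any $P\in\CP(\CC)$ with hinge set $H$ must equal the lower convex envelope of the modified heights $\tilde s_i$, and that membership of $H$ in the image is exactly the condition that each bullet $(b,s_b)$ lie strictly below every chord of the $\tilde s$--points, which unpacks to the four inequalities according to whether the outer indices are hinges. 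This is, if anything, a more uniform way to see why the image is a simplicial complex: raising $\tilde s_a$ or $\tilde s_c$ only raises chords.

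The gap is in the part you yourself flag as hard, the vanishing \eqref{eqn:collection 2} --- which, to be fair, the paper does not prove inside Claim \ref{claim:inj} either, but defers to the discussion following it and to Claim \ref{claim:finally}. Your primary strategy (a cone point $v$ with $H\cup\{v\}$ in the image for all $H$) fails in general: after the paper's analysis, the complex is exactly the collection of subsets of the marks $H_0$ of \eqref{eqn:h0} containing no two non-adjacent marks on a single edge of the convex path through $H_0$, and already four collinear marks $p_1,p_2,p_3,p_4$ on one edge (facets $\{1,2\},\{2,3\},\{3,4\}$) admit no cone point, since every candidate apex forms a forbidden non-adjacent pair with some singleton. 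Your fallback (toggle the first index whose flip keeps the set in the complex) is not automatically an involution on a general downward-closed complex: removing an element is always legal, so the first toggleable index can strictly decrease after a removal, when an insertion that was illegal for $H$ becomes legal for the smaller set; making such a matching involutive requires precisely the structural description of the complex that you have not derived. The missing idea is the identification of the extremal hinge set $H_0$, the proof (via the triangle argument in the text) that the bullets indexed by $H_0$ form a convex path with no bullets strictly above it, and the resulting reduction to the elementary induction of Claim \ref{claim:finally}. Without that, or an equivalent contractibility argument tailored to this specific complex, the Euler-characteristic computation does not go through.
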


\begin{proof} It is clear that properties \eqref{eqn:inj 1}--\eqref{eqn:inj 4} are necessary in order for a broken path passing through the set of hinges $H$ to be convex. Conversely, let us consider a set of hinges $H$ with properties \eqref{eqn:inj 1}--\eqref{eqn:inj 4}, and let us show that they correspond to a broken path $P$ with the correct properties. First of all, the hinges $\{(i,s_i), i\in H\}$ must themselves form a convex path, otherwise we would violate condition \eqref{eqn:inj 1}. In between two consecutive hinges $(i,s_i)$ and $(j,s_j)$, property (1) of Claim \ref{claim:broken} forces the piecewise--linear path $P$ to be the convex hull of the points:
\begin{equation}
\label{eqn:hull}
(i+1, s_{i+1}+1),...,(j-1,s_{j-1}+1)
\end{equation}
In particular, the path $P$ is uniquely determined in between any two consecutive hinges, which implies the injectivity of $\Xi_\CC$. The path $P$ thus constructed is convex between any two consecutive hinges, so we must also show that it is convex at each hinge $(i,s_i)$. In other words, we must show that the part of $P$ to the right of the hinge has slope greater than the part of $P$ that is to the left. This is guaranteed by formulas \eqref{eqn:inj 1}--\eqref{eqn:inj 4}.

\tab 
Finally, we must show that the convex piecewise--linear path $P$ thus constructed satisfies property (1) of Claim \ref{claim:broken} (since property (2) holds automatically). Assume that the path $P$ intersects the vertical line $x=b$ at height $y$. If $y>s_b+1$, then we contradict the fact that $P$ is the convex hull of the points \eqref{eqn:hull} between any two consecutive hinges. if $y \leq s_b$, then we contradict one of \eqref{eqn:inj 1}--\eqref{eqn:inj 4}.

\end{proof}

\noindent Therefore, we need to prove formula \eqref{eqn:collection 2} for the abstract simplicial complex consisting of subsets $H \subset \{1,...,d-1\}$ satisfying properties \eqref{eqn:inj 1}--\eqref{eqn:inj 4}. We may assume that \eqref{eqn:inj 1} is never violated, otherwise the abstract simplicial complex would be empty and \eqref{eqn:collection 2} would hold trivially. Therefore, let us consider the set:
\begin{equation}
\label{eqn:h0}
H_0 := \Big\{ 1,...,d-1 \Big\} \ \backslash \ \Big( \{a\text{'s as in \eqref{eqn:inj 2}}\} \cup \{c\text{'s as in \eqref{eqn:inj 3}}\} \Big)
\end{equation}
We claim that for all $a<b<c$ with $a,c\in H_0$, we have: 
\begin{equation}
\label{eqn:ineq h0}
\frac {s_c-s_b}{c-b} \geq \frac {s_b-s_a}{b-a}
\end{equation}
Indeed, assume for the purpose of contradiction that the opposite inequality holds. Then because $a,c\in H_0$, the points $(a,s_a)$ and $(c,s_c)$ cannot be in the situations of \eqref{eqn:inj 2} and \eqref{eqn:inj 3}. Therefore, the lattice point $(b,s_b)$ must be strictly inside the bottom triangle in the picture below. We assume $b$ is chosen such that the distance from the point $(b,s_b)$ to the line $l = \{(a,s_a), (c,s_c)\}$ is maximal.

\begin{picture}(100,170)(-30,-25)

\put(0,0){\circle*{2}}%\put(20,0){\circle*{2}}
\put(40,0){\circle*{2}}%\put(60,0){\circle*{2}}
\put(80,0){\circle*{2}}%\put(100,0){\circle*{2}}
\put(120,0){\circle*{2}}%\put(140,0){\circle*{2}}
\put(160,0){\circle*{2}}%\put(180,0){\circle*{2}}
\put(200,0){\circle*{2}}%\put(220,0){\circle*{2}}
\put(240,0){\circle*{2}}%\put(260,0){\circle*{2}}
\put(280,0){\circle*{2}}

\put(0,40){\circle*{2}}%\put(20,0){\circle*{2}}
\put(40,40){\circle*{2}}%\put(60,0){\circle*{2}}
\put(80,40){\circle*{2}}%\put(100,0){\circle*{2}}
\put(120,40){\circle*{2}}%\put(140,0){\circle*{2}}
\put(160,40){\circle*{2}}%\put(180,0){\circle*{2}}
\put(200,40){\circle*{2}}%\put(220,0){\circle*{2}}
\put(240,40){\circle*{2}}%\put(260,0){\circle*{2}}
\put(280,40){\circle*{2}}

\put(0,80){\circle*{2}}%\put(20,0){\circle*{2}}
\put(40,80){\circle*{2}}%\put(60,0){\circle*{2}}
\put(80,80){\circle*{2}}%\put(100,0){\circle*{2}}
\put(120,80){\circle*{2}}%\put(140,0){\circle*{2}}
\put(160,80){\circle*{2}}%\put(180,0){\circle*{2}}
\put(200,80){\circle*{2}}%\put(220,0){\circle*{2}}
\put(240,80){\circle*{2}}%\put(260,0){\circle*{2}}
\put(280,80){\circle*{2}}

\put(0,120){\circle*{2}}%\put(20,0){\circle*{2}}
\put(40,120){\circle*{2}}%\put(60,0){\circle*{2}}
\put(80,120){\circle*{2}}%\put(100,0){\circle*{2}}
\put(120,120){\circle*{2}}%\put(140,0){\circle*{2}}
\put(160,120){\circle*{2}}%\put(180,0){\circle*{2}}
\put(200,120){\circle*{2}}%\put(220,0){\circle*{2}}
\put(240,120){\circle*{2}}%\put(260,0){\circle*{2}}
\put(280,120){\circle*{2}}

\put(40,0){\circle*{8}}
\put(120,40){\circle*{8}}
\put(240,80){\circle*{8}}
\put(200,80){\circle{8}}

\put(40,0){\line(5,2){200}}
\put(40,40){\line(5,1){200}}
\put(40,0){\line(5,3){200}}
\put(40,0){\line(0,1){40}}
\put(240,80){\line(0,1){40}}

\put(40,-12){$(a,s_a)$}
\put(125,44){$(b,s_b)$}
\put(240,68){$(c,s_c)$}
\put(205,85){$(e,y)$}
\put(185,50){$l$}

\end{picture}

\tab 
Assume without loss of generality that $b$ is closer to $a$ than to $c$, and consider the lattice point $(e,y)$ with $e = 2b-a$ and $y = 2s_b-s_a$. We have three options: \\

\begin{itemize}

\item If $s_e < y$ then we contradict the fact that $a \in H_0$ because the triple $(a,b,e)$ violates condition \eqref{eqn:inj 2}. \\

\item If $s_e = y$ and $(e,y)$ lies in the bottom triangle, then we violate the choice of $(b,s_b)$ as having maximal possible distance from the line $l$. \\

\item If $s_e = y$ and $(e,y)$ lies in the right triangle, or if $s_e>y$, then we contradict the fact that $c \in H_0$ because the triple $(a,e,c)$ violates condition \eqref{eqn:inj 3}. 

\end{itemize}

\tab 
Inequality \eqref{eqn:ineq h0} proves that the lattice points $\{(i,s_i) , i\in H_0\}$ form a convex path $R$ (we will call these lattice points \textbf{marks}) and moreover, that there are no other lattice points $(j,s_j)$ strictly above $R$. Recall from Claim \ref{claim:inj} that the abstract simplicial complex $\Xi_\CC$ consists of all subsets $H \subset H_0$ such that situation \eqref{eqn:inj 4} does not occur. Because there are no points $(j,s_j)$ above the convex path $R$, we conclude that the abstract simplicial complex consists of those subsets $H$ of the set of marks $H_0$, such that $H$ does not contain non-adjacent marks on a side of the convex path $R$. Then \eqref{eqn:collection 2} follows from the more general statement below: \\

\begin{claim}
\label{claim:finally}

For any convex path $R$ with marked points $p_1,...,p_n$, the abstract simplicial complex consisting of $H \subset \{1,...,n\}$ such that $H$ does not contain non-adjacent marked points on a side of $R$, has reduced Euler characteristic $\sum_H (-1)^{|H|} = 0$.

\end{claim}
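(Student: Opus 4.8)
The plan is to reinterpret the complex as a flag (clique) complex and then collapse it one vertex at a time. Concretely, I would let $G$ be the graph on the marked points $p_1,\dots,p_n$, ordered along $R$, in which $p_i$ and $p_j$ are joined by an edge \emph{unless} they lie on a common side of $R$ and are non-adjacent on that side. A subset $H$ fails to be a face of our complex precisely when it contains two non-adjacent marks on some side, i.e. precisely when it contains a non-edge of $G$; hence the abstract simplicial complex in question is exactly the clique complex $\mathrm{Cl}(G)$, and $\sum_H (-1)^{|H|} = \sum_{C}(-1)^{|C|}$ with the sum ranging over all cliques $C$ of $G$, including $C=\emptyset$.

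The key observation I would use is that the first mark $p_1$ is \textbf{dominated} by the second mark $p_2$, in the sense that $N_G[p_1]\subseteq N_G[p_2]$. Indeed $p_1$ and $p_2$ are consecutive on the first side $S$, so $p_1p_2$ is an edge; the non-neighbours of $p_1$ are exactly the marks of $S$ of index $\ge 3$, whereas the non-neighbours of $p_2$ are the marks of $S$ of index $\ge 4$ (since $p_2$ is adjacent to both $p_1$ and $p_3$). Thus the non-neighbourhood of $p_2$ is contained in that of $p_1$, which gives the inclusion $N_G[p_1]\subseteq N_G[p_2]$.

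This domination yields a sign-reversing involution on the set of cliques containing $p_1$: any such clique $C$ satisfies $C\subseteq N_G[p_1]\subseteq N_G[p_2]$, so $C\cup\{p_2\}$ is again a clique, and the pairing $C\leftrightarrow C\,\triangle\,\{p_2\}$ matches cliques whose sizes differ by one. Hence $\sum_{C\ni p_1}(-1)^{|C|}=0$, and therefore $\sum_C(-1)^{|C|}$ equals the same alternating sum taken over the cliques of $G-p_1$. Since deleting $p_1$ leaves the convex path carrying the marks $p_2,\dots,p_n$ with the identical side and adjacency structure, $G-p_1$ is again a graph of the same type, and I would then induct on $n$, the base case $n=1$ giving $1-1=0$. (Topologically, this is the statement that repeated elementary collapse of dominated vertices contracts $\mathrm{Cl}(G)$ to a point, so its reduced Euler characteristic vanishes; but the involution makes the Euler-characteristic statement self-contained.)

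The one place that needs care — and the step I expect to be the main obstacle — is the bookkeeping at the corners of $R$, where a single mark lies on two sides at once. I would have to confirm that the clique-complex description and the neighbourhood computation for $p_1,p_2$ survive the case in which $p_2$ is itself a corner; in that case the first side is merely $\{p_1,p_2\}$, so $p_1$ is adjacent to every other mark, $\mathrm{Cl}(G)$ is a cone with apex $p_1$, and the conclusion is immediate. Once such corner cases are disposed of, the domination step and the induction go through uniformly, completing the proof of \eqref{eqn:collection 2} and hence of \eqref{eqn:iden}.
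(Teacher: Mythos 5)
Your proof is correct, but it runs by a genuinely different mechanism than the one in the paper. Your two preliminary observations are both valid: the defining condition is a condition on pairs, so the complex really is the flag (clique) complex of the compatibility graph $G$; and when the first side carries at least three marks, $p_1$ is dominated by $p_2$, since $p_1$ lies on no side other than the first one, so its only non-neighbours are the marks of index $\geq 3$ on that side, while the non-neighbours of the (non-corner) $p_2$ are those of index $\geq 4$. The involution $C\mapsto C\,\triangle\,\{p_2\}$ then kills the signed count of cliques through $p_1$ and reduces to the convex path on $p_2,\dots,p_n$, and the remaining case --- first side equal to $\{p_1,p_2\}$ --- makes $p_1$ a cone point, which disposes of the corner issue you flagged. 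The paper instead inducts from the other end: it partitions the faces according to their intersection with the last edge $E$, namely $p_n\notin H$, $H\cap E=\{p_n\}$, or $H\cap E=\{p_n,p_{n-1}\}$, and applies the induction hypothesis to each piece (with the single-edge path treated separately, where the last two pieces cancel). Your route buys two things: it establishes the strictly stronger fact that the complex is contractible (it collapses to a point by repeatedly deleting dominated vertices), rather than only that its reduced Euler characteristic vanishes; and it localizes all of the corner bookkeeping into the one transparent cone case, whereas the paper's three-way partition implicitly requires checking that each sub-family is again governed by a shorter convex path, including compatibility with $p_{n-1}$ on the penultimate side when $p_{n-1}$ is a corner. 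The paper's decomposition is marginally shorter to state; either argument completes the proof of the identity.
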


\tab 
The claim is proved by induction on the number $n$. Assume that the rightmost marked point $p_n$ is on the last edge $E \subset R$. Then the set of subsets in question can be partitioned into groups: \\

\begin{itemize}

\item those $H$'s which do not contain $p_n$ \\

\item those $H$'s which contain $p_n$ and no other points on the edge $E$ \\

\item thise $H$'s which contain $p_n$ and $p_{n-1}$ and no other points on the edge $E$ \\

\end{itemize}

\noindent By the induction hypothesis, each of the three collections of $H$'s above has Euler characteristic zero, unless the convex path $R$ only consists of the edge $E$. If it happens that $R = E$, then the second and third bullets contribute 1 and $-1$ to the reduced Euler characteristic, and hence their contributions cancel each other out. \\

\noindent \textbf{Proof of Proposition \ref{prop:explicit shuffle 2}:} By the property of $\delta$ functions, observe that:
$$
W_k(x) * W_{k'}(y) =  \sym \left[\delta \left(\frac {z_1}x \right) ... \delta \left(\frac {z_k}{xq^{1-k}} \right) \delta \left( \frac {z_{k+1}}y \right) ...  \delta \left(\frac {z_{k+k'}}{y q^{1- k'}} \right)  \prod_{i < j} \zeta \left(\frac {z_i}{z_j} \right) \right]
$$
and therefore, the analogue of formula \eqref{eqn:delta shuffle pairing 1} states that:
\begin{equation}
\label{eqn:delta shuffle pairing 2}
\left \langle R (z_1,...,z_{k+k'}), W_k(x) * W_{k'}(y) \right \rangle = \ph_{x,y}^{k,k'}(R)
\end{equation}
where the functional $\ph_{x,y}^{k,k'} : \CS_{k+k'} \longrightarrow \BF(x,y)$ is defined by:
\begin{equation}
\label{eqn:phi kl}
\ph_{x,y}^{k,k'}(R) = \frac {R\left(\frac 1x,..., \frac {q^{k-1}}x, \frac 1y,..., \frac {q^{k'-1}}y \right)}{ \prod_{1\leq i < i' \leq k} \zeta (q^{i-i'}) \prod_{1\leq j < j' \leq k'} \zeta (q^{j-j'})  \prod^{1\leq i \leq k}_{1\leq j \leq k'} \zeta\left(\frac {yq^i}{xq^j} \right)} := 
\end{equation}
$$
:= \lim_{z_{k+k'} \mapsto \frac {q^{k'-1}}y} \left( ... \lim_{z_{k+1} \mapsto \frac 1y} \left( \lim_{z_k \mapsto \frac {q^{k-1}}x} \left( ... \lim_{z_1 \mapsto \frac 1x} \left( \frac {R(z_1,...,z_{k+k'})}{\prod_{1\leq i < j \leq k+k'} \zeta \left( \frac {z_i}{z_j} \right)} \right) ... \right) \right) ... \right)
$$
By analogy with the proof of Proposition \ref{prop:explicit shuffle}, we claim that it suffices to prove the following statement. For any $k,k' \geq 0$ and any shuffle element $R$ of degree $k+k'$, we have:
\begin{equation}
\label{eqn:reduces}
\ph_{x,y}^{k,k'}(R) = \prod_{i=\max(0,k'-k)+1}^{k'} \frac 1{y-xq^i} \prod_{i=-k+1}^{\min(0,k'-k)} \frac {y-xq^i}{(y - x q_1 q^{i-1})(y - x q_2 q^{i-1})} ...
\end{equation}
where $...$ stands for a Laurent polynomial in $x,y$. Indeed, combined with Claim \ref{claim:2} and \eqref{eqn:basis}, formula \eqref{eqn:reduces} applied to $R = P_v$ implies that the basis element $P_v$ of the shuffle algebra appears in the decomposition of $W_k(x) * W_{k'}(y)$ with a coefficient which is a rational function of $x,y$ as in the right-hand side of \eqref{eqn:reduces}. \\

\noindent Therefore, it remains to prove \eqref{eqn:reduces}. By the argument in Proposition 2.9 of \cite{Shuf}, which closely follows a key Lemma of \cite{FHHSY}, the wheel conditions \eqref{eqn:wheel} imply that the evaluation:
\begin{equation}
\label{eqn:evaluation}
R\left(\frac 1x,..., \frac {q^{k-1}}x, \frac 1y,..., \frac {q^{k'-1}}y \right)
\end{equation}
is divisible by the linear factors:
$$
\begin{cases} \frac {q^{a-1}}x = \frac {q_1 q^{b-1}}y \text{ and } \frac {q^{a-1}}x = \frac {q_2 q^{b-1}}y \text{ for } 1 \leq a \leq k, \ 1 \leq b < k' & \text{ if } k\leq k' \\
\frac {q^{b-1}}y = \frac {q_1 q^{a-1}}x \text{ and } \frac {q^{b-1}}y = \frac {q_2 q^{a-1}}x \text{ for } 1 \leq a < k, \ 1 \leq b \leq k' & \text{ if } k\geq k' \end{cases}
$$
Dividing \eqref{eqn:evaluation} by $ \prod^{1\leq i \leq k}_{1\leq j \leq k'} \zeta \left( \frac {yq^i}{xq^j} \right)$ as in \eqref{eqn:phi kl} yields:
$$
\ph_{x,y}^{k,k'}(R) = \frac {...}{\prod_{i=-k + 1}^{\min(0,k'-k)} (y - x q_1 q^{i-1})(y - x q_2 q^{i-1})}
$$
where ... refers to a Laurent polynomial times linear factors other than those of the form $y-xq_1q^*$ and $y-xq_2q^*$. Meanwhile, as far as the factors $y-xq^*$ are concerned, let us write $R$ as a shuffle element \eqref{eqn:shuf}:
$$
\ph_{x,y}^{k,k'}(R) = \frac {r \left(\frac 1x,..., \frac {q^{k-1}}x, \frac 1y,..., \frac {q^{k'-1}}y \right)}{\prod^{1\leq i \leq k}_{1\leq j \leq k'} (y-xq^{j-i-1})(y-xq^{j-i+1})} \prod^{1\leq i \leq k}_{1\leq j \leq k'} \frac {(y-xq^{j-i})(y-xq^{j-i-1})}{ (y-xq_1q^{j-i-1})(y-xq_2q^{j-i-1})} 
$$
$$
\cdot \ ... = ... \cdot \left( \begin{cases} \prod_{i=1}^k \frac {y-xq^{1-i}}{y-xq^{k'+1-i}} & \text{ if } k \leq k' \\
\prod_{j=1}^{k'} \frac {y-xq^{j-k}}{y-xq^{j}}  & \text{ if } k \geq k' \end{cases} \right) = ... \cdot \frac {\prod_{i=-k+1}^{\min(0,k'-k)} (y-xq^i)}{\prod_{i=\max(0,k'-k)+1}^{k'} (y-xq^i)}
$$
where ... denote Laurent polynomials and linear factors other than those of the form $y-xq^*$. This completes the proof of \eqref{eqn:reduces}, and with it, Proposition \ref{prop:explicit shuffle 2}. \\

\noindent \textbf{Proof of Proposition \ref{prop:w}:} As a consequence of \eqref{eqn:relation 2}, we have:
$$
[P_{d,1},P_{\pm n,0}] = \pm (1-q_1^n)(1-q_2^n) P_{d \pm n,1} \quad \Rightarrow 
$$
\begin{equation}
\label{eqn:nice comm}
\quad \Rightarrow \left[\delta \left(\frac zx \right)^\uparrow, P_{\pm n,0} \right] = \pm (1-q_1^n)(1-q_2^n) \cdot x^{\pm n} \delta \left(\frac zx \right)^\uparrow
\end{equation}
which proves \eqref{eqn:w rel 0 minus}--\eqref{eqn:w rel 0 plus}. Let $\beta_n = \frac {(1-q_1^n)(1-q_2^n)}n$ and consider the functions:
$$
f_{kk'} (z) = \prod^{k-1}_{i = \max(0,k-k')} \zeta (z q^i) =  \exp \left[ \sum_{n=1}^\infty z^n \beta_n \cdot \frac {q^{\max(0,k-k')n} - q^{kn}}{1-q^n} \right]
$$
$$
g_{kk'} (z) = \prod^{1\leq i \leq k}_{1 \leq j \leq k'} \zeta \left( z q^{i-j-1} \right) = \exp \left[\sum_{n=1}^\infty \frac {z^n \beta_n}{q^n} \frac {(1-q^{kn})(1-q^{-k' n})}{(1-q^n)(1-q^{-n})} \right]
$$
$$
h_{kk'} (z) = f_{kk'} (z) g_{kk'}(z) = \exp \left[\sum_{n=1}^\infty z^n \beta_n \frac {q^{\max(0,k-k')n} + q^{\min(0,k-k')n-n} - q^{kn} - q^{-(k'+1)n}}{(1-q^n)(1-q^{-n})} \right]
$$
Recall formula \eqref{eqn:def explicit 2}:
\begin{equation}
\label{eqn:aaa}
W_k(x) W_{k'}(y) = S_{k,k'}(x,y) g_{kk'} \left(\frac yx\right) 
\end{equation}
where:
\begin{equation}
\label{eqn:pressure}
S_{k,k'}(x,y) = \eta_k \eta_{k'} \sym \left[ \delta \left(\frac {z_1}x \right) ... \delta \left( \frac {z_k}{xq^{1-k}} \right) \delta \left(\frac {z_{k+1}}y \right) ... \delta \left( \frac {z_{k+k'}}{yq^{1-k'}} \right) \right]^\uparrow  
\end{equation}
Moreover, formula \eqref{eqn:explicit shuffle 2} claims that the expression:
\begin{equation}
\label{eqn:bbb}
W_k(x)W_{k'}(y) f_{kk'}\left(\frac yx \right) = S_{k,k'}(x,y) h_{kk'} \left(\frac yx \right)
\end{equation}
is a rational function with poles given by \eqref{eqn:poles 1}--\eqref{eqn:poles 2}. Note that we have the equality of formal sums $S_{k,k'}(x,y) = S_{k',k}(y,x)$ by symmetry, while:
$$
h_{kk'} \left(\frac yx \right) = h_{k'k} \left(\frac xy \right)
$$
by using formulas \eqref{eqn:inv zeta} and \eqref{eqn:zeta exponential}. Therefore, we can switch $(k,x) \leftrightarrow (k',y)$ in \eqref{eqn:bbb} at the cost of picking up the residues at the aforementioned poles:
\begin{equation}
\label{eqn:s}
S_{k,k'}(x,y) h_{kk'}\left(\frac yx \right) - S_{k',k}(y,x) h_{k'k}\left(\frac xy \right) =
\end{equation}
$$
= \sum_{i=\max(0,k'-k)+1}^{k'} \delta\left(\frac {y}{xq^i} \right) \underset{x = \frac y{q^i}}{\res} \left[ S_{k,k'}(x,y) h_{kk'}\left( \frac yx \right) \frac {dx}x \right] - 
$$
$$
- \sum^{k}_{i = \max(0,k-k')+1} \delta\left(\frac {x}{yq^i} \right) \underset{y = \frac x{q^i}}{\res} \left[ S_{k',k}(y,x) h_{k'k} \left(\frac xy \right) \frac {dy}y \right]
$$
From the definition of $S_{k,k'}(x,y)$ in \eqref{eqn:pressure}, we see that:
$$
S_{k,k'}(x,y) \Big|_{x = \frac y{q^i}} = \frac {\eta_k \eta_{k'}}{\eta_{k+i} \eta_{k'-i}} \cdot  S_{k'-i,k+i} (x,y) \Big|_{x = \frac y{q^i}}
$$
and so we may rewrite \eqref{eqn:s} as:
\begin{equation}
\label{eqn:ss}
S_{k,k'}(x,y) h_{kk'}\left(\frac yx \right) - S_{k',k}(y,x) h_{k'k}\left(\frac xy \right) =
\end{equation}
$$
= \sum_{i=\max(0,k'-k)+1}^{k'} \delta\left(\frac {y}{xq^i} \right) \underset{x = \frac y{q^i}}{\res} \left[ S_{k'-i,k+i} (x,y) h_{kk'} \left( \frac yx \right) \frac {dx}x \right] \frac {\eta_k \eta_{k'}}{\eta_{k+i} \eta_{k'-i}}  - 
$$
$$
- \sum^{k}_{i = \max(0,k-k')+1} \delta\left(\frac {x}{yq^i} \right) \underset{y = \frac x{q^i}}{\res} \left[ S_{k-i,k'+i} (y,x) h_{k'k}\left( \frac xy \right) \frac {dy}y \right] \frac {\eta_k \eta_{k'}}{\eta_{k-i} \eta_{k'+i}}
$$
Let us rewrite this equality as:
\begin{equation}
\label{eqn:sss}
S_{k,k'}(x,y) h_{kk'}\left(\frac yx \right) - S_{k',k}(y,x) h_{k'k}\left(\frac xy \right) =
\end{equation}
$$
= \sum_{i=\max(0,k'-k)+1}^{k'} \delta\left(\frac {y}{xq^i} \right) \underset{x = \frac y{q^i}}{\res} \left[ S_{k'-i,k+i} (x,y) h_{k'-i,k+i} \left( \frac yx \right) \frac {h_{kk'} \left(\frac yx \right) \eta_k \eta_{k'} \cdot \frac {dx}x}{h_{k'-i,k+i} \left(\frac yx \right) \eta_{k+i} \eta_{k'-i}} \right]  
$$
$$
- \sum^{k}_{i = \max(0,k-k')+1} \delta\left(\frac {x}{yq^i} \right) \underset{y = \frac x{q^i}}{\res} \left[ S_{k-i,k'+i} (y,x) h_{k-i,k'+i}\left( \frac xy \right)  \frac {h_{k'k} \left( \frac xy \right)\eta_k \eta_{k'} \cdot \frac {dy}y}{h_{k-i,k'+i} \left( \frac xy \right) \eta_{k-i} \eta_{k'+i}} \right] 
$$
Using the definition of $h_{kk'}(z)$ and $\eta_k$, we conclude that $\frac {h_{kk'}(q^i)\eta_k \eta_{k'}}{h_{k'-i,k+i}(q^i) \eta_{k+i} \eta_{k'-i}} $ equals: 
$$
\exp \left[\sum_{n=1}^\infty\beta_n \left(  \frac {q^{[\max(0,k-k')+i]n} + q^{[\min(0,k-k')+i-1]n} - q^{(k+i)n} - q^{(i-k'-1)n}}{(1-q^n)(1-q^{-n})} -\right. \right.
$$
$$
 \left. \left. - \frac {q^{in} + q^{(k'-k-i-1)n} - q^{k'n} - q^{-(k+1)n}}{(1-q^n)(1-q^{-n})}  - \frac {(q^{k'n} - q^{(k+i)n})(1-q^{-in})}{(1-q^n)(1-q^{-n})} \right) \right] =
$$
\begin{equation}
\label{eqn:snake}
= \exp \left(\sum_{n=1}^\infty \beta_n \cdot \frac {1 - q^{\min(i,k-k'+i)n}}{1-q^n} \right) = \prod_{s=0}^{\min(i,k-k'+i)-1} \zeta(q^s)
\end{equation}
Note that in the first equality in \eqref{eqn:snake}, we have made generous use of the identity \eqref{eqn:inv zeta}, which in exponential notation takes the form:
$$
\exp \left(\sum_{n=1}^\infty \beta_n \cdot q^{na} \right) = \exp \left( \sum_{n=1}^\infty \beta_n \cdot q^{(-a-1)n} \right) 
$$
for any integer $a$. Also note that $\zeta(1)$ has a factor of $1-1$ in the denominator, and this factor is precisely the reason why the summands in the right-hand side of \eqref{eqn:sss} have a simple pole at $y = x q^i$ and $x = y q^i$, respectively. Without this factor, expression \eqref{eqn:snake} equals $\theta(\min(i,k-k'+i))$ of \eqref{eqn:def theta}. Therefore, changing all $S$'s to $W$'s via relation \eqref{eqn:bbb} converts formula \eqref{eqn:sss} into \eqref{eqn:w rel}.


\begin{thebibliography}{XXX}

\bibitem{Aga} Aganagic M., Haouzi N., Shakirov S., {\em $A_n$--triality}, ar$\chi$iv:1403.3657

\bibitem{AFHKSY} Awata H., Feigin B., Hoshino A., Kanai M., Shiraishi J., Yanagida S., {\em Notes on Ding-Iohara algebra and AGT conjecture}, ar$\chi$iv:1106.4088.

\bibitem{AKOS} Awata H., Kubo H., Odake S. and Shiraishi J., {\em Quantum $W_N$ algebras and Macdonald polynomials} \textbf{Comm. Math. Phys. } 179 (1996), no.2, 401--416

\bibitem{AY} Awata H., Yamada Y., {\em Five-dimensional AGT Relation and the Deformed $\beta$--ensemble}, \textbf{Prog.Theor.Phys. } 124 (2010), 227--262

\bibitem{Ba} Baranovsky V., {\em Moduli of sheaves on surfaces and action of the oscillator algebra}, \textbf{J. Diff. Geom.} 55 (2000), no. 2, 

\bibitem{Bo0} Bourgine J.-E., Matsuo Y., Zhang H., {\em Holomorphic field realization of $\emph{SH}_c$ and quantum geometry of quiver gauge theories}, \textbf{J. High Energy Phys.}, vol. 167 (2016)

\bibitem{Bo} Bourgine J.-E., Fukuda M., Matsuo Y., Zhang H., Zhu R.-D.,  {\em Coherent states in quantum $\CW_{1+\infty}$ algebra and qq--character for 5d Super Yang-Mills}, ar$\chi$iv:1606.08020

\bibitem{BoS} Bouwknegt P., Schoutens K., {\em W symmetry in conformal field theory} \textbf{Phys.Rept.} 223 (1993) 183-276

\bibitem{BS} Burban I., Schiffmann O., {\em On the Hall algebra of an elliptic curve I} \textbf{Duke Math. J.} 161 (2012), no. 7, 1171-1231

\bibitem{CNO} Carlsson E., Nekrasov N., Okounkov A., {\em Five dimensional gauge theories and vertex operators}, \textbf{Mosc. Math. J. } 14 (2014), no. 1, 39–61, 170.

\bibitem{CO} Carlsson E., Okounkov A., {\em Exts and vertex operators}, \textbf{Duke Math. J.} 161 (2012), no. 9, 1797-1815

%\bibitem{FFNR} Feigin B., Finkelberg M., Negut A., Rybnikov L.  {\em Yangians and cohomology rings of Laumon spaces} \textbf{Selecta Math.} (N.S.) 17 (2011), no. 3, 573-607

\bibitem{FL} Fateev V., Litvinov A. {\em Integrable structure, W-symmetry and AGT relation}, \textbf{J. High Energ. Phys.} (2012) 2012: 51

\bibitem{FLu} Fateev V., Lukyanov S. {\em The Models of Two-Dimensional Conformal Quantum Field Theory with $Z(n)$ Symmetry}, \textbf{Int. J. Mod. Phys.} A3 (1988) 507

\bibitem{FF} Feigin B. and Frenkel E., {\em Quantum $W$--algebras and elliptic algebras}, \textbf{Comm. Math. Phys. }, 178 (1996), no. 3, 653--678

\bibitem{FHHSY} Feigin B., Hashizume K., Hoshino A., Shiraishi J., Yanagida S., {\em A commutative algebra on degenerate $\BC \BP^1$ and MacDonald polynomials},  \textbf{J. Math. Phys.} 50 (2009), no. 9

\bibitem{Kernel} Feigin B., Hoshino A., Shibahara J., Shiraishi J., Yanagida S. {\em Kernel function and quantum algebras}, ar$\chi$iv: 1002.2485

\bibitem{FJMM} Feigin B., Jimbo M., Miwa T., Mukhin E., {\em Quantum toroidal $\mathfrak{gl}_1$ algebra: plane partitions}, \textbf{Kyoto J. Math.} 52 (2012), no 3, 621--659

\bibitem{FO} Feigin B., Odesskii A., {\em Vector bundles on elliptic curve and Sklyanin algebras}, Topics in Quantum Groups and Finite-Type Invariants, \textbf{Amer. Math. Soc. Transl. Ser. 2} 185 (1998), Amer. Math. Soc., 65-–84

\bibitem{FT} Feigin B., Tsymbaliuk A., {\em Heisenberg action in the equivariant $K-$theory of Hilbert schemes via Shuffle Algebra}, \textbf{Kyoto J. Math.} 51 (2011), no. 4

\bibitem{GN} Gorsky E., Negu\cb t A., {\em Infinitesimal change of stable basis}, \textbf{Selecta Math.}, July 2017, Volume 23, Issue 3, pp 1909–1930

%\bibitem{GN}  E., Negu\cb t A., {\em Refined knot invariants and Hilbert schemes},  \textbf{J. Math. Pures Appl.}, vol. 104 (2015), no. 3, 403–435

\bibitem{G} Grojnowski I., {\em Instantons and Affine Algebras I. The Hilbert Scheme and Vertex Operators}, \textbf{Math. Res. Lett.} 3 (1996), no. 2

%\bibitem{IOY} Itoyama H., Oota T., Yoshioka R., {\em $q$--vertex operator from 5D Nekrasov function}, \textbf{Jour. of Physics A}, 49 (2016), no 34

\bibitem{KP} Kimura T., Pestun V., {\em Quiver $W$--algebras}, ar$\chi$iv:1512.08533

%\bibitem{Koz} Kozlov D., {\em Combinatorial slgebraic topology}, \textbf{Algorithms and Computation in Mathematics}, Springer (2008), ISBN 978-3-540-71962-5

\bibitem{MO} Maulik D., Okounkov A., {\em Quantum groups and quantum cohomology}, ar$\chi$iv:1211.1287

\bibitem{O} Odake S., {\em Comments on the deformed $W_N$ algebra}, \textbf{Int. J. Mod. Phys. B}, 16, 2055 (2002)

%\bibitem{Oh} Ohkubo Y., {\em Singular Vector of Ding-Iohara-Miki Algebra and Hall-Littlewood Limit of 5D AGT Conjecture}, \textbf{PhD Thesis, Nagoya University}, ar$\chi$iv:1703.10990

\bibitem{ON} Okounkov A., Nekrasov N., {\em Seiberg-Witten Theory and Random Partitions}, \textbf{Progress in Mathematics},  The Unity of Mathematics (244), 525--596

%\bibitem{OS} Okounkov A., Smirnov A., {\em Quantum difference equation for Nakajima varieties},  arXiv:1602.09007

%\bibitem{Na} Nakajima H., {\em Lectures on Hilbert Schemes of points on surfaces}, \textbf{University Lecture Series} 18, American Mathematical Society, Providence, RI, 1999

\bibitem{NY} Nakajima H., Yoshioka K., {\em Instanton counting on blowup. II. K-theoretic partition function}, \textbf{Transform. Groups } 10 (2005), no. 3-4, 489-–519.

\bibitem{Ext} Negu\cb t A., {\em Exts and the AGT Relations}, \textbf{Lett. Math. Phys.}, 106 (2016), no. 9, 1265–1316

\bibitem{Shuf} Negu\cb t A., {\em The shuffle algebra revisited},  \textbf{Int. Math. Res. Not.} 22 (2014), 6242--6275

\bibitem{Mod} Negu\cb t A., {\em Moduli of flags of sheaves and their $K$--theory}, \textbf{Algebraic Geometry} 2 (2015), 19--43

\bibitem{Pieri} Negu\cb t A., {\em The $\frac mn$ Pieri rule}, \textbf{Int. Math. Res. Not.}, 2016 (1): 219--257

\bibitem{Ops} Negu\cb t A., {\em Operators on symmetric polynomials}, ar$\chi$iv:1310.3515

\bibitem{W gen} Negu\cb t A., {\em $W$--algebras associated to surfaces}, ar$\chi$iv:1710.03217

\bibitem{AGT} Negu\cb t A., {\em AGT relations for sheaves on surfaces}, ar$\chi$iv:1711.00390

%\bibitem{Thesis} Negu\cb t A., {\em Quantum algebras and cyclic quiver varieties}, Ph.D. thesis Columbia University (2015)

\bibitem{Nek} Nekrasov N., {\em Seiberg-Witten prepotential from instanton counting}, \textbf{Adv. Theor. Math. Phys.} 7 (2003), no. 5, 831-864

\bibitem{NS} Nekrasov N., Shadchin S., {\em ABCD of instantons}, \textbf{Commun. Math. Phys.} 252 (2004) 359--391

\bibitem{NPS} Nekrasov N., Pestun V., Shatashvili S., {\em Quantum geometry and quiver gauge theories}, ar$\chi$iv:1312.6689 

\bibitem{SV} Schiffmann O., Vasserot E., {\em The elliptic Hall algebra and the equivariant $K-$theory of the Hilbert scheme of ${\mathbb{A}}^2$}, \textbf{Duke Math. J.} 162 (2013), no. 2, 279--366

\bibitem{SV2} Schiffmann O., Vasserot E., {\em Cherednik algebras, $W$--algebras and the equivariant cohomology of the moduli space of instantons on $\BA^2$}, \textbf{Publ. Math. Inst. Hautes Etud. Sci.}, 118 (2013), Issue 1, 213-–342

\bibitem{Ta} Tachikawa Y., {\em A brief review of the 2d/4d correspondences}, \textbf{Journal of Physics A: Mathematical and Theoretical}, Volume 50, Number 44

\bibitem{T} Taki M., {\em On AGT-W Conjecture and q-Deformed W-Algebra}, ar$\chi$iv:1403.7016

\bibitem{Y} Yanagida S., {\em Whittaker vector of deformed Virasoro algebra and Macdonald symmetric functions}, \textbf{Lett. Math. Phys.} 106 (2016), no. 3, 395--431

\end{thebibliography}
\end{document}